\numberwithin{equation}{section}       
 \theoremstyle{plain}
\newtheorem{theorem}{Theorem}[section]
\newtheorem{prop}{Proposition}[section]
\newtheorem{coro}[prop]{Corollary}
\newtheorem{lemma}[prop]{Lemma}
\newtheorem{fact}[prop]{Fact}
\newtheorem*{mainlemma}{Main Lemma}
\theoremstyle{definition}
\newtheorem{definition}[prop]{Definition}
\theoremstyle{remark}
\newtheorem{remark}[prop]{Remark}
\newtheoremstyle{citing}
  {3pt}
  {3pt}
  {\itshape}
  {}
  {\bfseries}
  {.}
  {.5em}
  {\thmnote{#3}}
\theoremstyle{citing}
\DeclareMathAlphabet{\mathpzc}{OT1}{pzc}{m}{it} 
\newcommand{\C}{\mathbb{C}}
\newcommand{\D}{\mathbb{D}}
\newcommand{\R}{\mathbb{R}}
\newcommand{\Z}{\mathbb{Z}}
\newcommand{\teta}{\widetilde{\teta}}
\newcommand{\eps}{\varepsilon}
\DeclareMathOperator{\diam}{diam}
\newcommand{\CC}{\overline{\C}}
\newcommand{\Crit}{\mathcal{C}}
\def\HH{\mathcal{H}}
\begin{document}
\title[]{Monotonicity of entropy and positively oriented transversality for families of interval maps}
\author{Genadi Levin, Weixiao Shen and Sebastian van Strien}

\date{\today}

\begin{abstract}
In this paper we will develop a very general approach which shows that critical relations
of holomorphic maps on the complex plane unfold transversally in a {\lq\lq}positively oriented{\rq\rq} way. We will mainly illustrate this approach to obtain
transversality for a wide class of one-parameter families of interval maps,
for example maps with flat critical points,  piecewise linear maps, maps with discontinuities but also for families of maps
with  complex analytic extensions such as certain polynomial-like maps.
\end{abstract}

\maketitle
\tableofcontents
\section{Introduction}

In this paper we will develop a new method for showing transversality properties of families $f_t$ of maps of the complex
plane so that $f_0$ has a finite invariant marked set. Surprisingly,  this method works even when dealing with holomorphic maps
with domain and range open subsets of the complex plane.
For the unicritical family $z\mapsto z^{2d}+c$, this method gives a new and simple proof of well-known results, see Section~\ref{sec:oneparameter}.
The method also apply to many other families.

\subsection{Transversality through holomorphic motions}
Before stating specific theorems which follow from the approach developed in this paper, let
us discuss the general philosophy.  The setting in this paper is to consider rather general maps on open subsets of the complex plane with a finite
forward invariant marked set, for example the postcritical set.  These maps do not necessarily have to be rational or transcendental.
The aim is to show that critical relations unfold transversally,
by considering a holomorphic motion along the marked set.
By lifting this holomorphic motion by the dynamics we obtain a corresponding transfer operator $\mathcal A$. 
 It turns out that one has the following implications:
 \begin{equation*}
\mbox{lifting property} \implies
 \mbox{spec}(\mathcal A)\subset \mathbb D   \implies \mbox{transversality properties}.\end{equation*}
In Part A  we will define the {\lq}lifting property{\rq} and prove
these implications.  In fact, we will obtain {\lq}positively oriented transversality{\rq} in a sense which is made
precise in  Subsection~\ref{subsec:intro-on-appendix}.

In Part B we will show this lifting property follows from a separation property.
In this way, we derive in Section~\ref{sec:oneparameter}
 transversality for many families of interval maps but also for a wide class of one-parameter
families of the form $f_\lambda(x)=f(x)+\lambda$ and $f_\lambda(x)=\lambda f(x)$.
For example, as an easy application, we will recover known transversality results for the family of quadratic maps,
and in Subsection~\ref{subsec:additive} we partially address some conjectures from the 1980's about such families of maps.
In Section~\ref{sec:separation} we present this set-up in a rather general framework,
defining in this setting a {\em separation property} and prove that this property implies the lifting property.
In Section \ref{sec:sinearnold} we will use this separation property to obtain
transversality for  maps of the circle (maps from the generalised Arnol'd family).

In Part C, we will study the family $x\mapsto |x|^\ell+c$. When $\ell$ is not an even integer, we have not been able to prove the lifting property in general.
Nevertheless we will obtain the lifting property under additional assumptions in   Sections \ref{sec:finiteorder}-\ref{sec:finiteoddorder}.

In Part  D,
we show that the methods developed in this paper also apply to other families, which do not necessarily have the separation property.
For example, in Section~\ref{sec:rationalmaps} we consider the setting of  general
polynomial families and rational families,
obtain the lifting property using the Measurable Riemann Mapping theorem,
 and  thus transversality which is {\lq}positively oriented{\rq}.
In Sections~\ref{subset:linear} and \ref{sec:familiesdiscont}  we apply the methods from this paper
to obtain positively oriented transversality for piecewise linear interval maps and interval maps with discontinuities (i.e. Lorenz maps).

\subsection{Results for one-parameter families of (possibly non-analytic) unimodal interval maps of the form $f_c(x)=f(x)+c$}
Let $\mathcal{U}$ denote the collection of unimodal maps $f: \R\to \R$ which are strictly decreasing in $(-\infty,0]$ and strictly increasing in $[0,\infty)$. Given $f\in\mathcal{U}$, we are interested in the bifurcation in the family $f_c(x)=f(x)+c$, $c\in\R$ and in particular the problem whether the Milnor-Thurston kneading sequences depend on $c$ monotonically.

The case $f_c(x)=x^2+c$ was solved in 1980s as a major result in unimodal dynamics. By now there are several proofs, see~\cite{MT, Su, D, Tsu0,Tsu1}. All these proofs use complex analytic methods and rely on the fact that $f_c$ extends to a holomorphic map on the complex plane. These methods work well for $f_c(x)=|x|^\ell+c$ when $\ell$ is a positive even integer but break down for general $\ell$ and other families of non-analytic unimodal maps.  No approach using purely real-analytic method has so far been successful in proving this monotonicity theorem fully.

The complex analytic method developed in Section~\ref{sec:lifting}
(which in the unimodal reduces to a few pages), shows that it is sufficient to check a certain separation property to obtain
monotonicity  for families of the form $f_c(x)=f(x)+c$,  see  Theorem~\ref{single}.
In Subsection~\ref{subsec:additive} we obtain from  this a new elementary proof
of the well-known monotonicity theorem for $f_c(x)=|x|^\ell+c$ when $\ell$ is an even positive integer, but more importantly also
monotonicity 
for  families of some non-analytic unimodal maps:

%


\begin{theorem} \label{thm:flat}
Fix real numbers $\ell\ge 1$ and $b> 2(e\ell)^{1/\ell}$ and consider the family
$$f_c(x)= b e^{-1/|x|^\ell} +c, \,\, c\in\R$$
of unimodal maps. Let $\beta\in (0, \ell^{1/\ell})$ be the solution of the equation 
$$f_{-\beta}(\beta)=\beta.$$ Then 
the kneading sequence $\mathcal{K}(f_c)$ is monotone increasing in $c\in [-\beta, \infty)$  and the positive transversality condition
(\ref{eq:trans}) below.
\end{theorem}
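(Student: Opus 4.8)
The plan is to verify the hypotheses of Theorem~\ref{single}, i.e. to exhibit the relevant separation property for the family $f_c(x)=be^{-1/|x|^\ell}+c$ on the parameter interval $[-\beta,\infty)$, and then quote that theorem to obtain both the monotonicity of $\mathcal{K}(f_c)$ and the positive transversality condition~(\ref{eq:trans}). Since $f(x)=be^{-1/|x|^\ell}$ is a flat unimodal map with a single critical point at $0$, it lies in $\mathcal{U}$ after a harmless normalization (note $f(0)=0$ and $f$ is strictly decreasing on $(-\infty,0]$, strictly increasing on $[0,\infty)$). First I would record the elementary facts about the fixed point: the defining equation $f_{-\beta}(\beta)=\beta$ has a solution $\beta\in(0,\ell^{1/\ell})$ because $g(\beta):=be^{-1/\beta^\ell}-2\beta$ satisfies $g(0^+)=0^-$-ish behaviour (more precisely $g(\beta)<0$ for $\beta$ small since the exponential is superpolynomially flat) while for $\beta$ near $\ell^{1/\ell}$ the condition $b>2(e\ell)^{1/\ell}$ forces $g>0$; here $\ell^{1/\ell}$ is exactly the point where $x\mapsto e^{-1/x^\ell}$ has its steepest slope, which is why it is the natural cutoff. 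The point $-\beta$ is the left endpoint of the parameter interval because at $c=-\beta$ the critical value $f_c(0)=c=-\beta$ is the (orientation-reversing) fixed point, so for $c\ge-\beta$ the critical orbit stays in the invariant interval bounded by the fixed points.

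The heart of the matter is the separation property, which (in the unimodal additive-family setting of Section~\ref{sec:lifting} / Theorem~\ref{single}) amounts to controlling, along the postcritical orbit, an expansion–contraction balance of the form: the contraction coming from the flat critical point near $0$ is dominated by expansion along the rest of the orbit, uniformly enough that the associated transfer operator $\mathcal{A}$ has spectrum inside $\mathbb{D}$. Concretely I would show that for $x$ in the dynamical interval $[-\beta,\beta]$ one has a good lower bound on $|f_c'(f_c(x))|\cdots$ along orbits, or more directly the one-step separation inequality that Theorem~\ref{single} requires. The key quantitative input is that $f'(x)=b\ell\,|x|^{-\ell-1}e^{-1/|x|^\ell}\,\mathrm{sgn}(x)$ has a unique maximum of $|f'|$ on $[0,\ell^{1/\ell}]$ attained at the boundary behaviour near $\ell^{1/\ell}$; combined with $\beta<\ell^{1/\ell}$ this gives that on the invariant interval $f_c$ is \emph{expanding away from the critical point with a definite margin}, while near the critical point the flatness gives contraction that is "infinitely summable". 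The bound $b>2(e\ell)^{1/\ell}$ is precisely what makes the margin positive: I would track how $b$ enters the estimate for $\sup|f'|$ on $[0,\beta]$ versus the length of the invariant interval, and check the threshold is $2(e\ell)^{1/\ell}$.

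The main obstacle I anticipate is the quantitative separation estimate itself — verifying that the specific exponential $be^{-1/|x|^\ell}$, on the interval $[-\beta,\beta]$, satisfies the hypotheses of Theorem~\ref{single} with the stated $b$-threshold, rather than merely for $b$ sufficiently large. This requires a careful one-variable analysis of $x\mapsto be^{-1/|x|^\ell}$: locating $\beta$ as a function of $(b,\ell)$, bounding $\sup_{[0,\beta]}|f'|$ and the geometry of the first few iterates of the critical value, and showing the separation inequality holds for \emph{all} admissible $c\in[-\beta,\infty)$ (parameters where the critical orbit escapes toward $+\infty$ are handled trivially; the real work is on the invariant-interval regime). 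Once the separation property is in hand, the implications "separation $\implies$ lifting property $\implies \mathrm{spec}(\mathcal{A})\subset\mathbb{D}\implies$ transversality" from Part A and Section~\ref{sec:separation}, packaged in Theorem~\ref{single}, deliver both conclusions; monotonicity of the kneading sequence follows because positive transversality at every critical relation prevents the kneading sequence from ever decreasing as $c$ increases.
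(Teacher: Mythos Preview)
You have correctly identified Theorem~\ref{single} as the target, but you have misread what its hypothesis actually says. The separation condition~(\ref{1exp}) is not an expansion--contraction estimate along the real postcritical orbit; it is a complex-geometric containment statement about a \emph{holomorphic extension} $g:U_g\to V$, namely that the image $V$ contains the ball $B(c_1;\diam(U_g))$. Nothing you propose --- bounds on $|f_c'|$ on $[-\beta,\beta]$, margins of real expansion away from the critical point, summability of contraction near $0$ --- addresses this. To apply Theorem~\ref{single} you must first \emph{construct} a complex domain $U_g\subset\C$ and a holomorphic map $g:U_g\to V$ extending $f_c$, with $V\setminus\{c\}$ covered without branching, and then compare $\diam(U_g)$ with the size of $V$.

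The paper's argument does exactly this and the complex-analytic input is unavoidable. One picks $x_0>\beta$ slightly larger than $\beta$, sets $R=f_0(x_0)$, and extends $f_0$ to a holomorphic unbranched cover $F_0:U\to B^*(0,R)$ by writing $f_0(z)=b\exp(-1/\Phi(z))$ with $\Phi(z)=z^\ell$ on a sector; the domain $U^+=\Phi^{-1}(D_*((0,x_0^\ell)))$ is contained in $D_*((0,x_0))$ by the Schwarz lemma, so $\diam(U)=2x_0$. The role of the threshold $b>2(e\ell)^{1/\ell}$ is also different from what you suggest: it forces $\beta<\ell^{1/\ell}$, whence $Df_{-\beta}(\beta)=2\ell/\beta^\ell>2$, and \emph{this} derivative bound at the fixed point is what lets one choose $x_0$ close enough to $\beta$ so that $R=f_0(x_0)>2x_0=\diam(U)$, giving the robust separation~(\ref{1robust}). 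Your real-variable estimates, even if sharpened, do not produce a holomorphic extension or the inequality $\diam(U_g)<R$, so the proposal as written does not close.
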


Recall that the {\em Milnor-Thurston kneading sequence} of $f\in\mathcal{U}$ is defined as a word  $\mathcal{K}(f)=i_1 i_2\cdots\in \{1, 0, -1\}^{\Z^+}$, where
$$i_k=\left\{\begin{array}{ll}
1 & \mbox{ if } f^k(0)>0\\
0 & \mbox{ if } f^k(0)=0\\
-1 & \mbox{ if } f^k(0)<0.
\end{array}
\right.
$$
For $g\in\mathcal{U}$ with $\mathcal{K}(g)=j_1j_2\cdots$, we say that $\mathcal{K}(f)\prec \mathcal{K}(g)$ if there is some $n\ge 1$ such that
$i_k=j_k$ for all $1\le k<n$ and $\prod_{k=1}^n i_k < \prod_{k=1}^n j_k$.
Thus given $f\in\mathcal{U}$, to prove monotonicity of kneading sequence in a family $f_c(x)=f(x)+c$, $c\in\R$, it suffices to show that one of the following properties holds:
\begin{itemize}
\item ({\bf Rigidity}) if $f_{c}$ has $0$ as a periodic point and $f_{\hat{c}}$ has the same kneading sequence as $f_{c}$, then  $c=\hat{c}$;
\item ({\bf {\lq\lq}Positive{\rq\rq} transversality}) if $f_{c_*}$ has $0$ as a periodic point of period $q$, then
\begin{equation}\frac{\frac{d}{d c} f_c^q(0)\left.\right|_{c=c_*}}{Df_{c_*}^{q-1}(c_*)}= \sum_{n=0}^{q-1}\frac{1}{Df_{c_*}^i(c_*)}>0.\label{eq:trans}\end{equation}
\end{itemize}
\begin{remark}
Equation (\ref{eq:trans}) implies that if $0$ has (precisely) period $q$ at some parameter $c_*$, then
$$\begin{array}{rl} \frac{d}{dc}f^q_c(0)\bigr \vert_{c=c_*}<0 & \mbox { if } f^q_{c_*} \mbox{ has a local maximum at }0,\\
\frac{d}{dc} f^q_c(0)\bigr \vert_{c=c_*}>0 & \mbox { if }  f^q_{c_*} \mbox{ has a local minimum at }0.\end{array}
$$
Hence  the multiplier $\lambda(c)$ of the (local) analytic continuation $p(c)$ of this periodic point of period $q$ is strictly increasing.
Note that there is a result of Douady-Hubbard which asserts that in each hyperbolic component of the family of quadratic maps, the
multiplier of the periodic attractor is a univalent function of the parameter.
Proving (\ref{eq:trans}) complements this by
also showing that on the real line the multiplier of the periodic point is increasing.
The approach to prove monotonicity via the inequality (\ref{eq:trans})  was also previously  used
by Tsujii \cite{Tsu0,Tsu1} for real maps of the form $z\mapsto z^2+c$, $c\in \R$.
\end{remark}

When $f_c(x)=|x|^\ell+c$,  and $\ell$ is not an integer, we have not been able able to prove the lifting property.
The next theorem gives monotonicity 
when $\ell$ is a large real number (not necessarily an integer),
but only
if not too many points in the critical orbit are in the orientation reversing branch.

\begin{theorem}\label{thm:finiteorder}
Let  $\ell_-,\ell_+\ge 1$ and consider the family of unimodal maps $f_c=f_{c,\ell_-,\ell_+}$ where
$$f_c(x)=\left\{\begin{array}{ll}
|x|^{\ell_-}+c & \mbox{ if } x\le 0\\
|x|^{\ell_+}+c & \mbox{ if } x\ge 0.
\end{array}
\right.
$$
For any integer $L\ge 1$ there exists $\ell_0>1$
so that for any $q\ge 1$ and any
periodic kneading sequence   $\bold i=i_1i_2\cdots\in \{-1,0,1\}^{\Z^+}$ of period $q$
so that
\begin{equation}\#\{1\le j< q ; i_j =-1 \}\le L,\label{assum:comb}\end{equation}
and any pair $\ell_-,\ell_+\ge \ell_0$ there is at most one $c\in\R$ for which the kneading sequence
of $f_c$ is equal to $\bold i$. Moreover,
\begin{equation}\label{fiortrans}
\sum_{n=0}^{q-1} \frac{1}{Df_c^n(c)}>0.
\end{equation}
\end{theorem}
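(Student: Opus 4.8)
The plan is to deduce Theorem~\ref{thm:finiteorder} from the general machinery of Parts~A--B by verifying that the relevant separation/lifting property holds \emph{uniformly} over all kneading sequences satisfying the combinatorial constraint~\eqref{assum:comb}, once $\ell_-,\ell_+$ are large enough. Concretely, suppose $f_c=f_{c,\ell_-,\ell_+}$ has $0$ as a periodic point of period $q$ with kneading sequence $\bold i$. As explained in the discussion preceding~\eqref{eq:trans}, it suffices to establish the positive transversality inequality~\eqref{fiortrans}, i.e. $\sum_{n=0}^{q-1} 1/Df_c^n(c)>0$; rigidity (uniqueness of $c$) then follows by the standard argument that a monotone analytic branch of $c\mapsto f_c^q(0)$ crosses each level only once. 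So the whole theorem reduces to the single inequality.

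The key point is a \emph{contraction estimate along the critical orbit}. Write $x_n=f_c^n(0)$, so $x_0=0$, $x_1=c$, and $x_q=0$. The derivative $Df_c(x)=\ell_\pm |x|^{\ell_\pm-1}\operatorname{sgn}(x)$, so $|Df_c(x_n)|=\ell_\pm |x_n|^{\ell_\pm-1}$. First I would pin down the scale: since $0$ is periodic, the orbit $\{x_0,\dots,x_{q-1}\}$ lies in a bounded interval $I_c$ around the fixed point structure, and for large $\ell$ the map $f_c$ on $[-1,1]$ has the property that $|x|^{\ell_\pm-1}$ is extremely small unless $|x|$ is very close to $1$; one shows the orbit is trapped in a definite interval, say $[-\rho,\rho]$ with $\rho<1$ bounded away from $1$ uniformly (this uses $x_q=0$ and a fixed-point/trapping argument as in the $\beta$-construction of Theorem~\ref{thm:flat}). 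Consequently $|Df_c(x_n)|\le \ell_\pm \rho^{\ell_\pm-1}$, which, crucially, can be made \emph{larger} than $1$ or \emph{smaller} than $1$ depending on whether $x_n$ is near $\pm1$ — so naive hyperbolicity is not available, and this is exactly the subtlety. The escape is that the orientation-reversing branch ($x<0$) is used at most $L$ times by~\eqref{assum:comb}. Between consecutive visits to the left branch the dynamics is on the increasing branch, where I would show an expansion/contraction bookkeeping: the product $\prod_{n\in S}|Df_c(x_n)|$ over any block $S$ of consecutive right-branch indices is controlled, and crossing a left-branch index contributes a bounded factor and a single sign change.

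The heart of the argument is then a telescoping/positivity estimate for $\Sigma:=\sum_{n=0}^{q-1} \prod_{i=n+1}^{q-1}(Df_c(x_i))^{-1}$ (which is $\sum_n 1/Df_c^n(c)$ after reindexing via the chain rule and $x_1=c$). I would split the sum at the (at most $L$) indices where $i_j=-1$ and show that the terms between two such indices form a \emph{positive, geometrically decreasing} tail dominated by the term with the smallest index, because along the increasing branch all the factors $Df_c(x_i)=\ell_+ x_i^{\ell_+-1}$ are positive and, for $\ell_+$ large, either uniformly $>1+\delta$ (so the tail sums converge with the leading term dominating) or the orbit is close enough to $0$ that only finitely many factors are $\le 1$ and these are absorbed. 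The negative contributions come only from the $\le L$ sign flips, each bounded in absolute value by a constant $C(L,\rho)$, while the dominant positive contribution — the $n=q-1$ term, which equals $1$ — together with the positive part of the first block, beats $C(L,\rho)$ once $\ell_0=\ell_0(L)$ is chosen large. Choosing $\ell_0$ so that $\rho^{\ell-1}$-type quantities are negligible relative to $1/(10 L)$ closes the estimate uniformly in $q$ and in $\bold i$.

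The main obstacle I anticipate is the trapping step combined with making the block estimates genuinely \emph{uniform in $q$}: one must rule out that a long orbit accumulates many right-branch factors that are individually slightly less than $1$, whose product could decay and thereby let a later sign flip dominate. Handling this requires showing that such near-$0$ excursions are self-correcting — an orbit point very close to $0$ maps to a point very close to $c$, which is bounded away from $0$ (as $c$ is bounded away from $0$ once the period exceeds a small bound, the period-one and period-two cases being checked by hand) — so the "bad" factors cannot occur consecutively more than a bounded number of times. Once that combinatorial-geometric lemma is in place, the positivity of~\eqref{fiortrans} follows, and rigidity is immediate; the lifting property of Part~B could alternatively be invoked to package the same estimate, but the direct estimate on $\sum 1/Df_c^n(c)$ is the cleanest route given the constraint~\eqref{assum:comb}.
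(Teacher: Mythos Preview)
Your proposal attempts a direct real-variable estimate on $\sum_{n=0}^{q-1}1/Df_c^n(c)$, but this route has a genuine gap and is not how the paper proceeds. The sign of $1/Df_c^n(c)$ flips each time the orbit crosses to the left branch, so the sum breaks into at most $L+1$ blocks of constant sign; however, nothing in your sketch forces the positive blocks to dominate the negative ones uniformly in $q$. Your trapping claim is already false: for instance the orientation-reversing fixed point of $f_c$ sits at $|x|=2^{1/(\ell-1)}>1$, and periodic critical orbits typically contain points of modulus bigger than $1$, so there is no $\rho<1$ with the orbit in $[-\rho,\rho]$ and no small factor $\rho^{\ell-1}$ to exploit. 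More fundamentally, the magnitudes $|Df_c^n(c)|$ are products over the whole past orbit and carry no a priori correlation with the sign pattern; your ``self-correcting'' paragraph does not supply one. The paper in fact remarks that no purely real-analytic argument is known even for $|x|^\ell+c$ with $\ell$ non-integer.

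The paper's proof is complex-analytic and goes through the lifting machinery of Part~A. One fixes the finite postcritical set $P$ and calls a holomorphic motion $h_\lambda$ of $P$ \emph{$\theta$-regular} if each $h_\lambda(a)$ lies in a narrow sector $\pm S_{4\theta/\ell}$ and, for $a,b\in P$ on the same side of $0$ with $|a|>|b|$, the ratio $h_\lambda(b)/h_\lambda(a)$ lies in the lens $D_\theta=\{z:\angle 0z1>\pi-\theta\}$. The Main Lemma asserts that for $\ell\ge\ell_0(L)$, if a $\theta$-regular motion can be lifted $q-1$ times with all lifts $\theta$-regular, then the $q$-th lift is $\theta/2$-regular. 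The engine is the Schwarz lemma for $z\mapsto z^t$, $0<t<1$: it preserves $D_\theta$ and, when the relevant angle at $1$ is small, contracts it dramatically (Lemmas~\ref{lem:Schwarz}--\ref{lem:pbtriangle}). Tracking the angle parameter $\hat\theta^i$ backwards along the orbit, a right-branch step costs only $O(\theta/\ell)$, while each left-branch step can inflate the parameter by a factor at most~$4$; since there are at most $L+1$ such steps, the total inflation is bounded by $4^{L+1}$, which is absorbed by choosing $\ell_0$ large. This yields the lifting property for $(f_c,f_w,\textbf{p})_W$, and Theorem~\ref{thm:1eigen} then gives the spectral bound on $\mathcal A$ and hence the positive transversality~\eqref{fiortrans}; uniqueness of $c$ follows either from this or directly from the shrinking sectors $\pm S_{\theta_n}$ forcing any conjugating motion to be constant.
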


The proof of this theorem uses delicate geometric arguments,  see Section~\ref{sec:finiteorder}.
In Section~\ref{sec:finiteoddorder} an analogue of this theorem is proved for the case that $\ell$ is an arbitrary odd integer,
but under a stronger assumption on the combinatorics of the critical orbit.

\subsection{Results for other families of interval maps}
In Subsection~\ref{subsec:multiplicative} we introduce a rather large class $\mathcal{E}$  of interval maps
with only critical values at $1$ and possibly at $0$ and with a minimal  $c>0$ so that $f$ has a positive local maximum at $c$.

\begin{theorem} Assume that $f\in \mathcal{E}$ and that $c$ is a periodic point for a map
of the form $f_\lambda(x)=\lambda f(x)$.
Then the transversality property (\ref{eq:trans}) holds.
\end{theorem}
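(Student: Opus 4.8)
The plan is to reduce this multiplicative statement to the general machinery already developed in Parts A and B, exactly as was done for the additive family $f_c(x)=f(x)+c$. First I would set up the appropriate holomorphic motion: since $c$ is a periodic point of period $q$ for $f_\lambda(x)=\lambda f(x)$ at some parameter $\lambda=\lambda_*$, the finite forward-invariant marked set is the orbit $\{c, f_{\lambda_*}(c), \dots, f_{\lambda_*}^{q-1}(c)\}$, together with $0$ if that lies in the orbit (by the definition of $\mathcal{E}$, the critical values sit at $1$ and possibly $0$, so the combinatorics is controlled). Writing $f_\lambda = \lambda \cdot f$, differentiation in $\lambda$ gives $\partial_\lambda f_\lambda(x) = f(x)$, which plays the role that the constant $1$ plays in the additive case; this is the ingredient that feeds into the transfer operator $\mathcal{A}$ of the general scheme.

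Next I would verify the \emph{separation property} from Section~\ref{sec:separation} for this family. This is where the specific structure of the class $\mathcal{E}$ enters: the hypotheses that $f$ has a positive local maximum at the minimal point $c>0$, that all critical values lie in $\{0,1\}$, and the implicit monotonicity/branch structure should be precisely what is needed so that the relevant pieces of the dynamics are separated in the sense required. Once the separation property is checked, Section~\ref{sec:separation} gives the lifting property for free, and then the chain of implications quoted in the introduction, namely lifting property $\implies \operatorname{spec}(\mathcal{A})\subset\mathbb{D} \implies$ transversality, yields the positive transversality conclusion, i.e. the inequality (\ref{eq:trans}) with the sum $\sum_{n=0}^{q-1} 1/Df_{\lambda_*}^n(c)>0$ after the appropriate identification of $\frac{d}{d\lambda}f_\lambda^q(0)$ (here one must be slightly careful: the marked point is $c$, a critical value, not $0$ itself, but since $c$ is the image of the critical point the formula transfers in the standard way, as in the remark following Theorem~\ref{thm:flat}).

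The main obstacle I expect is the verification of the separation property for the full class $\mathcal{E}$ rather than for a single explicit map: one needs a uniform combinatorial/geometric argument that handles every admissible periodic kneading sequence simultaneously, controlling how the orbit pieces under the nonlinear branches of $f$ stay apart, and dealing correctly with the degenerate situation where $0$ belongs to the critical orbit (which is why the definition of $\mathcal{E}$ allows a critical value at $0$). A secondary technical point is translating between the multiplicative perturbation $\lambda\mapsto\lambda f$ and the abstract one-parameter setup of Part B; since the paper explicitly advertises families $f_\lambda(x)=\lambda f(x)$ alongside $f_\lambda(x)=f(x)+\lambda$, I would expect Section~\ref{sec:separation} to be phrased generally enough that this is a routine substitution, the derivative factor $f(x)$ replacing the constant $1$ throughout the estimates on $\mathcal{A}$.
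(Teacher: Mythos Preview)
Your strategy is right in outline and matches the paper's: set up the marked map $g=\lambda_* f$ with $P_0=\{c\}$, define the deformation $G_w(z)=wf(z)$, $p(w)\equiv c$, establish the lifting property, and invoke Theorem~\ref{thm:1eigen}. But you have misidentified where the work is.

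The separation property requires no ``uniform combinatorial/geometric argument'' at all. One simply takes $S_x=\C_*=\C\setminus\{0\}$ for every $x\in g(P)$: since by the definition of $\mathcal{E}$ the only singular values of $f$ are $0$ and $1$, the map $G_w=wf:D\setminus\{0\}\to\C_*$ is a branched covering with $w$ as its only critical value in $\C_*$, and the inclusions $U\subset S_x$ and $S_{g(x)}\subset V$ are immediate. There is no dependence on kneading data whatsoever. Also, your worry about $0$ lying in the critical orbit is a red herring: since $f(0)=0$ (condition (c) of the class $\mathcal{E}$), $0$ is a fixed point of $g$, so a nontrivial periodic orbit of $c>0$ cannot pass through it.

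The genuine issue, which you do not mention, is that $\C\setminus S_x=\{0\}$ omits only \emph{one} point, so the second clause of Theorem~\ref{thm:holo} does not apply and you get only the weak lifting property, which is not enough for Lemma~\ref{lem:lift2spectrum} or Theorem~\ref{thm:1eigen}. The paper closes this gap by a direct observation you are missing: since $p(w)\equiv c$ is constant, every lift satisfies $\widehat h_\lambda(c)\equiv c$. Hence for each $x\in g(P)\setminus\{c\}$ and every $k$, the function $\lambda\mapsto h_\lambda^{(k)}(x)$ omits both $0$ and $c$, so Montel's theorem gives the required uniform bound and hence the full lifting property. This is the key step; once you see it, the proof is short.

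A minor correction: $c$ is the critical \emph{point} of $f$ (with $f(c)=1$), not a critical value; the critical value of $g=\lambda_* f$ is $c_{1,1}=\lambda_*$.
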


The analogous result also holds for a related class $\mathcal{E}_o$.
Examples of maps in $\mathcal{E}$ and  $\mathcal{E}_o$  are given in Subsection~\ref{subsec:multiplicative}.

In Section~\ref{sec:sinearnold} we prove transversality for the  Arnol'd family $x\mapsto x + a +b \sin(2\pi x)$.

In this paper  we shall also consider the case when there are several critical points,
all of which eventually periodic. In this case
 the transversality condition (\ref{eq:trans})
has to be replaced by a more general transversality condition (\ref{eq:trans2}).

Our approach also applies to the setting of polynomials and rational maps, see Section \ref{sec:rationalmaps},
families of  piecewise linear maps
and to families of intervals maps with discontinuities (i.e. Lorenz maps), see Sections~\ref{subset:linear} and \ref{sec:familiesdiscont}.

\medskip
Since the polynomial and rational case is so important, in a separate paper \cite{LSvS} we have given a very elementary proof
of this and another theorem, but without the sign in (\ref{fiortrans}) and the corresponding case
when there are several critical point, see (\ref{eq:trans2}).  In that
paper we  also allow the postcritical set to be {\em infinite}. For an alternative discussion on transversality in the context of rational maps
with a finite postcritical set, see
\cite{Ep} and also \cite{BE}.

\subsection{Positively oriented transversality}
\label{subsec:intro-on-appendix}

%

An important feature of our prove is the sign in the above inequality. In the case when there are several critical points,
we obtain a corresponding inequality for some matrix.  That the sign of the determinant is positive,  means that the intersection of the algebraic sets
$R_j(g)=0$, $j=1,\dots,\nu$ corresponding to each of the critical relations is not only transversal, but that the intersection pattern is {\em everywhere  {\lq}positively oriented{\rq}}.

It would be interesting to know whether the sign in (\ref{eq:trans2}) makes it possible to simplify the proof in \cite{BS} of Milnor's conjecture.
This conjecture is about  the space of real polynomials with only real critical points, all of which non-degenerate,
and asks whether the  level sets of constant topological entropy are connected. The proof of this conjecture in \cite{BS}
relies on quasi-symmetric rigidity, but does having a positive sign in (\ref{eq:trans2}) everywhere allow for a simplification
of the proof of this conjecture?

\medskip

{\bf Acknowledgment.} We are indebted to Alex Eremenko for very helpful discussions concerning Subsection~\ref{subsec:multiplicative}.
This project was partly supported by ERC AdG grant no: 339523 RGDD.

\part*{Part A: A new method}
\section{Transversality and lifting holomorphic families}\label{sec:lifting}

In this section we study a transfer operator $\mathcal A$ associated to the analytic deformation
of a {\lq}marked map{\rq}, and show that if  $1$ is not an eigenvalue of $\mathcal A$
then a certain transversality condition holds (related to, in applications later on in this paper, to critical relations), see Section~\ref{subsec:transA}.
If the spectrum of $\mathcal A$ is inside the unit circle,  we will obtain additional information
about transversality, see Section~\ref{subsec:trans-sign}.   It turns out that if a certain lifting property of
holomorphic families holds, then the spectrum of $\mathcal A$ is inside the unit circle, see Section~\ref{subsec:lifting}.
If $1$ is an eigenvalue of
$\mathcal A$ then the set where one critical relation holds forms an analytic variety, see Theorem~\ref{thm:1eigen}
in Subsection~\ref{subsec:Aeig1}.

\subsection{Transversality of a marked map with respect to a holomorphic deformation}

\def\MM{\mathcal M}
A {\em marked map} is a map $g$ from the union of a finite set $P_0$ and an open set $U$ in $\C$ into $\C$ such that
\begin{itemize}
\item there exists a finite set $P\supset P_0$ such that $g(P)\subset P$ and $P\setminus P_0\subset U$;
\item $g|U$ is holomorphic and $g'(x)\not=0$ for $x\in P\setminus P_0$.
\end{itemize}
Let $c_{0,j}, j=1,2,\ldots, \nu$ denote the distinct points in $P_0$ and write
${\bf c}_0={\bf c}_0(g)=(c_{0,1},\dots ,c_{0,\nu})$
and ${\bf c}_1={\bf c}_1(g)=(g(c_{0,1}),\dots, g(c_{0,\nu})):=(c_{1,1},c_{1,2},\ldots, c_{1,\nu})$.

A {\em local holomorphic deformation} of $g$
is a triple $(g,G, \textbf{p})_W$ with the following properties:
\begin{enumerate}
\item $W$ is an open connected subset of $\C^\nu$ containing  $\textbf{c}_1(g)$;
\item $\textbf{p}=(p_1,p_2,\ldots, p_\nu):W\to \C^\nu$ is a holomorphic map,
so that $\textbf{p}(\textbf{c}_1)={\bf c}_0(g)$ (and so all coordinates of $\textbf{p}(\textbf{c}_1)$ are distinct).
\item $G: (w,z)\in W\times U \mapsto (w,G_w(z))\in W\times \C$ is a holomorphic map such that $G_{\textbf{c}_1}=g$.
\end{enumerate}
Let us fix $(g,G, \textbf{p})_W$ as above.
Since $g(P)\subset P$ and $P$ is a finite set,
for each $j=1,2,\ldots, \nu$, one of the following holds:
\begin{itemize}
\item There exists a positive integer $q_j$ and $\mu(j)\in \{1,2,\ldots,\nu\}$ such that
$g^{q_j}(c_{0,j})=c_{0,\mu(j)}$ and $g^k(c_{0,j})\not\in P_0$ for each $1\le k<q_j$;
\item There exist
positive integers $l_j<q_j$ such that
$g^{q_j}(c_{0,j})=g^{l_j}(c_{0,j})$ and $g^k(c_{0,j})\not\in P_0$ for all $1\le k\le q_j$.
We assume in the following that $l_j$ and $q_j$ are minimal with this property.
\end{itemize}
Relabelling these points $c_{0,j}$, we assume that there is $r$ such that
the first alternative happens for all $1\le j\le r$ and the second alternative happens for $r<j\le \nu$.

Define a map
$$\mathcal{R}=(R_1, R_2, \dots, R_\nu)$$
from a neighbourhood of $\textbf{c}_1\in \C^\nu$ into $\C^\nu$ as follows:
for $1\le j\le r$, $$R_j(\textbf{w})=G_{\textbf{w}}^{q_j-1}(w_j)-p_{\mu(j)}(\textbf{w})$$ and for $r<j\le \nu$, $$R_j(\textbf{w})=G_{\textbf{w}}^{q_j-1}(w_j)-G_{\textbf{w}}^{l_j-1}(w_j),$$
where $\textbf{w}=(w_j)_{j=1}^\nu$.

\begin{definition}\label{def:unfoldtrans}
We say that the holomorphic deformation $(g,G, \textbf{p})_W$ of $g$ {\em unfolds tranversally}, if the Jacobian matrix
$D\mathcal{R}(\textbf{c}_1)$ is invertible.
\end{definition}

A marked map $g$ is called {\em real} if $P\subset \R$ and for any $z\in U$ we have $\overline{z}\in U$ and $\overline{g(z)}=g(\overline{z})$.  Similarly, a local holomorphic deformation $(g, G, p)_W$ of a real marked map $g$ is called {\em real} if
for any $w=(w_1, w_2, \ldots, w_\nu)\in W$, $z\in U$ and $j=1,2,\ldots,\nu$, we have
$\overline{w}= (\overline{w_1}, \overline{w_2},\ldots, \overline{w_\nu})\in W$,
$$G_{\overline{w}}(\overline{z})=\overline{G_w(z)}, \text{ and } p_j(\overline{w})=\overline{p_j(w)}.$$

\begin{definition}
Let $(g, G, \textbf{p})_W$ be a real local holomorphic deformation of a real marked map $g$.
If
\begin{equation}\frac{\det (D\mathcal{R}(\textbf{c}_1))}{\prod_{j=1}^\nu Dg^{q_j-1}(c_{1,j})}>0.\label{eq:trans2}
\end{equation}
holds we say that  the unfolding $(g,G, \textbf{p})_W$ satisfies {\em the `positively oriented' transversality property}.
\end{definition}
Note that inequality (\ref{eq:trans2}) implies that the family unfolds transversally as in Definition~\ref{def:unfoldtrans}.
Inequality (\ref{eq:trans2}) is the generalisation of (\ref{eq:trans}) in the setting of several critical points,
all of which eventually periodic and if the critical points are allowed to move with the parameters.
In the case that the map has only one critical point which is periodic and which does not depend on the parameter,
(\ref{eq:trans2}) reduces to (\ref{eq:trans}).

\subsection{A transfer operator associated to a deformation of a marked map}\label{subsec:21}
Let  $\Lambda$ be a domain in $\C$ and $\ast\in \Lambda$.

A {\em holomorphic motion} of $g(P)$ {\em over} $(\Lambda,\ast)$ is a family of injections $h_{\lambda}: g(P)\to \C$, $\lambda\in\Lambda$,  
such that $h_\ast=id_{g(P)}$ and $\lambda\mapsto h_{\lambda}(x)$ is holomorphic for each $x\in g(P)$. Given an open neighbourhood $\Lambda_0$ of $\ast$ in $\Lambda$ and a holomorphic motion $\widehat{h}_\lambda(x)$ of $g(P)$ over $(\Lambda_0, \ast)$, 
we say that $\hat{h}_\lambda$ is a {\em lift}  of $h_{\lambda}$ {\em over} $\Lambda_0$ with respect to  $(g,G, \textbf{p})_W$ if the following holds when $d(\lambda,\ast)$ is small enough:
\begin{itemize}
\item For each $j=1,2, \dots, \nu$, with $c_{0,j}\in g(P)$, \
\begin{equation}\hat{h}_\lambda(c_{0,j})=p_j({\textbf{c}_1}(\lambda)),\label{eq:lift1}\end{equation}
    where $\textbf{c}_1(\lambda)=(h_{\lambda}(c_{1,1}), h_{\lambda}(c_{1,2}),\dots, h_{\lambda}(c_{1,\nu}))$;
\item for each $x\in g(P)\setminus P_0$, we have
\begin{equation}G_{\textbf{c}_1(\lambda)}(\hat{h}_{\lambda}(x))=h_{\lambda}(g(x)).\label{eq:lift2} \end{equation}
\end{itemize}
Here we use that $\textbf{c}_1(\lambda)\in W$ when $d(\lambda,\ast)$ is sufficiently small.
Clearly, locally any holomorphic motion $h_\lambda(x)$ of $P$ over $(\Lambda, \ast)$ has a lift under $(g, G, p)_W$, i.e. there is a holomorphic motion $\widehat{h}_\lambda$ over $(\Lambda_0, \ast)$, where $\Lambda_0$ is an open neighbourhood of $\ast$ in $\Lambda$ such that $\widehat{h}_\lambda$ is the lift of $h_\lambda$ over $\Lambda_0$.

Obviously there is a linear map $\mathcal{A}: \C^{\#g(P)}\to \C^{\#g(P)}$ such that whenever $\hat{h}_\lambda$ is a lift of $h_\lambda$, we have
$$\mathcal{A}\left(\left\{\frac{d}{d\lambda}h_\lambda(x)\left|\right._{\lambda=\ast}\right\}_{x\in g(P)}\right)
=\left\{\frac{d}{d\lambda}\hat{h}_\lambda(x)\left|\right._{\lambda=\ast}\right\}_{x\in g(P)}.$$
We will call $\mathcal{A}$ the {\em transfer operator} associated to the holomorphic deformation $(g,G, \textbf{p})_W$ of $g$.

If both $g$ and $(g, G, \textbf{p})_W$ are real, then $\mathcal{A}(\R^\nu)\subset \R^\nu$. In this case, we shall often consider real holomorphic motions, i.e. $\Lambda$ is symmetric with respect to $\R$, $\ast\in \R$ and $h_\lambda(x)\in \R$ for each $x\in g(P)$ and $\lambda\in \Lambda\cap\R$. Clearly, a lift of a real holomorphic motion is again real.

\subsection{Relating the transfer operator with transversality}\label{subsec:transA}

It turns out that transversality is closely related to the eigenvalues of $\mathcal{A}$:

\begin{lemma}\label{lem:21}  Assume the following holds: for any $r<j<j'\le\nu$ with $g^{q_j}(c_{0,j})= g^{q_{j'}}(c_{0,j'})$ we
 have $Dg^{q_j-l_j}(c_{l_j,j})\not=1.$
Then the following statements are equivalent:
\begin{enumerate}
\item $1$ is an eigenvalue of $\mathcal{A}$;
\item $D\mathcal{R}(\textbf{c}_1)$ is degenerate.
\end{enumerate}
\end{lemma}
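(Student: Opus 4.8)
The plan is to make the relationship between the transfer operator $\mathcal{A}$ and the Jacobian $D\mathcal{R}(\textbf{c}_1)$ completely explicit by setting up a convenient coordinate system on $\C^{\#g(P)}$ and computing both operators in those coordinates.

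\medskip

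First I would organize the finite set $g(P)$ according to the orbit structure. Each $c_{0,j}\in P_0\cap g(P)$ (for the first-alternative indices $1\le j\le r$) generates a piece of orbit $c_{0,j}, c_{1,j}=g(c_{0,j}),\dots, c_{q_j-1,j}$, and then $g(c_{q_j-1,j})=c_{0,\mu(j)}$ lands back in $P_0$; for the second-alternative indices $r<j\le\nu$ the orbit segment $c_{0,j},\dots,c_{q_j-1,j}$ maps onto $c_{l_j,j}$. In this way every element of $g(P)$ is labelled as $c_{k,j}$ for appropriate ranges of $k$ (with the understanding that points in $P_0$ may be shared). Writing a tangent vector to the holomorphic motion at $\ast$ as $v=(v_{k,j})$, the lifting equations \eqref{eq:lift1} and \eqref{eq:lift2}, differentiated at $\lambda=\ast$, become a system of linear recursions: equation \eqref{eq:lift2} gives $Dg(c_{k,j})\cdot \hat v_{k,j}=\hat v_{k+1,j}-(\text{contribution from }DG_w\text{ in the }w\text{ directions, evaluated on }v)$, and equation \eqref{eq:lift1} pins down $\hat v_{0,j}=Dp_j(\textbf{c}_1)\cdot \textbf{c}_1'$. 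So $\mathcal{A}v$ is obtained by solving these recursions forward along each orbit segment. The key observation is that the map $R_j$ is, up to the $(k=1)$-shift, exactly the composition $G_{\textbf{w}}^{q_j-1}(w_j)-(\text{target})$, so that $DR_j(\textbf{c}_1)$ applied to a vector $u\in\C^\nu$ is computed by the \emph{same} forward recursion along the $j$-th orbit segment, run with the initial value $u_j$ in place of $\hat v_{1,j}$ and with the $w$-derivatives of $G$ and of $p_{\mu(j)}$ contributing the off-diagonal terms.

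\medskip

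With this dictionary, I would show that the identification $v\mapsto (v_{1,1},v_{1,2},\dots,v_{1,\nu})$ (the values of the motion at the ``critical values'' $c_{1,j}$, i.e. the vector $\textbf{c}_1'(\ast)$) conjugates the fixed-point equation $\mathcal{A}v=v$ to the equation $D\mathcal{R}(\textbf{c}_1)\,u=0$. Concretely: if $\mathcal{A}v=v$, then running the recursion forward from $v_{1,j}$ returns the same vector of values, which after subtracting off the prescribed endpoint (either $p_{\mu(j)}$ for $j\le r$ or the $l_j$-th point on the appropriate segment for $j>r$) says precisely that $D\mathcal{R}(\textbf{c}_1)$ kills $u=(v_{1,j})_j$; conversely a nonzero kernel vector of $D\mathcal{R}(\textbf{c}_1)$ can be propagated to a nonzero fixed vector of $\mathcal{A}$. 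The direction (2)$\Rightarrow$(1) is the place where the hypothesis on $Dg^{q_j-l_j}(c_{l_j,j})\neq 1$ enters: for second-alternative indices the recursion relating the motion at the endpoint to the motion at $c_{l_j,j}$ is a linear equation with coefficient $Dg^{q_j-l_j}(c_{l_j,j})-1$ on one side, and when two distinct such orbit segments share a common forward image $g^{q_j}(c_{0,j})=g^{q_{j'}}(c_{0,j'})$ one must check consistency; invertibility of $Dg^{q_j-l_j}(c_{l_j,j})-1$ guarantees the motion value at that common point is uniquely determined, so a kernel vector of $D\mathcal{R}$ lifts unambiguously to a well-defined holomorphic-motion tangent vector fixed by $\mathcal{A}$.

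\medskip

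I expect the main obstacle to be exactly the bookkeeping when points of $g(P)$ are shared between different orbit segments --- that is, when several $c_{0,j}$ have overlapping forward orbits or a common eventual image. One has to verify that the forward recursions defining $\mathcal{A}$ are genuinely consistent on the shared coordinates (this is automatic from $\mathcal{A}$ being well-defined as an operator, but one must see how it interacts with the block structure coming from the $R_j$'s), and that the count of equations versus unknowns matches so that ``$\mathcal{A}$ has a nonzero fixed vector'' is equivalent to ``the $\nu\times\nu$ matrix $D\mathcal{R}(\textbf{c}_1)$ is singular'' rather than to degeneracy of some larger or smaller matrix. The non-resonance hypothesis $Dg^{q_j-l_j}(c_{l_j,j})\neq 1$ is precisely what is needed to make this correspondence a clean bijection on kernels in the presence of such overlaps; once the correspondence is set up, both implications are immediate.
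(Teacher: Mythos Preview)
Your plan is essentially the same as the paper's proof. Both arguments set up the correspondence between a fixed vector $v$ of $\mathcal{A}$ and a kernel vector $u=(v(c_{1,j}))_j$ of $D\mathcal{R}(\textbf{c}_1)$: for $(1)\Rightarrow(2)$ one projects and checks $u\neq 0$ (you should note explicitly that if $u=0$ then the lifting recursion forces $v\equiv 0$); for $(2)\Rightarrow(1)$ one propagates $u$ forward along each orbit segment via the differentiated lifting equations and verifies consistency on shared points. The paper packages the linear algebra in the language of holomorphic curves $\textbf{w}(t)=\textbf{c}_1+tu$ and $O(t^2)$ errors, while you phrase it as recursions on derivatives at $\lambda=\ast$; these are the same computation.

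One small point of comparison: the paper locates the use of the hypothesis at the \emph{start} of the orbit, showing that $c_{1,j}=c_{1,j'}$ (with $r<j<j'\le\nu$) forces $u_j=u_{j'}$ because the equation $R_j-R_{j'}=0$ yields $(Dg^{q_j-1}(c_{1,j})-Dg^{l_j-1}(c_{1,j}))(u_j-u_{j'})=0$, and the nonvanishing factor is exactly $Dg^{l_j-1}(c_{1,j})\bigl(Dg^{q_j-l_j}(c_{l_j,j})-1\bigr)$. You locate it instead at the common \emph{eventual} periodic image. Both viewpoints work, but when you write it up, be precise about which coincidence of orbit points you are checking. Also, in your differentiated form of \eqref{eq:lift2} the right-hand side should involve the \emph{unhatted} $v(g(x))$, not $\hat v_{k+1,j}$; this distinction disappears once you impose $\mathcal{A}v=v$, but the slip could confuse a reader before that point.
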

\begin{proof} We first show that (1) implies (2), even without the assumption. So suppose that $1$ is an eigenvalue of $\mathcal{A}$ and let $\textbf{v}=(v(x))_{x\in g(P)}$ be an eigenvector associated with $1$.
For $t\in \D$, define $h_t(x)=x+t v(x)$ for each $x\in g(P)$ and $\textbf{w}(t)=(c_{1,j}+ t v(c_{1,j}))_{j=1}^\nu$.
Then for each $x\in g(P)\setminus P_0$,
\begin{equation}\label{eqn:21.1}
G_{\textbf{w}(t)}(h_t(x))-h_t(g(x))=O(t^2)
\end{equation}, and for each $x=c_{0,j}\in g(P)\cap P_0$, we have
\begin{equation}\label{eqn:21.2}
h_t(x)-p_j(\textbf{w}(t))=O(t^2).
\end{equation}
For each $1\le j\le \nu$, and each $1\le k<q_j$,
applying (\ref{eqn:21.1}) repeatedly, we obtain
\begin{equation}\label{eqn:21.3}
G_{\textbf{w}(t)}^k(h_t(c_{1,j}))=h_t(g^k(c_{1,j}))+O(t^2).
\end{equation}
Together with (\ref{eqn:21.2}), this implies that $$R_j(\textbf{w}(t))=O(t^2),$$
holds for all $1\le j\le \nu$. It remains to show $\textbf{w}'(0)\not=\textbf{0}$. Indeed, otherwise, by (\ref{eqn:21.3}), it would follow that $v(g^k(c_{1,j}))=(g^k)'(c_{1,j}) v(c_{1,j})=0$ for each $1\le j\le \nu$ and $1\le k<q_j$, and hence $v(x)=0$ for all $x\in g(P)$, which is absurd. We completed the proof that (1) implies (2).

Now let us prove that (2) implies (1) under the assumption of the lemma. Suppose that $D\textbf{R}(\textbf{c}_1)$ is degenerate. Then there exists a non-zero vector $(w_1^0, w_2^0,\cdots, w_\nu^0)$ in $\C^\nu$ such that $R_j(\textbf{w}(t))=O(t^2)$ as $t\to 0$,  where $\textbf{w}(t)=(w_j(t))_{j=1}^\nu=(c_{1,j}+ tw_j^0)_{j=1}^\nu$. We claim that $w_j^0=w_{j'}^0$ holds whenever $c_{1,j}=c_{1,j'}$, $1\le j,j'\le \nu$.
Indeed,

{\em Case 1.} If $1\le j\le r$ then $1\le j'\le r$ and $\mu(j)=\mu(j')$, $q_j=q_{j'}$. Then
$$G_{\textbf{w}(t)}^{q_j-1}(w_j(t))-G_{\textbf{w}(t)}^{q_j-1}(w_{j'}(t))=R_j(\textbf{w}(t))-R_{j'}(\textbf{w}(t))=O(t^2)$$
which implies that $w_j(t)-w_{j'}(t)=O(t^2)$, i.e. $w_j^0=w_{j'}^0$.

{\em Case 2.} If $r<j\le \nu$ then $r<j'\le \nu$ and $l_j=l_{j'}$, $q_j=q_{j'}$. Thus
$$G_{\textbf{w}(t)}^{q_j-1}(w_j(t))-G_{\textbf{w}(t)}^{q_j-1}(w_{j'}(t))=
G_{\textbf{w}(t)}^{l_j-1}(w_j(t))-G_{\textbf{w}(t)}^{l_j-1}(w_{j'}(t))+O(t^2),$$
which implies that
$$(Dg^{q_j-1}(c_{l_j,j})-Dg^{l_j-1}(c_{l_j,j}))(w_j(t)-w_{j'}(t))=O(t^2).$$
If such $j$ and $j'$  exist then $c_{l_j,j}$ is a hyperbolic periodic point,
hence $Dg^{q_j-1}(c_{l_j,j})\not=Dg^{l_j-1}(c_{l_j,j})$.
It follows that $w_{j}^0=w_{j'}^0$.

The claim is proved.
To obtain an eigenvector for $\mathcal{A}$ with eigenvalue $1$, define $v(c_{1,j})=w_j^0$,
$v(c_{0,j})=\frac{d}{dt} p_j(\textbf{w}(t))|_{t=0}$. For points $x\in g(P)\setminus P_0$, there is $j$ and $1\le s<q_j$ such that $x=g^s(c_{0,j})$, define $v(x)=\frac{d}{dt} G_{\textbf{w}(t)}^{s-1}(w_j(t))|_{t=0}$. Note that $v(x)$ does not depend on the choice of $j$ and $s$. (This can be proved similarly as the claim.)
\end{proof}

\subsection{The spectrum of $\mathcal A$ gives additional information concerning transversality}\label{subsec:trans-sign}
Define $D(\rho)=(D_{j,k}(\rho))_{1\le j,k\le \nu}$ as follows: Put
$$L_{k}(z)=\frac{\partial G_{\textbf{w}}(z)}{\partial w_k}\left.\right|_{\textbf{w}=\textbf{c}_1};\quad
p_{j,k}=\frac{\partial p_j}{\partial w_k}(\textbf{c}_1);$$
$$\mathcal{L}^0_{j,k}=0 \text{ and } \mathcal{L}^m_{j,k}=\sum_{n=1}^m\frac{\rho^n L_k(c_{n,j})}{Dg^n(c_{1,j})} \text{ for } m>0;$$
$$D_{jk}(\rho)=\delta_{jk} +\mathcal{L}^{q_j-1}_{j,k}-\rho^{q_j} \frac{p_{\mu(j),k}}{Dg^{q_j-1}(c_{1,j})}$$
when $1\le j\le r$ and
$$D_{jk}(\rho)=\delta_{jk}+\mathcal{L}^{q_j-1}_{j,k}
-\frac{\rho^{q_j-l_j}}{Dg^{q_j-l_j}(c_{l_j,j})}\left(\mathcal{L}^{l_j-1}_{jk}+\delta_{j,k}\right)$$
when $r<j\le \nu$. Note that
$$\det(D\mathcal{R}(\textbf{c}_1))=\prod_{j=1}^\nu Dg^{q_j-1}(c_{1,j})\det(D(1)).$$


We say that $\rho\in \C$ is an {\em exceptional value} if there exist $r<j<j'\le \nu$ such that
$\rho^{q_j-l_j}=Dg^{q_j-l_j}(c_{l_j,j})$ and $g^{q_j}(c_{0,j})=g^{q_{j'}}(c_{0,j'})$.
Note that for such $\rho$, $j$ and $j'$, $D_{jk}(\rho)=D_{j'k}(\rho)$ for all $k$ so that $\det(D(\rho))=0$. (But it may happen that $\det(I-\rho \mathcal{A})\not=0$.)
\begin{prop}\label{prop:non-exceptional}
For each non-exceptional $\rho\in \C$, we have
\begin{equation}
\det(I-\rho\mathcal{A})=0\Leftrightarrow \det (D(\rho))=0.\label{eq:non-exceptional}\end{equation}

\end{prop}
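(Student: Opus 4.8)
The plan is to show that the characteristic polynomial $\det(I - \rho\mathcal A)$ and $\det(D(\rho))$ have the same zero set among non-exceptional $\rho$ by exhibiting, for each such $\rho$, an explicit correspondence between kernel vectors of $I - \rho\mathcal A$ acting on $\C^{\#g(P)}$ and kernel vectors of $D(\rho)$ acting on $\C^\nu$. Concretely, I would first write down the action of $\mathcal A$ in coordinates: given a candidate infinitesimal motion $\mathbf v = (v(x))_{x\in g(P)}$, the lifted motion $\widehat{\mathbf v} = \mathcal A \mathbf v$ is forced by differentiating the lifting equations (\ref{eq:lift1})--(\ref{eq:lift2}) at $\lambda = \ast$. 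Differentiating (\ref{eq:lift2}) at a point $x = g^{k-1}(c_{0,j}) \in g(P)\setminus P_0$ gives $Dg(c_{k,j})\,\widehat v(c_{k,j}) + L(\mathbf v)(c_{k,j}) = v(c_{k+1,j})$ where $L(\mathbf v)(z) = \sum_k \tfrac{\partial G_{\mathbf w}(z)}{\partial w_k}\big|_{\mathbf c_1} v(c_{1,k})$ collects the parameter-derivative terms; and differentiating (\ref{eq:lift1}) gives $\widehat v(c_{0,j}) = \sum_k p_{j,k} v(c_{1,k})$. So the eigenvalue equation $\mathcal A \mathbf v = \rho^{-1}\mathbf v$ (equivalently $\det(I-\rho\mathcal A)=0$ has a nonzero solution for the dual scaling — I would be careful to match conventions, but the point is that an eigenvector with eigenvalue $1/\rho$ of $\mathcal A$, rescaled, solves a telescoping recursion) lets one solve for all $v(c_{k,j})$, $1\le k < q_j$, in terms of the "boundary data" $v(c_{1,j})$, $j=1,\dots,\nu$, by iterating the recursion — this is exactly where the factors $\rho^n/Dg^n(c_{1,j})$ and the partial sums $\mathcal L^m_{j,k}$ come from.

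The key step is then the closing condition. Having expressed $v(c_{q_j-1+1,j}) = v(g^{q_j}(c_{0,j}))$ via the telescoped recursion in terms of the vector $(v(c_{1,k}))_k$, I impose the constraint that comes from the periodicity/preperiodicity structure: for $1\le j\le r$ one needs $v(g^{q_j}(c_{0,j})) = v(c_{0,\mu(j)}) = \sum_k p_{\mu(j),k} v(c_{1,k})$, and for $r<j\le\nu$ one needs $v(g^{q_j}(c_{0,j})) = v(g^{l_j}(c_{0,j}))$, where the right side is again the telescoped expression through the first $l_j$ steps. Writing these $\nu$ scalar equations out, dividing through by $Dg^{q_j-1}(c_{1,j})$, and collecting the coefficient of each $v(c_{1,k})$, one obtains precisely $\sum_k D_{jk}(\rho) v(c_{1,k}) = 0$; the $\delta_{jk}$ term is the "$v(c_{1,j})$ from the start of the orbit", the $\mathcal L^{q_j-1}_{j,k}$ term is the accumulated parameter contributions, and the last term is the contribution of the closing point. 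Thus a nonzero kernel vector of $I-\rho\mathcal A$ produces a nonzero kernel vector of $D(\rho)$, and conversely, given a kernel vector $(w^0_k)$ of $D(\rho)$, one defines $v(c_{1,k}) = w^0_k$, propagates via the recursion to define $v$ on all of $g(P)$, and checks it is a genuine eigenvector of $\mathcal A$ — this is the content, for $\rho=1$, of the second half of the proof of Lemma \ref{lem:21}, which I would follow closely.

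The main obstacle is well-definedness of the backward construction: when $x \in g(P)\setminus P_0$ lies on more than one critical orbit (i.e. $x = g^{s}(c_{0,j}) = g^{s'}(c_{0,j'})$), the formula for $v(x)$ in terms of $w^0_j$ versus $w^0_{j'}$ must agree, and when $c_{1,j}=c_{1,j'}$ the values $w^0_j$ and $w^0_{j'}$ themselves must agree. This is exactly the place where the non-exceptional hypothesis is used: following Case 1 and Case 2 of Lemma \ref{lem:21}, the difference $v$-value along two coincident orbits satisfies a linear equation whose coefficient is $Dg^{q_j-1}(c_{l_j,j}) - Dg^{l_j-1}(c_{l_j,j})$ (or, in the $\rho$-weighted version, a factor $1 - \rho^{-(q_j-l_j)}Dg^{q_j-l_j}(c_{l_j,j})$ up to normalization), which is nonzero precisely because $\rho$ is not exceptional; hence the difference vanishes and the construction is consistent. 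I would also note, as the proposition's remark already flags, that at an exceptional $\rho$ two rows of $D(\rho)$ coincide so $\det(D(\rho))=0$ automatically while $\det(I-\rho\mathcal A)$ need not vanish — so the non-exceptional hypothesis is genuinely needed for the "$\Leftarrow$" direction. The "$\Rightarrow$" direction, by contrast, should go through with no hypothesis, just as in Lemma \ref{lem:21}: from an eigenvector of $\mathcal A$ one reads off the boundary data and the closing relations directly give a kernel vector of $D(\rho)$, with the only thing to rule out being that the boundary data $(v(c_{1,j}))_j$ is identically zero, which would force $\mathbf v = 0$ by the recursion.
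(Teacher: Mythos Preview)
Your approach is correct, but the paper takes a different and rather slicker route. Instead of carrying the parameter $\rho$ through a direct generalization of the computation in Lemma~\ref{lem:21}, the paper \emph{reduces} to that lemma by modifying the deformation itself: for $\rho\neq 0$ it introduces a new triple $(g^\rho, G^\rho, \textbf{p}^\rho)$ obtained by linearising $G$ near each $x\in P\setminus P_0$ with slope $Dg(x)/\rho$ and rescaling $\textbf{p}$ by $\rho$, and checks that the associated transfer operator is exactly $\rho\mathcal A$. Then $\det(I-\rho\mathcal A)=\det(I-\mathcal A^\rho)$, and Lemma~\ref{lem:21} applied to the new triple (whose hypothesis is precisely that $\rho$ is non-exceptional) gives the equivalence with degeneracy of $D\mathcal R^\rho(\textbf{c}_1)$; a short direct calculation shows the rows of $D\mathcal R^\rho(\textbf{c}_1)$ are just the rows of $D(\rho)$ scaled by the nonzero factors $Dg^{q_j-1}(c_{1,j})/\rho^{q_j-1}$. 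What this buys is that all the delicate well-definedness bookkeeping you flag (consistency of $v(x)$ when $x$ lies on several critical orbits, the precise form of the coincidence condition $g^{q_j}(c_{0,j})=g^{q_{j'}}(c_{0,j'})$ rather than merely $c_{1,j}=c_{1,j'}$) has already been done once in Lemma~\ref{lem:21} and need not be repeated with $\rho$-weights. Your direct route, by contrast, avoids introducing an auxiliary deformation and makes the telescoping origin of the matrix $D(\rho)$ completely transparent, at the cost of redoing that bookkeeping; it also requires handling $\rho=0$ separately (where your eigenvector framing via $1/\rho$ breaks down, though the statement is trivially true there since $D(0)=I$).
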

\begin{proof} For $\rho=0$, $\det(I)=\det(D(0))=1$. Assume $\rho\not=0$.
Define a new triple $(g^\rho, G^\rho, \textbf{p}^\rho)$ as follows.
\begin{itemize}
\item For each $x\in P\setminus P_0$, $G^\rho_{\textbf{w}}(z)=G_{\textbf{w}}(x)+\frac{Dg(x)}{\rho}(z-x)$ in a neighbourhood of $x$;
\item $g^\rho(x)=g(x)$ for each $x\in P_0$ and $g^\rho(z)= G_{\textbf{c}_1}^\rho(z)$ in a neighbourhood of $P\setminus P_0$;
\item $\textbf{p}^\rho(w)= \textbf{c}_0+ \rho \frac{\partial \textbf{p}}{\partial \textbf{w}}(\textbf{c}_1) \cdot (\textbf{w}-\textbf{c}_1).$
\end{itemize}
Let $\mathcal{A}^\rho$ be the transfer operator associated with the triple $(g^\rho, G^\rho, \textbf{p}^\rho)$. Then it is straightforward to check that
$$\mathcal{A}^\rho=\rho \mathcal{A}.$$
We can define a map $\mathcal{R}^\rho=(R^\rho_1, R^\rho_2,\cdots, R_\nu^\rho)$ for each $\rho\not=0$ in the obvious way:
$$R^\rho_j(\textbf{w})=(G_{\textbf{w}}^\rho)^{q_j-1}(w_j)-p^\rho_{\mu(j)}(\textbf{w})$$
for $1\le j\le r$ and
$$R^\rho_j(\textbf{w})=(G_{\textbf{w}}^\rho)^{q_j-1}(w_j)-(G_{\textbf{w}}^\rho)^{l_j-1}(w_j)$$
for $r<j\le \nu$.
As long as $\rho$ is non-exceptional for the triple $(g,G, \textbf{p})$, the new triple $(g^\rho, G^\rho, \textbf{p}^\rho)$ satisfies the assumption of Lemma~\ref{lem:21}, thus
$$\det(I-\rho\mathcal{A}^\rho)=0\Leftrightarrow D\mathcal{R}^\rho(\textbf{c}_1) \text{ is degenerate}.$$
Direct computation shows that the $(j,k)$-th entry of
$D\mathcal{R}^\rho (\textbf{c}_1)$ is
$D_{j,k}(\rho)Dg^{q_j-1}(c_{1,j})/\rho^{q_j-1}.$ Indeed,
for each $1\le j\le r$,
\begin{align*}
D^\rho_{j,k}(\rho)&=\frac{\partial (G^\rho_{\textbf{w}})^{q_j-1}(c_{1,j})}{\partial w_k}\left.\right|_{\textbf{w}=\textbf{c}_1}+\frac{Dg^{q_j-1}(c_{1,j})}{\rho^{q_j-1}}\delta_{jk}- \rho \frac{\partial p_{\mu(j)}}{\partial w_k}\\
&=\frac{Dg^{q_j-1}(c_{1,j})}{\rho^{q_j-1}}\left(\delta_{jk}+\sum_{n=1}^{q_j-1} \frac{\rho^n L_k(c_{n,j})}{Dg^n(c_{1,j})}-\rho^{q_j}\frac{p_{\mu(j),k}}{Dg^{q_j-1}(c_{1,j})}\right),
\end{align*}
and for $r<j\le \nu$,
\begin{align*}
& D^\rho_{jk}(\rho)\\
=&
\frac{\partial ((G_{\textbf{w}}^\rho)^{q_j-1}(c_{1,j})-(G_{\textbf{w}}^\rho)^{l_j-1}(c_{1,j}))}{\partial w_k}\left.\right|_{\textbf{w}=\textbf{c}_1}+\delta_{jk}
\left(\frac{Dg^{q_j-1}(c_{1,j})}{\rho^{q_j-1}}-\frac{Dg^{l_j-1}(c_{1,j})}{\rho^{l_j-1}}\right)\\
=&\frac{Dg^{q_j-1}(c_{1,j})}{\rho^{q_j-1}}\mathcal{L}^{q_j-1}_{j,k}
-\frac{Dg^{l_j-1}(c_{1,j})}{\rho^{l_j-1}}\mathcal{L}^{l_j-1}_{j,k}+\delta_{jk}
\left(\frac{Dg^{q_j-1}(c_{1,j})}{\rho^{q_j-1}}-\frac{Dg^{l_j-1}(c_{1,j})}{\rho^{l_j-1}}\right)
\end{align*}
Therefore $\det(I-\rho \mathcal{A})=0$ if and only if $\det(D(\rho))=0$.
\end{proof}

To illustrate the power of the previous proposition we state:

\begin{coro}[The transversality condition]
\label{real} Let $(g, G, \textbf{p})_W$ be a real local holomorphic deformation of a real marked map $g$. Assume that one has $|Dg^{q_j-l_j}(c_{l_j,j})|>1$ for all $r<j\le \nu$.
Assume furthermore that all the eigenvalues of $\mathcal{A}$ lie in the set $\{|\rho|\le 1, \rho\not=1\}$.
Then the `positively oriented' transversality condition holds.

\end{coro}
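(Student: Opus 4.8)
The plan is to deduce Corollary~\ref{real} directly from Proposition~\ref{prop:non-exceptional} by evaluating the sign of $\det(D(1))$, and relating it to $\det(D\mathcal R(\mathbf c_1))$ via the identity $\det(D\mathcal R(\mathbf c_1))=\prod_{j=1}^\nu Dg^{q_j-1}(c_{1,j})\cdot\det(D(1))$ already recorded above. Since the deformation is real, $D(\rho)$ has real entries for real $\rho$, so $\rho\mapsto\det(D(\rho))$ is a real-analytic (in fact polynomial) function on $\R$, and inequality~(\ref{eq:trans2}) is exactly the assertion that $\det(D(1))>0$. So the whole game is to show $\det(D(1))>0$ under the hypothesis that all eigenvalues of $\mathcal A$ lie in $\{|\rho|\le 1,\ \rho\ne 1\}$.

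First I would check that $\rho=1$ is non-exceptional under the stated hypothesis: exceptionality at $\rho=1$ would require $Dg^{q_j-l_j}(c_{l_j,j})=1$ for some $r<j\le\nu$, which is excluded since $|Dg^{q_j-l_j}(c_{l_j,j})|>1$. Hence Proposition~\ref{prop:non-exceptional} applies at $\rho=1$, and since $1$ is not an eigenvalue of $\mathcal A$ we get $\det(D(1))\ne 0$; in particular $D\mathcal R(\mathbf c_1)$ is invertible, so the deformation unfolds transversally. Next I would track the sign of $\det(D(\rho))$ as $\rho$ runs along the real interval $[0,1]$. At $\rho=0$ we have $D(0)=I$, so $\det(D(0))=1>0$. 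The idea is that $\det(D(\rho))$ cannot vanish anywhere on $[0,1]$, so by continuity it stays positive, giving $\det(D(1))>0$. To see it does not vanish: if $\det(D(\rho_0))=0$ for some $\rho_0\in(0,1]$, then either $\rho_0$ is exceptional or, by Proposition~\ref{prop:non-exceptional}, $\det(I-\rho_0\mathcal A)=0$, i.e. $1/\rho_0$ is an eigenvalue of $\mathcal A$; but $|1/\rho_0|\ge 1$ with equality only at $\rho_0=1$, where $1/\rho_0=1$ is excluded, contradicting the eigenvalue hypothesis. The exceptional case on $(0,1]$ requires $\rho_0^{q_j-l_j}=Dg^{q_j-l_j}(c_{l_j,j})$ for some $r<j\le\nu$, which forces $|\rho_0|^{q_j-l_j}=|Dg^{q_j-l_j}(c_{l_j,j})|>1$, impossible for $\rho_0\in(0,1]$. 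Hence $\det(D(\rho))\ne 0$ throughout $[0,1]$ and the sign is preserved.

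Finally, combining $\det(D(1))>0$ with the displayed identity $\det(D\mathcal R(\mathbf c_1))=\prod_{j=1}^\nu Dg^{q_j-1}(c_{1,j})\,\det(D(1))$ yields
\[
\frac{\det(D\mathcal R(\mathbf c_1))}{\prod_{j=1}^\nu Dg^{q_j-1}(c_{1,j})}=\det(D(1))>0,
\]
which is precisely the `positively oriented' transversality condition~(\ref{eq:trans2}). I expect the only subtle point to be the bookkeeping of the exceptional set along $[0,1]$ — one must make sure that the hyperbolicity hypothesis $|Dg^{q_j-l_j}(c_{l_j,j})|>1$ is used in exactly the right place to exclude exceptional values in $(0,1]$ — but this is a short verification rather than a genuine obstacle. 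One should also remark that the argument only uses that $\mathcal A$ has no eigenvalue of modulus $\ge 1$ other than possibly a non-issue at $\rho=1$; the reality of the deformation is what makes $\det(D(\rho))$ real on $[0,1]$ so that the intermediate-value/continuity argument makes sense.
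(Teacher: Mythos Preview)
Your proof is correct and follows essentially the same route as the paper: both arguments use Proposition~\ref{prop:non-exceptional} together with the hypothesis $|Dg^{q_j-l_j}(c_{l_j,j})|>1$ to rule out zeros of $\det(D(\rho))$ on $(0,1]$, and then conclude $\det(D(1))>0$ from $\det(D(0))=1$. The only cosmetic difference is that the paper factors $\det(D(\rho))=\prod_i(1-\rho\rho_i)$ and shows no real $\rho_i\ge 1$ (with complex-conjugate pairs automatically contributing positive factors), whereas you use the intermediate value theorem directly on $[0,1]$; your presentation is arguably a bit cleaner since it sidesteps the factorization.
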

\begin{proof} Write the polynomial $\det (D(\rho))$ in the form $\prod_{i=1}^N (1-\rho \rho_i)$, where $\rho_i\in \C\setminus \{0\}$. If $\rho_i\ge 1$ for some $i$, then $1/\rho_i$ is a zero of $\det(D(\rho))$. As $|1/\rho_i|
\le 1$, $1/\rho_i$ is not an exceptional value. Thus
$\det(I-\mathcal{A}/\rho_i)=0$, which implies that $\rho_i$ is an eigenvalue of $\mathcal{A}$, a contradiction!
\end{proof}

\begin{remark}
Proposition~\ref{prop:non-exceptional} shows that for non-exceptional $\rho$,
one has (\ref{eq:non-exceptional}). One can also associate to $(g,G, \textbf{p})$ another linear operator $\mathcal{A}_J$ for which
\begin{equation}\label{I}
\det D(\rho)=\det (I-\rho \mathcal{A}_J)
\end{equation}
holds for {\em all} $\rho\in \C$.
Here $J$ denotes a collection of all pairs $(i,j)$ such that $1\le j\le \nu$,  $0\le i\le q_j-1$ and if $i=0$ then $j=\mu(j')$ for some $1\le j'\le \nu$. Given a collection of functions $\{c_{i,j}(\lambda)\}_{(i,j)\in J}$ which are holomorphic in a small neighbourhood of $\lambda=0$, there is another collection of holomorphic near $0$ functions $\{\hat c_{i,j}(\lambda)\}_{(i,j)\in J}$ such that
$\hat c_{0,j}(\lambda)=p_j(\textbf{c}_1(\lambda))$ where $\textbf{c}_1(\lambda)=(c_{1,1}(\lambda),\cdots, c_{1,\nu}(\lambda))$
and, for $i\not=0$, $G(\text{c}_1(\lambda), \hat c_{i,j})=c_{i+1,j}(\lambda)$. Here we set
$c_{q_j,j}(\lambda)=c_{0,\mu(j)}(\lambda)$ for $1\le j\le r$ and $c_{q_j,j}(\lambda)=c_{l_j,j}(\lambda)$ for $r<j\le \nu$.
Define the linear map
$\mathcal{A}_J: \C^{\#J}\to \C^{\#J}$ by taking the derivative at $\lambda=0$: $\mathcal{A}_J(\{c_{i,j}'(0)\}_{(i,j)\in J})=\{\hat c_{i,j}'(0)\}_{(i,j)\in J}$.
Explicitely, we get:
$$\hat c'_{i,j}(0)=\left\{
\begin{array}{ll}
\sum_{k=1}^\nu p_{j,k} &\mbox{ if } i=0 \mbox{ and } j=\mu(j') \mbox{ for some } j'\\
\frac{1}{Dg(c_{i,j})} \left(v_{i+1,j}-\sum_{k=1}^\nu L_k(c_{i,j})v_{1,k}\right) & \mbox{ if } 1\le i<q_j-1, 1\le j\le \nu\\
\frac{1}{Dg(c_{q_j-1,j})} \left(v_{0,\mu(j)}-\sum_{k=1}^\nu L_k(c_{q_j-1,j})v_{1,k}\right) &\mbox{ if } i=q_j-1, 1\le j\le r\\
\frac{1}{Dg(c_{q_j-1,j})} \left(v_{l_j,j}-\sum_{k=1}^\nu L_k(c_{q_j-1,j})v_{1,k}\right) & \mbox{ if } i=q_j-1, r< j\le \nu
\end{array}
\right.
$$
Elementary properties of determinants being applied to the matrix $I-\rho \mathcal{A}_J$ lead to~(\ref{I}).
Observe that $\mathcal{A}_J=\mathcal{A}$ if (and only if) all points $c_{i,j}$, $(i,j)\in J$ are pairwise different.
Therefore, we have:
$$\det (I-\rho \mathcal{A})=\det D(\rho)$$
for every $\rho\in \C$
provided $\sum_{j=1}^\nu (q_j-1)+r=\#P$.
\end{remark}

\subsection{The lifting property and the spectrum of $\mathcal A$}\label{subsec:lifting}
We say that the triple $(g,G,\textbf{p})_W$ has {\em the lifting property} if the following holds: Given a holomorphic motion $h_\lambda^{(0)}$ of $g(P)$ over $(\Lambda, 0)$, where $\Lambda$ is a domain in $\C$ which contains $0$, there exist $\eps>0$ and holomorphic motions
$h_\lambda^{(k)}$, $k=1,2,,\cdots$,
of $g(P)$ over $(\D_{\eps}, 0)$
such that
\begin{enumerate}
\item $\textbf{c}_1^{(k)}(\lambda):=h_\lambda^{(k)}(\textbf{c}_1)\in W$ for each $\lambda \in \D_\eps$ and each $k=0,1,2,\dots$;
\item for each $k=0,1,\cdots$, $h_\lambda^{(k+1)}$ is the lift of $h_\lambda^{(k)}$ over $(\D_\eps, \ast)$ for $(g, G, \textbf{p})_W$,
\item there exists $M>0$ such that $|h_\lambda^{(k)}(x)|\le M$ for all
$x\in g(P)$, all $k\ge 0$ and all $\lambda\in \D_{\eps}$.
\end{enumerate}
We say that $(g,G,\textbf{p})_W$ has {\em the weak lifting property} if, for each $h_\lambda^{(0)}$ as above, there exists $\eps>0$ and holomorphic motions $h_\lambda^{(k)}$ of $g(P)$ over $(\D_\eps,0)$, $k=1,2,\cdots$, such that the properties (1) and (2) hold (but we may not have property (3)).

In the case $(g, G, \textbf{p})_W$ is real, we say it has {\em the real lifting property} or {\em the real weak lifting property} if the corresponding property holds for any real holomorphic motions $h_\lambda^{(0)}$.

The following observation is important for us.
\begin{lemma} \label{lem:lift2spectrum}
If $(g,G,\textbf{p})_W$ has the lifting property , then
the spectral radius of the associated transfer operator $\mathcal{A}$ is at most $1$ and every eigenvalue of $\mathcal{A}$ of modulus one is semisimple (i.e. its algebraic multiplicity coincides with its geometric multiplicity). Moreover, for $(g, G, \textbf{p})_W$ real, we only need to assume that the lifting property with respect to real holomorphic motions.
\end{lemma}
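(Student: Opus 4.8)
The plan is to show that the lifting property forces $\mathcal A$ to be "power-bounded" on $\C^{\#g(P)}$, and that power-boundedness is equivalent to the spectral statement (spectral radius $\le 1$ with all eigenvalues on the unit circle semisimple). First I would fix a holomorphic motion $h_\lambda^{(0)}$ of $g(P)$ over some $(\Lambda,0)$ whose velocity vector $\mathbf v^{(0)}:=\{\tfrac{d}{d\lambda}h_\lambda^{(0)}(x)|_{\lambda=0}\}_{x\in g(P)}$ is an arbitrary prescribed vector in $\C^{\#g(P)}$; such a motion exists, e.g. take $h_\lambda^{(0)}(x)=x+\lambda v(x)$ for $\lambda$ in a small disc, which is manifestly a holomorphic motion for $|\lambda|$ small (injectivity for small $\lambda$ since $g(P)$ is finite). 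In the real case one chooses $v(x)\in\R$ so the motion is real. Then the lifting property produces $\eps>0$ and holomorphic motions $h_\lambda^{(k)}$ over $(\D_\eps,0)$ with $h_\lambda^{(k+1)}$ the lift of $h_\lambda^{(k)}$, and a uniform bound $|h_\lambda^{(k)}(x)|\le M$ for all $k$, all $x\in g(P)$, all $\lambda\in\D_\eps$.

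The key step is then to extract from the uniform bound a uniform bound on the velocities $\mathbf v^{(k)}:=\{\tfrac{d}{d\lambda}h_\lambda^{(k)}(x)|_{\lambda=0}\}_{x\in g(P)}$. By the defining property of the transfer operator, $\mathbf v^{(k)}=\mathcal A^k\mathbf v^{(0)}$. Since each $\lambda\mapsto h_\lambda^{(k)}(x)$ is holomorphic on $\D_\eps$ and bounded by $M$ there, the Cauchy estimate gives $|\tfrac{d}{d\lambda}h_\lambda^{(k)}(x)|_{\lambda=0}|\le M/\eps$ for every $x$ and every $k$. Hence $\|\mathcal A^k\mathbf v^{(0)}\|\le (\#g(P))^{1/2}M/\eps$ uniformly in $k$. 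As $\mathbf v^{(0)}$ was an arbitrary vector (and in the real case, arbitrary in $\R^{\#g(P)}$, which by linearity and $\mathcal A(\R^\nu)\subset\R^\nu$ suffices to conclude for all of $\C^{\#g(P)}$), this shows $\sup_k\|\mathcal A^k\|<\infty$, i.e. $\mathcal A$ is power-bounded. [One subtlety: the constant $\eps$ and $M$ a priori depend on the chosen motion $h_\lambda^{(0)}$, hence on $\mathbf v^{(0)}$; but that is fine — for each fixed $\mathbf v^{(0)}$ we get $\sup_k\|\mathcal A^k\mathbf v^{(0)}\|<\infty$, and by the uniform boundedness principle (or simply by applying this to the finitely many basis vectors and taking the max of the resulting constants) one upgrades to $\sup_k\|\mathcal A^k\|<\infty$.]

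Finally I would invoke the standard linear-algebra fact: a linear operator $\mathcal A$ on a finite-dimensional space is power-bounded if and only if its spectral radius is $\le 1$ and every eigenvalue of modulus $1$ is semisimple. (Indeed, writing $\mathcal A$ in Jordan form: an eigenvalue $\rho$ with $|\rho|>1$ makes $\|\mathcal A^k\|\ge|\rho|^k\to\infty$; a non-trivial Jordan block at $|\rho|=1$ makes $\|\mathcal A^k\|$ grow at least like $k$; conversely if all $|\rho|\le1$ and the unit-modulus ones are semisimple then $\|\mathcal A^k\|$ stays bounded.) This yields exactly the asserted conclusion. The main obstacle — or rather the point requiring care — is the uniformity just flagged: making sure the Cauchy-estimate bound is genuinely independent of $k$ (which it is, since $\eps$ and $M$ come from the single application of the lifting property to the fixed initial motion) and then promoting per-vector boundedness to norm boundedness of $\mathcal A^k$; everything else is routine. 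In the real case no extra work is needed beyond noting that a real power-bounded operator on $\R^n$ is power-bounded as an operator on $\C^n$, so the real lifting property already gives the full spectral conclusion.
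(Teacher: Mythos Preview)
Your proposal is correct and follows essentially the same approach as the paper: construct a holomorphic motion with prescribed initial velocity, use the lifting property and the Cauchy estimate to bound $\mathcal{A}^k\mathbf v$ uniformly in $k$, and then invoke the standard Jordan-form characterization of power-bounded operators. Your write-up is in fact slightly more explicit than the paper's (you spell out the construction $h_\lambda^{(0)}(x)=x+\lambda v(x)$, flag and resolve the dependence of $\eps,M$ on $\mathbf v^{(0)}$, and state the linear-algebra equivalence), but the argument is the same.
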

\begin{proof} Let us fix an order in the set $g(P)$.
For any $\textbf{v}=(v(x))_{x\in g(P)}$, construct a holomorphic motion $h_\lambda^{(0)}$ over $(\Lambda,0)$ for some domain $\Lambda\ni 0$, such that
$\frac{d}{d\lambda}h^{(0)}_\lambda(x)\left|\right._{\lambda=0}=v(x)$ for all $x\in g(P)$. Then
$$\mathcal{A}^k(\textbf{v})=\left(\frac{d}{d\lambda}h^{(k)}_\lambda(x)\left|\right._{\lambda=0}\right)_{x\in g(P)}$$ for every $k>0$.
By Cauchy's integral formula, there exists $C=C(M, \eps)$ such that $|\frac{d}{d\lambda}h^{(k)}_\lambda(x)\left|\right._{\lambda=0}|\le C$ holds for all $x\in g(P)$ and all $k$. It follows that for any $\textbf{v}\in \C^{\#g(P)}$, $\mathcal{A}^k(\textbf{v})$ is a bounded sequence. Thus the
spectral radius of $\mathcal{A}$ is at most one and every eigenvalue of $\mathcal{A}$ of modulus one is semisimple.

Suppose $(g, G, \textbf{p})_W$ is real. Then for any $\textbf{v}\in \R^{\nu}$, the holomorphic motion $h_\lambda^{(0)}$ can be chosen to be real. Thus if $(g, G,\textbf{p})_W$ has the weak lifting property, then $\{\mathcal{A}^k(\textbf{v})\}_{k=0}^\infty$ is bounded for each $\textbf{v}\in\R^\nu$. The conclusion follows.
\end{proof}

To obtain that the radius is strictly smaller than one, we shall apply the argument to a suitable perturbation of the map $g$.
For example, we have the following:

\begin{lemma} \label{lem:perturb2spectrum}
Let $(g,G,\textbf{p})_W$ be as above.
Let $Q$ be a polynomial such that
$Q(c_{0,j})=0$ for $1\le j\le \nu$ and $Q(x)=0$, $Q'(x)=1$ for every $x\in g(P)$. Let $\varphi_\xi(z)=z-\xi Q(z)$ and for $\xi\in (0,1)$ let
$\psi_{\xi}(\textbf{w})=(\varphi_\xi^{-1}(w_1),\cdots,\varphi_\xi^{-1}(w_\nu))$
be a map from a neighbourhood of $\textbf{c}_1$ into a neighbourhood of $\textbf{c}_1$. Suppose that there exists $\xi\in (0,1)$ such that the triple
$(\varphi_\xi\circ g, \varphi_\xi\circ G_{{\bf u}}, \textbf{p}\circ \psi_\xi)$ has the lifting property erty. Then
the spectral radius of $\mathcal{A}$ is at most $1-\xi$.
\end{lemma}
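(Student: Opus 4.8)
The plan is to reduce Lemma~\ref{lem:perturb2spectrum} to Lemma~\ref{lem:lift2spectrum} by analysing how the transfer operator transforms under post-composition with the contraction $\varphi_\xi$. First I would check that $(\varphi_\xi\circ g, \varphi_\xi\circ G_{\bf u}, \textbf{p}\circ\psi_\xi)$ is indeed a legitimate local holomorphic deformation of the marked map $\varphi_\xi\circ g$: since $Q$ vanishes on $g(P)$ and $Q'=1$ there while $Q(c_{0,j})=0$, the point set $\varphi_\xi(g(P))$ equals $g(P)$ pointwise, the combinatorial data $(q_j,\mu(j),l_j)$ is unchanged, $\textbf{p}\circ\psi_\xi$ still sends $\textbf{c}_1$ to $\textbf{c}_0$, and $\psi_\xi$ is well-defined and holomorphic near $\textbf{c}_1$ because $\varphi_\xi'(x)=1-\xi Q'(x)=1-\xi\neq 0$ for $x\in g(P)$. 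So the hypothesis that this new triple has the lifting property makes sense, and by Lemma~\ref{lem:lift2spectrum} its associated transfer operator, call it $\mathcal{A}_\xi$, has spectral radius at most $1$.

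Next I would compute $\mathcal{A}_\xi$ in terms of $\mathcal{A}$. Given a holomorphic motion $h_\lambda$ of $g(P)$ and its lift $\hat h_\lambda$ for $(g,G,\textbf{p})_W$, I claim the corresponding lift for the new triple is obtained by keeping $h_\lambda$ on $g(P)$ but rescaling the lift: one checks that $\hat h_\lambda^{\xi}(x) := \varphi_\xi^{-1}(\hat h_\lambda(x))$ for $x\in g(P)\setminus P_0$ satisfies the new lifting equations, because $(\varphi_\xi\circ G_{\textbf{c}_1(\lambda)})(\hat h^\xi_\lambda(x)) = \varphi_\xi(G_{\textbf{c}_1(\lambda)}(\varphi_\xi^{-1}(\hat h_\lambda(x))))$, and on the relevant points $\varphi_\xi^{-1}$ is the identity up to order $\geq 2$ at the base, while at $P_0$ the new $\textbf p$-equation $\hat h^\xi_\lambda(c_{0,j})=(\textbf{p}\circ\psi_\xi)(\textbf{c}_1(\lambda))$ matches. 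Differentiating at $\lambda=0$ and using $(\varphi_\xi^{-1})'(x)=1/(1-\xi)$ for $x\in g(P)$, together with $\varphi_\xi'(x)=1-\xi$, one finds that in the chosen coordinates $\mathcal{A}_\xi = \frac{1}{1-\xi}\,\mathcal{A}$ (the precise relation to be extracted by a direct but short computation analogous to the one in the proof of Proposition~\ref{prop:non-exceptional}, where the rescaling of the fibre derivatives by $\rho$ played exactly this role). Since $\mathcal{A}_\xi$ has spectral radius at most $1$, it follows that $\mathcal{A}$ has spectral radius at most $1-\xi$.

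The main obstacle I expect is getting the bookkeeping of the conjugation exactly right: one must verify carefully that post-composing both $g$ and $G$ by the same $\varphi_\xi$ (rather than conjugating) produces precisely a uniform scaling of $\mathcal{A}$ and not some more complicated perturbation, and in particular that the $O(t^2)$-type remainders coming from $\varphi_\xi$ not being affine genuinely do not contribute to the derivative. The cleanest way is probably to factor the argument through the "linearised" triple $(g^\rho,G^\rho,\textbf{p}^\rho)$ of Proposition~\ref{prop:non-exceptional}: replacing $G$ by its affine-along-the-orbit model changes neither $\mathcal{A}$ nor the spectral conclusion, and for affine maps the post-composition by $\varphi_\xi$ literally multiplies all fibre multipliers $Dg(c_{i,j})$ by $(1-\xi)$, hence divides $\mathcal{A}$ by $(1-\xi)$ on the nose. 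One should also note the real case: if $Q$ has real coefficients then $\varphi_\xi$, $\psi_\xi$ and the new triple are real, so it suffices to assume the lifting property with respect to real holomorphic motions, exactly as in Lemma~\ref{lem:lift2spectrum}.
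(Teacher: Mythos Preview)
Your strategy is exactly the paper's: verify that $\varphi_\xi\circ g$ is a marked map with the same $P_0\subset P$, show that the transfer operator $\mathcal{A}_\xi$ of the perturbed triple equals $(1-\xi)^{-1}\mathcal{A}$, and then apply Lemma~\ref{lem:lift2spectrum}. The paper carries out the middle step by directly computing at the basepoint that $D\tilde g(c_{i,j})=(1-\xi)Dg(c_{i,j})$, $\tilde L_k(c_{i,j})=L_k(c_{i,j})$ and $\tilde p_{j,k}=(1-\xi)^{-1}p_{j,k}$, from which $\mathcal{A}_\xi=(1-\xi)^{-1}\mathcal{A}$ is immediate.

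Your concrete execution of that step has a gap, though. The proposed relation $\hat h^\xi_\lambda(x)=\varphi_\xi^{-1}(\hat h_\lambda(x))$ does \emph{not} solve the new lift equation: substituting it into $(\varphi_\xi\circ G_{\textbf{c}_1(\lambda)})(\,\cdot\,)=h_\lambda(g(x))$ would require $\varphi_\xi\circ G_{\textbf{c}_1(\lambda)}\circ\varphi_\xi^{-1}=G_{\textbf{c}_1(\lambda)}$, which fails even to first order because $\varphi_\xi$ acts on the value of $G$ but not on its parameter $\textbf{c}_1(\lambda)$. If you read the new deformation as $\tilde G_w=\varphi_\xi\circ G_w$ without reparametrizing $w$, then in fact $\tilde L_k(x)=\varphi_\xi'(g(x))\,L_k(x)=(1-\xi)L_k(x)$, so the ratio $\tilde L_k/D\tilde g$ is \emph{unchanged}; a two-line check then shows $\mathcal{A}_\xi\ne (1-\xi)^{-1}\mathcal{A}$ whenever some $L_k\ne 0$. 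The same objection defeats your linearised-model route: in $(g^\rho,G^\rho,\textbf{p}^\rho)$ one has $L_k^\rho=L_k$ independently of $Dg$, so merely rescaling all fibre multipliers $Dg(c_{i,j})$ does not produce a global scalar factor on $\mathcal{A}$. What makes the paper's identity $\tilde L_k=L_k$ hold is that the parameter is simultaneously reparametrized by $\psi_\xi$ (this is precisely why $\textbf{p}$ becomes $\textbf{p}\circ\psi_\xi$): with $\tilde G_w=\varphi_\xi\circ G_{\psi_\xi(w)}$ the extra factor $(\varphi_\xi^{-1})'(c_{1,k})=(1-\xi)^{-1}$ cancels the $(1-\xi)$ coming from $\varphi_\xi'(g(x))$, and the direct computation then gives $\mathcal{A}_\xi=(1-\xi)^{-1}\mathcal{A}$ on the nose.
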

\begin{proof}
Note that $\tilde{g}:=\varphi_\xi\circ g$ is a marked map with the same sets $P_0\subset P$. Furthermore,
$\tilde{g}^i(c_{0,j})=g^i(c_{0,j})=c_{i,j}$, $D\widetilde{g}(c_{i,j})=(1-\xi) Dg(c_{i,j})$ and
$\frac{\partial \varphi_\xi\circ G}{\partial w_k}({\bf c}_1,z)=\frac{\partial G}{\partial w_k}({\bf c}_1,z)$,
$\frac{\textbf{p}\circ \psi_\xi}{\partial w_k}({\bf c}_1)=(1-\xi)^{-1}\frac{\partial \textbf{p}}{\partial w_k}({\bf c}_1)$.
Therefore, the operator which is associated to the triple $(\varphi_\xi\circ g, \varphi_\xi\circ G_{{\bf u}}, \textbf{p}\circ \psi_\xi)$ is equal to
$(1-\xi)^{-1}\mathcal{A}$,
Since the latter triple has the lifting property , by Lemma~\ref{lem:lift2spectrum},
the spectral radius of $(1-\xi)^{-1}\mathcal{A}$ is at most $1$.
\end{proof}

For completeness we include:

\begin{lemma} Assume that the spectrum radius of $\mathcal{A}$ is strictly less than $1$. Let $h_\lambda^{(k)}$, $k=1,2,,\cdots$, be holomorphic motions
of $P$ all defined over $(\Lambda, 0)$
such that $h_\lambda^{(k+1)}$ is the lift of $h_\lambda^{(k)}$ for each $k$. Assume that $h_\lambda^{(k)}$ are uniformly bounded in $\Lambda$. Then for each $x\in g(P)$, $h_\lambda^{(k)}$ converges to the constant $x$, locally uniformly on $\Lambda$, as $k\to\infty$.
\end{lemma}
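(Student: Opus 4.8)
The plan is to exploit that lifting is a linear operation on the derivatives at $\lambda=0$ together with the assumption $\mathrm{spec}(\mathcal A)\subset\D$, and to bootstrap from the control on first derivatives to control on all Taylor coefficients. First I would fix an order on $g(P)$ and, for each $k$, introduce the vector-valued holomorphic function $H^{(k)}(\lambda)=\big(h^{(k)}_\lambda(x)-x\big)_{x\in g(P)}$, which is bounded on $\Lambda$ uniformly in $k$ by hypothesis, say $\|H^{(k)}\|\le M$. By Cauchy estimates on a slightly smaller disc $\D_{r}\subset\Lambda$, the Taylor coefficients $a^{(k)}_m:=\frac{1}{m!}\frac{d^m}{d\lambda^m}H^{(k)}(\lambda)|_{\lambda=0}$ satisfy $\|a^{(k)}_m\|\le M r^{-m}$, so the statement ``$h^{(k)}_\lambda(x)\to x$ locally uniformly'' will follow from a normal-families argument once I show that $a^{(k)}_m\to 0$ as $k\to\infty$ for every fixed $m\ge 0$. (Normality: the $H^{(k)}$ form a normal family on $\Lambda$, every locally uniform subsequential limit is holomorphic, and if all its Taylor coefficients at $0$ vanish it is identically $0$; hence the whole sequence converges to $0$.)

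Next I would establish the recursion $a^{(k+1)}_m = \mathcal A\, a^{(k)}_m + E^{(k)}_m$, where $E^{(k)}_m$ depends only on the coefficients $a^{(k)}_{m'}$ with $m'<m$. This is exactly the kind of computation already done in the proof of Lemma~\ref{lem:21}: the lift equations $(\ref{eq:lift1})$ and $(\ref{eq:lift2})$, namely $\hat h_\lambda(c_{0,j})=p_j(\textbf{c}_1(\lambda))$ and $G_{\textbf{c}_1(\lambda)}(\hat h_\lambda(x))=h_\lambda(g(x))$, when expanded in powers of $\lambda$, express the degree-$m$ coefficient of $H^{(k+1)}$ as a linear function of the degree-$m$ coefficient of $H^{(k)}$ — the linear part being precisely $\mathcal A$ by the very definition of the transfer operator — plus a polynomial in the lower-order coefficients of $H^{(k)}$ coming from the nonlinearities of $G_w(z)$ in $z$ and in $w$ and of the $p_j$.

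Then I would run an induction on $m$. The base case $m=0$ is trivial: $a^{(k)}_0=H^{(k)}(0)=0$ for all $k$, since every $h^{(k)}_\lambda$ is a holomorphic motion based at $0$, so $h^{(k)}_0=\mathrm{id}$. Assume $a^{(k)}_{m'}\to 0$ as $k\to\infty$ for all $m'<m$; then $E^{(k)}_m\to 0$. Since $\mathrm{spec}(\mathcal A)\subset\D$, pick a norm on $\C^{\#g(P)}$ with $\|\mathcal A\|\le\theta<1$; iterating $a^{(k+1)}_m=\mathcal A a^{(k)}_m+E^{(k)}_m$ gives $\|a^{(k)}_m\|\le \theta^k\|a^{(0)}_m\| + \sum_{j=0}^{k-1}\theta^{k-1-j}\|E^{(j)}_m\|$, and the right-hand side tends to $0$ because it is a discrete convolution of the geometric sequence $\theta^n$ with the null sequence $\|E^{(j)}_m\|$ (this is a standard fact: $\sum_{j}\theta^{k-1-j}\epsilon_j\to 0$ when $\epsilon_j\to 0$, split the sum at $j\approx k/2$). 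Hence $a^{(k)}_m\to0$, closing the induction.

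The main obstacle is purely bookkeeping rather than conceptual: making the recursion $a^{(k+1)}_m=\mathcal A a^{(k)}_m+E^{(k)}_m$ precise, i.e.\ verifying that the linear-in-$a^{(k)}_m$ part is exactly $\mathcal A$ and that $E^{(k)}_m$ genuinely involves only strictly lower-order coefficients. This is forced by the structure of $(\ref{eq:lift1})$–$(\ref{eq:lift2})$: differentiating $m$ times in $\lambda$ and using the chain and Leibniz rules, the degree-$m$ term of $\hat h^{(k+1)}$ at a point $g^s(c_{0,j})$ is obtained from the degree-$m$ term of $h^{(k)}$ by applying the same linear substitution (multiplication by $1/Dg$, subtraction of $\sum_k L_k(\cdot)$ times the $j$-component, etc.) that defines $\mathcal A$ in Subsection~\ref{subsec:21}, while all cross-terms between derivatives of $G_w(z)$ and powers of $H^{(k)}$ of total order $m$ involve at least one factor $a^{(k)}_{m'}$ with $m'<m$. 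Once this is checked, everything else is the routine analysis sketched above. Finally, note the hypothesis ``spectral radius strictly less than $1$'' is used only through the existence of such a norm with $\|\mathcal A\|<1$; semisimplicity of unimodular eigenvalues (as in Lemma~\ref{lem:lift2spectrum}) would not suffice here, which is why this lemma requires the stronger hypothesis.
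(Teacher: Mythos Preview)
Your argument is correct, and takes a somewhat different route from the paper's. The paper's proof is more geometric: it observes that there is a single holomorphic map $\Phi$ defined on a neighbourhood of the point $\mathbf{c}=(x)_{x\in g(P)}\in\C^{\#g(P)}$, fixing $\mathbf{c}$, whose derivative at $\mathbf{c}$ is exactly $\mathcal{A}$, and such that $\Phi$ sends the vector $\mathbf{h}^{(k)}_\lambda=(h^{(k)}_\lambda(x))_{x\in g(P)}$ to $\mathbf{h}^{(k+1)}_\lambda$. Since the spectral radius of $\mathcal{A}$ is $<1$, $\mathbf{c}$ is a hyperbolic attracting fixed point of $\Phi$, so $\Phi^n\to\mathbf{c}$ uniformly on a small polydisk $\mathcal{U}_0$ around $\mathbf{c}$; uniform boundedness plus Cauchy estimates force $\mathbf{h}^{(k)}_\lambda\in\mathcal{U}_0$ for $|\lambda|$ small, and one is done. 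Your Taylor-coefficient induction is essentially a direct proof of this attracting-fixed-point convergence, carried out without ever naming $\Phi$: the recursion $a^{(k+1)}_m=\mathcal{A}a^{(k)}_m+E^{(k)}_m$ is precisely what you get by Taylor-expanding $\Phi$ at $\mathbf{c}$. The paper's packaging is shorter and more conceptual; your approach is more self-contained and makes the role of the spectral-radius hypothesis very explicit.

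One small inaccuracy in your bookkeeping: the error term $E^{(k)}_m$ depends not only on the $a^{(k)}_{m'}$ with $m'<m$ but also on the $a^{(k+1)}_{m'}$ with $m'<m$, because the nonlinear part $Q(U,V)$ of $G_{\mathbf{c}_1+U}(x+V)$ mixes $U$ (coefficients of $H^{(k)}$) with $V$ (coefficients of $H^{(k+1)}$). This is harmless for your induction, since the hypothesis $a^{(k)}_{m'}\to 0$ for all $m'<m$ immediately gives $a^{(k+1)}_{m'}\to 0$ by a shift in $k$, and $E^{(k)}_m$ is a fixed polynomial (with $k$-independent coefficients) in these quantities; but you should state it correctly.
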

\begin{proof} It suffices to prove that there exists $\delta>0$ such that for each $|\lambda|<\delta$, $h_\lambda^{(k)}(x)\to x$ as $k\to\infty$, since we assume that $h^{(k)}_\lambda$ are uniformly bounded in $\Lambda$.

Note that there is a holomorphic map $\Phi=(\varphi_x)_{x\in g(P)}$ from a neighbourhood $V$ of the point
${\bf c}:=g(P)\in \C^{\#g(P)}$ into
$\C^{\#g(P)}$ which fixes ${\bf c}$ such that
$$G_{{\bf z}_1}(\varphi_{x}(Z))=z_{g(x)}, \ \ \ x\in g(P)\setminus P_0, $$
$$\varphi_{c_{0,j}}(Z)=p_j({\bf z}_1), \ \ \ 1\le j\le \nu ,$$
where ${\bf z}_1=(z_{c_{1,j}})_{j=1}^\nu$, $Z=(z_x)_{x\in g(P)}$.
Since the derivative of $\Phi$ at ${\bf c}$ is equal to $\mathcal{A}$, ${\bf c}$ is a hyperbolic attracting fixed point of $\Phi$. Therefore, there exist $r>0$ and $N>0$ such that $\Phi^N$ is defined on the polydisk
$$\mathcal{U}_0=\Pi_{x\in g(P)} B(x, r)$$ 
and maps it compactly into itself. It follow $\Phi^n$ converges uniformly to the constant ${\bf c}$ in $\mathcal{U}_0$.
Since $h^{(k)}_\lambda$ are uniformly bounded, there exists $\delta>0$ such that $h_\lambda^{(k)}(x)\in B(x, r)$ whenever $|\lambda|<\delta$. Since $\Phi$ maps $\textbf{h}^{(k)}_\lambda:
=(h_{\lambda}^{(k)}(x))_{x\in g(P)}$ to $\textbf{h}^{(k+1)}_\lambda$, the statement follows.
\end{proof}
\subsection{If $\mathcal A$ has eigenvalue one then $\mathcal R(w)=0$ is a variety}\label{subsec:Aeig1}
\begin{theorem}\label{thm:1eigen} Assume that either the triple $(g, G, \textbf{p})_W$ has the lifting property or $(g, G,\textbf{p})_W$ is real and has the real lifting property.
Then we have the following alternative:
\begin{enumerate}
\item All eigenvalues of $\mathcal{A}$ are contained in $\overline{\D}\setminus\{1\}$;
\item There exists a neighbourhood $W'\subset W$ of ${\bf c}_1$
such that
\begin{equation}\label{allornoth}
\{\textbf{w}\in W'  \, | \,  \mathcal{R}(\textbf{w})=0\}
\end{equation}
is an analytic variety of (complex) dimension at least $1$.
\end{enumerate}
\end{theorem}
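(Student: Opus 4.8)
The plan is to read the dichotomy off from the spectral information on $\mathcal{A}$ supplied by the lifting property, and, in the degenerate case, to manufacture a nonconstant holomorphic curve inside $\{\mathcal{R}=0\}$ out of a single bounded sequence of lifts. By Lemma~\ref{lem:lift2spectrum}, the (real) lifting property forces $\mathcal{A}$ to have spectral radius at most $1$ with every eigenvalue of modulus $1$ semisimple. If $1\notin\mathrm{spec}(\mathcal{A})$ then all eigenvalues lie in $\overline{\D}\setminus\{1\}$ and we are in alternative (1). So assume $1\in\mathrm{spec}(\mathcal{A})$; since the zero set of a holomorphic map is analytic and has dimension $\ge 1$ at every non-isolated point of it, it suffices to show that $\textbf{c}_1$ is not an isolated zero of $\mathcal{R}$, and then any sufficiently small connected neighbourhood $W'$ of $\textbf{c}_1$ does the job for (2).

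Let $\Phi=(\varphi_x)_{x\in g(P)}$ be the holomorphic map on configurations of $g(P)$ (the one introduced in the lemma preceding Subsection~\ref{subsec:Aeig1}) which fixes $\textbf{c}=(x)_{x\in g(P)}$, satisfies $D\Phi(\textbf{c})=\mathcal{A}$, and sends the configuration $\textbf{h}_\lambda$ of a holomorphic motion of $g(P)$ to that of its lift. If $Z=(z_x)_{x\in g(P)}$ is a fixed point of $\Phi$, then the relations $G_{\textbf{z}_1}(z_x)=z_{g(x)}$ ($x\in g(P)\setminus P_0$) and $z_{c_{0,j}}=p_j(\textbf{z}_1)$ hold, and iterating them shows that $\textbf{z}_1:=(z_{c_{1,j}})_{j=1}^\nu$ satisfies $\mathcal{R}(\textbf{z}_1)=0$ (one checks the cases $q_j=1$ and $r<j\le\nu$ separately, but this is routine). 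Hence it is enough to construct a nonconstant holomorphic germ $\gamma\colon(\D_{\eps'},0)\to(\C^{\#g(P)},\textbf{c})$ with $\Phi\circ\gamma=\gamma$: fixing a nonzero eigenvector $\textbf{v}=(v(x))_{x\in g(P)}$ of $\mathcal{A}$ for the eigenvalue $1$ and arranging $\gamma'(0)=\textbf{v}$, the curve $\lambda\mapsto(\gamma_{c_{1,j}}(\lambda))_{j}$ is then nonconstant — because $(v(c_{1,j}))_{j}\neq\textbf{0}$, exactly as in the proof of Lemma~\ref{lem:21} — and lies in $\{\mathcal{R}=0\}$, so $\textbf{c}_1$ is non-isolated there.

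I would build $\gamma(\lambda)=\textbf{c}+\sum_{m\ge1}\gamma_m\lambda^m$ with $\gamma_1=\textbf{v}$ by solving $\Phi(\gamma(\lambda))=\gamma(\lambda)$ order by order. Using semisimplicity, fix the $\mathcal{A}$-invariant splitting $\C^{\#g(P)}=\ker(\mathcal{A}-I)\oplus F$ with $I-\mathcal{A}|_F$ invertible, and let $\rho$ denote the projection onto $\ker(\mathcal{A}-I)$ along $F$. Expanding $\Phi$ at $\textbf{c}$, the order-$m$ equation reads $(\mathcal{A}-I)\gamma_m=-N_m(\gamma_1,\dots,\gamma_{m-1})$ for an explicit polynomial $N_m$ built from the Taylor series of $\Phi$ (with $N_1\equiv 0$); it is solvable precisely when $\rho\,N_m(\gamma_1,\dots,\gamma_{m-1})=0$, and then I take the solution $\gamma_m\in F$. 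To verify this vanishing at stage $m$, I would apply the lifting property not to the eigenvector but to the truncation $\gamma^{[m-1]}(\lambda)=\textbf{c}+\sum_{i=1}^{m-1}\gamma_i\lambda^i$ constructed so far — a genuine holomorphic motion of $g(P)$ for small $|\lambda|$, being the identity at $\lambda=0$. Its iterated lifts $\textbf{h}^{(k)}_\lambda=\Phi^k(\gamma^{[m-1]}(\lambda))$ stay uniformly bounded on some disk $\D_{\eps_m}$; since the $\gamma_i$ satisfy the lower-order fixed-point equations, the $\lambda^i$-coefficient of $\textbf{h}^{(k)}_\lambda$ remains $\gamma_i$ for all $k$ and all $i<m$ (a short induction on $k$), so matching $\lambda^m$-coefficients in $\textbf{h}^{(k)}_\lambda=\Phi(\textbf{h}^{(k-1)}_\lambda)$ shows that the $\lambda^m$-coefficient of $\textbf{h}^{(k)}_\lambda$ equals $\sum_{i=0}^{k-1}\mathcal{A}^i N_m(\gamma_1,\dots,\gamma_{m-1})$, whose $\rho$-component is exactly $k\,\rho\,N_m(\gamma_1,\dots,\gamma_{m-1})$. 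Cauchy's estimate bounds this uniformly in $k$, forcing $\rho\,N_m(\gamma_1,\dots,\gamma_{m-1})=0$, as needed. A routine majorant estimate (bounding $((\mathcal{A}-I)|_F)^{-1}$ and the Taylor coefficients of $\Phi$) then shows $\sum_m\gamma_m\lambda^m$ has positive radius of convergence, so $\gamma$ is an honest holomorphic germ, and $\Phi\circ\gamma=\gamma$ because this identity holds at the level of formal power series.

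I expect the main obstacle to be this last construction — turning the mere boundedness of one lifting sequence into the vanishing of every resonant obstruction $\rho\,N_m$. The idea that makes it go, and the step I would have to implement with care, is the device of moving the base motion at each stage to the current truncated solution $\gamma^{[m-1]}$: this freezes the lower-order jet at its final value and exhibits the order-$m$ obstruction as an honest linear-in-$k$ term $k\,\rho\,N_m$ that boundedness must annihilate; carrying it out cleanly needs an induction on $m$ with an inner induction on the iterate count. The remaining ingredients are subsidiary — the identification of $\Phi$-fixed configurations with zeros of $\mathcal{R}$ used above, and, in the real case, the remark that all motions and hence all $\gamma_m$ may be taken real (so that $\gamma$ is real), which is immediate from the reality of $\mathcal{A}$ and of the splitting $\ker(\mathcal{A}-I)\oplus F$.
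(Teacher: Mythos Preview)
Your argument is correct and follows a route closely related to, but distinct from, the paper's. Both proofs begin identically (Lemma~\ref{lem:lift2spectrum} forces all eigenvalues into $\overline{\D}$ with $1$ semisimple), and both then upgrade asymptotic invariance order by order using the boundedness of iterated lifts. The mechanisms differ: the paper (Lemma~\ref{lem:arbitraryasyminv}) takes Ces\`aro averages of the lifts and passes to a subsequential limit to go from order $m$ to $m+1$, while you extract the $\lambda^m$-coefficient explicitly, note that its projection to $\ker(\mathcal{A}-I)$ equals $k\,\rho N_m$, and annihilate it via Cauchy's estimate. These are two sides of the same coin, since Ces\`aro averaging is precisely what kills linear-in-$k$ growth; your version is more explicit and makes the role of semisimplicity transparent.

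The real divergence is in the endgame. The paper never shows that its sequence of order-$m$ asymptotically invariant motions $a^{(m)}_\lambda$ converges; instead it assumes $\textbf{c}_1$ is an isolated zero of $\mathcal{R}$ and derives a contradiction from the local degree $k$ of $\mathcal{R}$ at $\textbf{c}_1$ (via the symmetric functions $P_j(z,\textbf{u})$), using only that $\mathcal{R}(\textbf{a}^{(m)}_1(\lambda))=o(\lambda^m)$ with $m>k$. You instead prove convergence of the formal solution directly: once the obstructions vanish, $\gamma$ is the unique solution of the analytic fixed-point equation $(I-\rho)Y=-B(I-\rho)H(\textbf{v}\lambda+(I-\rho)Y)$ in $F$, which has an honest holomorphic solution by the implicit function theorem (this is your ``routine majorant estimate''). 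Your approach is constructive and produces an actual curve in $\{\mathcal{R}=0\}$; the paper's avoids the convergence question entirely at the cost of invoking the local structure of analytic maps near an isolated zero. Both are clean; yours would be marginally longer to write out in full because the convergence step, while standard, needs to be stated carefully.
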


If the second alternative holds,  then there is a local analytic manifold through ${\bf c}_1$ such that $\mathcal{R}(\textbf{w})=0$ holds identically in this manifold.
If $\nu=1$, the manifold must contain a neighbourhood of ${\bf c}_1$, in other words, $\mathcal{R}(\textbf{w})=0$ holds for every ${\bf w}\in \C$ near ${\bf c}_1\in \C$. Note that in most situations, one can easily show that the second alternative is invalid. Thus this theorem is very useful to obtain transversality.

Let $\Lambda$ be a domain in $\C$ which contains $0$.
A holomorphic motion $h_\lambda(x)$ of $g(P)$ over $(\Lambda,0)$ is called {\em asymptotically invariant of order $m$} (with respect to $(g,G,\textbf{p})_W$) if there is a subdomain $\Lambda_0\subset \Lambda$ which contains $0$ and a holomorphic motion  $\widehat{h}_\lambda(x)$ which is the lift of $h_\lambda$ over $(\Lambda_0,0)$, such that
\begin{equation}
\widehat{h}_\lambda(x)-h_\lambda(x)=o(\lambda^{m+1})\text{ as } \lambda\to 0.
\label{eq:asymptm}\end{equation}
Obviously,
\begin{lemma} $1$ is an eigenvalue of $\mathcal{A}$ if and only if there is a non-degenerate holomorphic motion which is invariant of order $1$.
\end{lemma}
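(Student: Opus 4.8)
The plan is to prove both directions by translating the condition ``$1$ is an eigenvalue of $\mathcal{A}$'' into the existence of a holomorphic motion satisfying the asymptotic invariance relation (\ref{eq:asymptm}) with $m=1$, and conversely. The key observation is that the transfer operator $\mathcal{A}$ records exactly the first-order behaviour of the lifting procedure at $\lambda = 0$: if $\widehat{h}_\lambda$ is the lift of $h_\lambda$, then $\mathcal{A}$ sends the vector $(\frac{d}{d\lambda} h_\lambda(x)|_{\lambda=0})_{x\in g(P)}$ to $(\frac{d}{d\lambda}\widehat{h}_\lambda(x)|_{\lambda=0})_{x\in g(P)}$. So the whole statement is really a matter of reading off first derivatives and invoking the elementary fact that a holomorphic function vanishing to order $\geq 2$ at $0$ has vanishing value and first derivative there.

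For the forward direction, suppose $1$ is an eigenvalue of $\mathcal{A}$ with eigenvector $\textbf{v} = (v(x))_{x\in g(P)} \neq \textbf{0}$. Following the construction already used in the proof of Lemma~\ref{lem:21}, I would set $h_\lambda(x) = x + \lambda v(x)$ for $x \in g(P)$; this is a holomorphic motion over $(\D,0)$ (for $\lambda$ small it is injective since $g(P)$ is finite and the $v(x)$ are fixed), and it is non-degenerate because $\textbf{v}\neq\textbf{0}$. Let $\widehat{h}_\lambda$ be its lift over some $\Lambda_0$. Then $\frac{d}{d\lambda}\widehat{h}_\lambda(x)|_{\lambda=0} = (\mathcal{A}\textbf{v})(x) = v(x) = \frac{d}{d\lambda}h_\lambda(x)|_{\lambda=0}$ for every $x$, while $\widehat{h}_0(x) = h_0(x) = x$ since both are lifts of the identity motion at $\lambda=0$. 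Hence $\widehat{h}_\lambda(x) - h_\lambda(x)$ is holomorphic in $\lambda$, vanishes at $0$, and has vanishing derivative at $0$, so it is $o(\lambda^2)$; thus $h_\lambda$ is asymptotically invariant of order $1$ and non-degenerate.

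For the converse, suppose $h_\lambda$ is a non-degenerate holomorphic motion of $g(P)$ over $(\Lambda,0)$ which is asymptotically invariant of order $1$, with lift $\widehat{h}_\lambda$ over $\Lambda_0$ satisfying $\widehat{h}_\lambda(x) - h_\lambda(x) = o(\lambda^2)$. Set $\textbf{v} = (\frac{d}{d\lambda}h_\lambda(x)|_{\lambda=0})_{x\in g(P)}$. By definition of $\mathcal{A}$, $\mathcal{A}\textbf{v} = (\frac{d}{d\lambda}\widehat{h}_\lambda(x)|_{\lambda=0})_{x\in g(P)}$, and the $o(\lambda^2)$ condition forces $\frac{d}{d\lambda}\widehat{h}_\lambda(x)|_{\lambda=0} = \frac{d}{d\lambda}h_\lambda(x)|_{\lambda=0}$ for each $x$, i.e. $\mathcal{A}\textbf{v} = \textbf{v}$. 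It remains to check $\textbf{v}\neq\textbf{0}$: this is where non-degeneracy of the motion is used — a holomorphic motion over a domain with $h_0 = \mathrm{id}$ and all first derivatives at $0$ vanishing would, by the $\lambda$-lemma type normalization or simply by the fact that $h_\lambda(x) - h_\lambda(y)$ never vanishes, still be degenerate only if it is constant; one should argue that $\textbf{v} = \textbf{0}$ would make the motion tangent to the trivial one to first order, which for the purposes of ``non-degenerate'' (meaning not all $\frac{d}{d\lambda}h_\lambda(x)|_{\lambda=0}$ agree after subtracting an affine motion, or simply $\textbf{v}\neq 0$ in the chosen normalization) is exactly what is excluded. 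The only mild subtlety — and the step I would be most careful about — is pinning down the precise meaning of ``non-degenerate'' so that it matches ``$\textbf{v}\neq\textbf{0}$'' on the nose; once that bookkeeping is fixed, both implications are immediate from the definition of $\mathcal{A}$ and the vanishing-to-second-order criterion.
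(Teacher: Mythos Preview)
Your argument is correct and is exactly what the paper intends; indeed the paper itself merely prefaces the lemma with ``Obviously,'' and gives no proof. Two minor clean-ups: the paper defines ``non-degenerate'' immediately after the lemma to mean precisely that $\frac{d}{d\lambda}h_\lambda(x)|_{\lambda=0}\neq 0$ for some $x\in g(P)$, i.e.\ $\textbf{v}\neq\textbf{0}$, so your closing hedge is unnecessary; and ``vanishes at $0$ with vanishing first derivative'' yields $O(\lambda^2)$ rather than $o(\lambda^2)$ --- the $o(\lambda^{m+1})$ in (\ref{eq:asymptm}) should be read as $O(\lambda^{m+1})$, as its use in the proofs of Lemma~\ref{lem:arbitraryasyminv} and Theorem~\ref{thm:1eigen} confirms, so you have inherited a notational slip from the paper rather than made a reasoning error.
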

Here, a holomorphic motion $h_\lambda(x)$ is called {\em non-degenerate} if $\frac{d}{d\lambda}h_\lambda(x)\left|\right._{\lambda=\ast}\not=0$ holds for some $x\in P$.

A crucial step in proving this theorem is the following Lemma~\ref{lem:arbitraryasyminv} whose proof requires the following easy fact. 
\begin{fact} Let $F:\mathcal{U}\to \C$ be a holomorphic function defined in an open set $\mathcal{U}$ of $\C^N$, $N\ge 1$. Let $\gamma, \widetilde{\gamma}:\D_\eps\to \mathcal{U}$ be two holomorphic curve such that $$\gamma(\lambda)-\widetilde{\gamma}(\lambda)=O(\lambda^{m+1})\text{ as }\lambda\to 0.$$
Then
$$F(\gamma(\lambda))-F(\widetilde{\gamma}(\lambda))=\sum_{i=1}^N \frac{\partial F}{\partial z_i}(\gamma(0))(\gamma_i(\lambda)-\widetilde{\gamma}_i(\lambda))+O(\lambda^{m+2})\text{ as }\lambda\to 0.$$
\end{fact}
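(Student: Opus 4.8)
The plan is to prove the \textbf{Fact} by a direct Taylor expansion of $F$ around the point $\gamma(0) = \widetilde\gamma(0)$, using only the hypothesis that the two curves agree to order $\lambda^{m+1}$. Write $p = \gamma(0)$ and set $\delta(\lambda) = \gamma(\lambda) - \widetilde\gamma(\lambda) = O(\lambda^{m+1})$. Since $F$ is holomorphic on $\mathcal{U}$, on a small enough polydisc around $p$ we may use the first-order Taylor formula with remainder: for points $z, z'$ in that polydisc,
\begin{equation*}
F(z) - F(z') = \sum_{i=1}^N \frac{\partial F}{\partial z_i}(p)\,(z_i - z_i') + E(z,z'),
\end{equation*}
where the error term $E(z,z')$ is controlled by $\left(\sup_i \sup |\partial^2 F|\right)\big(|z-p| + |z'-p|\big)\,|z - z'|$, a bound that follows from writing $F(z) - F(z')$ as $\int_0^1 \frac{d}{dt}F(z' + t(z-z'))\,dt$ and then expanding each first partial derivative $\frac{\partial F}{\partial z_i}$ around $p$ to zeroth order with its own first-order remainder.

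Next I would substitute $z = \gamma(\lambda)$ and $z' = \widetilde\gamma(\lambda)$. For $\lambda$ in a sufficiently small disc $\D_\eps$, both $\gamma(\lambda)$ and $\widetilde\gamma(\lambda)$ lie in the chosen polydisc (they are continuous and both tend to $p$), so the expansion applies. We get
\begin{equation*}
F(\gamma(\lambda)) - F(\widetilde\gamma(\lambda)) = \sum_{i=1}^N \frac{\partial F}{\partial z_i}(p)\,\big(\gamma_i(\lambda) - \widetilde\gamma_i(\lambda)\big) + E\big(\gamma(\lambda),\widetilde\gamma(\lambda)\big).
\end{equation*}
It remains to estimate the error. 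Since $\gamma$ is holomorphic and $\gamma(0) = p$, we have $\gamma(\lambda) - p = O(\lambda)$, and likewise $\widetilde\gamma(\lambda) - p = O(\lambda)$; meanwhile $|\gamma(\lambda) - \widetilde\gamma(\lambda)| = |\delta(\lambda)| = O(\lambda^{m+1})$. Plugging these into the bound for $E$ gives $|E(\gamma(\lambda),\widetilde\gamma(\lambda))| = O(\lambda) \cdot O(\lambda^{m+1}) = O(\lambda^{m+2})$ as $\lambda \to 0$, which is exactly the claimed remainder.

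There is no real obstacle here — the statement is elementary. The only point requiring any care is making the error bound in the Taylor formula uniform on a neighbourhood of $p$, which is automatic from holomorphy (the second partials of $F$ are bounded on a compact polydisc around $p$ by Cauchy estimates), and checking that $\gamma(\lambda)$ and $\widetilde\gamma(\lambda)$ genuinely stay in that polydisc for small $\lambda$, which follows from continuity at $0$. One could alternatively phrase the whole argument in one variable by composing $F$ with the affine path $t \mapsto \widetilde\gamma(\lambda) + t\,\delta(\lambda)$, $t \in [0,1]$, and expanding in $t$; this avoids writing the multivariable remainder explicitly but is the same computation. Either way the proof is a couple of lines, and its sole purpose is to be quoted in the proof of Lemma~\ref{lem:arbitraryasyminv}.
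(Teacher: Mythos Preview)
Your proof is correct and is essentially the same as the paper's: the paper fixes $\lambda$, sets $\delta(t)=(1-t)\widetilde\gamma(\lambda)+t\gamma(\lambda)$, writes $F(\gamma(\lambda))-F(\widetilde\gamma(\lambda))=\int_0^1 \sum_i \partial_i F(\delta(t))(\gamma_i(\lambda)-\widetilde\gamma_i(\lambda))\,dt$, and uses $\delta(t)-\gamma(0)=O(\lambda)$ to replace $\partial_i F(\delta(t))$ by $\partial_i F(\gamma(0))+O(\lambda)$. This is exactly the line-segment integral you describe both in deriving your error bound and in your ``alternative'' remark at the end.
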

\begin{proof} For fixed $\lambda$ small, define $\delta(t)=(1-t)\widetilde{\gamma}(\lambda)+t\gamma(\lambda)$ and $f(t)=F(\delta(t))$. Then
$$f'(t)=\sum_{i=1}^N \frac{\partial F}{\partial z_i}(\delta(t)) (\gamma_i(\lambda)-\widetilde{\gamma_i}(\lambda)).$$
Since $\delta(t)-\gamma(0)=O(\lambda)$, and
$F(\gamma(\lambda))-F(\widetilde{\gamma}(\lambda))=\int_0^1 f'(t)dt$, the equality follows.
\end{proof}
\begin{lemma}\label{lem:arbitraryasyminv}
One has the following:
\begin{enumerate}
\item Assume $(g, G,\textbf{p})_W$ has the lift property.
Suppose that there is a non-degenerate holomorphic motion $h_\lambda$ of
$g(P)$ over $(\Lambda, 0)$ which is asymptotically invariant of order $m$ for some $m\ge 1$. Then there is a non-degenerate holomorphic motion $H_\lambda$ of $g(P)$ over some $(\tilde\Lambda, 0)$ which is asymptotically invariant of order $m+1$. Besides, $H_\lambda(x)-h_\lambda(x)=o(\lambda^{m+1})$ as $\lambda\to 0$ for all $x\in g(P)$.
\item Assume $(g, G,\textbf{p})_W$ is real and has the real lift property. Suppose that
there is a non-degenerate real holomorphic motion $h_\lambda$ of
$g(P)$ over $(\Lambda, 0)$ which is asymptotically invariant of order $m$ for some $m\ge 1$. Then there is a non-degenerate real holomorphic motion $H_\lambda$ of $g(P)$ over some $(\tilde\Lambda, 0)$ which is asymptotically invariant of order $m+1$. Besides, $H_\lambda(x)-h_\lambda(x)=o(\lambda^{m+1})$ as $\lambda\to 0$ for all $x\in g(P)$.
\end{enumerate}
\end{lemma}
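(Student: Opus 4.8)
The plan is to prove statement (1); statement (2) is identical except that all holomorphic motions, lifts, and parameters are taken to be real and one observes that the construction respects complex conjugation symmetry, so I will concentrate on (1) and only remark on the modifications needed for (2).

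First, given the non-degenerate holomorphic motion $h_\lambda$ of $g(P)$ over $(\Lambda,0)$ which is asymptotically invariant of order $m$, let $\widehat{h}_\lambda$ be its lift over some $(\Lambda_0,0)$, so that by hypothesis $\widehat{h}_\lambda(x)-h_\lambda(x)=o(\lambda^{m+1})$ for all $x\in g(P)$. The key idea is that the defect
$$\phi(x):=\frac{1}{(m+1)!}\frac{d^{m+1}}{d\lambda^{m+1}}\bigl(\widehat{h}_\lambda(x)-h_\lambda(x)\bigr)\Bigr|_{\lambda=0}$$
is, by the Fact above (applied to the coordinate functions of $G$ and $\textbf{p}$ along the relevant curves), precisely the obstruction that the transfer operator $\mathcal{A}$ acts on. More precisely, if one lifts $h_\lambda$ repeatedly to get $h_\lambda^{(0)}=h_\lambda$, $h_\lambda^{(1)}=\widehat{h}_\lambda$, $h_\lambda^{(2)}$, \dots\ (using the lifting property to get a uniform bound and a common domain of definition $\D_\eps$), then the coefficient vectors $\textbf{a}_k:=\bigl(\frac{1}{(m+1)!}\frac{d^{m+1}}{d\lambda^{m+1}}h^{(k)}_\lambda(x)|_{\lambda=0}\bigr)_{x\in g(P)}$ satisfy
$$\textbf{a}_{k+1}=\mathcal{A}\,\textbf{a}_k+\textbf{b}_k,$$
where $\textbf{b}_k\to 0$ — in fact the error term $\textbf{b}_k$ comes entirely from the lower-order Taylor coefficients of $h^{(k)}_\lambda$, which by the order-$m$ asymptotic invariance and an inductive bookkeeping argument agree with those of $h_\lambda$ up to order $m$ and hence stabilize. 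So really $\textbf{a}_{k+1}-\textbf{a}_k = \mathcal{A}\textbf{a}_k - \textbf{a}_k + (\text{vanishing correction})$, i.e. the increments are governed by $(\mathcal A - I)$ acting on a bounded sequence.

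Next I would use that, by Lemma~\ref{lem:lift2spectrum}, the lifting property forces the spectral radius of $\mathcal{A}$ to be at most $1$ with every modulus-one eigenvalue semisimple. Decompose $\C^{\#g(P)}=E_1\oplus E_{<1}\oplus E_{\ne 1, =1}$ into the generalized eigenspace for the eigenvalue $1$, the part with spectrum inside the open disc, and the part with spectrum on the unit circle other than $1$. On $E_1$, $\mathcal A$ acts as the identity (semisimplicity), so the component of $h^{(k)}_\lambda$ in that direction, Cesàro-averaged, gives a genuinely invariant holomorphic motion: setting $H_\lambda:=\frac1N\sum_{k=0}^{N-1}h^{(k)}_\lambda$ for suitable $N$ (or passing to a subsequential/averaged limit, using that all the $h^{(k)}_\lambda$ are uniformly bounded on $\D_\eps$ and hence form a normal family), the $(m+1)$-st Taylor coefficient of $\widehat{H}_\lambda - H_\lambda$ at $0$ is annihilated while the lower-order coefficients are unchanged. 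This produces a holomorphic motion asymptotically invariant to order $m+1$ with $H_\lambda(x)-h_\lambda(x)=o(\lambda^{m+1})$, and non-degeneracy is preserved precisely because the lower-order Taylor data (which witness non-degeneracy at some order $\le m$) is unchanged by the averaging.

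The main obstacle I anticipate is the bookkeeping needed to justify that the averaging kills the order-$(m+1)$ defect without disturbing anything at lower order, and in particular controlling the domains: each lift is a priori defined only on a possibly smaller disc, so one must invoke the lifting property to get all $h^{(k)}_\lambda$ on a common $\D_\eps$ with a uniform bound $M$, then use Cauchy estimates to bound all Taylor coefficients uniformly in $k$, and finally extract the averaged limit as a bona fide holomorphic motion (checking injectivity of $H_\lambda$ on $g(P)$ — this needs the $\lambda$-lemma or a direct argument, or one simply shrinks $\eps$). A secondary subtlety is handling the eigenvalues of modulus one different from $1$: there the Cesàro average of $\mathcal A^k$ on that subspace converges to $0$ (semisimplicity plus $|\rho|=1,\rho\ne1$ means $\frac1N\sum \rho^k\to 0$), so those components do not obstruct, but one should say this explicitly. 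For part (2), the only change is that $h^{(k)}_\lambda$ are real, the averaging preserves reality, and $\mathcal A$ preserves $\R^{\#g(P)}$, so the same decomposition and Cesàro argument apply verbatim over $\R$.
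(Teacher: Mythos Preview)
Your approach and the paper's coincide at the core: both Ces\`aro-average the iterated lifts $h_\lambda^{(k)}$ (defined on a common $\D_\eps$ with a uniform bound, via the lifting property) and pass to a subsequential limit $H_\lambda$. Where they differ is in the verification that $H_\lambda$ is asymptotically invariant of order $m+1$. The paper works directly with the lifting relations: since every $h_\lambda^{(i)}$ agrees with $h_\lambda$ to order $m$, the Fact stated just before the lemma gives, with $F(w_1,\dots,w_\nu,z)=G_{(w_1,\dots,w_\nu)}(z)$, that $F(h_\lambda^{(i)}(\textbf{c}_1),h_\lambda^{(i+1)}(x))-F(\psi_\lambda^{(k)}(\textbf{c}_1),\varphi_\lambda^{(k)}(x))$ equals a term linear in the differences plus $O(\lambda^{m+2})$; averaging over $i=0,\dots,k-1$ kills the linear part by definition of $\psi^{(k)}$ and $\varphi^{(k)}$, so the averaged motion satisfies the lifting relation modulo $O(\lambda^{m+2})$. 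No appeal to the spectrum of $\mathcal{A}$ is made. Your verification via the recursion $\textbf{a}_{k+1}=\mathcal{A}\textbf{a}_k+\textbf{b}$ on the $(m{+}1)$-st Taylor coefficients is equally valid (note $\textbf{b}$ is \emph{constant}, not ``vanishing'', once you observe the order-$\le m$ jets stabilize), but the eigenspace decomposition is a detour: telescoping $\textbf{a}_{k+1}-\textbf{a}_k=(\mathcal{A}-I)\textbf{a}_k+\textbf{b}$ over $k<N$ and dividing by $N$ gives $(\textbf{a}_N-\textbf{a}_0)/N=(\mathcal{A}-I)\tfrac1N\sum_{k<N}\textbf{a}_k+\textbf{b}$, and since $(\textbf{a}_k)$ is bounded (Cauchy estimate from the uniform bound), any subsequential Ces\`aro limit $\textbf{c}$ satisfies $(\mathcal{A}-I)\textbf{c}+\textbf{b}=0$, which is precisely the vanishing of the $(m{+}1)$-st jet of $\widehat{H}_\lambda-H_\lambda$.
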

\begin{proof} We shall only prove the first statement as the proof of the second is the same with obvious change of terminology. Let $h_\lambda$ be a non-degenerate holomorphic motion of
$P$ over $(\Lambda, 0)$  which is asymptotically invariant of order $m$.
%
By assumption that $(g, G, \textbf{p})_W$ has the lifting property , there exists a smaller domain $\Lambda_0\subset \Lambda$ and holomorphic motions $h^{(k)}_\lambda$ over $\Lambda_0$, $k=0,1,\ldots$ such that
$h^{(0)}_\lambda=h_\lambda$ and such that $h^{(k+1)}_\lambda$ is the lift of
$h^{(k)}_\lambda$ over $(\Lambda_0,0)$ for each $k\ge 0$. Moreover, the functions $h^{(k)}_\lambda$ are uniformly bounded. For each $k\ge 1$, define
$$\psi_\lambda^{(k)}(x)=\frac{1}{k}\sum_{i=0}^{k-1}h_\lambda^{(i)}(x),$$
and $$\varphi_\lambda^{(k)}(x)=\frac{1}{k}\sum_{i=1}^k h_\lambda^{(i)}(x).$$
By shrinking $\Lambda_0$, we may assume that there exists $k_n\to\infty$, such that $\psi_\lambda^{(k_n)}(x)$ converges uniformly in $\lambda\in\Lambda_0$ as $k_n\to\infty$ to a holomorphic function $H_\lambda(x)$. Shrinking $\Lambda_0$ furthermore if necessary, $H_\lambda$ defines a holomorphic motion of $g(P)$ over $(\Lambda_0,0)$. Clearly, $\varphi_\lambda^{(k_n)}(x)$ converges uniformly to $H_\lambda(x)$ as well.

Let us show that $H_\lambda$ is asymptotically invariant of order $m+1$ by applying the fact above. This amounts to show
\begin{enumerate}
\item [(i)] For each $x\in g(P)\setminus P_0$, and any $k\ge 1$,
$$G_{\psi^{(k)}_\lambda(c_{1,1}),\cdots, \psi_\lambda^{(k)}(c_{1,\nu})}(\varphi_\lambda^{(k)}(x))=\psi_\lambda^{(k)}(x) +O(\lambda^{m+2}) \text{ as } \lambda\to 0.$$
\item [(ii)] For $x=c_{0,j}\in g(P)$,
$$p_{j}(\psi^{(k)}_\lambda(c_{1,1},\cdots, \psi_\lambda^{(k)}(c_{1,\nu}))=\varphi^{(k)}_\lambda (c_{0,j})+ O(\lambda^{m+2}) \text{ as }\lambda\to 0.$$
\end{enumerate}

Let us prove (i). Fix $x\in g(P)\setminus P_0$ and $k\ge 1$. Let $F(z_1,z_2,\cdots, z_\nu, z_{\nu+1})=G_{(z_1,z_2,\cdots, z_\nu)}(z_{\nu+1})$. By the construction of $h^{(k)}$,
we have
$$F(h_\lambda^{(i)}(c_{1,1}),\cdots, h_\lambda^{(i)}(c_{1,\nu}), h_\lambda^{i+1}(x))=h_\lambda^{(i)}(g(x))$$
for every $i\ge 0$. Thus
\begin{equation}\label{eqn:arbitraryasyminv1}
\psi^{(k)}_\lambda(g(x))=\frac{1}{k} \sum_{i=0}^{k-1}F(h_\lambda^{(i)}(c_{1,1}),\cdots, h_\lambda^{(i)}(c_{1,\nu}), h_\lambda^{i+1}(x)).
\end{equation}
Since all the functions $h^{(i)}(x)$,$\psi^{(k)}_\lambda(x)$, $\varphi^{(k)}_\lambda(x)$ have the same derivatives up to order $m$ at $\lambda=0$, applying Fact, we obtain
\begin{multline*}
F(h_\lambda^{(i)}(c_{1,1}),\cdots, h_\lambda^{(i)}(c_{1,\nu}), h_\lambda^{i+1}(x))-F(\psi_\lambda^{(k)}(c_{1,1}),\cdots, \psi^{(k)}_\lambda(c_{1,\nu}),\varphi^{(k)}_\lambda(x))\\
=\sum_{j=1}^{\nu}\frac{\partial F}{\partial z_j}(\textbf{c}_1,x) (h_\lambda^{(i)}(c_{1,j})-\psi_\lambda^{(k)}(c_{1,j})+\frac{\partial F}{\partial z_{\nu+1}}(\textbf{c}_1, x) (h_\lambda^{(i+1)}(x)-\varphi^{(k)}_\lambda(x))+O(\lambda^{m+2}),
\end{multline*}
as $\lambda\to 0$. Summing over $i=0,1,\ldots, k-1$, we obtain
\begin{multline*}
\frac{1}{k} \sum_{i=0}^{k-1}F(\varphi_\lambda^{(i)}(c_{1,1}),\cdots, \varphi_\lambda^{(i)}(c_{1,\nu}), h_\lambda^{i+1}(x))\\
=F(\psi^{(k)}_\lambda(c_{1,1}),\cdots, \psi^{(k)}(c_{1,\nu}), \varphi^{(k)}_\lambda(x))+ O(\lambda^{m+2}).
\end{multline*}
Together with (\ref{eqn:arbitraryasyminv1}), this implies the equality in (i).

For (ii), we  use $F(z_1,\cdots, z_\nu)=p_j(z_1, \cdots, z_\nu)$ and argue in a similar way.
\end{proof}

\begin{proof}[Proof of Theorem~\ref{thm:1eigen}] By Lemma~\ref{lem:lift2spectrum}, all eigenvalues of $\mathcal{A}$ are contained in $\overline{\D}$. Assume that (1) fails. Then $1$ is an eigenvalue of $\mathcal{A}$. Let $\textbf{v}=(v(x))_{x\in g(P)}$ be its eigenvector. In the case that $(g,G,\textbf{p})_W$ is real, we choose $\textbf{v}$ to be real. Then $h_\lambda(x)=x+ v(x) \lambda$ defines a non-degenerate holomorphic motion of $P$ over some $(\Lambda, 0)$ and this holomorphic motion is asymptotically invariant of order $1$. By Lemma~\ref{lem:arbitraryasyminv}, it follows that for each $m\ge 1$, there is a holomorphic motion
$a^{(m)}_\lambda$ of $P$ which is asymptotically invariant of order $m$
and such that $D_0 a^{(m)}_\lambda(x)=v(x)$. It follows that for every $m$,
for each
$1\le j\le r$, $G^{q_j-1}_{{\bf a}^{(m)}_1(\lambda)}(a^{(m)}_\lambda(c_{1,j}))=p_{\mu(j)}({\bf a}^{(m)}_1(\lambda))+o(\lambda^m)$ and
for $r+1\le j\le \nu$, $G^{q_j-1}_{{\bf a}^{(m)}_1(\lambda)}(a^{(m)}_\lambda(c_{1,j}))=G^{l_j-1}_{{\bf a}^{(m)}_1(\lambda)}(a^{(m)}_\lambda(c_{1,j}))+o(\lambda^m)$.
Here ${\bf a}^m_{1}(\lambda)=(a^{(m)}_\lambda(c_{1,1}),\cdots,a^{(m)}_\lambda(c_{1,\nu}))$.
Recall that a map $\mathcal{R}=(R_1,\cdots,R_\nu)$ from a neighbourhood of the point ${\bf c}_1=(c_{1,1},\cdots,c_{1,\nu})\in \C^\nu$ into $\C^\nu$ is defined as follows:
$$
\left\{\begin{array}{ll}
R_j({\bf w})=G^{q_j-1}_{{\bf w}}(w_j)-p_{\mu(j)}({\bf w}), \ \ \  1\le j\le r,\\
R_j({\bf w})=G^{q_j-1}_{{\bf w}}(w_j)-G^{l_j-1}_{{\bf w}}(w_j), \ \ \  r+1\le j\le \nu.
\end{array}
\right.
$$
where ${\bf w}=(w_1, w_2,...,w_\nu)$.
Then for every $m>0$,
$\mathcal{R}({\bf a}^{(m)}_{1}(\lambda))={\bf \psi}^{(m)}(\lambda)$
where
${\bf \psi}^{(m)}(\lambda)=(\psi^{(m)}_1(\lambda),\cdots,\psi^{(m)}_\nu(\lambda))$ and
$\psi^{(m)}_j(\lambda)=o(\lambda^m)$, $j=1,\cdots,\nu$.
Also, ${(\bf a}^m_{1})'(0)={\bf v}\not={\bf 0}$.
Now we assume the contrary: the local analytic variety defined by the equation $\mathcal{R}({\bf w})=0$ and containing ${\bf c}_1$ is $0$-dimensional, i.e., consists of a single point ${\bf c}_1$. By general properties of analytic varieties,
${\bf c}_1$ is then an isolated zero of $\mathcal{R}^{-1}({\bf 0})$. There exists some $k\ge 1$ such that for any point ${\bf u}$ which is close to ${\bf 0}$ and outside an analytic variety of dimension less than $\nu$
 (on which the Jacobian of the map $\mathcal{R}$ is equal to zero) the equation
$\mathcal{R}({\bf w})={\bf u}$ has precisely $k$ different solutions ${\bf w}^i({\bf u})=(w^i_1({\bf u}),\cdots,w^i_\nu({\bf u}))$, $i=1,\cdots,k$.
It follows that for every coordinate $j$ the following  function: $P_j(z, {\bf u}):=\Pi_{i=1}^k(z-w^i_j({\bf u}))$
extends to an analytic function in ${\bf u}$ in a neighbourhood $W'$ of ${\bf 0}$.
Thus $$P_j(z,\textbf{u})=P_j(z,0)+O(\|\textbf{u}\|)=(z-c_{1,j})^k+ O(\|\textbf{u}\|) \text{ as } \textbf{u}\to \textbf{0}.$$
For each $m\ge 1$, $P_j(a^{(m)}_\lambda(c_{1,j}), {\bf \psi}^{(m)}(\lambda))=0$ holds for every $\lambda$ near $0$. Therefore,
$$(a^{(m)}_\lambda(c_{1,j})-c_{1,j})^k=O(\|\psi^{(m)}(\lambda)\|)=O(\lambda^m) \text{ as } \lambda\to 0 $$
holds for every $j$ and $m$.
Taking $j$ such that $v(c_{1,j})\not =0$ and $m>k$, we obtain
$$(\lambda v(c_{1,j})+o(\lambda))^k=O(\lambda^m)\text{ as }\lambda\to 0,$$
which is absurd.
\end{proof}


\part*{Part B: Application to covering maps, and in particular polynomial-like maps,  satisfying a separation property}
In the next three sections we will show that the method developed in Part A
works well in the case of covering maps, and in particular polynomial-like maps,  satisfying some separation property.
\section{Families of the form $f_\lambda(x)=f(x)+\lambda$ and $f_\lambda(x)=\lambda f(x)$}
\label{sec:oneparameter}
In this section we will  apply these techniques to show that one has monotonicity and the transversality properties (\ref{eq:trans}) and (\ref{eq:trans2}) within certain families
of real real maps of the form $f_\lambda(x)=f(x)+\lambda$ and $f_\lambda(x)=\lambda \cdot f(x)$
where $x\mapsto f(x)$ has one critical value (and is unimodal - possibly on a subset $\R$) or satisfy symmetries.  There are quite a few
papers giving examples for which one has non-monotonicity for such families,  see for example \cite{Br,Ko,NY,Zd}.
In this section we will prove several theorems which show monotonicity for a fairly wide class
of such families.

Note that one can only expect transversality in one-parameter families for which one either has precisely one singular value
or for which one has two singular values but for which one has additional symmetry.
Indeed, it is easy to construct a one-parameter family of bimodal maps for which transversality fails, see \cite{Str2}.

In Subsection~\ref{subsec:additive} we show that the methods we developed in the previous section
apply if one has something like a polynomial-like map $f\colon U\to V$  with sufficiently  {\lq}big complex bounds{\rq}. This gives yet another proof for monotonicity
for real families of the form $z^\ell+c$, $c\in \R$ in the setting when $\ell$ is an even integer.
 We also apply this method to a family of maps with a flat critical point
in Subsection~\ref{subsec:flatcritical}. In Subsection~\ref{subsec:multiplicative} we show how to obtain
the lifting property in the setting of one parameter families of entire maps.

\subsection{Families of the form $f_\lambda(x)=f(x)+\lambda$ with a single critical point}\label{subsec:additive}
Consider a marked map $g$ from a finite set
$P$ into itself with $P\supset P_0=\{0\}$ and define a local holomorphic deformation $(g,G,\textbf{p})$ of $g$ as follows: $G_w(z)=g(z)+(w-g(0))$ and $\textbf{p}(w)=0$ for $w$ in a neighbourhood $W$ of $c_1=g(0)\in \C$.
\begin{theorem}\label{single}
Suppose that $g$ extends to a holomorphic map $g: U_g\to V$ where
\begin{itemize}
\item $U_g$ is a bounded open set in $\C$ such that $U\supset P\setminus \{0\}$ and $0\in \overline{U}_g$;
\item $V$ is a bounded open set in $\C$ such that $c_1:=g(0)\in V$;
\item $g: U_g\setminus \{0\}\to V\setminus \{c_1\}$ is an unbranched covering.
\end{itemize}
If the separation property
\begin{equation}\label{1exp}
V\supset B(c_1;\diam(U_g)) \supset U_g.
\end{equation}
holds
then the spectrum of the operator $\mathcal{A}$ is contained in $\overline{\D}\setminus \{1\}$.
If the robust separation property
\begin{equation}\label{1robust}
V\supset \overline{B(c_1;\diam (U_g))} \supset U_g
\end{equation}
holds, then the spectral radius of $\mathcal{A}$ is strictly smaller than $1$.
In particular, if $g^q(0)=0$, then
$$\det (I-\rho \mathcal{A})=\sum_{i=0}^{q-1}\frac{\rho^i}{Dg^i(c_1)}\not=0$$
holds for all $|\rho|\le 1$.
\end{theorem}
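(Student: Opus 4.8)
The plan is to feed the geometric separation hypothesis into the machinery of Part A, via the \emph{lifting property}. Concretely, I would proceed in three stages: (1) establish the (robust) lifting property for the triple $(g,G,\textbf{p})_W$ directly from the covering structure and the inclusion (\ref{1exp}) (resp. (\ref{1robust})); (2) invoke Lemma~\ref{lem:lift2spectrum} (and Lemma~\ref{lem:perturb2spectrum} in the robust case) to conclude the spectral statement; (3) deduce the final explicit nonvanishing formula from Theorem~\ref{single}'s spectral conclusion together with Proposition~\ref{prop:non-exceptional} and its corollary. The last stage is essentially bookkeeping once the spectrum is controlled, so the heart of the matter is stage (1).

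\textbf{Stage 1: the lifting property.} Start with a holomorphic motion $h^{(0)}_\lambda$ of $g(P)$ over $(\Lambda,0)$; by the $\lambda$-lemma we may assume $\Lambda=\D$ and that the motion extends to all of $\overline{V}$ (this is where the bounded sets $U_g,V$ are used). The lift equations (\ref{eq:lift1})--(\ref{eq:lift2}) here read: $\hat h_\lambda(0)=0$ (since $\textbf{p}\equiv 0$) and $g(\hat h_\lambda(x)) + (c_1(\lambda)-c_1) = h_\lambda(g(x))$ for $x\in g(P)\setminus\{0\}$, i.e. $\hat h_\lambda(x) = g^{-1}_{\text{branch}}\big(h_\lambda(g(x)) - c_1(\lambda)+c_1\big)$, where $c_1(\lambda)=h_\lambda(g(0))$ and $g^{-1}$ denotes the appropriate local inverse branch. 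The point is that, because $g:U_g\setminus\{0\}\to V\setminus\{c_1\}$ is an unbranched covering, these local inverse branches extend to honest holomorphic branches along the whole motion, \emph{provided} the translated target point $h_\lambda(g(x))-c_1(\lambda)+c_1$ stays inside $V$. The separation inequality $V\supset B(c_1;\diam U_g)$ is exactly what guarantees this: the translation by $-c_1(\lambda)+c_1$ moves $c_1$ to $c_1(\lambda)$, and since the image of the motion of $g(P)$ sits in a region of diameter controlled by $\diam U_g$ (one shows inductively that all $h^{(k)}_\lambda(g(P))$ land in $U_g$, because $\hat h_\lambda$ takes values in $U_g$), the translated points remain in $B(c_1;\diam U_g)\subset V$. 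This gives a well-defined sequence of lifts $h^{(k)}_\lambda$ over some $\D_\eps$, and the inclusion $B(c_1;\diam U_g)\supset U_g$ provides the uniform bound $M$ in item (3) of the definition — all iterates stay in $U_g$, which is bounded. Thus the lifting property holds.

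\textbf{Stage 2 and 3.} With the lifting property in hand, Lemma~\ref{lem:lift2spectrum} gives that $\spec(\mathcal A)\subset\overline\D$ with every modulus-one eigenvalue semisimple. To upgrade to $\overline\D\setminus\{1\}$ one must rule out $1$ as an eigenvalue; here I would argue that an eigenvector with eigenvalue $1$ would, via $h_\lambda(x)=x+\lambda v(x)$, produce an asymptotically invariant motion of order $1$, and then run the argument of Theorem~\ref{thm:1eigen}: the alternative is that $\mathcal R(\textbf w)=0$ cuts out a variety of dimension $\ge 1$, but since $\nu=1$ this would force $\mathcal R\equiv 0$ near $c_1$, which is absurd because $\mathcal R(w)=G^{q-1}_w(w)$ and the explicit formula below shows $\mathcal R$ is nonconstant (its derivative is $Dg^{q-1}(c_1)\sum_{i=0}^{q-1}Dg^i(c_1)^{-1}\neq 0$). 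Hence alternative (1) of Theorem~\ref{thm:1eigen} holds and $\spec(\mathcal A)\subset\overline\D\setminus\{1\}$. Under the \emph{robust} separation (\ref{1robust}), the same argument applied to $\varphi_\xi\circ g$ for small $\xi>0$ — where the closed-ball inclusion leaves just enough room to absorb the $(1-\xi)$-contraction and still have the lifting property — gives, via Lemma~\ref{lem:perturb2spectrum}, spectral radius $\le 1-\xi<1$. Finally, in the periodic case $g^q(0)=0$ we have $\nu=1$, $r=1$, $q_1=q$, $\mu(1)=1$, $p_{1,1}=0$, and since $G_w(z)=g(z)+(w-c_1)$ one computes $L_1(c_{n,1})=1$ for all $n$; plugging into the formula for $D_{11}(\rho)$ in Subsection~\ref{subsec:trans-sign} gives $D(\rho)=1+\sum_{n=1}^{q-1}\rho^n/Dg^n(c_1)=\sum_{i=0}^{q-1}\rho^i/Dg^i(c_1)$, and by Proposition~\ref{prop:non-exceptional} (there are no exceptional values since $r=\nu$) this equals $\det(I-\rho\mathcal A)$, which is nonzero for $|\rho|\le 1$ because $\spec(\mathcal A)\subset\overline\D\setminus\{1\}$.

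\textbf{Main obstacle.} The delicate point is Stage~1: verifying that the chain of lifts is globally well-defined and uniformly bounded, which requires carefully tracking that \emph{every} iterate $h^{(k)}_\lambda(g(P))$ (and the auxiliary inverse-branch values) stays within $U_g\subset B(c_1;\diam U_g)\subset V$ so that no branch point is ever encountered and the covering-space lifting applies; this is precisely the role of the two nested inclusions in (\ref{1exp}), and getting the diameter bookkeeping right — that a holomorphic motion fixing $g(P)\subset U_g$ at $\lambda=0$ and lifting through $g$ cannot escape a ball of radius $\diam U_g$ about $c_1$ — is the crux of the argument.
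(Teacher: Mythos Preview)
Your approach mirrors the paper's: establish the lifting property from the covering hypothesis and inclusion~(\ref{1exp}), then invoke Lemma~\ref{lem:lift2spectrum}, Lemma~\ref{lem:perturb2spectrum}, and Theorem~\ref{thm:1eigen}. Stage~1 is correct in substance and matches the paper's Claim (the $\lambda$-lemma is unnecessary---one just restricts to a small simply connected $\Delta$ on which $h_\lambda(g(P))\subset U_g$ and runs the induction you describe). The computation of $D_{11}(\rho)=\sum_{i=0}^{q-1}\rho^i/Dg^i(c_1)$ in Stage~3 is correct, and since $r=\nu=1$ there are indeed no exceptional values.

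There is, however, a genuine circularity in Stage~2. To exclude alternative~(2) of Theorem~\ref{thm:1eigen} you assert that $\mathcal R$ is nonconstant because its derivative $Dg^{q-1}(c_1)\sum_{i=0}^{q-1}Dg^i(c_1)^{-1}$ is nonzero. But that nonvanishing is exactly $D(1)\neq 0$, which by Lemma~\ref{lem:21} (or Proposition~\ref{prop:non-exceptional}) is \emph{equivalent} to $1\notin\spec(\mathcal A)$---precisely what the dichotomy of Theorem~\ref{thm:1eigen} is meant to decide. The paper avoids this trap by observing that, since $\nu=1$, alternative~(2) would force $\mathcal R\equiv 0$ on a full neighbourhood of $c_1$, so that \emph{every} $G_w$ with $w$ near $c_1$ satisfies the same critical relation; since $G_w(0)=w$ genuinely varies, this is absurd (distinct parameters give distinct dynamics). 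You should replace the circular derivative assertion with an argument of this kind.

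One further slip: at the end of Stage~3 you deduce $\det(I-\rho\mathcal A)\neq 0$ for all $|\rho|\le 1$ from $\spec(\mathcal A)\subset\overline{\D}\setminus\{1\}$. That inclusion only excludes $\rho=1$; an eigenvalue $e^{i\theta}$ with $\theta\neq 0$ would produce a zero at $\rho=e^{-i\theta}$. The theorem's ``in particular'' clause sits under the \emph{robust} hypothesis~(\ref{1robust}), where the spectral radius is strictly less than~$1$; it is that stronger conclusion you need to cite.
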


\begin{proof}
Let $W=U_g$. Let us show that (\ref{1exp}) implies that
$(g, G, \textbf{p})_W$ has the lift property.

For each domain $\Delta\ni 0$ in $\C$, let $\mathcal{M}_\Delta$ denote the collection of all holomorphic motions $h_\lambda$ of $g(P)$ over $(\Delta,0)$ such that
\begin{equation}
h_\lambda(x)\in U\text{ for all }x\in g(P)\text{ and }\lambda\in \Delta.
\end{equation}

{\bf Claim.} Let $\Delta\ni 0$ be a simply connected domain in $\C$. Any holomorphic motion $h_\Delta$ in $\mathcal{M}_\Delta$ has a lift $\widehat{h}_\lambda$ which is again in the class $\mathcal{M}_\Delta$.

Indeed, for each $x\in P\setminus\{0\}$, and any $\lambda\in\Delta$, $$0<|h_\lambda(g(x))-h_\lambda(g(0))|<\diam (U_g),$$
hence by (\ref{1exp}),
$$h_\lambda(g(x))-h_\lambda(g(0))+g(0)\in V\setminus \{g(0)\}.$$
Since $g:U_g\setminus\{0\}\to V\setminus \{g(0)\}$ is an unbranched covering and $\Delta$ is simply connected, there is a holomorphic function $\lambda\mapsto \widehat{h}_\lambda(x)$, from $\Delta$ to $U_g\setminus\{0\}$, such that $\widehat{h}_0(x)=x$ and
$$g(\widehat{h}_\lambda(x))=h_\lambda(g(x))-h_\lambda(g(0))+g(0),$$
i.e.,
$$G_{h_\lambda(g(0))}(\widehat{h}_\lambda(x))=h_\lambda(g(x)).$$
Define $\widehat{h}_\lambda(0)=0$ if $0\in g(P)$. Then $\widehat{h}_\lambda$ is a lift of $h_\lambda$ over $\Delta$.

For any holomorphic motion $h_\lambda$ of $g(P)$ over $(\Lambda,0)$, there is a simply connected sub-domain $\Delta\ni 0$ such that the restriction of $h_\lambda$ on $\Delta$ belongs to the class $\mathcal{M}_{\Delta}$. It follows that $(g,G,\textbf{p})_W$ has the lift property.

Therefore the assumptions of
Theorem~\ref{thm:1eigen}  are satisfied.
The second alternative of Theorem~\ref{thm:1eigen} cannot hold, because otherwise for
all parameters $w\in W$ the $G_w$ would have the same dynamics. Hence, the conclusion follows.

If the robust separation property~(\ref{1robust}) holds, then
Lemma~\ref{lem:perturb2spectrum} applies and therefore the spectral radius of $\mathcal{A}$ is strictly smaller than $1$.
\end{proof}

\begin{coro}\label{cor:zd}
For any even integer $d$, transversality condition (\ref{eq:trans2}) holds and  the topological entropy of $g_c(z)=z^d+c$ depends monotonically on $c\in \R$.
\end{coro}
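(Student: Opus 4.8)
\textbf{Proof proposal for Corollary~\ref{cor:zd}.}
The plan is to apply Theorem~\ref{single} to the family $g_c(z)=z^d+c$ with $d$ an even integer, and then to translate the spectral conclusion into the transversality statement~(\ref{eq:trans2}) and monotonicity of entropy. First I would set up the marked map: fix a parameter $c_*\in\R$ for which $0$ is periodic (say of period $q$) under $g_{c_*}$, let $P$ be the orbit of $0$ together with $P_0=\{0\}$, and take the local holomorphic deformation $G_w(z)=z^d+(w-c_*^d-\cdots)$ in the normalisation of the excerpt, i.e. $G_w(z)=g_{c_*}(z)+(w-g_{c_*}(0))$ and $\textbf{p}\equiv 0$. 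To invoke Theorem~\ref{single} I need to exhibit a bounded open $U_g$ with $P\setminus\{0\}\subset U_g$, $0\in\overline{U}_g$, a bounded open $V\ni c_1=g_{c_*}(0)$, the covering property, and the separation condition~(\ref{1exp}) (or ideally the robust version~(\ref{1robust})). The natural choice is to take $U_g$ and $V$ to be large round disks: since $g_{c_*}$ has a connected filled Julia set (real $c_*$ in the relevant hyperbolic or combinatorial window), one can take $V=B(0,R)$ for $R$ large and $U_g=g_{c_*}^{-1}(V)$, which for $z\mapsto z^d$ is (up to the additive constant) again essentially a round disk of radius $R^{1/d}\ll R$; then $\diam(U_g)\approx 2R^{1/d}$ and $B(c_1;\diam(U_g))$ is a disk of radius $\approx 2R^{1/d}$ around a point of modulus $O(1)$, which is comfortably contained in $V=B(0,R)$ once $R$ is large, while $U_g\subset B(c_1;\diam(U_g))$ since $U_g$ has diameter $2R^{1/d}$ and contains $c_1$ (one checks $0\in\overline{U_g}$ as $g_{c_*}(0)=c_*\in V$). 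The key point that makes this work \emph{only} for even $d$ is that $g_{c_*}\colon U_g\setminus\{0\}\to V\setminus\{c_1\}$ is an unbranched covering of degree $d$: the single critical point $0$ lies on the boundary-ish of $U_g$ but more importantly is the unique branch point, so deleting $\{0\}$ and $\{c_1\}$ makes it unbranched; the reality constraint $\overline{g(z)}=g(\bar z)$ holds because $d$ is an integer. With these verifications Theorem~\ref{single} gives $\operatorname{spec}(\mathcal A)\subset\overline{\D}\setminus\{1\}$, and if the round-disk construction yields the robust inequality~(\ref{1robust}) (which it does, with strict room to spare for $R$ large) even $\operatorname{spec}(\mathcal A)\subset\D$.

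Next I would invoke Corollary~\ref{real} (the transversality condition): the deformation is real, there are no ``second-alternative'' periodic points of the second type to worry about since $0$ is genuinely periodic (so $r=\nu=1$, the hypothesis on $|Dg^{q_j-l_j}|$ is vacuous), and all eigenvalues of $\mathcal A$ lie in $\{|\rho|\le 1,\ \rho\neq1\}$. Hence the positively oriented transversality property~(\ref{eq:trans2}) holds; since here $\nu=1$ and $0$ is periodic and fixed under the marking, this is exactly~(\ref{eq:trans}), i.e.
$$\sum_{n=0}^{q-1}\frac{1}{Dg_{c_*}^{n}(c_1)}>0.$$
Moreover, by the explicit formula at the end of Theorem~\ref{single}, $\det(I-\rho\mathcal A)=\sum_{i=0}^{q-1}\rho^i/Dg_{c_*}^i(c_1)\neq0$ for all $|\rho|\le1$; taking $\rho=1$ recovers the same strict inequality once one checks the sign (the sign is pinned down by the real structure together with $\rho=1$ not being a root and the leading behaviour as $\rho\to0$, where the determinant is $1>0$).

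Finally, to get monotonicity of topological entropy in $c\in\R$ I would argue as in the classical Milnor--Thurston/Douady--Hubbard scheme, now using~(\ref{eq:trans}) as the replacement for the complex-analytic transversality input: by the Remark following Theorem~\ref{thm:flat}, inequality~(\ref{eq:trans}) implies that whenever $0$ has exact period $q$ at $c_*$, the multiplier $\lambda(c)$ of the analytic continuation of that periodic point is strictly increasing through $c_*$, which forbids a superattracting parameter from being a local ``turning point'' of the kneading data; combined with the fact that the kneading sequence $\mathcal K(g_c)$ is continuous and, at parameters where $0$ is periodic, can only move in one direction, one obtains that $\mathcal K(g_c)$ — hence the topological entropy $h(g_c)=\log(\text{growth rate of the kneading determinant})$ — is monotone nondecreasing in $c$. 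Concretely: it suffices (as recorded in the discussion preceding~(\ref{eq:trans})) to verify either rigidity or positive transversality at every periodic-kneading parameter, and we have just verified positive transversality for \emph{all} such $c_*\in\R$; this yields monotonicity of the kneading sequence, and monotonicity of entropy follows from the standard fact that entropy is a monotone function of the kneading sequence for unimodal maps of fixed shape.

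\textbf{Main obstacle.} The only genuinely non-formal step is the geometric verification of the separation property~(\ref{1exp})/(\ref{1robust}) for a suitable $(U_g,V)$: one must choose the domains so that $g_{c_*}$ restricts to an unbranched covering $U_g\setminus\{0\}\to V\setminus\{c_1\}$ \emph{and} $U_g$ fits inside the ball $B(c_1;\diam U_g)$ inside $V$. For $z\mapsto z^d+c_*$ the round-disk choice makes this essentially a one-line estimate ($\diam U_g\sim R^{1/d}$ versus $R$), but some care is needed to ensure $0\in\overline{U_g}$ and that $c_1$ actually lies in $U_g$ so that $U_g\subset B(c_1;\diam U_g)$; this is where evenness of $d$ (ensuring $z\mapsto z^d$ maps the disk onto a disk, with $0$ the sole ramification point) is used essentially, and where the argument would break for non-even exponents. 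Everything downstream — applying Theorems~\ref{thm:1eigen} and Corollary~\ref{real}, extracting the sign, and passing to entropy monotonicity — is then bookkeeping following the templates already set up in Part A and in the Remark after Theorem~\ref{thm:flat}.
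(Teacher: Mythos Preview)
Your proposal is correct and is exactly the intended (implicit) proof: the paper states Corollary~\ref{cor:zd} without proof immediately after Theorem~\ref{single}, and your round-disk construction $V=B(c_*,R)$, $U_g=g_{c_*}^{-1}(V)=B(0,R^{1/d})$ (cleaner than your $V=B(0,R)$) verifies the robust separation property~(\ref{1robust}) for $R$ large, after which Corollary~\ref{real} gives~(\ref{eq:trans2}) and the paper's own discussion preceding~(\ref{eq:trans}) gives monotonicity.

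One correction to your commentary: your identification of where evenness of $d$ is ``used essentially'' is off. The covering and separation verification works verbatim for \emph{any} integer $d\ge 2$ --- $z\mapsto z^d$ maps a disk centred at $0$ onto a disk with $0$ as the sole branch point regardless of parity, and the reality condition $\overline{g(z)}=g(\bar z)$ holds for any integer $d$. Evenness enters only so that $x\mapsto x^d+c$ is genuinely unimodal on $\R$ (in the class $\mathcal{U}$), which is what makes the kneading-sequence/entropy-monotonicity conclusion meaningful; for odd $d$ the real map is monotone and the entropy statement is vacuous.
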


\subsection{A unimodal family with a flat critical  point: Proof of Theorem~\ref{thm:flat}}
\label{subsec:flatcritical}
Fix $\ell\ge 1$, $b> 2(e\ell)^{1/\ell}$ and consider
$$f_c(x)=\left\{ \begin{array}{rl} b e^{-1/|x|^\ell} +c  &\mbox{ for }x\in \R\setminus \{ 0\}, \\
c & \mbox{ for }x=0 .\end{array} \right.
.$$
The assumption on $b$ implies that $b= 2x e^{1/x^\ell}$ has a solution $x=\beta\in (0, \ell^{1/\ell})$.
This means that the map $f_{-\beta}$ has the Chebeshev combinatorics: $f_{-\beta}(0)=-\beta$ and $f_{-\beta}(\beta)=\beta$.
Note that
$$D f_{-\beta}(\beta)= be^{-1/\beta^\ell} \frac{\ell }{\beta^{\ell+1}}= \frac{2\ell}{\beta^\ell}>2.$$
Therefore, there exists $x_1>x_0>\beta$ such that
$f_{-\beta}(x_0)=x_1$ and $x_1-\beta> 2(x_0-\beta)$.
Choosing $x_0$ close enough to $\beta$, we have
$$R:=f_0(x_0)=x_1+\beta<b.$$

For a bounded open interval $J\subset \R$, let $D_*(J)$ denote the Euclidean disk with $J$ as a diameter.
\begin{lemma}\label{lem:flatext} The map $f_0: (-x_0,0)\cup (0,x_0)\to (0, R)$ extends to an unbranched holomorphic covering map $F_0:U\to B^*(0,R)$, where
$U\subset D_*((-x_0,0))\cup D_*((0,x_0))$.
In particular, $\diam (U)=2x_0 <R$.
\end{lemma}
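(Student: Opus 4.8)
The statement concerns the fixed branch structure of $f_0(x) = be^{-1/|x|^\ell}$, restricted to the symmetric interval $(-x_0, 0) \cup (0, x_0)$, mapping onto the punctured disk $B^*(0, R)$. The strategy is to exploit the even symmetry $f_0(-x) = f_0(x)$ together with the change of variables $x \mapsto |x|^\ell$ to reduce to a concrete covering of one-punctured (or slit) disks, and then to control the image of a suitable complex neighborhood of the real interval $(0, x_0)$ under the composition.

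First I would work on the positive side. Write $f_0(x) = b e^{-x^{-\ell}} + 0$ for $x > 0$; with $u = x^\ell$ this becomes $b e^{-1/u}$, a univalent map on a right half-plane-type region. The key point is to find an open set $U_+ \subset D_*((0, x_0))$ containing $(0, x_0)$ whose image under the holomorphic extension of $f_0$ equals $B^*(0, R) = B(0, R) \setminus \{0\}$, as an unbranched covering (the puncture at $0$ corresponds to $x \to 0$, where $e^{-1/x^\ell} \to 0$ non-uniformly, so it is not a branch point but a genuine asymptotic value, hence the "covering of the punctured disk" rather than the full disk). Concretely, $f_0$ extends holomorphically to a neighborhood of $(0, x_0)$ in $\C$ because $x \mapsto e^{-1/x^\ell}$ is holomorphic away from the negative real axis and $0$; I would take $U_+$ to be the connected component containing $(0,x_0)$ of the preimage $f_0^{-1}(B^*(0,R))$ inside a thin complex neighborhood of $(0, x_0)$. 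The recipe $R = f_0(x_0) = x_1 + \beta$ was chosen precisely so that $f_0$ maps $(0, x_0)$ bijectively onto $(0, R)$ with $f_0(x_0) = R$ on the boundary, so the real slice already matches; the task is to check the disk inclusion $U_+ \subset D_*((0,x_0))$, i.e. that the component does not bulge outside the half-disk on $(0,x_0)$. This follows from a Schwarz-lemma / subordination estimate: $e^{-1/x^\ell}$ maps the half-disk $D_*((0, x_0))$ into a region of very small modulus (since $\mathrm{Re}(1/x^\ell)$ is large and positive there when $|x|$ is small and $x$ is near the positive axis), so in fact $f_0(D_*((0,x_0))) \subset B(0, R)$ once $b$ and $x_0$ are in the stated ranges, and then $U_+ = D_*((0,x_0)) \cap f_0^{-1}(B^*(0,R))$ works, with the covering being unbranched because $f_0' \ne 0$ on $U_+$ (the derivative of $e^{-1/x^\ell}$ vanishes nowhere off the singular locus).

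Then I would handle the negative side by symmetry: set $U_- = -U_+ := \{-z : z \in U_+\}$; since $f_0(-z) = f_0(z)$, the map $f_0|_{U_-}$ is also an unbranched holomorphic covering onto $B^*(0, R)$, and $U_- \subset D_*((-x_0, 0))$. Putting $U = U_+ \cup U_-$ (a disjoint union, as $U_\pm$ sit in disjoint half-disks) gives $F_0 = f_0|_U : U \to B^*(0, R)$ an unbranched covering of degree twice that of each piece, extending the real map $f_0: (-x_0, 0) \cup (0, x_0) \to (0, R)$. Finally $\diam(U) \le \diam(D_*((-x_0,0)) \cup D_*((0,x_0))) = 2x_0$, with equality since the two half-disk diameters span the segment $(-x_0, x_0)$; and $2x_0 < R = x_1 + \beta$ follows from $x_1 - \beta > 2(x_0 - \beta)$, which rearranges to $x_1 + \beta > 2x_0$ exactly.

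\textbf{Main obstacle.} The delicate point is verifying the inclusion $U_+ \subset D_*((0,x_0))$ — equivalently, that the holomorphic extension of $f_0$ does not map points of the half-disk $D_*((0,x_0))$ outside $B(0,R)$, forcing the preimage component to stay inside. This requires a genuine estimate on $|e^{-1/z^\ell}|$ for $z$ in a half-disk based on an interval $(0, x_0)$ with $x_0$ slightly larger than $\beta < \ell^{1/\ell}$: one needs $\mathrm{Re}(z^{-\ell}) \ge 0$ (or bounded below) throughout the half-disk so that $|e^{-1/z^\ell}| \le 1$ there, giving $|f_0(z)| \le b$, and then a sharper bound pushing this below $R$. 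The half-disk on $(0, x_0)$ lies in the sector $|\arg z| < \pi/2$, so $\mathrm{Re}(z^\ell)$ could be negative if $\ell > 1$ and $z$ is near the imaginary-axis edge of the half-disk — but points near that edge have $|z|$ comparable to $x_0/2$, still small, and one checks $\mathrm{Re}(z^{-\ell}) = |z|^{-\ell}\cos(\ell \arg z)$; on the half-disk the worst case is $\arg z \to \pm\pi/2$ which is only attained in the limit at the endpoints $0, x_0$ where the estimate degenerates gracefully. Making this quantitative — choosing $x_0$ close enough to $\beta$ and using $b > 2(e\ell)^{1/\ell}$ — is the real content; the rest is bookkeeping with the symmetry and the arithmetic $2x_0 < R$.
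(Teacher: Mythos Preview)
Your overall architecture---reduce by symmetry to the positive side, factor through $u=z^\ell$, then reflect to get $U_-$, and deduce $\diam(U)=2x_0<R$ from $x_1-\beta>2(x_0-\beta)$---matches the paper's, and your arithmetic for the diameter is correct.

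The gap is in the key inclusion $U_+\subset D_*((0,x_0))$. You try to obtain it by proving $f_0(D_*((0,x_0)))\subset B(0,R)$, arguing that $\mathrm{Re}(z^{-\ell})>0$ on the half-disk. This is false for $\ell>1$: near $0$ on the boundary of $D_*((0,x_0))$ one has $\arg z\to\pi/2$, hence $\arg(z^{-\ell})\to -\ell\pi/2$, and for interior points nearby $\mathrm{Re}(z^{-\ell})=|z|^{-\ell}\cos(\ell\arg z)\to -\infty$. Thus $|f_0(z)|=b\,e^{-\mathrm{Re}(z^{-\ell})}\to+\infty$ there, so $f_0$ is unbounded on $D_*((0,x_0))$ and the inclusion you want cannot be obtained this way. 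Your own ``main obstacle'' paragraph correctly locates the difficulty but the suggested resolution (``degenerates gracefully'') does not hold.

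The paper's fix is to reverse the logic: instead of starting from $D_*((0,x_0))$ and pushing forward, start from the $u$-variable. Let $\Phi(z)=z^\ell$ be the conformal map from the sector $\{|\arg z|<\pi/(2\ell)\}$ onto the right half-plane, and \emph{define} $U^+:=\Phi^{-1}\bigl(D_*((0,x_0^\ell))\bigr)$. On this set $F_0(z)=b\,e^{-1/\Phi(z)}$ is the composition of the biholomorphism $\Phi:U^+\to D_*((0,x_0^\ell))$ with $u\mapsto b\,e^{-1/u}$, and the latter maps $D_*((0,x_0^\ell))\setminus\{0\}$ onto $B^*(0,R)$ as an unbranched covering (via $u\mapsto 1/u$ to a half-plane, then the exponential). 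The inclusion $U^+\subset D_*((0,x_0))$ then follows from the Schwarz--Sullivan lemma: if $w\in D_*((0,1))$ then $w^{t}\in D_*((0,1))$ for $0<t<1$, applied with $t=1/\ell$ after rescaling. So the Schwarz lemma is used on the power map $u\mapsto u^{1/\ell}$, not on $f_0$ itself. (Incidentally, $u\mapsto b\,e^{-1/u}$ is not univalent as you state; it is an infinite-sheeted covering of the punctured disk, which is exactly what is needed.)
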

\begin{proof} Let $\Phi(re^{i\theta})= r^\ell e^{i\ell \theta}$ denote the conformal map from the sector $\{re^{i\theta}: |\theta|<\pi/(2\ell)\}$ onto the right half plane, let $U^+=\Phi^{-1}(D_*((0, x_0^\ell)))$. As usual, the Schwarz Lemma, implies $U^+\subset D_*((0,x_0))$.  Define $U^-=\{-z: z\in U^+\}$, $U=U^+\cup U^-$ and
$$F_0(z)=\left\{\begin{array}{ll}
b e^{-1/\Phi(z)} & \mbox{ if } z\in U^+\\
be^{-1/\Phi(-z)} & \mbox{ if } z\in U^-.
\end{array}
\right.
$$
It is straightforward to check that $F_0$ maps $U^+$ (resp. $U_-$) onto $B^*(0,R)$ as an un-branched covering.
\end{proof}

\begin{proof}[Proof of Theorem~\ref{thm:flat}]
Let $c\ge -\beta$ be such that $f_c^q(0)=0$ for some $q\ge 1$. We need to show that $\sum_{n=0}^{q-1}1/Df_c^i(c)>0$.
So it suffices to show that the assumptions of  Theorem~\ref{single} are satisfied.
To see this, let  $F_0\colon U\to B^*(0,R)$ be the map given by Lemma~\ref{lem:flatext},  and define $F_c=F_0+c$ and $V=B(c, R)$.
Since $-\beta\le c\le 0$, we have that $P:=\{f_c^i(0): 1\le i<q\}\subset U$. Moreover,
$F_c\colon U\to V\setminus \{c\}$ is an  unbranched covering. Finally,  Lemma~\ref{lem:flatext} gives $\diam(U)<R$ and
thus we obtain that
(\ref{1robust}) holds.
\end{proof}

\subsection{Families of the form $f_a(x)=af(x)$}\label{subsec:multiplicative}

There are quite a few papers which ask the question:
\begin{quote}For which interval maps $f$, has one monotonicity of the entropy
for the family  $x\mapsto af(x)$?
\end{quote}
This question is subtle, as the
counter examples to various conjectures show, see \cite{NY, Ko, Br, Zd}.
In this section we will obtain monotonicity and transversality for a very large class of maps $f$.

As usual we say that $v\in \C$ is a {\em singular value} of a holomorphic map $f: D\to \C$ if it is a critical value,
or an asymptotic value where the latter means the existence of a path $\gamma: [0,1)\to D$ so that $\gamma(t)\to \partial D$ and $f(\gamma(t))\to v$ as $t\to 1-$.

Consider holomorphic maps $f: D\to \C$ such that:
\begin{enumerate}
\item[(a)] $D$ is a domain which is symmetric w.r.t. $\R$ and $D\cap \R=I$ where $I$ is an open interval (finite or infinite), $0\in I$;
\item[(b)] $f(I)\subset \R$, $f(D)=\C$ and the only possible (finite) asymptotic value of $f$ is $0$;
\item[(c)] $f(0)=0$;
\end{enumerate}
Let $\mathcal{E}$ be the class of maps which satisfy $(a)$,$(b)$,$(c)$ and assumption $(d)$:
\begin{enumerate}
\item[(d)] the only critical values of $f$ are $1$ and, perhaps, $0$ and there exists a  minimal $c>0$ such that $f$ has a positive local maximum at $c$.
\end{enumerate}
Similarly let  $\mathcal{E}_o$ be the class of maps which satisfy $(a)$,$(b)$,$(c)$ and assumption $(e)$:
\begin{enumerate}
\item[(e)] $f$ is odd, the only critical values of $f$ are $\pm 1$ and, perhaps, $0$ and there exists a minimal $c>0$ such that $f$ has a positive local maximum at $c$.
\end{enumerate}

Classes $\mathcal{E}$ and $\mathcal{E}_o$ are rich even in the case $D=\C$. See \cite{GO} for a general method of constructing entire (or meromorphic) functions with prescribed asymptotic and critical values.
These classes are also non-empty if the domain $D$ is a topological disk or even if $D$ not simply-connected \cite{Er}.

Note that for $f$ in class $\mathcal E\cup \mathcal{E}_0$, $f(c)=1$. Put $b=\sup\{x\in I: x>c, f(x)>0\}$. Then $f'(x)<0$ holds for $x\in (c, b)$ and thus $f: [0,b)\to \R$ is unimodal.
Let us call $J=[0,b)$ the {\em unimodal part} of $f$.

Examples of entire functions $f$ of the class $\mathcal{E}$
are
\begin{itemize}
\item $f(z)=4z(1-z)$,
\item  $f(z)=4\exp(z)(1-\exp(z))$,
\item $f(z)=[\sin(z)]^2$,
\item  $f(z)=m^{-m} (ez)^m\exp(-z)$ when $m$ is a positive even integer.
\end{itemize}
Examples  of maps in the class $\mathcal{E}_o$ are
\begin{itemize}
\item $f(z)=\sin(z)$ and
\item $f(z)= (m/2)^{-m/2}e^{m/2}z^m \exp(-z^2)$ when $m$ is a positive odd integer.
(in fact, $z^m\exp(-z^2)$ and $z^m\exp(-z)$ are conjugated by $z\mapsto 2z^2$).
\end{itemize}

Using qs-rigidity, it was already shown in \cite{RS} that the topological entropy of $\R\ni x \mapsto af(x)$
is monotone $a$, where $f(x)=\sin(x)$ or more generally $f$ is real, unimodal and entire on the complex plane and satisfies a certain sector condition.
Here we strengthen and generalise this result as follows:

\begin{theorem}\label{thm:classE}
Let $f$ be either in $\mathcal{E}$ or in $\mathcal{E}_o$. Assume that the critical point $c>0$ is either periodic or eventually periodic for $f_a(x)=af(x)$ where $0<a<b$.
Then the transversality properties (\ref{eq:trans}) resp.
(\ref{eq:trans2}) hold for the family $f_a(x)$.
In particular, the kneading sequence of the  family $f_a(x): J\to \R$ is 
monotone increasing. 
\end{theorem}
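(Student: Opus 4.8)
The plan is to reduce Theorem~\ref{thm:classE} to Theorem~\ref{single} by constructing, for the family $f_a(x)=af(x)$, a marked map together with a local holomorphic deformation of the form $G_w(z)=g(z)+(w-g(0))$ — or rather the multiplicative analogue — and then verifying the separation property (\ref{1exp}) (or its robust version (\ref{1robust})). The first step is a change of coordinates: since $f$ has its relevant critical value normalized to $1$ and a local maximum at $c>0$ with $f(c)=1$, the natural move is to track the orbit of the critical point $c$ under $f_a$. Writing $g=f_{a_*}$ for the parameter $a_*$ at which $c$ is (eventually) periodic, the marked set is $P_0=\{c\}$ (or $P_0=\{c,-c\}$ in the odd case $\mathcal{E}_o$, using the symmetry to reduce back to one effective critical orbit), and $P$ is the forward orbit of $c$ together with its limit cycle. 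The deformation is $G_w(z)=\frac{w}{f(c)}f(z)=w f(z)$ along the one-dimensional parameter $w$ near $c_1=g(c)=a_*$, with $\textbf{p}(w)\equiv c$; note $c$ itself does not move with the parameter, so we are exactly in the situation where (\ref{eq:trans2}) reduces to (\ref{eq:trans}).

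Next I would set up the covering structure required by Theorem~\ref{single}. By assumption (b), $f:D\to\C$ is surjective with $0$ the only finite asymptotic value and (by (d)/(e)) only $1$ (and possibly $0$) as critical values; restricting to the component $U_g$ of $f^{-1}(V\setminus\{c_1\})$ — more precisely to the preimage of an appropriate punctured disk — we want $f:U_g\setminus\{c\}\to V\setminus\{c_1\}$ to be an unbranched covering. The key geometric input is to choose the disk $V=B(c_1;\diam(U_g))$ so that $U_g\subset V$: this is where the normalization $f(c)=1$ and the definition of the unimodal part $J=[0,b)$ enter. The orbit of $c$ under $f_{a_*}$ with $0<a_*<b$ stays inside the unimodal part, hence inside a controlled real interval, and the complex neighbourhood $U_g$ can be taken inside a disk based on that interval — analogous to Lemma~\ref{lem:flatext}, where the sector/Schwarz-lemma argument confined the domain to $D_*((0,x_0))$. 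One uses that $|f|$ grows and in particular $\diam(U_g) < R = $ (diameter of the target disk), so that (\ref{1exp}), and with a little room to spare (\ref{1robust}), holds.

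With the separation property in hand, Theorem~\ref{single} gives that the spectrum of $\mathcal{A}$ lies in $\overline{\D}\setminus\{1\}$ (or strictly inside $\D$ under the robust version), and then Corollary~\ref{real} yields the positively oriented transversality condition (\ref{eq:trans})/(\ref{eq:trans2}). Monotonicity of the kneading sequence along $a\mapsto f_a|_J$ follows from the ``Positive transversality'' criterion recorded in the introduction: whenever $c$ is periodic of period $q$ at $a=a_*$, the sign of $\frac{d}{da}f_a^q(c)$ at $a_*$ is determined by (\ref{eq:trans}), which forces the kneading sequence to strictly increase through that parameter, and combined with the fact that there are no other obstructions (the map is genuinely unimodal on $J$) this gives global monotonicity. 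In the odd case one additionally notes that the two critical relations coming from $\pm c$ are exchanged by the symmetry $z\mapsto -z$, so the relevant matrix (\ref{eq:trans2}) is controlled by a single scalar computation.

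I expect the main obstacle to be the verification of the separation property (\ref{1exp}): one must choose the domains $U_g\subset D$ and $V\subset\C$ carefully so that $f$ restricts to an unbranched covering $U_g\setminus\{c\}\to V\setminus\{c_1\}$ \emph{and} the diameter bound $U_g\subset B(c_1;\diam(U_g))\subset V$ holds simultaneously. This requires genuine control of the geometry of $f$ near its critical point and near the asymptotic value $0$ — exploiting that the only critical value in play is $1$, that the local maximum at $c$ is the first one, and that the parameter range $0<a<b$ keeps the whole critical orbit inside the unimodal part. For domains $D$ that are merely topological disks (or even non-simply-connected, as allowed in the class), one must also be careful that the relevant preimage component is simply connected so that the holomorphic lift in the proof of Theorem~\ref{single} exists; this is where the precise definition of the classes $\mathcal{E},\mathcal{E}_o$ and the choice of $b$ do the work.
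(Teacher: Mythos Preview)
Your overall plan---set up the marked map with $P_0=\{c\}$, take the deformation $G_w(z)=wf(z)$ with $p(w)\equiv c$, verify a lifting property, and then invoke Theorem~\ref{thm:1eigen} and Corollary~\ref{real}---matches the paper. But the specific route you propose, reducing to Theorem~\ref{single} and verifying the ball-type separation condition~(\ref{1exp}), will not go through, and this is not merely a technical obstacle but the wrong geometry.

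Theorem~\ref{single} is tailored to bounded domains: it asks for a bounded $U_g$ with $P\setminus\{0\}\subset U_g$ and the inclusion $B(c_1;\diam(U_g))\subset V$. For $f\in\mathcal{E}$ (think $f(z)=\sin^2 z$ or $f(z)=4z(1-z)$ restricted appropriately, or any entire example), the natural covering is $f:D\setminus f^{-1}(\{0,1\})\to\C\setminus\{0,1\}$, and there is no canonical bounded piece $U_g$ on which $f$ is still an unbranched covering onto a punctured disk \emph{and} whose diameter is controlled by the distance from $c_1$ to $\partial V$. Your sketch of a Schwarz-lemma/sector argument confining $U_g$ to a disk, by analogy with Lemma~\ref{lem:flatext}, does not transfer: in the flat-critical-point example the map had a very specific local form $be^{-1/|x|^\ell}$ which forced the preimage into a small sector, whereas a general $f\in\mathcal{E}$ has no such structure.

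The paper instead works globally in $\C_*=\C\setminus\{0\}$. One takes $U=D\setminus\{0\}$, $V=\C_*$, and observes that for each $w\in\C_*$ the map $G_w:U\setminus G_w^{-1}(w)\to\C_*\setminus\{w\}$ is an unbranched covering (since the only finite singular values of $f$ are $0$ and $1$). Given a holomorphic motion $h_\lambda$ of $g(P)$ over a simply connected $\Delta$ with $h_\lambda(x)\in\C_*$, the lift $\widehat h_\lambda$ is obtained by solving $f(\widehat h_\lambda(x))=h_\lambda(g(x))/h_\lambda(g(c))$; this lands in $\C_*\setminus\{1\}$, so the covering property gives the lift on all of $\Delta$. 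The crucial boundedness---the part you were trying to get from~(\ref{1exp})---comes instead from Montel's theorem: each $h^{(k)}_\lambda(x)$ for $x\ne c$ omits the two values $0$ and $c$, hence the family is normal. In the odd case $\mathcal{E}_o$ the same argument works once one additionally tracks the condition $h_\lambda(x)\ne -h_\lambda(y)$, which is preserved under lifting because $f$ is odd. This is the content of the general separation framework of Section~\ref{sec:separation} with the choice $S_x=\C_*$ for all $x$; no diameter bound is needed or available.
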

\begin{proof}
The proof is an easy application of Theorem~\ref{thm:holo}.
Let $f\in \mathcal{E}\cup \mathcal{E}_o$. Denote $g(x)=af(x)$.
Let $P_0=\{c\}$, $P=\{c_i=g^i(c): i\ge 0\}$.
By the assumptions, $g$ extends to a holomorphic map $g: D\to \C$, $g(P)\subset P$ and $Dg(x)\not=0$ for any $x\in P\setminus P_0$. In particular, $g$ is an extension of a real marked map.
For each $w\in W:=\C_*=\C\setminus \{0\}$, $G_w(z):=wf(z)$ is a branched covering from $U=D\setminus \{0\}$ onto $V=\C_*$. Define $p(w)\equiv c$. Then $(g, G, p)_W$ is an (extension) of a local holomorphic deformation of $g$. It suffices to prove that $(g, G, p)_W$ has the lift property so that Theorem~\ref{thm:1eigen} applies. Note that the second alternative in the conclusion of Theorem~\ref{thm:1eigen} clearly fails in our setting.

Let us first consider the case $f\in \mathcal{E}$. In this case, $w$ is the only critical value of $G_w$. Given a simply connected domain $\Delta\ni 0$ in $\C$, let $\mathcal{M}_\Delta$ denote the collection of all holomorphic motions $h_\lambda$ of $g(P)$ over $(\Delta,0)$ with the following properties: $h_\lambda(x)\in U$ for all $x\in g(P)$ and $\lambda\in \Delta$. Given such a holomorphic motion, for each $x\in g(P)$ there is a holomorphic map $\lambda\mapsto \widehat{h}_\lambda(x)$, $\lambda\in\Delta$, with $\widehat{h}_0(x)=x$ and such that
$f(\widehat{h}_\lambda(x))=h_\lambda(f(x))/h_\lambda(g(c))$. Indeed, for $x=c$, takes $\widehat{h}_\lambda(x)\equiv c$ and for $x\in g(P)\setminus\{c\}$, we have $h_\lambda(f(x))/h_\lambda(g(c))\in V\setminus \{1\}$ so the existence of $\widehat{h}_\lambda$ follows from the fact that
$f: U\setminus f^{-1}(1)\to V\setminus \{1\}$ is an unbranched covering.  Clearly, $\widehat{h}_\lambda$ is a holomorphic motion in $\mathcal{M}_\Delta$ and it is a lift of $h_\lambda$ over $\Delta$.
It follows that $(g, G, p)_W$ has the lift property. Indeed, if $h_\lambda$ is a holomorphic motion of $g(P)$ over $(\Lambda, 0)$ for some domain $\Lambda\ni 0$ in $\C$, then we can take a small disk $\Delta\ni 0$ such that the restriction of $h_\lambda$ on $(\Delta, 0)$ is in the class $\mathcal{M}_\Delta$. Therefore, there exists a sequence of holomorphic motions $h_\lambda^{(k)}$ of $g(P)$ over $(\Delta,0)$ such that $h_\lambda^{(0)}=h_\lambda$ and $h_\lambda^{(k+1)}$ is a lift of $h_\lambda^{(k)}$ over $\Delta$ for each $k\ge 0$. If $x=c$ then $h_\lambda^{(k)}(x)\equiv c$ for each $k\ge 1$ while if $x\in g(P)\setminus \{c\}$, $h_\lambda^{(k)}(x)$ avoids values $0$ and $c$. Restricting to a small disk, we conclude by Montel's theorem that $\lambda \mapsto h_\lambda^{(k)}(x)$ is bounded.

The case $f\in\mathcal{E}^o$ is similar. In this case, $G_w$ has two critical values $w$ and $-w$, but it has additional symmetry being an odd function. Given a simply connected domain $\Delta\ni 0$ in $\C$, let $\mathcal{M}_\Delta^o$ denote the collection of all holomorphic motions $h_\lambda$ of $g(P)$ over $(\Delta,0)$ with the following properties: for each $\lambda\in \Delta$,
\begin{itemize}
\item $h_\lambda(x)\in U$ for all $x\in g(P)$;
\item $h_\lambda(x)\not=-h_\lambda(y)$ for $x, y\in g(P)$ and $x\not=y$.
\end{itemize}
Then similar as above, we show that each $h_\lambda$ in $\mathcal{M}_\Delta^o$ has a lift which is again in the class $\mathcal{M}_\Delta^o$. It follows that $(g, G, p)_W$ has the lift property.
\end{proof}

\section{The separation property in a more general setting}\label{sec:separation}
In this section we present a unified set-up to treat examples like in Section~\ref{sec:oneparameter}. We give some applications of this quite general scheme later in the paper.

\subsection{Holomorphic covering maps and their deformations}\label{subsec:localdefn}
We use $\HH$ to denote all the triples $(g, C(g), \textbf{v})$ where $g:U_g\to V_g\subset \C$ is a holomorphic map,  $C(g)$ is a discrete subset of $\overline{U_g}$, $\textbf{v}\in \C^\nu$ for some $\nu\in \{1,2,\ldots\}$,  such that
\begin{itemize}
\item $U_g$ is a finite union of pairwise disjoint open sets $U_g^{(i)}$ in $\C$, $i=1,2,\ldots,m$ and $V_g$ is a union of open sets $V_g^{(i)}$;
\item for each $i$, $g: U_g^{(i)}\to V_g^{(i)}:=g(U_g^{(i)})$ is non-constant,
and
$g: U_g^{(i)}\setminus g^{-1}(C^v(g))\to V_g^{(i)}\setminus C^v(g)$ is an un-branched covering, where $C^v(g)$ is the set consisting of all coordinates of $\textbf{v}$;
\item
\begin{equation}\label{ccv}
\{c\in U_g: g'(c)=0\}\subset C(g)\cap U_g \subset g^{-1}(C^v(g)).
\end{equation}
\end{itemize}
We shall call such a map $g$ a {\em holomorphic branched covering}, $C(g)$ the {\em singular point set} and $\textbf{v}$ the {\em singular value vector}.
Note that here $V_g$ can be a bounded subset of $\C$.

A {\em local holomorphic deformation}
of $(g, C(g),\textbf{v})$ is a triple $(g, G, \textbf{p})_W$, where $W$ is a neighbourhood of $\textbf{v}$ in $\C^\nu$ and
\begin{enumerate}[topsep=0.1cm,itemsep=0.05ex,leftmargin=0.6cm]
\item $\textbf{p}=\{p_k\}_{k=1}^N$ where $N=\# C(g)$, each map $p_k:W\to \C$ is holomorphic and furthermore, $p_k(w)\not=p_{k'}(w)$ for all $w\in W$ and all $k\not=k'$;
\item there exists a positive integer $m$ and for each $w\in W$, there is a holomorphic map $G_w: U_w\to \C$, where $U_w$ is a union of pairwise disjoint non-empty
open sets $U_w^{(i)}$ in $\C$, $i=1,2,\ldots, m$, such that $(G_w, C(G_w),w)\in\HH$, where $C(G_w)=\{p_j(w): 1\le j\le N\}$;
\item $G_{\textbf{v}}=g$ and $C(g)=C(G_{\textbf{v}})$;
\item \label{item4}
If $p_j(\textbf{v})\in U_g$ then $p_j(w)\in U_w$ for all $w\in W$ and if $p_j(\textbf{v})\not\in U_g$ then $p_j(w)\not\in U_w$ for all $w\in W$;
\item For each $i=1,2,\ldots, m$, $\mathcal{U}^{(i)}=\{(w,z)\in \C^{\nu+1}: w\in W, z\in U_w^{(i)}$ is open in $\C^{\nu+1}$. Defining the map $G(w,z)= (w, G_w(z)$,
    and
$$C^v(G)=\{(w_1,w_2,\ldots, w_\nu, w_j)| (w_1,w_2,\ldots, w_\nu)\in W, 1\le j\le \nu\},$$
    then
     $\mathcal{V}^{(i)}=G(\mathcal{U}^{(i)})$ is
     open in $\C^{\nu+1}$
     and
$$G: \mathcal{U}^{(i)}\setminus G^{-1}(C^v(G))\to \mathcal{V}^{(i)}\setminus C^v(G)$$
is an unbranched covering map;
\item \label{item2}
for each $i=1,\cdots,m$, there is {\it no} path $\gamma:[0,1)\to \mathcal{U}^{(i)}$, $\gamma(t)=(w(t),z(t))$ as follows:
$w(t)=(w_1(t),\cdots,w_\nu(t))$ where $w_j(t)=w_{j'}(t)$ if and only if $v_j=v_{j'}$,
$G_{w(t)}(z(t))=w_j(t)$ and $z(t)\notin C(G_{w(t)})$ for some $j$ and all $t$, finally,
as $t\to 1$, $w(t)\to w_\ast\in W$ and $z(t)\to \partial U^{(i)}_{w_\ast}\cup C(G_{w_\ast})$.
%
\end{enumerate}
In particular, by the property (4) and (\ref{ccv}), we have
\begin{remark}\label{ccvdef}
If $p_r(\textbf{v})\in U_g$ for some $r$, then there is $j=1,\cdots,\nu$ such that
$G_w(p_r(w))=w_j$ for all $w\in W$.
\end{remark}

Note that this definition allows the domain and range of the maps $G_w$,
as well as the sets $C(G_w),C^v(G_w)$ to depend on  $w$. Condition (\ref{item2}) rules out the situation
that some preimage of a singular value which is not in $C(G_w)$ moves to the boundary or collides with a point of $C(G_w)$ along some curve in $W$.

\begin{remark}\label{rmk:fexample}
In particular, the conditions hold if $G_{w}$ is a local analytic family of finite ramified coverings
with constant multiplicities at the critical points. Other non-trivial families include
\begin{itemize}
\item In the set-up of Theorem~\ref{single}, $(g:U_g\to \C, \{0\}, c_1)\in \HH$ and the triple $(g, G_w, \textbf{p})_{U_g}$ is a local holomorphic deformation of $(g, \{0\}, c_1)$.
\item In the set-up of Theorem~\ref{thm:classE} with $f\in\mathcal{E}$, putting $C(g)=\{z\in D: g'(z)=0\}=\{p_j\}_{j=1}^N$, we have $(g|U, C(g), \{1\})\in \HH$. Moreover, putting $p_j(w)\equiv p_j$, the triple $(g, G_w, \{p_j\})_W$ is a local holomorphic deformation of $(g|U, C(g), \{1\})$.
\end{itemize}
\end{remark}

\subsection{Global deformations of maps in $\HH$ and the separation property}\label{subsec:globaldeform}
Let us fix a marked map $g_0$ as in Section 2.  Assume that $g_0: U\to \C$ extends to a holomorphic map $g:U_g\to V_g$ and there exists a discrete subset $C(g)$ of $\C$  such that $P_0\subset C(g)$ and
$(g, C(g), \textbf{v})\in\HH$, where $\textbf{v}=\textbf{c}_1(g_0)$.
(In particular, $P\setminus P_0\subset U_g$ and $P_0\subset \overline{U}_g$). 
In this subsection we will define the notion of a (global) holomorphic deformation in the current
more general setting.


Let
$\mathbb{S}=\{S_x\}_{x\in g(P)}$ be a collection of connected open subsets of $\C$.
Let
\begin{equation*}
S_\ast^\nu:= \left\{(w_1,\cdots,w_\nu)| w_j\in S_{c_{1,j}}
\text{ and } w_j=w_{j'} \text{ if and only if } c_{1,j}=c_{1,j'}\right\}.
\end{equation*}
For a complex manifold $\Lambda$ with $\ast\in \Lambda$,
a local holomorphic deformation $(g,G, \textbf{p})_W$ of $(g, C(g),\textbf{v})$ {\em determines a (global) holomorphic deformation} over $(\Lambda,\ast)$ {\em for $\mathbb{S}$}, if the following holds: for any holomorphic map
$\rho\colon \Lambda  \to S_\ast^\nu$, there exist holomorphic functions $p_{\rho, j}:\Lambda\to \C$, and for each $\lambda\in \Lambda$, there exists a holomorphic map $g_{\rho,\lambda}: U_{\rho,\lambda}\to \C$ such that
\begin{itemize}
\item $(g_{\rho, \lambda},\textbf{p}_{\rho}(\lambda), \rho(\lambda))\in\HH$, where  $\textbf{p}_{\rho}=\{p_{\rho, j}\}_{j=1}^N$
 for all $\lambda\in \Lambda$;
\item $g_{\rho,\lambda}=G_{\rho(\lambda)}$, $\textbf{p}_\rho(\lambda)=\textbf{p}_{\rho(\lambda)}$ when $\rho(\lambda)\in W$ (so for $\lambda$ close to $\ast$);
\item  for each $\lambda_0\in \Lambda$, there exists a local holomorphic deformation $(g',G',\textbf{p}')_{W'}$ of $(g_{\rho,\lambda_0},\textbf{p}_\rho(\lambda_0), \rho(\lambda_0))$  so that
$g_{\rho,\lambda}=G'_{\rho(\lambda)}$ and $\textbf{p}'(\rho(\lambda))=\textbf{p}_\rho(\lambda)$ for $\lambda$ close to $\lambda_0$.
\end{itemize}
\begin{remark}\label{monodromy}
In other words, such a global holomorphic deformation is determined through analytic continuation by the local deformations $(g',G',\textbf{p}')_{W'}$.
If $\Lambda$ is simply connected, then by the Monodromy Theorem,
it is enough to check $(g,G,\textbf{p})_W$ admits an analytic continuation by the local deformations along every arc in $S^\nu_\ast$.
\end{remark}


We say that $(g,G, \textbf{p})_W$ has the {\em separation property} {\em with respect to the collection $\mathbb{S}$} if it determines global holomorphic deformations
over $(\D,0)$ for $\mathbb{S}$ and
for any holomorphic $\rho\colon \D\to S_\ast^\nu$, the corresponding global family $g_{\rho,\lambda}\colon \cup_{i=1}^m U^{(i)}_{g_{\rho,\lambda}}\to \cup_{i=1}^m V^{(i)}_{g_{\rho,\lambda}}$ and $p_{\rho, j}$,
one has, for all $\lambda\in \D$

\begin{equation}\label{extincl}
U_{g_{\rho,\lambda}}^{(i)}\subset S_x \text{ and } S_{g(x)}\subset V_{g_{\rho,\lambda}}^{(i)}
\end{equation}
whenever $x\in  g(P)\cap U^{(i)}_g$ and, moreover,
\begin{equation}\label{extincl'}
p_{\rho, j}(\lambda)\in S_x
\end{equation}
if $x=p_{\rho,j}(0)\in P_0\cap g(P)$.
In particular, $x\in S_x$ for all $x\in g(P)$.
It has {\it the robust separation property} if there exist $R>0$, $\epsilon>0$  such that
for any such $\rho\colon \Lambda\to S_\ast^\nu$ and $x,i$ as in~(\ref{extincl}),
$V^{(i)}_{g_{\rho,\lambda}}\subset B(0,R)$ and $V_{g_{\rho,\lambda}}^{(i)}$ contains an $\epsilon$-neighbourhood of $S_{g(x)}$.

Examples of triples with such separation properties include (cf Remark~\ref{rmk:fexample}):
\begin{itemize}
\item In the set-up of Theorem~\ref{single}, choosing $S_x= U$ for all $x\in g(P)$, the triple $(g: U_g\to\C, G_w, \textbf{p})_{U_g}$ determines a global holomorphic deformation over $(\D,0)$ in a trivial way and satisfies the separation property with respect to $\{S_x\}_{x\in g(P)}$.
\item In the set-up of Theorem~\ref{thm:classE} with $f\in\mathcal{E}$, choosing $S_x=\C_*$ for $x\in g(P)$, the triple $(g, G_w, \{p_j\})_W$ determines a global deformation over $(\D,0)$ in a trivial way and satisfies  the separation property with respect to $\{S_x\}_{x\in g(P)}$.
\end{itemize}

\subsection{The separation property implies the weak lifting property}
\begin{theorem}\label{thm:holo}
Let $g_0$ be a marked map with a local holomorphic deformation
$(g_0,G_0, \textbf{p}_0)_W$ and $\textbf{v}=\textbf{c}_1(g_0)$. Suppose that there is a holomorphic extension $g: U_g\to \C$ of $g_0$
and a discrete subset $C(g)$ of $\C$ such that $(g, C(g),\textbf{v})\in\HH$ and $(g_0, G_0, \textbf{p})$ extends to a local holomorphic deformation $(g, G, \textbf{p})_W$ of $(g, C(g)), \textbf{v})$, i.e.
\begin{itemize}
\item $G_w(z)=G_0(w,z)$ for all $(w,z)\in W\times U$,
\item $\textbf{p}_0=(p_1,\cdots,p_\nu)$ is a subset of $\textbf{p}=(p_1,\cdots,p_\nu,\cdots)$.
\end{itemize}
Assume that $(g,G,\textbf{p})_W$ has the separation property with respect to some $\textbf{S}=\{S_x\}_{x\in g(P)}$. Then
\begin{itemize}
\item  the triple $(g_0,G_0, \textbf{p}_0)_W$ has the weak lifting property. 
\item If $\C\setminus S_x$ contains at least $2$ points for each $x\in g(P)$,  then $(g_0,G_0, \textbf{p}_0)_W$ has the lifting property, in particular, the alternative in the conclusion of Theorem~\ref{thm:1eigen} holds.
\item If, moreover, $(g,G, \textbf{p})_W$ has the robust separation property, then the spectral radius ofp the associated operator $\mathcal{A}$ is strictly less than $1$.
\end{itemize}
\end{theorem}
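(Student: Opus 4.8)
The plan is to construct, for any (real) holomorphic motion $h^{(0)}_\lambda$ of $g(P)$ over $(\Lambda,0)$, the sequence of lifts $h^{(k)}_\lambda$ directly from the separation property, and to check that these lifts land inside the sets $S_x$ (so that the relevant preimage branches exist and are single-valued) and are uniformly bounded. First I would record the following reduction: a holomorphic motion $h^{(0)}_\lambda$ of $g(P)$ over $(\Lambda,0)$, after restricting to a small disk $\D_\eps$ around $0$, gives a holomorphic map $\rho(\lambda) = \textbf{c}_1(\lambda) = (h^{(0)}_\lambda(c_{1,1}),\dots,h^{(0)}_\lambda(c_{1,\nu}))$ into $S^\nu_\ast$ (this uses that $h^{(0)}_0 = \id$, the open condition defining $S^\nu_\ast$, and that injectivity of $h^{(0)}_\lambda$ forces $w_j = w_{j'}$ exactly when $c_{1,j} = c_{1,j'}$). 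Then the separation property produces the global family $g_{\rho,\lambda} = G_{\rho(\lambda)}$ and the functions $p_{\rho,j}$ with the inclusions~(\ref{extincl}) and~(\ref{extincl'}).

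Next I would perform the inductive lifting step. Suppose $h^{(k)}_\lambda$ is a holomorphic motion of $g(P)$ over $(\D_\eps,0)$ with $h^{(k)}_\lambda(x) \in S_x$ for all $x \in g(P)$ and $h^{(k)}_\lambda(\textbf{c}_1) \in W$ for $\lambda$ near $0$. To build the lift $h^{(k+1)}_\lambda$: for $x = c_{0,j} \in g(P)\cap P_0$ set $h^{(k+1)}_\lambda(c_{0,j}) = p_{\rho^{(k)},j}(\lambda)$, which by~(\ref{extincl'}) lies in $S_{c_{0,j}}$; for $x = g(x') \in g(P)\setminus P_0$, note $x' \in U^{(i)}_g$ for some $i$, and I use that $G : \mathcal U^{(i)}\setminus G^{-1}(C^v(G)) \to \mathcal V^{(i)}\setminus C^v(G)$ is an unbranched covering together with condition~(\ref{item2}) (which prevents the relevant branch from running to $\partial U^{(i)}$ or colliding with $C(G_w)$) to continue the branch of $G^{-1}_{\rho^{(k)}(\lambda)}$ through $h^{(k)}_\lambda(g(x'))$ starting from $x'$; simple connectivity of $\D_\eps$ makes this single-valued. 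By~(\ref{extincl}) the resulting point stays in $U^{(i)}_{g_{\rho^{(k)},\lambda}} \subset S_{x'}$. Shrinking $\eps$ once (independently of $k$, using that all the $S_x$ are fixed) ensures $h^{(k+1)}_\lambda(\textbf{c}_1) \in W$. This gives properties (1) and (2) of the lifting property, hence the weak lifting property, and in the real case the same construction with real motions gives the real weak lifting property.

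For the second bullet, if each $\C \setminus S_x$ contains at least two points, then each $S_x$ is hyperbolic, and the family $\{h^{(k)}_\lambda(x)\}_{k\ge0}$, being a family of holomorphic maps $\D_\eps \to S_x$, is normal by Montel's theorem; since all these maps fix $x$ at $\lambda = 0$ and are bounded on compact subsets, one extracts the uniform bound $|h^{(k)}_\lambda(x)| \le M$ on a slightly smaller disk — this is property (3), so the (real) lifting property holds, and Theorem~\ref{thm:1eigen} applies. For the third bullet, under the robust separation property I would apply the perturbation trick of Lemma~\ref{lem:perturb2spectrum}: choose the polynomial $Q$ as there and $\xi \in (0,1)$ small; the $\epsilon$-room around each $S_{g(x)}$ inside $V^{(i)}_{g_{\rho,\lambda}}$ and the uniform bound $V^{(i)}_{g_{\rho,\lambda}} \subset B(0,R)$ guarantee that for $\xi$ small enough the perturbed triple $(\varphi_\xi\circ g, \varphi_\xi\circ G, \textbf{p}\circ\psi_\xi)$ still satisfies the separation property with respect to the same (or slightly enlarged) collection $\mathbb S$, hence has the lifting property by the part just proved; Lemma~\ref{lem:perturb2spectrum} then bounds the spectral radius of $\mathcal A$ by $1 - \xi < 1$.

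I expect the main obstacle to be the third bullet: verifying that the perturbation $\varphi_\xi \circ g$ genuinely preserves the separation property — one must check that $\varphi_\xi$ moves the ranges $V^{(i)}$ by at most $O(\xi \cdot \diam)$ on the relevant compact region (which is where the uniform radius $R$ and the $\epsilon$-neighbourhood condition enter), that the global deformation structure and condition~(\ref{item2}) survive the perturbation, and that the bookkeeping with the maps $\psi_\xi$, $p_{\rho,j}$ is consistent. The covering-continuation step in the inductive lift is the other delicate point, since one must be careful that condition~(\ref{item2}) is exactly what rules out monodromy obstructions and escape to the boundary along the lift.
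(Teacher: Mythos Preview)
Your approach matches the paper's: reduce everything to showing that a motion $h_\lambda$ of $g(P)$ with $h_\lambda(x)\in S_x$ has a lift $\widehat h_\lambda$ over the \emph{same} disk again with $\widehat h_\lambda(x)\in S_x$, then iterate; Montel gives the uniform bound when $\C\setminus S_x$ omits two points; and Lemma~\ref{lem:perturb2spectrum} handles the robust case. Two points in your lifting step need more care, however.

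First, your covering argument does not cover all $x\in g(P)\setminus P_0$. When $g(x)$ equals some $c_{1,j}$, the target $h_\lambda(g(x))$ lies in $C^v(G)$, so you are outside the unbranched part of the covering. The paper splits this into two subcases. If $x\in C(g)\setminus P_0$, then $x=p_r(\textbf v)$ for some $r$ and, by Remark~\ref{ccvdef}, $G_w(p_r(w))=w_j$ identically; so the lift is simply $\widehat h_\lambda(x)=(\textbf p_\rho(\lambda))_r$, holomorphic by construction. If $x\notin C(g)$ but $g(x)=c_{1,j}$, \emph{then} condition~(\ref{item2}) is what forces the branch to continue without escaping to $\partial U^{(i)}$ or hitting $C(G_w)$. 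Your sketch invokes~(\ref{item2}) uniformly for all non-$P_0$ points, but for $x\in C(g)$ the hypothesis $z(t)\notin C(G_{w(t)})$ of~(\ref{item2}) already fails at $t=0$, so that route is unavailable there. (There is also a notational slip: you write ``$x=g(x')\in g(P)\setminus P_0$'' and then lift at $x'$; the element of $g(P)$ being lifted is $x$ itself, with $\widehat h_\lambda(x)$ a $G_{\rho(\lambda)}$-preimage of $h_\lambda(g(x))$ near $x$.)

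Second, you do not verify that $\widehat h_\lambda$ is a holomorphic \emph{motion}, i.e.\ that $\widehat h_\lambda(x_1)\ne\widehat h_\lambda(x_2)$ for $x_1\ne x_2$ and all $\lambda\in\D_\eps$. This is not automatic: two branches of $G^{-1}_{\rho(\lambda)}$ could in principle coalesce. The paper disposes of this by first observing (from (ii), (iii) above and property~(1) of the local-deformation definition) that a collision forces $x_1,x_2\in g(P)\setminus C(g)$, and then arguing that at an accumulation point $\lambda_*$ of the coincidence set the equation $g_{\rho,\lambda}(z)=h_\lambda(g(x_1))$ acquires a double root, so $\widehat h_{\lambda_*}(x_1)\in C(g_{\rho,\lambda_*})$, contradicting~(iii). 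This is short but essential and should appear in your plan.
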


\begin{proof}[Proof of Theorem~\ref{thm:holo}]
It is enough to prove that the triple $(g_0,G_0, \textbf{p}_0)_W$ has the weak lifting property. Indeed, if $\#(\C\setminus S_x)\ge 2$ for each $x\in g(P)$, the lifting property then follows from Montel's theorem. And if additionally $(g,G, \textbf{p})_W$ has the robust separation property,
Lemma~\ref{lem:perturb2spectrum} applies.   

Let $h_\lambda$ be a holomorphic motion of $g(P)$ over $(\D_\eps, 0)$ such that $h_\lambda(x)\in S_x$ for all $x\in g(P)$ and $\lambda\in \D_\epsilon$ and let $\widehat{h}_\lambda$ be the lift of $h_\lambda$ over 
$(\D_{\eps'},0)$ for some $\eps'\in (0,\eps)$. We shall prove that $\widehat{h}_\lambda$ extends to a holomorphic motion of $g(P)$ over $(\D_\eps, 0)$ and $\widehat{h}_\lambda(x)\in S_x$ holds for all $x\in g(P)$ and $\lambda\in \D_\eps$. Once this is proved, the weak lifting property follows.
%

Let $g_{\rho,\lambda}: \bigcup_{i=1}^m U^{(i)}_{\rho,\lambda}\to \C$ and $\textbf{p}_{\rho}(\lambda)$,
be the families corresponding to $\rho(\lambda)=(h_\lambda(c_{1,1}),\cdots,h_\lambda(c_{1,\nu}))$.
By the definition of lifting, for $\lambda\in \D_{\eps'}$, the following equations hold:
\begin{itemize}
\item for each $j=1,2, \cdots, \nu$ and $c_{0,j}\in g(P)$,
\begin{equation}\label{p0}
\widehat{h}_\lambda(c_{0,j})=(\textbf{p}_\rho(\lambda))_j;
\end{equation}
\item for each $x\in g(P)\setminus P_0$, we have
\begin{equation}\label{p}
g_{\rho,\lambda}(\widehat{h}_{\lambda}(x))=h_{\lambda}(g(x)).
\end{equation}
\end{itemize}
Let us first show that
\begin{enumerate}
\item [(i)] $\widehat{h}_\lambda(x)$ extends to a holomorphic function in $\D_\epsilon$ for each $x\in g(P)$;
\item [(ii)] for $x\in C(g)\cap g(P)$, $\widehat{h}_\lambda(x)\in C(g_{\rho,\lambda})$ and $\widehat{h}_\lambda(x)=(\textbf{p}_\rho(\lambda))_r$ for some $r$;
\item [(iii)] for $x\in g(P)\setminus C(g)$, $\widehat{h}_\lambda(x)\not\in C(g_{\rho, \lambda})$.
\end{enumerate}

{\em Case 1.} If $x=c_{0,j}\in g(P)$ for some $j$, the continuation of $\widehat{h}_\lambda(x)$ to $\D_\eps$ is defined by (\ref{p0}). Moreover, $(\textbf{p}(\lambda))_j\in S_x$ by (\ref{extincl'}).

{\em Case 2.} $x\in g(P)\setminus P_0$ and $g(x)\not\in \{c_{1,1}, \ldots, c_{1,\nu}\}$. Find $i\in \{1,\cdots,m\}$ so that $x\in U_g^{(i)}$.
Let $\mathcal{U}=\{(\lambda, z): \lambda\in \D_\eps, z\in U^{(i)}_{\rho,\lambda}\}$, $\mathcal{V}=\{(\lambda, z): \lambda\in \D_\eps, z\in g_{\rho,\lambda}(U^{(i)}_{\rho,\lambda})\}$,
$C^v(G)=\{(\lambda, z):z=h_\lambda(c_{1,j}), j=1,2,\ldots, \nu\}$. Then $G: \mathcal{U}\setminus G^{-1}(C^v(G))\to \mathcal{V}\setminus C^v(G)$ is an unbranched covering. Since $(\lambda, h_{\lambda}(g(x)))\in \mathcal{V}\setminus C^v(G)$ and since $\D_\eps$ is simply connected, it follows that $\widehat{h}_\lambda(x)$ extends to a holomorphic map in $\D_\eps$.


{\em Case 3.} $x\in g(P)\cap C(g)\setminus P_0$. So $x=p_r(\textbf{v})\cap U_g$ for some $r$. By Remark~\ref{ccvdef}, there is $j\in \{1,2,\ldots, \nu\}$ such that
$g_{\rho,\lambda} ((\textbf{p}_\rho(\lambda))_r)=h_\lambda(c_{1,j})$ holds for all $\lambda\in \D_\eps$. Since $g'(x)\not=0$, it follows that $\widehat{h}_\lambda(x)=(\textbf{p}_\rho(\lambda))_r$  for $\lambda\in \D_{\eps'}$. Thus $\widehat{h}_\lambda(x)$ extends to a holomorphic function in $\D_\eps$.

{\em Case 4.} $x\in g(P)\setminus (P_0\cup C(g))$ and $g(x)=c_{1,j}$ for some $j$.
Let us show that $\widehat{h}_\lambda(x)$ admits an analytic continuation along every curve $\Gamma: [0,1]\to \D_\eps$, $\Gamma(0)=0$. Assume the contrary and let $t_\ast\in (0,1)$ be the maximal so that $\widehat{h}_\lambda(x)$ has an analytic continuation $H: [0,t]\to \C$ for all $0\le t<t_\ast$.
By~(\ref{p}), $g_{\rho,\Gamma(t)}(H(t))=h_{\Gamma(t)}(g(x))$ for all $0\le t<t_\ast$. In particular, $H(t)\in U^{(i)}_{\rho,\Gamma(t)}$ for
$0\le t<t_\ast$.
Then $H(t)\to \partial U_{\rho, \lambda_*}\cup C(g_{\rho,\lambda_*})$ as $t\to t_\ast$. However, this is ruled out by property (6) in the definition of local holomorphic deformation of in class $\HH$. By the Monodromy theorem, $\widehat{h}_\lambda(x)$ extends to a holomorphic function in $\D_\eps$. By property (6) of local holomorphic deformation of in class $\HH$ again, $\widehat{h}_\lambda(x)\not \in  C(g_{\rho, \lambda})$ for all $\lambda$.

Since all $\widehat{h}_\lambda$ are holomorphic in $\D_\eps$, the equations (\ref{p0})-(\ref{p}) hold for all $\lambda\in \D_\eps$. Therefore, $\widehat{h}_\lambda(x)\in S_x$ holds for all $\lambda$.

Finally let us prove that $\widehat{h}_\lambda(x_1)\not=\widehat{h}_\lambda(x_2)$ holds for all $\lambda\in \D_\eps$ and $x_1\not=x_2$ in $g(P)$. Assume the contrary. By property (1) in the definition of local holomorphic deformation for triples in $\HH$ and property (ii) and (iii) above, we must have $x_1, x_2\in g(P)\setminus C(g)$. Let $\lambda_*$ be an accumulation point of the set $\Omega:=\{\lambda: \widehat{h}_\lambda(x_1)\not=\widehat{h}_\lambda(x_2)\}$ in $\D_\eps$. Then $\widehat{h}_{\lambda_*}(x_1)=\widehat{h}_{\lambda_*}(x_2)$ which implies that
$h_{\lambda_*}(g(x_1))=h_{\lambda_*}(g(x_2))$, hence $g(x_1)=g(x_2)$. Take a sequence $\lambda_n\in \Omega$ such that $\lambda_n\to\lambda_*$. Then $g_{\rho,\lambda_n}(z)= h_{\lambda_n}(g(x_1))$ has two distinct solutions. It follows that $\widehat{h}_{\lambda_*}(x_1)$ is a critical point of $g_{\rho,\lambda_*}$, hence $\widehat{h}_{\lambda_*}(x_1)=\widehat{h}_{\lambda_*}(x_2)\in C(g_{\rho,\lambda_*})$. However, by the property (iii) above, this implies that $x_1, x_2\in C(g)$, a contradiction.
\end{proof}

\section{The Arnol'd family}\label{sec:sinearnold}
In this section we will apply the above methods to a family of generalized Arnol'd  maps.
Let us fix an integer $d>0$. The (generalized) Arnol'd circle map is a map $A_{a,b}: \R/\Z\to \R/\Z$ of the form $A_{a,b}(t)=dt+a+b\sin(2\pi t)(\!\!\!\mod 1)$.
(Choice $d=1$ corresponds to the standard Arnol'd map.) Here $(a,b)\in \R\times \R\setminus \{0\}$ (in fact, $a\in \R/\Z$). We consider pairs $(a,b)$ such that $A_{a,b}$ has two (distinct) real critical values $v_j=A_{a,b}(e_j)$,
where $e_j$, $j=1,2$ are two (distinct) real critical points of $A_{a,b}$. Then one checks that given $(a_0,b_0)$ the correspondence $\textbf{q}_A: (a,b)\mapsto (v_1,v_2)$ is a local homeomorphism which extends to a biholomorphic map from a (complex) neighbourhood $X\subset \C\times \C$ of $(a_0,b_0)$ onto a (complex) neighbourhood $Y$ of $\textbf{q}_A((a_0,b_0))$ and there is also another biholomorphic map
$\textbf{p}_A=(p_1^A,p_2^A): (v_1,v_2)\mapsto (e_1,e_2)$ from $Y$ onto $\textbf{p}_A(Y)$.
Let $F_{(v_1,v_2)}=A_{\textbf{q}_A^{-1}((v_1,v_2))}$ for $(v_1,v_2)\in Y$.
\begin{theorem}\label{arn}
Suppose that for some real parameters $(a_0,b_0)$ the map $A_{a_0,b_0}: \R/\Z\to \R/\Z$ has two different critical values $c_{1,1}$, $c_{1,2}$ and each of them is either periodic or preperiodic. Consider the real marked map $A_{a_0,b_0}$ and
the real local holomorphic deformation $(A_{a_0,b_0}, F, \textbf{p}_A)_Y$ of $A_{a_0,b_0}$.
Then the 'positively oriented' transversality property (\ref{eq:trans2}) holds:
$$\frac{\det (D\mathcal{R}(\textbf{c}_1)))}{\prod_{j=1}^\nu Dg^{q_j-1}(c_{1,j})}>0.$$

\end{theorem}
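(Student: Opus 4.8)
The strategy is to verify that the real local holomorphic deformation $(A_{a_0,b_0}, F, \textbf{p}_A)_Y$ fits into the framework of Part A and Part B, and then invoke Corollary~\ref{real} (the transversality condition). Concretely, I would reduce Theorem~\ref{arn} to an application of Theorem~\ref{thm:holo}, by exhibiting a collection $\mathbb{S}=\{S_x\}_{x\in g(P)}$ of open subsets of the plane with respect to which $(A_{a_0,b_0}, F, \textbf{p}_A)_Y$ has the (robust) separation property. Once the separation property holds, Theorem~\ref{thm:holo} gives the lifting property, hence by Lemma~\ref{lem:lift2spectrum} the spectrum of $\mathcal A$ lies in $\overline{\D}$, and by Theorem~\ref{thm:1eigen} it in fact lies in $\overline{\D}\setminus\{1\}$ (the second alternative being excluded, since $\mathcal R\equiv 0$ on a neighbourhood would force all nearby $F_{(v_1,v_2)}$ to have a persistent critical relation, which is false for the Arnol'd family). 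For the strict inequality (\ref{eq:trans2}) we additionally need all eigenvalues strictly inside $\overline{\D}\setminus\{1\}$ for the periodic critical points, or at least that $|Dg^{q_j-l_j}(c_{l_j,j})|>1$ for preperiodic ones — the expanding hypothesis needed in Corollary~\ref{real}. The key point is that for a circle map of degree $d\ge 1$ which is not conjugate to a rotation on the relevant piece, hyperbolicity along the (pre)periodic critical orbit, or at least the strict expansion estimate, holds; I would lift everything to the universal cover $\R$ (equivalently work with $e^{2\pi i t}$ on $\C^*$) so that $A_{a,b}$ becomes a genuine holomorphic self-map of an annulus, bringing it literally into the setting of class $\HH$.

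**The separation property for the Arnol'd family.** The main work is constructing $\mathbb{S}$. Lift $A_{a,b}$ via $z=e^{2\pi i t}$ to a holomorphic map $z\mapsto e^{2\pi i a} z^d \exp(\pi b(z-1/z))$ on $\C^*$; this is an entire-type map of the punctured plane with $0,\infty$ as (the only) asymptotic values and two critical points $e_1,e_2$ with critical values $v_1,v_2$. For each $x$ in the postcritical set $g(P)$ take $S_x=\C^*$. With this choice, the global deformation $g_{\rho,\lambda}$ attached to any holomorphic $\rho\colon\D\to S_*^\nu$ is again a map of this same form with moved critical values $\rho(\lambda)$: one checks that $(g_{\rho,\lambda},\textbf{p}_\rho(\lambda),\rho(\lambda))\in\HH$ because the only singular values are the two critical values together with the two asymptotic values $0,\infty\notin\C^*$, and that the no-escape condition (6) in the definition of local holomorphic deformation holds since a preimage of $\rho(\lambda)_j$ can only approach $\partial\C^*=\{0,\infty\}$, which cannot happen along a bounded curve in the annulus away from the critical points. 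The inclusions (\ref{extincl})–(\ref{extincl'}) are then trivial (every relevant set equals $\C^*$, and $\C\setminus\C^*=\{0\}$ — wait, that is only one point), so to get the full lifting property (not merely the weak one) and to apply Montel one must instead take $S_x=\C\setminus\{0, *\}$ for a suitable second omitted value, or keep the critical values out; concretely one uses that the orbit of each critical value avoids $0,\infty$ and at least one more point, e.g. one of the critical points themselves, so that $\#(\C\setminus S_x)\ge 2$. This is the step where care is needed.

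**Assembling the proof.** With the separation property in hand: Theorem~\ref{thm:holo} yields that the triple has the (robust) lifting property; Lemma~\ref{lem:lift2spectrum} gives $\mathrm{spec}(\mathcal A)\subset\overline{\D}$ with eigenvalues of modulus one semisimple; Theorem~\ref{thm:1eigen}, together with the observation that the Arnol'd family does not have a persistent critical relation (the map $\mathcal R$ is not identically zero near $\textbf{c}_1$ because varying $(a,b)$ genuinely moves the finite critical orbits), rules out its second alternative, so in fact $\mathrm{spec}(\mathcal A)\subset\overline{\D}\setminus\{1\}$. Finally, to apply Corollary~\ref{real} we must check $|Dg^{q_j-l_j}(c_{l_j,j})|>1$ for every preperiodic critical point; this follows because a periodic point in the cycle that a critical orbit falls into cannot be attracting or neutral — if it were, the lift argument and the robustness of the separation property would force the spectral radius of $\mathcal A$ to be strictly less than $1$ (via Lemma~\ref{lem:perturb2spectrum}), but more directly, a persistent neutral cycle carrying a critical point is incompatible with the global deformation structure. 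Then Corollary~\ref{real} yields exactly (\ref{eq:trans2}). \emph{The main obstacle} I anticipate is a clean verification of condition (6) in the definition of local holomorphic deformation for the lifted family on the annulus — i.e. showing that along an arc in parameter space an inverse branch of a critical value never escapes to $0$ or $\infty$ nor collides with a critical point — and, relatedly, the bookkeeping needed to make sure the omitted-value count $\#(\C\setminus S_x)\ge 2$ holds uniformly so that Montel's theorem applies and one gets the genuine lifting property rather than only the weak one.
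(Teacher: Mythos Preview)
Your overall plan matches the paper's: lift via $E(z)=e^{2\pi i z}$ to the family $f_{\mu,b}(z)=\mu z^d\exp\{\pi b(z-1/z)\}$ on $\C^*$, take $S_x=\C^*$, verify the separation property, apply Theorem~\ref{thm:holo} and Theorem~\ref{thm:1eigen}, then Corollary~\ref{real}. Two of the technical steps, however, are not yet handled and the fixes you propose would not work as stated.

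\textbf{Normality (the lifting, not just the weak lifting, property).} You correctly see that $\C\setminus\C^*=\{0\}$ is only one point, but your suggested repair --- choosing $S_x=\C\setminus\{0,*\}$ for some additional point, or noting that the orbit avoids a critical point --- does not give a \emph{uniform} second omitted value for the lifts $h_\lambda^{(k)}(x)$. The paper's argument is different and uses structure specific to this family: the two critical points of $G_{\textbf{c}_1^{(k)}(\lambda)}$ satisfy $\tilde e_1^{(k)}(\lambda)\,\tilde e_2^{(k)}(\lambda)=1$, so the ratio $\tilde e_1^{(k)}/\tilde e_2^{(k)}$ omits $0,1,\infty$ and is normal; the product relation then makes each $\tilde e_j^{(k)}$ normal separately. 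For a general $x\in g(P)$ one then looks at $h_\lambda^{(k)}(x)/\tilde e_1^{(k-1)}(\lambda)$, which omits $0$ and $1$, to get normality of $h_\lambda^{(k)}(x)$. Without this algebraic identity (or something like it) Montel does not apply.

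\textbf{Ruling out alternative (2) of Theorem~\ref{thm:1eigen}.} Saying that ``varying $(a,b)$ genuinely moves the finite critical orbits'' is exactly what alternative (2) would contradict --- you cannot assume it. The paper argues instead: if alternative (2) held one would get a nontrivial holomorphic curve of critically finite maps $f_{\mu(t),b(t)}$; the $\lambda$-lemma then gives quasiconformal conjugacies to $f_{\mu_0,b_0}$, and either the Julia set has measure zero (so the conjugacy is M\"obius, forcing the family to be constant) or there is an invariant line field on $J_0$, which by a McMullen-type argument again forces constancy. Your sketch is missing this rigidity step.

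\textbf{Hyperbolicity.} Your justification of $|Dg^{q_j-l_j}(c_{l_j,j})|>1$ is circular. The clean argument (which the paper uses) is simply that $f_{\mu_0,b_0}:\C^*\to\C^*$ is critically finite with asymptotic values only $0,\infty$, hence every cycle is superattracting or repelling; this transfers to $A_{a_0,b_0}$ via the semiconjugacy.
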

\begin{proof}
The projection $E(z)=\exp(2\pi i z)$ semi-conjugates $A_{a,b}$ to the following map $f_{\mu,b}$:
$$f_{\mu,b}(z)=\mu z^d \exp\{\pi b (z-1/z)\}.$$
That is,
\begin{equation}\label{econj}
E\circ A_{a,b}=f_{\mu,b}\circ E,
\end{equation}
where $\mu=E(a_0)$.
We consider pairs $(\mu, b)$ such that
$f_{\mu,b}$ has two distinct critical values $w_1$ and $w_2$. Then the correspondence $\textbf{q}: (\mu, b)\mapsto (w_1, w_2)$ is a local biholomorphic map. Let $\textbf{p}=(p_1,p_2)$ be another a local biholomorphic map where $\tilde e_1=p_1(w_1,w_2)$, $\tilde e_2=p_2(w_1,w_2)$ are the two critical points of $G_{\textbf{w}}=f_{\mu,b}$ with $\textbf{w}=(w_1,w_2)=\textbf{q}((\mu,b))$.

By~(\ref{econj}), the map $f_{\mu_0,b_0}$ where $\mu_0=E(a_0)$ is a marked map and by the above $(f_{\mu_0,b_0}, G, \textbf{p})$ is its local holomorphic deformation. Note that
$E\circ F_{v_1,v_2}=G_{w_1,w_2}\circ E$ where $w_i=E(v_i)$ and $p_i^A=p_i\circ E$, $i=1,2$. By this, it is easy to check that $\mathcal{A}_G I=I \mathcal{A}_F$ where $\mathcal{A}_G$, $\mathcal{A}_F$ are the transfer operators to the holomorphic deformations $(f_{\mu_0,b_0}, G, \textbf{p})$ and  $(A_{a_0,b_0}, F, \textbf{p}_A)_Y$ respectively and $I$ is a non-degenerate diagonal matrix. Hence, $\mathcal{A}_G$, $\mathcal{A}_F$ share the same spectrum. Therefore, by Corollary~\ref{real}, it is enough to show that: (i) every cycle of $A_{a_0,b_0}$ is either superattarcting or repelling and (ii) the spectrum of $\mathcal{A}_G$ belongs to $\{|t|\le 1\}\setminus \{1\}$.

(i) Holomorphic function $f_{\mu_0, b_0}: \C_\ast\to \C_\ast$ where $C_\ast=\C\setminus \{0\}$ has the only asymptotic values $0$ and $\infty$. It is also critically finite,  hence
every cycle of $f_{\mu_0, b_0}$ is either superattracting or repelling. By~(\ref{econj}), this holds for $A_{a_0,b_0}$ as well.

To show (ii), it is enough to prove that:

(a) the triple $(f_{\mu_0, b_0},G,\textbf{p})_W$ has the lifting property so that Theorem~\ref{thm:1eigen} applies,

(b) the second part of the alternative in Theorem~\ref{thm:1eigen} does not hold.

{\it Proof of (a)}. Each map $f_{\mu,b}: \C_\ast\to \C_\ast$ is a holomorphic covering map in $\HH$ and $(g,G,\textbf{p})_W$ is
a local holomorphic deformation of $g=f_{\hat\mu,\hat b}$ (in the sense of Subsection~\ref{subsec:localdefn})
for any pair
$(\hat\mu,\hat b)\in \C_*\times \C_*$ such that $g$ has two distinct critical values. Here $W$ is a small enough neighbourhood of $\textbf{q}^{-1}((\hat\mu,\hat b))$. Indeed, the properties (1)-(5) are straightforward.
To check the property (6), let us assume the contrary, i.e., $\mu(t)z(t)^d\exp{\pi b(t)(z(t)-1/z(t))}=w_j(t)$ where $\mu,z,b,w_j:[0,1)\to \C$ are continuous, $w_j(t)$ is a coordinate of $\textbf{q}^{-1}((\mu(t),b(t)))$,
$z(t)\notin C(f_{\nu(t),b(t)})$ for all $t\in [0,1)$ while $(\mu(t),b(t))\to (\nu_*,b_*)\in \C_*\times \C_*$ and $z(t)\to \partial\C_*\cup C(f_{\mu_*,b_*})$ as $t\to 1$. The case $z(t)\to C(f_{\mu_*,b_*})$ is impossible as
otherwise the critical point $z(1)=\lim_{t\to 1} z(t)$ of $f_{\mu_*,b_*}$ would be multiple. So, let $z(t)\to \{0,\infty\}$. Consider $z(t)\to \infty$ (the other case is similar).
Then $z(t)^d\exp\{\pi b(t)z(t)\}\to \lim_{t\to 1} w_j(t)/\mu_*\not=0,\infty$ as $t\to 1$. Hence, $u(t)^d\exp{u(t)}\to B\not= 0,\infty$ as $u(t)=\pi b(t)z(t)$ is continuous on $[0,1)$ and $u(t)\to \infty$ as $t\to 1$.
This is a contradiction since the asymptotic values of the function $z^d\exp(z)$ are $0$ and $\infty$.

Take $\mathbb{S}=\{S_x\}_{x\in P}$ where $S_x=C_*$ for all $x\in P$. It follows from the Monodromy theorem that
the local holomorphic deformation $(f_{\mu_0,b_0},G, \textbf{p})_W$  where $W$ is a small neighbourhood of $\textbf{q}((\mu_0,b_0))$ determines  a global holomorphic deformation over $(\Delta,0)$ for $\mathbb{S}$ in the sense of  Subsection~\ref{subsec:globaldeform}.
Moreover, $(f_{\mu_0,b_0},G, \textbf{p})_W$ has the separation property with the choice $\mathbb{S}$.

By Theorem~\ref{thm:holo}, $(f_{\mu_0, b_0},G,\textbf{p})$ has the weak lifting property: given a holomorphic motion
$h_\lambda^{(0)}$ of $P$ over $(\Lambda, 0)$, there exists $\eps_0=\eps>0$ and holomorphic motions
$h_\lambda^{(k)}$, $k=1,2,,\cdots$,
of $P$ over $(\Delta_{\eps}, 0)$ such that
for each $k=0,1,\cdots$, there is $\epsilon_{k+1}>0$ such that $h_\lambda^{(k+1)}$ is the lift of $h_\lambda^{(k)}$ over $(\Delta_{\eps_{k+1}}, 0)$ for $(f_{\mu_0,b_0}, G, \textbf{p})$,
We have to show that the family $h^{(k)}_\lambda$, $k=0,1,...$, is uniformly  bounded in a neighbourhood of $\lambda=0$.
  We always have that $h^{(k)}_\lambda(x)\not=0$ for all $x\in g(P)$ and all $|\lambda|<\eps$. For $k\ge 0$, let $\tilde e_1^{(k)}(\lambda)$, $\tilde e_2^{(k)}(\lambda)$ be two critical points of the map $G_{\textbf{c}_1(\lambda)}$ where $\textbf{c}_1(\lambda)=(h^{(k)}_\lambda(E(c_{1,1})), h^{(k)}_\lambda(E(c_{1,2})))$.
Note that $\tilde e^{(k)}_j(0)=\tilde e^{(0)}_j(0)$ for $j=1,2$ and all $k$. The family $(\tilde e^{(k)}_1(\lambda)/\tilde e^{(k)}_2(\lambda))_{k\ge 0}$ is normal in $\Delta_\eps$ as it does not take values $0,1,\infty$.
Observe that
$\tilde e^{(k)}_1(\lambda) \tilde e^{(k)}_2(\lambda)=1$. Hence, each family $(\tilde e^{(k)}_j(\lambda))_{k\ge 0}$, $j=1,2$ is normal in $\Delta_\eps$. In particular, this proves the claim (a) if $P=P_0$.
If $P\setminus P_0\not=\emptyset$, i.e., say, $c_{0,1}$ is not a fixed point of $F_{a_0, b_0}$, then for every $x\in P\setminus P_0$,
functions $h^{(k)}_\lambda(x)/\tilde e_{1}^{(k-1)}(\lambda)$ are holomorphic in $\Delta_\eps$ and do not take values $0$ and $1$ and, hence, form a normal family in $\Delta_\eps$.
Therefore, $(h^{(k)}_\lambda(x))_{k\ge 0}$ is normal in $\Delta_\eps$ too. As at $\lambda=0$ it is bounded, it is bounded
in $\Delta_{\eps/2}$.
This proves (a).

{\it Proof of (b)}. Assume the contrary. The
there exists a neighbourhood $W$ of the point ${\bf c}_1=(c_{1,1},c_{1,2})$
such that the equation
\begin{equation}\label{allornoth2}
\mathcal{R}(\textbf{w})=0
  \end{equation}
defines an analytic variety $E$ in $W$ of (complex) dimension at least $1$. Note that for all such $\textbf{w}=(w_1,w_2)$ the map $G_{\textbf{w}}$ is critically finite.
and the repelling periodic points are dense in the Julia set.
It follows that there is a pair of non-trivial holomorphic maps $\mu(t)$, $b(t)$, $|t|<\delta$, such that
$\mu(0)=\mu_0$, $b(0)=b_0$ and the points in the Julia $J_t$ set of $f_{\mu(t),b(t)}$ move holomorphically in $t$.
For all $t$, the complement $\C_\ast\setminus J_t$ is either empty of consists of basins of attraction of superattracting cycles.
By the $\lambda$-Lemma, $f_{\mu(t),b(t)}$ is quasi-conformally conjugate to $f_{\mu_0,b_0}$. Moreover, if $J_0$ has zero area, the conjugacy can be chosen to be conformal, i.e. a Mobius transformation $M_t$.
If $J_0$ has positive area,
$f_{\mu_0,b_0}$ has an invariant line field on its Julia set $J_0$. Considerations which are similar to the proof of Theorem 3.17, \cite{McM} show that it must be holomorphic. It follows that in this case as well
$f_{\mu(t),b(t)}$ is conjugate to $f_{\mu_0,b_0}$ by a Mobius transformation $M_t$. As $M(\{0,\infty\})=\{0,\infty\}$, $M(z)=c/z$ for some $c\not=0$. Then necessarily $c=1$ and $\nu(\lambda)=\nu_0$ for all $\lambda$
where $\mu_0=\pm 1$.
Hence, for every $t$ the map $f_{\mu_0,b(t)}$ is critically finite, which is possible only if $b(t)$ is a constant function, too, a contradiction.
\end{proof}

\part*{Part C: The method applied to the family  $f_c(x)=|x|^{\ell_\pm}+c$}

\section{The family $f_c(x)=|x|^{\ell_\pm}+c$ with $\ell_{\pm}>1$ large}\label{sec:finiteorder}
\subsection{Unimodal family}
In the next theorem we obtain monotonicity for unimodal (not necessary symmetric!) maps
in the presence of critical points of large non-integer order, but only
if not too many points in the critical orbit are in the orientation reversing branch.

\begin{theorem}\label{thm:finiteorder2}
Fix real numbers $\ell_-,\ell_+\ge 1$ and consider the family of unimodal maps $f_c=f_{c,\ell_-,\ell_+}$ where
$$f_c(x)=\left\{\begin{array}{ll}
|x|^{\ell_-}+c & \mbox{ if } x\le 0\\
|x|^{\ell_+}+c & \mbox{ if } x\ge 0.
\end{array}
\right.
$$
For any integer $L\ge 1$ there exists $\ell_0>1$
so that for any $q\ge 1$ and any
periodic kneading sequence   $\bold i=i_1i_2\cdots\in \{-1,0,1\}^{\Z^+}$ of period $q$
so that
$$\#\{0\le j< q ; i_j =-1 \}\le L,$$
and any pair $\ell_-,\ell_+\ge \ell_0$ there is at most one $c\in\R$ for which the kneading sequence
of $f_c$ is equal to $\bold i$. Moreover,
\begin{equation}\label{fiortrans2}
\sum_{n=0}^{q-1} \frac{1}{Df_c^n(c)}>0.
\end{equation}
\end{theorem}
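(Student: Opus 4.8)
The plan is to derive the theorem from the abstract transfer-operator machinery of Part~A together with the separation--lifting criterion of Part~B. Fix a parameter $c$ for which $0$ has exact period $q$ and kneading sequence $\mathbf i$; note $c\le 0$, since for $c>0$ the orbit of $0$ is strictly increasing and never returns. Put $g=f_c$, viewed as a marked map with $P_0=\{0\}$, $\nu=1$, and with the deformation $G_w(z)=f(z)+(w-f(0))$ of Theorem~\ref{single}. Here $\mathbf p\equiv 0$ and $L_1\equiv 1$, and since $\nu=r=1$ no $\rho$ is exceptional and $\#P=q$, so Proposition~\ref{prop:non-exceptional} (as already recorded in Theorem~\ref{single}) gives the identity $\det(I-\rho\mathcal A)=\sum_{n=0}^{q-1}\rho^{n}/Df_c^{n}(c)$ for all $\rho$. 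Consequently it suffices to prove that the spectral radius of $\mathcal A$ is strictly less than $1$: the real polynomial $\rho\mapsto\det(I-\rho\mathcal A)$ then has all its roots in $\{|\rho|>1\}$ and takes the value $1$ at $\rho=0$, hence is positive on $[0,1]$, and at $\rho=1$ this is precisely (\ref{fiortrans2}). Combining (\ref{fiortrans2}) with the chain-rule identity (\ref{eq:trans}) shows $\tfrac{d}{dc}f_c^{q}(0)\neq 0$, so the parameters realizing $\mathbf i$ are isolated; since a periodic kneading sequence cannot be realized on a whole parameter interval, this gives the ``at most one $c$'' assertion exactly as in the introduction. All objects here are real, so one may work throughout in the real category.

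To bound the spectral radius I would verify a robust separation property via Part~B. The essential new feature, compared with the polynomial-like situation of Theorem~\ref{single}, is that when $\ell_\pm$ is not an integer $f_c$ has \emph{no} holomorphic extension across $0$: it has instead two holomorphic extensions, namely $z\mapsto z^{\ell_+}+c$ on a thin sector-like neighbourhood $U^{(+)}$ of $(0,\infty)$ inside $\C\setminus(-\infty,0]$, and $z\mapsto(-z)^{\ell_-}+c$ on a thin sector-like neighbourhood $U^{(-)}$ of $(-\infty,0)$ inside $\C\setminus[0,\infty)$. Accordingly one places $g$ in the class $\HH$ of Section~\ref{sec:separation} with $U_g=U^{(+)}\sqcup U^{(-)}$, singular point set $C(g)=\{0\}$ and singular value $c$ (there being no other finite asymptotic values), and takes the natural deformation of this object; condition~(6) in the definition of a local holomorphic deformation in $\HH$ is then essentially vacuous, because the only preimage of the critical value of $z\mapsto z^{\ell_\pm}+c$ is the critical point itself. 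By Theorem~\ref{thm:holo} it is enough to produce bounded simply connected domains $\mathbb S=\{S_x\}_{x\in f_c(P)}$, each omitting at least two points, with the robust separation property: for every holomorphic $\rho\colon\D\to S^\nu_\ast$ and every orbit point $c_j$, the deformed sector around $c_j$ lies in $S_{c_j}$ and its image under $z\mapsto z^{\ell_\pm}+(\text{the deformed }c)$ contains a definite neighbourhood of $S_{c_{j+1}}$, with $0\in S_0$ and the trivial relation $\mathbf p\equiv 0$ respected --- that is, (\ref{extincl})--(\ref{extincl'}) together with the required margin.

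Constructing $\mathbb S$ is the crux, and is the content of Section~\ref{sec:finiteorder}; I expect it to absorb essentially all of the work. After normalizing coordinates one may assume the critical orbit lies in a fixed interval. Each $S_{c_j}$ is taken to be a carefully chosen sector-like region adapted to the exponent governing the next iterate --- of angular aperture $O(1/\ell_+)$ about the positive axis when $c_j>0$, of aperture $O(1/\ell_-)$ about the negative axis when $c_j<0$, and slightly tilted near the places where trouble occurs. The mechanism exploited is that $z\mapsto z^{\ell_\pm}$ multiplies the angular aperture by $\ell_\pm$, so that a thin sector opens up to nearly a half-plane whose complementary ray can be rotated by $\Theta(1)$ through an $O(1/\ell_\pm)$ tilt of the domain; dually, the corresponding lift --- a branch of the $\ell_\pm$-th root --- contracts by $1/\ell_\pm$, which is ultimately what forces $\mathrm{spec}(\mathcal A)\subset\overline{\D}$. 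Along a maximal run of orbit points of one sign the required inclusions propagate essentially for free, much as in Theorem~\ref{single}. The difficulty is at the entrances into the orientation-reversing branch, i.e.\ at the indices $j$ with $i_{j+1}=-1$: there the sector-like region around $c_{j+1}$ is forced to meet the complementary slit of the preceding image, and one must spend a definite amount of ``room'' --- by tilting, so as to reorient that slit away --- to restore the inclusion. The hypothesis bounds $\#\{0\le j<q:i_j=-1\}$ by $L$, hence the number of such defects by $O(L)$, and one chooses $\ell_0=\ell_0(L)$ large enough that this bounded per-defect expenditure is covered by the reservoir created by the $\ell_\pm$-fold openings.

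With $\mathbb S$ in hand, Theorem~\ref{thm:holo} (via Lemma~\ref{lem:perturb2spectrum}) yields that the spectral radius of $\mathcal A$ is strictly less than $1$, and the reductions of the first paragraph then give both (\ref{fiortrans2}) and the uniqueness of $c$. The genuinely hard step is the quantitative geometry: controlling the non-round sector-like domains under the non-integer power maps and the translations \emph{uniformly over all} holomorphic motions $\rho\colon\D\to S^\nu_\ast$ (not only infinitesimal ones), and balancing the expansion gain $\ell_\pm$ against the number $L$ of visits to the reversing branch --- which is precisely why $\ell_0$ must grow with $L$.
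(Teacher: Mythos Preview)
Your reduction to the transfer operator in the first paragraph is fine and matches the paper. However, the route you choose for the hard part --- fitting everything into the separation framework of Section~\ref{sec:separation} and invoking Theorem~\ref{thm:holo} with fixed sector-domains $S_x$ --- is \emph{not} what the paper does, and there is a concrete reason why it is not clear your plan goes through.

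The paper bypasses Section~\ref{sec:separation} entirely for this theorem and argues the lifting property directly. The device is a notion of \emph{$\theta$-regularity} of a holomorphic motion, which has two clauses: (A1) each $h_\lambda(a)$ lies in a thin sector $\pm S_{4\theta/\ell}$ according to the sign of $a$, and (A2) for every ordered pair $a,b\in P$ with $|a|>|b|$ and $ab>0$ the \emph{ratio} $h_\lambda(b)/h_\lambda(a)$ lies in the Poincar\'e region $D_\theta=\{z:\angle 0z1>\pi-\theta\}$. The point is that a single lift automatically preserves (A1) but \emph{not} (A2); the Main Lemma shows that if (A1) and (A2) hold for $q$ consecutive lifts (which one arranges initially by shrinking the disk) then the next lift is $\theta/2$-regular. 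A sliding-window argument then gives $\theta_n\to 0$, hence the lifting property, and even convergence of the lifts to constants (which the paper uses to prove uniqueness of $c$ directly, without passing through monotonicity of the kneading invariant). The geometric input is Sullivan's Schwarz-lemma fact that $D_\theta$ is invariant under $z\mapsto z^t$ for $0<t<1$ (Lemma~\ref{lem:Schwarz}), a sharpening when $\angle 01z$ is small (Lemma~\ref{lem:pbtriangle}), and a careful bookkeeping of the angles $\angle 0\,h^i_\lambda(x)\,h^i_\lambda(y)$ (Lemmas~\ref{lem:pbangle} and~\ref{lem:pbangle1}); the bound $L$ enters as the number of times one must pass through case~(3) of Lemma~\ref{lem:pbangle1}, each such passage costing at most a factor $4$ in the angle.

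The gap in your plan is precisely the pair condition. The separation property~(\ref{extincl}) only constrains each $h_\lambda(x)$ individually, whereas the paper's argument hinges on the \emph{relative} angular positions encoded in~(A2). In particular, for $U^{(\pm)}$ fixed sectors and the additive deformation, you correctly have $U^{(\pm)}\subset S_x$ for free and the lift stays in a thin sector --- that is exactly~(A1) --- but this alone does \emph{not} propagate the structure needed to control what happens when the orbit enters the orientation-reversing branch: it is the inequality $\angle c_\lambda u_\lambda v_\lambda\ge \pi-\theta$ coming from~(A2), not any statement about $u_\lambda$ or $v_\lambda$ separately, that makes Lemma~\ref{lem:pbangle}(3) go through. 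So either you must smuggle a pair condition back into your $S_x$'s (at which point you are essentially redoing the paper's proof outside the Section~\ref{sec:separation} framework), or you must find a genuinely new way to control the negative-branch passages with single-point constraints; you have not indicated how to do the latter. Your uniqueness argument (``isolated plus cannot hold on an interval'') is also incomplete: isolation does not preclude several isolated parameters, and one needs either the monotonicity-of-kneading consequence of~(\ref{eq:trans}) or the paper's direct convergence argument.
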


{\bf Notations.} As usual, for any three distinct point $o, a, b\in\C$, let $\angle aob$ denote the angle in $[0,\pi]$ which is formed by the rays $oa$ and $ob$. We shall often use the following obvious relation: for any distinct four points $o,a, b, c$,
$$\angle aob +\angle boc \ge \angle aoc.$$
For $\theta\in (0, \pi)$, let
$$D_{\theta}=\{z\in\C\setminus\{0,1\}: \angle 0z1>\pi-\theta\}$$ and let
$$S_\theta=\{re^{it}: t\in (-\theta, \theta)\}.$$ For $0<t<1$, we shall only consider $z^t$ in the case $z\not\in (-\infty, 0)$ and $z^t$ is understood as the holomorphic branch with $1^t=1$.

Let us fix a map $f=f_{c,\ell_-,\ell_+}$ with a periodic critical point of period $q$ and let $P=\{f^n(0): n\ge 0\}$. So $P$ is a forward invaraint finite set.
Denote
$$\ell=\min\{\ell_-,\ell_+\}.$$
\begin{definition}
A holomorphic motion $h_\lambda$ of $P$ over $(\Omega,0)$, is called ${\theta}$-regular if
\begin{enumerate}
\item[(A1).] For $a\in P$, $$h_{\lambda}(a)\in S_{4\theta/\ell} \text{ , if } a>0$$
and
$$h_{\lambda}(a)\in -S_{4\theta/\ell} \text{ , if } a<0;$$
\item[(A2).] For $a, b\in P$, $|a|>|b|>0$ and $ab>0$, $$\frac{h_{\lambda}(b)}{h_{\lambda}(a)}\in D_{\theta}.$$
\end{enumerate}
\end{definition}

Given a $\theta$-regular holomorphic motion $h_{\lambda}$ of $P$ over $\Omega$, with $\theta\in (0, \pi)$, one can define another holomorphic motion $\tilde{h}_{\lambda}$ of $P$ over the same domain $\Omega$ as follows: $\tilde{h}_{\lambda}(0)=0$; for $a\in P$ with $a>0$,
$$\tilde{h}_\lambda(a)=(h_{\lambda}(f(a))-h_{\lambda}(f(0)))^{1/\ell_+};$$ for $a\in P$ with $a<0$, define
$$\tilde{h}_\lambda(a)=-(h_{\lambda}(f(a))-h_{\lambda}(f(0)))^{1/\ell_-}.$$

 The new holomorphic motion is called the {\em lift} of $h_{\lambda}$ which clearly satisfies the condition (A1), but not necessarily (A2) in general.

\begin{mainlemma} There is $\ell_0$ depending only on the number $L$  such that for any $\ell\ge \ell_0$ and each $\theta$ small enough, the following holds:
If $\#\{0\le j< q ; i_j =-1 \}\le L$ and
if  a $\theta$-regular motion can be successively lifted $q-1$ times and all these successive lifts are $\theta$-regular, then the $q$-th lift of the holomorphic motion is $\theta/2$-regular.
\end{mainlemma}

\begin{proof}[Proof of Theorem~\ref{thm:finiteorder2}]
Given $L$, choose $\ell_0$ as in the Main Lemma. It is enough to prove (\ref{fiortrans2}) provided $\ell\ge \ell_0$. Consider a local holomorphic deformation $(f_c,f_w,\textbf{p})_W$ where $W\subset \C$ is a small neighbourhood of $c$, $f_w=f_c+(w-c)$ and $\textbf{p}=0$.
Let $h_\lambda$ be a holomorphic motion of $P$ over $(\Delta,0)$. Let us fix $\theta>0$ small enough.
Restricting $h_\lambda$ to a smaller domain $\Delta_\eps$, we may assume that $h_{\lambda}$ is $\theta$-regular and that $h_\lambda$ can be lifted successively for $q$ times. Therefore by the Main Lemma, we obtain a sequence of holomorphic motions $h^n_\lambda$ of $P$ over $(\Delta_\eps,0)$, such that $h^0_\lambda=h_{\lambda}$ and $h^{n+1}_\lambda$ is the lift of $h^n_\lambda$
and such that $h^n_\lambda(x)\in \pm S_{\theta_n}$ for all $n$ and all $x\in P$ where $\theta_n\to 0$ as $n\to \infty$.
Thus $(f_c,f_w,\textbf{p})_W$ has the lifting property and by Theorem~\ref{thm:1eigen}, the transversality condition (\ref{fiortrans2}) holds.

Alternatively, the uniqueness of $c$ follows directly from the Main Lemma. Indeed, let
$\tilde{f}=f_{\tilde c}$ be a map with the same kneading sequence as $f_{c}$. Then one can define a real holomorphic motion $h_{\lambda}$ over some domain $\Omega\ni 0,1$ such that $h_{\lambda}(f^n(0))=\tilde{f}^n(0)$ for $\lambda=1$. As above, for $i>0$ let $h_\lambda^i$ be the lift
of $h_\lambda^{i-1}$.  As we have just shown,
$h_\lambda^i(c)$ is contained in the sector $-S_{\theta_n}$ with $\theta_n\to 0$,
this sequence of functions $\lambda\to h_\lambda^i(c)$ has to converge to a constant function. Since by construction
of the lifts $\tilde{c}=h_1^n(c)$ for each $n\ge 1$  we conclude that $\tilde{c}=c$.
\end{proof}
\subsection{Proof of the Main Lemma}
\begin{lemma}\label{lem:Schwarz}
For any $\theta\in (0, \pi)$ and $0<t<1$, if $z\in D_{\theta}$ then $z^t\in D_{\theta}$.
\end{lemma}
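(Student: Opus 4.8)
The plan is to reformulate the region $D_\theta = \{z \in \C\setminus\{0,1\}: \angle 0z1 > \pi-\theta\}$ as a union of two circular arcs' interiors and observe that the map $z\mapsto z^t$ preserves each such region. First I would describe $D_\theta$ geometrically: for a point $z$, the angle $\angle 0z1$ subtended by the segment $[0,1]$ at $z$ is $>\pi-\theta$ exactly when $z$ lies in the (open) lens-shaped region bounded by the two circular arcs through $0$ and $1$ that make angle $\pi-\theta$ with the chord $[0,1]$. Equivalently, writing the argument condition out, $z\in D_\theta$ if and only if $\arg\!\big(\tfrac{z-1}{z}\big) = \arg\!\big(1-\tfrac1z\big)$ lies in $(\theta-\pi, \pi-\theta)^{c}$—more precisely the complement region—so it is cleanest to say: $z\in D_\theta$ iff $\pi - \big|\arg z - \arg(z-1)\big| < \theta$ with the principal branch, i.e. the segment $[0,1]$ is seen from $z$ under an angle exceeding $\pi-\theta$. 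This is a Möbius-invariant-type description, but the key point is the classical ``inscribed angle'' fact that $D_\theta$ is precisely the set of $z$ in the open lens (symmetric about $[0,1]$, containing the open segment $(0,1)$ on its axis) cut out by two arcs.

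The main step is then: if $z\in D_\theta$ then $z\notin(-\infty,0]$ (since points of $D_\theta$ see $[0,1]$ under a large angle, they cannot lie on the line through $0$ and $1$ outside $[0,1]$; in particular $z\notin(-\infty,0]$), so $z^t$ is well defined by the branch with $1^t=1$, and moreover $\Re z > 0$ or at least $|\arg z|<\pi$ strictly, so that $|\arg z^t| = t|\arg z| < |\arg z|$. I would argue that the condition $\angle 0z1>\pi-\theta$ is equivalent to $|\arg z| + |\arg(z-1)| < \theta$ together with $\arg z$ and $\arg(z-1)$ having, roughly, opposite configuration — actually the clean inequality to aim for is $\big|\arg z - \arg(z-1)\big| > \pi - \theta$. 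Now apply the power map. Writing $w = z^t$, we have $\arg w = t\arg z$. The quantity $\arg w - \arg(w-1) = \arg\frac{w}{w-1}$; I want to show $\big|\arg\frac{w}{w-1}\big| > \pi-\theta$ follows from $\big|\arg\frac{z}{z-1}\big| > \pi-\theta$. For this I would use that $z\mapsto z^t$ maps the slit plane $\C\setminus(-\infty,0]$ into the sector $S_{\pi t/2}\cdot$(rotations)… — more carefully, it contracts arguments, and it maps the region ``above the real axis'' to ``above the real axis'' and ``below'' to ``below'', fixing $0$ and $1$; a region that is a lens around $[0,1]$ with half-opening related to $\theta$ is carried into itself because along the two bounding arcs the argument of $(z^t-1)/(z^t-0)=(z^t-1)/z^t$ is controlled by monotonicity.

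I expect the main obstacle to be making the ``lens maps into lens'' claim rigorous without excessive computation: the cleanest route is probably a Schwarz-lemma / Lindelöf-type argument, which is surely why the authors named the lemma ``Schwarz''. Concretely: $D_\theta$ is conformally a disk (or half-plane), $z\mapsto z^t$ is a holomorphic self-map of the slit plane fixing the two boundary-of-chord points $0,1$; one checks that $z^t$ maps the upper bounding arc of $D_\theta$ into the closure of $D_\theta$ and likewise the lower arc (using that on the upper arc $\arg z$ decreases to $0$ as $z\to 1$ and to the arc's endpoint angle as $z\to 0$, and $t$-th powers only shrink these), then invokes the maximum principle / argument principle for the harmonic function $z\mapsto \angle 0 z^t 1 - (\pi-\theta)$ — or simply the fact that a holomorphic map sending the boundary of a Jordan domain into the closed domain sends the interior inside. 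So the steps are: (1) record the angular characterization of $D_\theta$ and note $D_\theta\cap(-\infty,0]=\emptyset$ so $z^t$ is defined; (2) verify $z^t$ maps $\partial D_\theta$ into $\overline{D_\theta}$ by the monotonicity of $\arg$ along the two arcs under $t$-th powers; (3) conclude by the open mapping / maximum principle that $z^t\in D_\theta$ for $z\in D_\theta$. The only real care needed is the endpoint/branch bookkeeping in step (2), which I would not grind through here.
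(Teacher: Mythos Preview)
The paper gives no detailed proof of this lemma; it simply says ``This is a well-known consequence of the Schwarz lemma, due to Sullivan.'' The intended argument is the following. Let $\Omega=\C\setminus((-\infty,0]\cup[1,\infty))$. The map $z\mapsto z^t$ is a holomorphic self-map of $\Omega$ (it sends $(0,1)$ into $(0,1)$, $\mathbb H$ into $\mathbb H$, and $\mathbb H^-$ into $\mathbb H^-$), so by the Schwarz--Pick lemma it contracts the hyperbolic metric $d_\Omega$; in particular $d_\Omega(z^t,(0,1))\le d_\Omega(z,(0,1))$. The key identification is that $D_\theta$ is exactly a hyperbolic neighbourhood of $(0,1)$ in $\Omega$: the conformal map $w=\log\frac{z}{1-z}$ takes $\Omega$ onto the strip $\{|\Im w|<\pi\}$, takes $(0,1)$ onto $\R$, and takes $D_\theta$ onto the sub-strip $\{|\Im w|<\theta\}$ (this is a one-line computation using $\angle 0z1=|\arg\tfrac{z}{z-1}|$). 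Since sub-strips are hyperbolic neighbourhoods of $\R$, the lemma follows immediately.

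Your proposal has a genuine gap at the step you flag as ``the only real care needed''. You want to check that $z\mapsto z^t$ sends the boundary arcs of $D_\theta$ into $\overline{D_\theta}$ ``by monotonicity of $\arg$ under $t$-th powers''. But the quantity you must control is $\angle 0\,z^t\,1=|\arg z^t-\arg(z^t-1)|$, and while $\arg z^t=t\arg z$ is indeed shrunk, $\arg(z^t-1)$ is \emph{not} $t\arg(z-1)$ and has no simple relation to $\arg(z-1)$. So the monotonicity you invoke does not by itself bound the angle at $z^t$, and the boundary check is not actually carried out. You also mention a Schwarz-lemma route in passing, but you place it in the singly-slit plane and treat $D_\theta$ merely as ``conformally a disk''; the missing idea is to work in the doubly-slit plane $\Omega$ and to recognise $D_\theta$ as a hyperbolic neighbourhood of the interval $(0,1)$, which is what makes the Schwarz--Pick contraction immediately relevant.
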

\begin{proof} This is a well-known consequence of the Schwarz lemma, due to Sullivan.
\end{proof}

When $\angle 01z$ is much smaller than $\angle 10z$, we have the following improved estimate.
\begin{lemma} \label{lem:pbtriangle}
For any $\eps>0$, there is $\delta>0$  such that the following holds.
For $z\in D_{\theta}$ with $\theta\in (0, \pi/2]$ and $\angle 01z<\delta\theta$ and for any $0<t<1$, we have $\angle 01z^t< \eps \theta.$
\end{lemma}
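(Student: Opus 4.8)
The plan is to work with the angular coordinates of $z$ at the two vertices $0$ and $1$ directly and track how the map $z\mapsto z^t$ contracts them. Write $\alpha=\angle 01z$ (the angle at vertex $1$) and $\gamma=\angle 10z$ (the angle at vertex $0$), so that $z\in D_\theta$ means $\alpha+\gamma>\pi-\theta$, equivalently the angle $\angle 0z1=\pi-\alpha-\gamma<\theta$. The hypothesis $\alpha<\delta\theta$ then forces $\gamma>\pi-\theta-\delta\theta$, i.e. $\gamma$ is very close to $\pi$, so $z$ lies in a thin sector at the origin just below the negative real axis (or just above it; by symmetry we may assume $\operatorname{Im}z>0$). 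First I would record the exact relation: since $z\notin(-\infty,0]$ we have $\arg z=\pi-\gamma\in(-\pi,\pi)$, and hence $\arg(z^t)=t\arg z=t(\pi-\gamma)$; this shows the angle $\gamma'=\angle 10 z^t$ at the origin satisfies $\pi-\gamma'=t(\pi-\gamma)$, so $\pi-\gamma'<t\theta\le\theta$. The point is thus that $z^t$ is \emph{even closer} to the negative real axis (in the sense of the angle at $0$), while $z^t$ stays in $D_\theta$ by Lemma~\ref{lem:Schwarz}.

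Next I would convert the statement "$\angle 01 z^t<\eps\theta$" into a statement about position. The key geometric fact is elementary planar trigonometry in the triangle with vertices $0,1,z$: by the law of sines, $\sin\alpha/|z| = \sin\gamma/|z-1|$, hence
\begin{equation*}
\sin\alpha = \frac{|z|}{|z-1|}\,\sin\gamma .
\end{equation*}
When $\gamma$ is near $\pi$ and $\alpha$ near $0$ this forces $|z|$ to be small relative to $|z-1|$: indeed $|z-1|\ge 1-|z|$ gives $\sin\alpha\le \frac{|z|}{1-|z|}\sin\gamma$, and since $\sin\gamma=\sin(\pi-\gamma)<\pi-\gamma<\delta\theta$ while $\alpha<\delta\theta<\pi/2$, we get $|z|\le C\delta\theta$ for an absolute constant $C$ once $\delta\theta$ is small. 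So the smallness of $\alpha$ together with membership in $D_\theta$ forces $z$ to be quantitatively close to $0$: $|z|=O(\delta\theta)$. Then $|z^t|=|z|^t$, which for $|z|$ small and $t\in(0,1)$ is \emph{not} small — this is the subtlety. So I cannot conclude via smallness of $|z^t|$ alone; instead I must use the angular information at $0$ that I computed above.

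The final step combines the two pieces. I know $z^t$ has $|z^t|=|z|^t\le (C\delta\theta)^t$ and $\pi-\gamma' = t(\pi-\gamma)<t\,\delta\theta<\delta\theta$, where $\gamma'=\angle 10z^t$. Apply the law of sines again in the triangle $0,1,z^t$: with $\alpha'=\angle 01 z^t$,
\begin{equation*}
\sin\alpha' = \frac{|z^t|}{|z^t-1|}\,\sin\gamma' \le \frac{|z^t|}{1-|z^t|}\,(\pi-\gamma') < \frac{(C\delta\theta)^t}{1-(C\delta\theta)^t}\,\delta\theta .
\end{equation*}
As $\theta\le\pi/2$ and $\delta<1$, the factor $(C\delta\theta)^t/(1-(C\delta\theta)^t)$ is bounded by some $K=K(\delta)$ which, crucially, does \emph{not} blow up as $t\to 1$ (it is largest near $t=1$, where it is $\approx C\delta\theta/(1-C\delta\theta)$, still bounded); more simply, for all $t\in(0,1)$ and $\delta\theta$ small this factor is at most, say, $1$. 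Hence $\sin\alpha'<\delta\theta$, so $\alpha'<2\delta\theta$ for $\delta\theta$ small, and choosing $\delta\le\eps/2$ we obtain $\alpha'=\angle 01 z^t<\eps\theta$ as desired.

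The main obstacle, and the place where care is needed, is precisely the interplay near $t=1$: the naive hope "$z$ small $\Rightarrow$ $z^t$ small" fails, so the argument must be routed through the angle at the origin (which contracts cleanly by the factor $t$ under $z\mapsto z^t$) rather than through the modulus. Once one commits to tracking $\pi-\gamma$ rather than $|z|$, the estimate is uniform in $t$; the law-of-sines bookkeeping is then routine. I would also double-check the branch convention (the excerpt fixes $z^t$ to be the branch with $1^t=1$ for $z\notin(-\infty,0]$, which is exactly what makes $\arg(z^t)=t\arg z$ legitimate on $D_\theta$, since points of $D_\theta$ near the negative reals still have $\arg\in(-\pi,\pi)$).
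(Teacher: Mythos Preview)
Your argument rests on a misreading of $D_\theta$. By definition $D_\theta=\{z:\angle 0z1>\pi-\theta\}$, so the angle at the vertex $z$ is \emph{large} (close to $\pi$), and therefore the remaining two angles satisfy $\alpha+\gamma<\theta$: both are small. Thus $z$ lies in a thin lens around the segment $[0,1]$, with $\arg z=\gamma\in(0,\theta)$; it is \emph{not} near the negative real axis. Your identities $\arg z=\pi-\gamma$ and $\pi-\gamma<\delta\theta$ are both false under the correct reading, and the subsequent bound $|z|\le C\delta\theta$ collapses. In fact $|z|$ need not be small at all: if the angle at the origin is also of order $\delta\theta$ (nothing in the hypotheses prevents this), then $|z|=\sin\alpha/\sin(\alpha+\gamma)$ is of order $1$, and your law-of-sines estimate for $\sin\alpha'$ gives nothing.

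The paper's proof uses the same law-of-sines bookkeeping but with the correct picture, and it handles the obstruction above by a case split. If $\alpha+\gamma<\eps\theta$ (both angles already tiny), Lemma~\ref{lem:Schwarz} gives $\alpha'+\gamma'\le\alpha+\gamma<\eps\theta$ immediately. Otherwise $\alpha+\gamma\ge\eps\theta$ while $\alpha<\delta\theta$, so the angle at the origin is bounded below and $r=\sin\alpha/\sin(\alpha+\gamma)$ \emph{is} small---smaller than $1/K$ for $\delta$ small---after which a direct computation of $\tan\beta'=\dfrac{r^t\sin(t\gamma)}{1-r^t\cos(t\gamma)}$ yields $\beta'<\eps\theta$, the key uniformity in $t$ coming from choosing $K$ so that $t/(K^t-1)<\eps$ for all $t\in(0,1)$. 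Your instinct to route the estimate through the angle at the origin (which contracts by the factor $t$) is sound; what is missing is the correct sign of the inequality defining $D_\theta$ and the case split that isolates when $|z|$ is genuinely small.
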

\begin{proof} Write $z=re^{i\alpha}$ where $r>0$ and $\alpha\in (0, \theta)$ and write $\alpha'=t\alpha$ and $\beta'=\angle 01z^t$. By assumption, $\alpha+\beta\le \theta$. By the sine theorem,
$$r=\frac{\sin \beta}{\sin (\alpha+\beta)}$$
and $$r^t=\frac{\sin \beta'}{\sin (t\alpha +\beta')}.$$

If $\alpha+\beta<\eps\theta$ then by Lemma~\ref{lem:Schwarz}, $\alpha'+\beta'\le \alpha+\beta<\eps \theta.$ Assume now $\alpha+\beta\ge \eps \theta$.
Let $K>0$ be a large constant such that
$$\frac{t}{K^t-1}<\eps \text{ for any } 0<t<1.$$
Assume $\beta<\delta\theta$ for $\delta$ small. Then $r<1/K$. Thus
$$\tan\beta'=\frac{r^t \sin t\alpha}{1-r^t\cos t \alpha} \le \frac{tr^t}{1-r^t} \alpha\le \frac{t}{K^t-1}\alpha<\eps \theta.$$
\end{proof}

\begin{lemma} \label{lem:pbangle}
Let $\varphi_\lambda$ be a $\theta$-regular motion with $\theta\in (0, \pi/10]$ and let $\psi_\lambda$ be its lift. For $x,y\in P$ so that $xy\ge 0$
let $x_\lambda=\psi_\lambda(x)$, $y_\lambda=\psi_\lambda(y)$, $u_\lambda=\varphi_\lambda(f(x))$, $v_\lambda=\varphi_\lambda(f(y))$ and $c_\lambda=\varphi_\lambda(f(0))$.

For any $\eps>0$ there is $\ell_0$ and $\delta>0$ such that if $\ell>\ell_0$ then the following hold.
\begin{enumerate}
\item If $f(x)\le 0\le f(y)$ then $\angle 0x_\lambda y_\lambda \ge \pi-\eps \theta$ for all $\lambda$.
\item Let $0<f(x)<f(y)$. Then (i) $\angle 0x_\lambda y_\lambda \ge \angle 0 u_\lambda v_\lambda -\frac{8\theta}{\ell}$. If, moreover,
$c_\lambda\in -S_{\theta_1}$ and $u_\lambda,v_\lambda\in S_{\theta_1}$ for some $\theta_1\in (0, 4\theta/\ell]$ then (ii) $x_\lambda,y_\lambda\in \pm S_{\theta_1/\ell}$ and $\angle 0x_\lambda y_\lambda \ge \angle 0 u_\lambda v_\lambda -2 \theta_1$.
\item Suppose $f(x)<f(y)<0$ and
$$\alpha=\pi-\min (\angle c_\lambda v_\lambda 0, \angle u_\lambda v_\lambda 0)<\delta \theta.$$
Then $$\angle 0 xy \ge \pi-\eps \theta.$$
\end{enumerate}
\end{lemma}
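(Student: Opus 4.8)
The plan is to reduce all three assertions to an estimate on the single auxiliary ratio
$$z:=\frac{u_\lambda-c_\lambda}{v_\lambda-c_\lambda}$$
and then feed the $\theta$-regularity of $\varphi_\lambda$ into Lemmas~\ref{lem:Schwarz} and~\ref{lem:pbtriangle}. Replacing every point by its image under $w\mapsto-w$ if necessary (this preserves all angles and the sets $S_\bullet$, $D_\bullet$), I may assume $x,y>0$, so that with $\ell':=\ell_+\ge\ell$ one has $x_\lambda=(u_\lambda-c_\lambda)^{1/\ell'}$ and $y_\lambda=(v_\lambda-c_\lambda)^{1/\ell'}$; the case $x,y<0$ is identical with $\ell_-$ in place of $\ell_+$, while the degenerate cases $x=0$, $y=0$ or $f(x)=f(y)$ do not arise, since then one of $x_\lambda$, $y_\lambda$, $x_\lambda-y_\lambda$ vanishes and the asserted angle is undefined. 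Since $x_\lambda/y_\lambda=z^{1/\ell'}$, inspecting the triangle with vertices $0,1,z^{1/\ell'}$ gives
$$\angle 0\,x_\lambda\,y_\lambda=\angle 0\,(z^{1/\ell'})\,1=\pi-\tfrac1{\ell'}\,|\arg z|-\angle 0\,1\,(z^{1/\ell'}).$$
I will use throughout: a point $\zeta\in D_\theta$ (a thin lens neighbourhood of $[0,1]$, symmetric under $\zeta\mapsto1-\zeta$) satisfies $|\arg\zeta|<\theta$, $|\arg(1-\zeta)|<\theta$, $|\zeta|<1$, $|1-\zeta|<1$; $\theta$-regularity forces $\arg(\varphi_\lambda(a))$ to lie within $4\theta/\ell$ of $0$ (resp. of $\pi$) when $a>0$ (resp. $a<0$), and $\varphi_\lambda(b)/\varphi_\lambda(a)\in D_\theta$ whenever $a,b$ have the same sign and $|a|>|b|$; since $0$ is periodic for $f_c$ one has $c=f(0)\le f(t)$ for every $t$ and $f(0)\le0$, so $u_\lambda-c_\lambda$ and $v_\lambda-c_\lambda$ are perturbations of the nonnegative reals $f(x)-f(0)$ and $f(y)-f(0)$; and the identity $\angle 0\,z\,1=\angle c_\lambda\,u_\lambda\,v_\lambda$ (each side is the angle between the vectors $u_\lambda-c_\lambda$ and $u_\lambda-v_\lambda$).

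\emph{Part (1).} Here $f(x)\le0\le f(y)$ and $f(0)<0$ (if $f(0)=0$ then $0$ is a fixed point, $P=\{0\}$, and there is nothing to prove), so $c_\lambda\in-S_{4\theta/\ell}$ and $\varphi_\lambda(f(x))/c_\lambda\in D_\theta$ (or $f(x)=0$, $u_\lambda=0$). Then $u_\lambda-c_\lambda=c_\lambda\bigl(u_\lambda/c_\lambda-1\bigr)$ has argument $O(\theta)$ and modulus $<|c_\lambda|$, while $z-1=\dfrac{u_\lambda-v_\lambda}{v_\lambda-c_\lambda}$ is a perturbation, with argument $O(\theta/\ell)$, of a nonpositive real (since $u_\lambda$ and $-v_\lambda$ lie in $-S_{4\theta/\ell}$ and $v_\lambda-c_\lambda$ is a near-positive minus a near-negative number). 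Hence $z$ lies in $D_{C\theta}$ for a universal constant $C$ and $\angle 0\,1\,z=O(\theta/\ell)$. Taking $\delta$ as in Lemma~\ref{lem:pbtriangle} (so that $\angle 0\,1\,z<\delta(C\theta)$ yields $\angle 0\,1\,z^{1/\ell'}<\tfrac\eps2\theta$) and using $\tfrac1{\ell'}|\arg z|=O(\theta/\ell)<\tfrac\eps2\theta$ for $\ell>\ell_0(\eps)$, the displayed identity gives $\angle 0\,x_\lambda\,y_\lambda\ge\pi-\eps\theta$.

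\emph{Part (2).} Now $0<f(x)<f(y)$, so $u_\lambda,v_\lambda\in S_{4\theta/\ell}$ and, as $c_\lambda\in-S_{4\theta/\ell}$, the vectors $u_\lambda-c_\lambda$ and $v_\lambda-c_\lambda$ are again near-positive minus near-negative, so $\arg z=O(\theta/\ell)$. For (i): by Lemma~\ref{lem:Schwarz}, applied to lenses slightly larger than the boundary lens through $z$, one has $\angle 0\,(z^{1/\ell'})\,1\ge\angle 0\,z\,1$, which by the identity equals $\angle c_\lambda\,u_\lambda\,v_\lambda$; combining with the angle inequality $\angle 0\,u_\lambda\,c_\lambda+\angle c_\lambda\,u_\lambda\,v_\lambda\ge\angle 0\,u_\lambda\,v_\lambda$ gives $\angle 0\,x_\lambda\,y_\lambda\ge\angle 0\,u_\lambda\,v_\lambda-\angle 0\,u_\lambda\,c_\lambda$, and since the vectors $0-u_\lambda$ and $c_\lambda-u_\lambda$ both have argument within $4\theta/\ell$ of $\pi$ we get $\angle 0\,u_\lambda\,c_\lambda<8\theta/\ell$. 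For (ii): the extra hypothesis puts $c_\lambda\in-S_{\theta_1}$ and $u_\lambda,v_\lambda\in S_{\theta_1}$ with $\theta_1\le4\theta/\ell$, so $u_\lambda-c_\lambda$, being a sum of two vectors with arguments in $(-\theta_1,\theta_1)$, has argument in $(-\theta_1,\theta_1)$, whence $x_\lambda\in\pm S_{\theta_1/\ell'}\subset\pm S_{\theta_1/\ell}$ and likewise for $y_\lambda$; rerunning the argument of (i) with $\theta_1$ in place of $4\theta/\ell$ gives $\angle 0\,x_\lambda\,y_\lambda\ge\angle 0\,u_\lambda\,v_\lambda-2\theta_1$.

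\emph{Part (3), the main obstacle.} Here $f(0)<f(x)<f(y)<0$, so $u_\lambda,v_\lambda,c_\lambda\in-S_{4\theta/\ell}$ and $|f(0)|>|f(x)|>|f(y)|$ forces $u_\lambda/c_\lambda$, $v_\lambda/c_\lambda$, $v_\lambda/u_\lambda\in D_\theta$; as in part (1), $z$ is a perturbation of a point of $(0,1)$ and $\angle 0\,z\,1=\angle c_\lambda\,u_\lambda\,v_\lambda$. The difficulty is that now the generic regularity estimates only bound the vertex‑$1$ angle $\angle 0\,1\,z$ by $O(\theta)$, which is too crude to feed into Lemma~\ref{lem:pbtriangle}; this is exactly what the hypothesis $\alpha=\pi-\min(\angle c_\lambda\,v_\lambda\,0,\angle u_\lambda\,v_\lambda\,0)<\delta\theta$ repairs. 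It forces $v_\lambda/c_\lambda$ into $D_{\delta\theta}$, far inside $D_\theta$, and it forces $|u_\lambda/v_\lambda|$ to be separated from $1$ relative to its (tiny) argument. I would then decompose $u_\lambda-c_\lambda=(u_\lambda-v_\lambda)+(v_\lambda-c_\lambda)$ into two nearly real, oppositely signed summands and estimate each using the sharpened $D_{\delta\theta}$‑membership, obtaining $\angle 0\,1\,z<\delta'\theta$ for the $\delta'$ that Lemma~\ref{lem:pbtriangle} requires to produce $\angle 0\,1\,z^{1/\ell'}<\tfrac\eps2\theta$; together with $\tfrac1{\ell'}|\arg z|<\tfrac\eps2\theta$ one then concludes as in part (1). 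The one genuinely delicate point running through all three parts is the bookkeeping: $\ell_0$ and $\delta$ must be chosen in terms of $\eps$ alone, independently of $q$ and of the map, which is possible precisely because every estimate used is either a fixed geometric fact about $D_\theta$ or is uniformly controlled by $1/\ell$.
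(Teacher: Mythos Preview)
Your normalization via $z=(u_\lambda-c_\lambda)/(v_\lambda-c_\lambda)$ is exactly the paper's approach made explicit: the paper's opening remark that $\triangle 0x_\lambda y_\lambda$ is the image of $\triangle c_\lambda u_\lambda v_\lambda$ under $w\mapsto (w-c_\lambda)^{1/\ell'}$ is just your identity $x_\lambda/y_\lambda=z^{1/\ell'}$ in different clothing. Parts (1) and (2) of your write-up match the paper's argument closely and are correct; the only cosmetic difference is that the paper records the angle bounds at the vertices $u_\lambda$, $v_\lambda$ of the unnormalized triangle while you work with $\arg z$ and $\arg(z-1)$.

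Part (3), however, is left as a sketch, and the sketch does not quite land. You correctly observe that the generic $\theta$-regularity estimate on $\angle 01z$ is too crude and that the hypothesis on $\alpha$ must repair this, but your proposed route --- ``decompose $u_\lambda-c_\lambda=(u_\lambda-v_\lambda)+(v_\lambda-c_\lambda)$ into two nearly real, oppositely signed summands'' --- is aimed at $\arg z$, not at $\angle 01z=\angle c_\lambda v_\lambda u_\lambda$, which is what actually needs the sharp bound. The paper gets this bound in one line: the hypothesis says the rays from $v_\lambda$ to $c_\lambda$ and from $v_\lambda$ to $u_\lambda$ are each within $\alpha$ of being antipodal to the ray from $v_\lambda$ to $0$, hence within $2\alpha$ of each other, i.e.\ $\angle c_\lambda v_\lambda u_\lambda\le 2\alpha$. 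Combined with $\angle c_\lambda u_\lambda v_\lambda\ge \angle c_\lambda u_\lambda 0-\angle v_\lambda u_\lambda 0\ge(\pi-\theta)-\alpha$ (the first term from $u_\lambda/c_\lambda\in D_\theta$, the second from $\angle u_\lambda v_\lambda 0>\pi-\alpha$), this places $z\in D_{2\theta}$ with $\angle 01z<2\alpha<2\delta\theta$, and Lemma~\ref{lem:pbtriangle} finishes. Your intended decomposition can be pushed through to the same conclusion, but it is longer and you did not actually carry it out; the paper's angle-at-$v_\lambda$ argument is both shorter and the step you are missing.
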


\begin{proof} Note that $\triangle 0 xy$ is the image of $\triangle c_\lambda u_\lambda v_\lambda$ under an appropriate branch of $z\mapsto (z-c_\lambda)^t$.  Since $\angle xoy<8\theta/\ell$, an upper bound on $\angle oyx$ implies a lower bound on $\angle oxy$.

(1) In this case, we have $u_\lambda \in -\overline{S_{4\theta/\ell}}$ and $v_\lambda \in \overline{S_{4\theta/\ell}}$, so
$$\angle 0u_\lambda v_\lambda \le  4\theta/\ell,$$
and $$\angle 0v_\lambda u_\lambda \le 4\theta/\ell.$$
In particular,
$$\angle c_\lambda u_\lambda v_\lambda \ge \angle c_\lambda u_\lambda 0 - \angle 0u_\lambda v_\lambda \ge \pi-\theta - 4\theta/\ell\ge \pi-5 \theta.$$
By Lemma~\ref{lem:pbtriangle}, the statement follows.

(2) In this case,
$$\angle c_\lambda u_\lambda v_\lambda \ge \angle 0 u_\lambda v_\lambda -\angle 0 u_\lambda c_\lambda\ge  \angle 0 u_\lambda v_\lambda - 8\theta /\ell.$$
Thus by Lemma~\ref{lem:Schwarz}, the conclusion (i) follows; (ii) is similar.

(3) In this case,
$$\angle c_\lambda u_\lambda v_\lambda \ge \angle c_\lambda u_\lambda 0-\angle v_\lambda u_\lambda 0\ge \pi-\theta-\alpha\ge \pi-2\theta$$
and $$\angle c_\lambda v_\lambda u_\lambda\le 2\pi-(\angle c_\lambda v_\lambda 0+ \angle 0v_\lambda u_\lambda) \le 2\alpha.$$
So the conclusion follows from Lemma~\ref{lem:pbtriangle}.
\end{proof}

Now suppose that we have a sequence of $\theta$-regular holomorphic motions $h_\lambda^i$ of $P$, $i=0,1,\ldots, q-1$  over the same marked domain $(\Omega,0)$, such that $h_\lambda^{i}$ is a lift of $h_\lambda^{i-1}$ for all $1\le i<q$. Then $h^q_\lambda$, lift of $h_\lambda^{q-1}$ is well-defined and satisfies the condition (A1) with the same constant $\theta$.
For each $0\le i\le q$, $\lambda\in\Omega$ and $x,y\in P$ so that $0<|x|<|y|$ and $xy>0$, let
\begin{align*}
& \theta_\lambda^i(x,y)=\pi-\\
&\inf\{\angle 0h_\lambda^i(z_1) h_\lambda^i(z_2): z_1, z_2\in P, 0<|z_1|\le |x|<|y|\le |z_2|, \ \ xz_1>0, xz_2>0\}\\
\ge & \pi-\angle 0h^i_\lambda(x)h_\lambda^i(y).
\end{align*}
Furthermore,
given  any $x,y\in P$, $xy>0$ (but not necessarily $|x|<|y|$), denote
$$\hat{\theta}^i(x,y)=\theta^i(x\wedge y, x\vee y)$$ where $x\wedge y=x/|x|\min(|x|,|y|)$ and $x\vee y=x/|x|\max(|x|,|y|)$.

\begin{lemma}\label{lem:pbangle1}
Consider $0\le i<q$, $x,y\in P$ where $xy>0$ and $\lambda\in\Omega$. For any $\eps>0$ there is $\delta>0$ and $\ell_0>0$ such that if $\ell\ge \ell_0$, then the following hold.
\begin{enumerate}
\item If $f(x)\le 0\le f(y)$ then $$\hat{\theta}^{i+1}_\lambda(x,y)\le \eps\theta.$$
\item Let $r\ge 1$ be such that $i+r\le q$. If $0<f^j(x)<f^j(y)$ for all $1\le j\le r$, then $$\hat{\theta}_\lambda^{i+r}(x,y)\le \hat{\theta}_\lambda^i(f^r(x),f^r(y))+\eps\theta.$$
\item If $f(x)<f(y)<0$ and $\hat{\theta}_\lambda^i(f(x), f(y))<\delta \theta$,
then $$\hat{\theta}_\lambda^{i+1}(x,y)\le 4\max (\eps\theta, \hat{\theta}_\lambda^i(f(x), f(y))).$$
\end{enumerate}
\end{lemma}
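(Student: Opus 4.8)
The plan is to reduce each of the three statements to the pointwise estimates of Lemma~\ref{lem:pbangle} by unwinding the definitions of $\hat\theta^{i}_\lambda$ and $\theta^{i}_\lambda$. Fix $x,y\in P$ with $xy>0$, and recall that $\hat\theta^{i}_\lambda(x,y)=\theta^{i}_\lambda(x\wedge y, x\vee y)$ measures, via an infimum over all $z_1,z_2\in P$ lying (in absolute value) respectively inside and outside the pair $\{x,y\}$ on the same side of $0$, the quantity $\pi-\angle 0 h^{i}_\lambda(z_1)h^{i}_\lambda(z_2)$. So to bound $\hat\theta^{i+1}_\lambda(x,y)$ from above, I must produce, for \emph{every} admissible pair $z_1,z_2$ at level $i+1$, a lower bound on $\angle 0\,h^{i+1}_\lambda(z_1)\,h^{i+1}_\lambda(z_2)$. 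Since $h^{i+1}_\lambda$ is by construction the lift of $h^{i}_\lambda$, the triple $0,h^{i+1}_\lambda(z_1),h^{i+1}_\lambda(z_2)$ is the image of the triple $h^{i}_\lambda(f(0)),h^{i}_\lambda(f(z_1)),h^{i}_\lambda(f(z_2))$ under the appropriate branch of $z\mapsto (z-c_\lambda)^{1/\ell_\pm}$; moreover the relevant order relation $|z_1|\le|z_2|$ is preserved or reversed by $f$ according to whether $f$ is orientation preserving or reversing on that side, and this is exactly what is recorded by the hat-construction $f^r(x)\wedge f^r(y)$, $f^r(x)\vee f^r(y)$.

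For \textbf{(1)}: here $f(x)\le 0\le f(y)$, so for any admissible $z_1,z_2$ at level $i+1$ we have $f(z_1)\le f(x)\le 0\le f(y)\le f(z_2)$ (using monotonicity of $f$ on each branch and that $z_1,z_2$ are nested around $x,y$), hence $f(z_1)\le 0\le f(z_2)$ and we are precisely in case (1) of Lemma~\ref{lem:pbangle}, which gives $\angle 0\,h^{i+1}_\lambda(z_1)\,h^{i+1}_\lambda(z_2)\ge \pi-\eps\theta$ for $\ell$ large; taking the infimum over $z_1,z_2$ yields $\hat\theta^{i+1}_\lambda(x,y)\le\eps\theta$. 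For \textbf{(2)}: by induction on $r$ it suffices to treat $r=1$; the hypothesis $0<f(x)<f(y)$ forces $f$ to be orientation preserving on the side containing $x,y$ (the increasing branch), so the nesting is preserved, and for every admissible $z_1,z_2$ at level $i+1$ the pair $f(z_1),f(z_2)$ is admissible at level $i$ for $(f(x),f(y))$; case (2)(i) of Lemma~\ref{lem:pbangle} gives $\angle 0\,h^{i+1}_\lambda(z_1)\,h^{i+1}_\lambda(z_2)\ge \angle 0\,h^{i}_\lambda(f(z_1))\,h^{i}_\lambda(f(z_2)) - 8\theta/\ell \ge \pi-\hat\theta^{i}_\lambda(f(x),f(y)) - 8\theta/\ell$, and absorbing $8\theta/\ell$ into $\eps\theta$ for $\ell\ge\ell_0$ and taking infima finishes it; the general $r$ then follows by iterating (note the hat-operation correctly tracks $f^r(x)\wedge f^r(y)$ since all the branches in question are increasing). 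For \textbf{(3)}: $f(x)<f(y)<0$ means $x,y$ lie on the side sent by $f$ into the negative axis, with the orientation-reversing branch in play, so the nesting around $x,y$ is reversed under $f$; for every admissible $z_1,z_2$ at level $i+1$ one checks $f(z_1),f(z_2)$ (suitably ordered) is admissible at level $i$ for $(f(x),f(y))$, and the smallness hypothesis $\hat\theta^{i}_\lambda(f(x),f(y))<\delta\theta$ gives the bound $\alpha:=\pi-\min(\angle c_\lambda v_\lambda 0,\angle u_\lambda v_\lambda 0)<\delta\theta$ required in case (3) of Lemma~\ref{lem:pbangle} (one needs to see that the $c_\lambda$-angle is also controlled, which follows from (A1) together with $\alpha$ being small — this is where the factor $4$ and the $\max$ with $\eps\theta$ enter). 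Lemma~\ref{lem:pbangle}(3) then gives $\angle 0\,h^{i+1}_\lambda(z_1)\,h^{i+1}_\lambda(z_2)\ge\pi-\eps\theta$ when $\hat\theta^{i}_\lambda(f(x),f(y))\le\eps\theta$, and more generally the stated bound $4\max(\eps\theta,\hat\theta^{i}_\lambda(f(x),f(y)))$ when it is merely $<\delta\theta$.

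The main obstacle I anticipate is purely bookkeeping rather than analytic: verifying in each case that an admissible pair $(z_1,z_2)$ for $\hat\theta^{i+1}_\lambda(x,y)$ really does map under $f$ to an admissible pair for $\hat\theta^{i}_\lambda(f(x),f(y))$ — i.e. that the monotonicity/orientation of the relevant branch of $f$ interacts correctly with the $\wedge,\vee$ operations and the constraints $z_1 x>0$, $z_2 x>0$. One must be careful that $z_1,z_2$ may lie on either branch at level $i+1$ even though $x,y$ are pinned to one side, and that the points $f(z_1),f(z_2)$ land on the correct side of $0$ relative to $c_\lambda$; the constant $\delta$ in (3) is chosen after $\eps$ precisely to make the angle $\alpha$ small enough to apply Lemma~\ref{lem:pbtriangle}. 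Choosing $\ell_0$ and $\delta$ in the right order — first $\eps$ is given, then $\delta=\delta(\eps)$ and $\ell_0=\ell_0(\eps)$ from Lemma~\ref{lem:pbangle} — closes the argument, and taking the infimum over all admissible $(z_1,z_2)$ at the end converts the pointwise angle bounds into the claimed bounds on $\hat\theta^{i+1}_\lambda$.
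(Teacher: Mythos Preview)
Your treatment of part (1) is correct and matches the paper. Parts (2) and (3), however, have genuine gaps.

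\medskip

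\textbf{Part (2): the induction on $r$ does not work.} Your plan is to prove the case $r=1$ via Lemma~\ref{lem:pbangle}(2)(i), obtaining
\[
\hat\theta^{i+1}_\lambda(x,y)\le \hat\theta^{i}_\lambda(f(x),f(y))+\tfrac{8\theta}{\ell},
\]
and then iterate. But iterating $r$ times yields an error $8r\theta/\ell$, and since the statement requires $\ell_0$ to be chosen \emph{before} $r$ (and $r$ can be as large as $q$, which is unbounded), you cannot absorb this into $\eps\theta$ uniformly. The paper avoids this by not inducting on $r$ at all: it fixes an admissible pair $(z_1,z_2)$, lets $r_1\le r$ be maximal so that $0<f^j(z_1)\le f^j(x)<f^j(y)\le f^j(z_2)$ for all $1\le j\le r_1$, and then applies the \emph{refined} estimate of Lemma~\ref{lem:pbangle}(2)(ii) inductively. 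The point is that repeated lifts through the positive branch force the relevant points into geometrically shrinking sectors $S_{4\theta/\ell^{k+1}}$, so the accumulated error is a geometric series $\sum_{k\ge 0}8\theta/\ell^{k+1}=8\theta/(\ell-1)$, bounded independently of $r$. This use of part~(ii) rather than part~(i) is essential, and your outline never invokes it.

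There is also a smaller omission in (2): an admissible $z_1$ for $(x,y)$ satisfies only $0<|z_1|\le|x|$, so $f(z_1)$ may well be $\le 0$ even though $f(x)>0$; then $(f(z_1),f(z_2))$ is \emph{not} admissible for $(f(x),f(y))$. The paper disposes of this case separately via Lemma~\ref{lem:pbangle}(1).

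\medskip

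\textbf{Part (3).} Similarly, you must treat separately the case $f(z_2)>0$ (again via Lemma~\ref{lem:pbangle}(1)). More importantly, your justification that the $c_\lambda$-angle is controlled ``by (A1) together with $\alpha$ being small'' is not the right mechanism: the bound $\angle c_\lambda v_\lambda 0\ge \pi-\hat\theta^i_\lambda(f(x),f(y))$ holds because the critical value $c=f(0)$ is itself a point of $P$ with $|c|\ge|f(x)|$, so the pair $(f(z_2),c)$ is admissible in the definition of $\theta^i_\lambda(f(y),f(x))$. That is where the factor $4$ and the $\max$ genuinely come from in Lemma~\ref{lem:pbangle}(3), not from (A1).
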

\begin{proof}
Note that $f(x)<f(y)$ implies $|x|<|y|$.

(1) For each $0<|z_1|\le |x|<|y|\le |z_2|$ as in the definition of $\theta_\lambda^i(x,y)$ we have $f(z_1)\le 0$ and $f(z_2)\ge 0$.  So by Lemma~\ref{lem:pbangle} (1), (applying to $\varphi=h^i$ and $\psi=h^{i+1}$), $\angle 0h_\lambda^{i+1}(z_1)h_\lambda^{i+1}(z_2)\ge \pi-\eps\theta$.  Thus the statement holds.

(2) Consider $0<|z_1|\le |x|<|y|\le |z_2|$ so that $z_1z_2>0$. Then $f(z_2)>0$. If $f(z_1)\le 0$, then by Lemma~\ref{lem:pbangle} (1),
$\angle 0h_\lambda^{i+r}(z_1)h_\lambda^{i+r}(z_2)\ge \pi-\eps\theta$.
Assume $f(z_1)>0$ and let $r_1\in \{1,\cdots,r\}$ be maximal such that $0<f^j(z_1)\le f^j(x)<f^j(y)\le f^j(z_2)$ for all $1\le j\le r_1$.
Notice that then
$$0<f^{r_1}(z_2)\le f^{r_1-1}(f^2(0))<f^{r_1-2}(f^2(0))<\cdots<f^2(0).$$
Let us show that for all $k\in \{0,\cdots,r_1-1\}$,
\begin{equation}\label{r1}
h_\lambda^{k+i+r-r_1}(f(0))\in -S_{4\theta/\ell^{k+1}},
\end{equation}
and
\begin{equation}\label{r2}
h_\lambda^{k+i+r-r_1}(f^{r_1-k}(z_1)), h_\lambda^{k+i+r-r_1}(f^{r_1-k}(z_2))\in S_{4\theta/\ell^{k+1}}.
\end{equation}
Indeed, this holds for $k=0$ as $h_\lambda^{i+r-r_1}$ is $\theta$-regular. Now, for $1\le k\le r_1-1$, (\ref{r1})-(\ref{r2}) follows by a successive application of the second part of Lemma~\ref{lem:pbangle} (2).
This proves (\ref{r1})-(\ref{r2}).
In turn, using (\ref{r1})-(\ref{r2}) and again applying successively Lemma~\ref{lem:pbangle} (2),
\begin{align*}
&\angle 0h^{i+r}_\lambda(z_1) h^{i+r}_\lambda(z_2)> \angle 0h^{i+r-r_1}_\lambda(f^{r_1}(z_1)) h^{i+r-r_1}_\lambda(f^{r_1}(z_2))-2\sum_{k=0}^\infty \frac{4\theta}{\ell^{k+1}}=\\
&\angle 0h^{i+r-r_1}_\lambda(f^{r_1}(z_1)) h^{i+r-r_1}_\lambda(f^{r_1}(z_2))-\frac{8\theta}{\ell-1}.
\end{align*}
Consider two cases. If $r_1<r$, then $f^{r_1+1}(z_1)\le 0$ and $f^{r_1+1}(z_2)>0$ and by Lemma~\ref{lem:pbangle} (1),
$$\angle 0h^{i+r-r_1}_\lambda(f^{r_1}(z_1)) h^{i+r-r_1}_\lambda(f^{r_1}(z_2))\ge \pi-{\eps}\theta$$
for any $\ell$ large enough.
If $r_1=r$,
$$\angle 0h^{i}_\lambda(f^{r}(z_1)) h^{i}_\lambda(f^{r}(z_2))\ge \pi-\theta_\lambda^i(f^r(x),f^r(y)).$$
In any case,
$$\angle 0h^{i+r}_\lambda(z_1) h^{i+r}_\lambda(z_2)>\pi-\theta_\lambda^i(f^r(x),f^r(y))-\eps\theta$$
provided $\ell$ is large enough.
Thus the statement holds.

(3) Notice that in this case $\hat{\theta}^i_\lambda(f(x),f(y))=\theta^i_\lambda(f(y),f(x))$. Consider $0<|z_1|\le |x|<|y|\le |z_2|$ so that $z_1z_2>0$. If $f(z_2)>0$ then by Lemma~\ref{lem:pbangle} (1),
$\angle 0h_\lambda^{i+1}(z_1)h_\lambda^{i+1}(z_2)\ge \pi-\eps\theta$.
Assume $f(z_2)<0$. Then $0>f(z_2)\ge f(y)>f(x)>f(z_1)>c$. So
$$\angle h^i_\lambda(c)h^i_\lambda(f(z_2))0\ge \pi-\theta_\lambda^i(f(y), f(x))$$
and $$\angle h^i_\lambda(f(z_1)) h^i_\lambda (f(z_2))0 \ge \pi-\theta_\lambda^i (f(y), f(x)).$$
By Lemma~\ref{lem:pbangle} (3),
$$\angle 0 h^{i+1}_\lambda  (z_1)h^{i+1}_\lambda (z_2)\ge \pi- 4 \max (\theta_\lambda^i(f(y), f(x)), \eps\theta),$$
provided that $\theta^i_\lambda(f(y), f(x))/\theta$ is small enough and $\ell$ is large enough.
\end{proof}

\begin{proof}[Completion of proof of the Main Lemma]
It is easy to check that $h^q$ satisfies the condition (A1) with $S_{4\theta/\ell}$ replaced by $S_{2\theta/\ell}$. It remains to check that  for $x,y\in P$, $0<|x|<|y|$ and $xy>0$ implies $\angle 0 h_\lambda^q(x) h_{\lambda}^q(y)>\pi-\theta/2$. Since the critical point is periodic, there is a minimal integer $p$, less than the period $q$ of the critical point, such that $$f^p([x,y])\ni 0.$$
Let us define $p-1=m_0>m_1>\cdots>m_{j_0-1}>m_{j_0}=0$ inductively as follows. Given $m_i$, let $m_{j+1}\in \{0,1\cdots,m_j-1\}$ be the maximal so that $f^{m_{j+1}}([x,y])\subset \R^-$ if it exists and $m_{j+1}=0$ otherwise.
Note that $j_0\le L+1$.
Let
$$\kappa_{m_j} =\hat\theta_\lambda^{q-m_j} (f^{m_j}(x), f^{m_j}(y))/\theta, j=0,1,\ldots, j_0.$$
Fix $\eps>0$ small. Assume that $\ell$ is large.
Then by Lemma~\ref{lem:pbangle1} (1),
$$\kappa_{m_0}=\kappa_{p-1}\le \eps.$$
For each $0<j\le j_0$, by Lemma~\ref{lem:pbangle1} (2) and (3),
$$\kappa_{m_{j+1}}\le 4 \kappa_{m_j} + 4\eps$$
provided that $\kappa_{m_j}$ is small enough and $\ell$ is large enough.
Therefore, provided that $\ell$ is large enough, we have $\kappa_0<1/2$. It follows that
$$\angle 0 h^q_\lambda(x) h_\lambda^q(y) \ge \pi-\kappa_0\theta \le \pi-\theta/2.$$
\end{proof}

\section{The family $f_c(x)=|x|^\ell+c$ with $\ell$ odd}\label{sec:finiteoddorder}
In this section we will prove the following theorem.
\begin{theorem}\label{thm:finiteoddorder}
Let $\ell\ge 3$ be an odd integer. Suppose that $f_{c_1}(x)=|x|^\ell+c_1$ satisfies the following:
\begin{itemize}
\item there exists an integer $q\ge 1$ such that $f_{c_1}^{q+1}(0)=0$, $f_{c_i}^j(0)\not=0$ for $1\le j\le q$. In particular, $c_1<0$ and $f_{c_1}$ has an orientation reversing fixed point $-w<0$.
\item $f_{c_1}^j(0)\not\in [-w, 0)$ for any $1\le j\le q$.
\item $c_2>c_3>c_4>0$.
\end{itemize}
Then
$$\sum_{n=0}^q \frac{1}{Df_{c_1}^n(c_1)}>0.$$
\end{theorem}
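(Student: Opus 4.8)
The plan is to reduce the theorem to the machinery of Part~A by establishing the (real) lifting property for the additive deformation of $f_{c_1}$, in the spirit of the proof of Theorem~\ref{thm:finiteorder2}. Set $g=f_{c_1}$ and $P=\{g^j(0):0\le j\le q\}$; since $g^{q+1}(0)=0$ and $g^j(0)\ne 0$ for $1\le j\le q$, the critical point $0$ is periodic of period $q+1$ and $P$ consists of the $q+1$ distinct points of this cycle. Take $P_0=\{0\}$ and the real local holomorphic deformation $(g,G,\textbf{p})_W$ with $W$ a small neighbourhood of $c_1=g(0)$, $G_w(z)=g(z)+(w-c_1)$ (so $G_w=f_w$ and $G_w(0)=w$) and $\textbf{p}\equiv 0$. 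Because $\ell$ is an \emph{odd integer}, on a small complex neighbourhood of a nonzero $a\in P$ the map $g$ agrees with the polynomial $z\mapsto z^\ell+c_1$ if $a>0$ and with $z\mapsto -z^\ell+c_1$ if $a<0$, and $g$ maps each of the two half-neighbourhoods of $0$ onto a neighbourhood of $c_1$ as an $\ell$-to-$1$ cover branched only over $c_1$; in particular $g(a)-g(0)=|a|^\ell>0$, so the lift of a holomorphic motion $h_\lambda$ of $P$ is defined, exactly as in Section~\ref{sec:finiteorder} with $\ell_-=\ell_+=\ell$, by $\widehat h_\lambda(a)=(h_\lambda(g(a))-h_\lambda(c_1))^{1/\ell}$ for $a>0$ and $\widehat h_\lambda(a)=-(h_\lambda(g(a))-h_\lambda(c_1))^{1/\ell}$ for $a<0$, using the principal branch.

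The heart of the argument is an analogue of the Main Lemma of Section~\ref{sec:finiteorder}, valid for the fixed odd integer $\ell\ge 3$ rather than for $\ell$ large, at the price of the restrictive combinatorial hypotheses of the theorem. As there, I would call a real holomorphic motion $h_\lambda$ of $P$ over $(\Omega,0)$ \emph{$\theta$-regular} if each $h_\lambda(a)$ lies in a narrow sector around $\R^\pm$ according to the sign of $a$ and each ratio $h_\lambda(b)/h_\lambda(a)$ with $0<|b|<|a|$, $ab>0$, lies in the Sullivan domain $D_\theta$. The conditions $g^j(0)\notin[-w,0)$ for $1\le j\le q$ and $c_2>c_3>c_4>0$ are precisely what is needed to keep, along every lift, the arguments of the $\ell$-th roots off the slit $(-\infty,0]$ (so the lift is well-defined) and the opening angles non-increasing: the orientation-reversing fixed point $-w$ is the only place where the folding of the left branch could spoil regularity, and confining the critical orbit to $\R^+\cup(-\infty,-w]$ places it in the region where the relevant triangles are controlled. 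Concretely I would prove, by the Schwarz-lemma estimates of Lemmas~\ref{lem:Schwarz}--\ref{lem:pbangle1} (notably $z\in D_\theta\Rightarrow z^{1/\ell}\in D_\theta$, the refined estimate near the vertex $1$, and the contraction of the opening angles $\theta^i_\lambda(x,y)$ along the orbit), that a $\theta$-regular motion which can be lifted $q$ times with all intermediate lifts $\theta$-regular has its $(q+1)$-st lift $\tfrac{\theta}{2}$-regular. Iterating this, as in the proof of Theorem~\ref{thm:finiteorder2}, gives for any holomorphic motion $h^{(0)}_\lambda$ of $P$ a sequence of successive lifts $h^{(k)}_\lambda$ defined over a fixed disk $\D_\eps$, with images in a fixed bounded set (sectors shrinking to $0$), so $(g,G,\textbf{p})_W$ has the real lifting property.

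Granting this, Lemma~\ref{lem:lift2spectrum} gives $\operatorname{spec}(\mathcal{A})\subset\overline{\D}$ with all unimodular eigenvalues semisimple, and Theorem~\ref{thm:1eigen} applies. Its second alternative is impossible: since $\nu=1$, a positive-dimensional variety $\{\mathcal{R}(w)=0\}$ through $c_1$ would force $f_w^{q+1}(0)\equiv 0$ for $w$ near $c_1$, i.e. all nearby $f_w$ would share the periodic critical orbit, which is absurd. Hence every eigenvalue of $\mathcal{A}$ lies in $\overline{\D}\setminus\{1\}$. As $f_{c_1}$ is real and unicritical with a periodic critical point that does not move with the parameter ($r=\nu=1$, so the range $r<j\le\nu$ is empty and there are no exceptional values), Corollary~\ref{real} yields the positively oriented transversality inequality; using the identity $\det(I-\rho\mathcal{A})=\det D(\rho)=\sum_{n=0}^{q}\rho^n/Dg^n(c_1)$ from Theorem~\ref{single} (valid here since $\sum_{j}(q_j-1)+r=\#P$), this says exactly $\sum_{n=0}^{q}1/Df_{c_1}^n(c_1)=\det D(1)>0$, which is the assertion.

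I expect the genuine difficulty to be the Main Lemma. Unlike in Section~\ref{sec:finiteorder}, $\ell$ is fixed and small, so there is no large power into which to absorb errors; the angle estimates must be fully quantitative, and the bookkeeping of the finitely many visits of the critical orbit to the orientation-reversing branch must be arranged so that each lifting step is either strictly improving or is exactly balanced by the Sullivan/Schwarz contraction. Pinning down the precise geometric role of the fixed point $-w$ and of the chain $c_2>c_3>c_4>0$ — showing in particular that these properties propagate under lifting in the form required to preserve $\theta$-regularity — is where the real work lies.
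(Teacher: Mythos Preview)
Your reduction to Part~A is correct and matches the paper: once the (real) lifting property for the additive deformation $(g,G,\textbf{p})_W$ is in hand, Theorem~\ref{thm:1eigen} (with the second alternative ruled out exactly as you say, since $\nu=1$) and Corollary~\ref{real} give $\sum_{n=0}^q 1/Df_{c_1}^n(c_1)>0$.

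The gap is in the Main Lemma. You propose to reuse the $\theta$-regular class and the Schwarz-type angle estimates of Lemmas~\ref{lem:pbangle} and~\ref{lem:pbangle1}, but those lemmas are stated and proved with an $\ell_0$-threshold: they need $\ell$ large to absorb the $O(\theta/\ell)$ error terms, and this is not a matter of sharpening constants. For a fixed odd $\ell\ge 3$ (already $\ell=3$) the contraction-after-$q$-steps scheme of Section~\ref{sec:finiteorder} simply does not close up, and you have not indicated any new mechanism to replace it. You correctly flag this as ``where the real work lies'', but the proposal supplies none of that work.

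The paper's argument takes a different route. It fixes one specific sector angle $\theta_\ell=\frac{\pi\ell^2}{2(\ell^3-1)}$ and a radius $R_\ell>1$ determined by an explicit algebraic equation, augments $P$ by the symmetric pair $\pm z_1$ (with $f(\pm z_1)=0$), and defines an ``admissible'' class of motions by eight conditions (A1)--(A8) that mix angle constraints with \emph{modulus} constraints (lower bounds $|h_\lambda(c_1)|>R_\ell$, $|h_\lambda(c_2)|>R_\ell^{1/\ell}$ and upper bounds $|h_\lambda(x)|\le 2^{1/(\ell-1)}$, $|h_\lambda(z_1)|\le 2^{1/(\ell^2-\ell)}$). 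The Main Lemma then shows that a \emph{single} lift of an admissible motion is again admissible (no shrinking $\theta\mapsto\theta/2$, no period-$q$ iteration). Two features have no analogue in your outline: (i) the injectivity step (two lifted points can collide only if $x_1x_2<0$, since $z\mapsto z^\ell$ is a bijection for odd integer $\ell$, and then a real-part inequality together with the modulus bounds rules this out), and (ii) a concrete numerical inequality, Lemma~\ref{lem:technical}, namely $2R_\ell\cos(\theta_\ell/\ell^2)>2^{1/(\ell-1)}+2^{1/(\ell^2-\ell)}$, which is checked by hand for $\ell=3$ and by elementary estimates for $\ell\ge 5$. The hypotheses $f^j(0)\notin[-w,0)$ and $c_2>c_3>c_4>0$ are used in (A2)--(A6) to keep $h_\lambda(c_1)$, $h_\lambda(c_2)$, $h_\lambda(c_3)$ in the correct sectors and to bootstrap the modulus bounds through one lift; the role of $-w$ is not to ``confine to the good branch'' in an angle sense but to separate the part of $P$ that obeys sector control (A3)--(A4) from the part that obeys only the modulus control (A2). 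Without introducing modulus constraints and the accompanying numerical lemma, the angle-only scheme you describe will not establish the lifting property for small odd $\ell$.
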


In the proof it will be convenient to define, $z_1=|f_{c_1}^q(0)|$ so $f_{c_1}(z_1)=f_{c_1}(-z_1)=0$. Note that $|w|<|z_1|$.
Let $$P=\{f_{c_1}^j(0): 0\le j\le q\}\cup \{\pm z_1\}.$$
Let $\theta=\theta_\ell=\frac{\pi\ell^2}{2(\ell^3-1)}$, and let $R=R_\ell>1$ be such that 
\begin{equation}\label{eqn:Rell}
R^{2\ell}= R^2+R^{2/\ell}+2R^{1+1/\ell}\cos \frac{\pi(\ell+1)}{2(\ell^3-1)}.
\end{equation}

\begin{lemma}\label{lem:technical} For each odd integer $\ell\ge 3$,
$$2R_\ell \cos \frac{\theta_\ell}{\ell^2} > 2^{\frac{1}{(\ell-1)}}+ 2^{\frac{1}{\ell^2-\ell}}.$$
\end{lemma}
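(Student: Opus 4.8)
The plan is to rewrite the implicit definition~(\ref{eqn:Rell}) of $R_\ell$ in a clean closed form, extract from it a lower bound on $R_\ell$ that is sharp to leading order in $\ell$, and then reduce Lemma~\ref{lem:technical} to a single scalar estimate in $\ell$ which is dispatched by elementary bounds on $2^{\pm x}$ and on $\cos$.

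First I would note that the right-hand side of~(\ref{eqn:Rell}) equals $|R_\ell+R_\ell^{1/\ell}e^{i\phi_\ell}|^2$ with $\phi_\ell=\frac{\pi(\ell+1)}{2(\ell^3-1)}$, so that $R_\ell^{\ell}=|R_\ell+R_\ell^{1/\ell}e^{i\phi_\ell}|$. Setting $C=R_\ell^{(\ell-1)/\ell}>1$ and dividing by $R_\ell^{1/\ell}=C^{1/(\ell-1)}$ turns this into
\[
C^{\ell+1}=|C+e^{i\phi_\ell}|,\qquad\text{equivalently}\qquad C^{2(\ell+1)}=C^2+1+2C\cos\phi_\ell .
\]
The function $h(t)=t^{2(\ell+1)}-t^2-1-2t\cos\phi_\ell$ is strictly increasing on $[1,\infty)$ with $h(1)<0$, so $C$ is its unique zero exceeding $1$; hence $C>2^{1/(\ell+1)}$ is equivalent to $h(2^{1/(\ell+1)})<0$, i.e.\ to $u^2+2u\cos\phi_\ell>3$ where $u=2^{1/(\ell+1)}$. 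Writing $u^2+2u\cos\phi_\ell-3=(u-1)(u+3)-2u(1-\cos\phi_\ell)$ and using $u-1\ge\frac{\ln2}{\ell+1}$, $u+3>4$, $u<2$ and $1-\cos\phi_\ell\le\phi_\ell^2/2$, this is bounded below by $\frac{4\ln2}{\ell+1}-2\phi_\ell^2$, which is positive for every $\ell\ge3$ (it reduces to $(\ell^3-1)^2>\frac{\pi^2}{8\ln2}(\ell+1)^3$, immediate). Consequently $R_\ell=C^{\ell/(\ell-1)}>2^{\ell/(\ell^2-1)}$, and — crucially — this bound is asymptotically exact, since $C^{2(\ell+1)}\to4$.

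Next I would use this to finish. Dividing the asserted inequality by $2^{1/(\ell-1)}=2^{(\ell+1)/(\ell^2-1)}$, and writing $\alpha_\ell:=\theta_\ell/\ell^2=\frac{\pi}{2(\ell^3-1)}$, it suffices to prove
\[
2^{(\ell^2-2)/(\ell^2-1)}\cos\alpha_\ell>1+2^{-1/\ell}.
\]
Putting $2^{(\ell^2-2)/(\ell^2-1)}=2(1-a')$, $2^{-1/\ell}=1-b'$, $\cos\alpha_\ell=1-c'$ with $a',b',c'\ge0$, the difference of the two sides equals $b'-2a'-2c'+2a'c'$, so it is enough that $b'>2a'+2c'$. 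With the elementary bounds $b'=1-2^{-1/\ell}\ge\frac{\ln2}{\ell}-\frac{(\ln2)^2}{2\ell^2}$, $a'=1-2^{-1/(\ell^2-1)}\le\frac{\ln2}{\ell^2-1}$ and $c'=1-\cos\alpha_\ell\le\frac{\pi^2}{8(\ell^3-1)^2}$, this reduces to
\[
1-\frac{\ln2}{2\ell}>\frac{2\ell}{\ell^2-1}+\frac{\pi^2\ell}{4\ln2\,(\ell^3-1)^2},
\]
whose left side is $\ge1-\frac{\ln2}{6}>0.88$ for $\ell\ge3$, while the right side is decreasing in $\ell$ with value below $0.77$ at $\ell=3$; hence the inequality holds for every odd $\ell\ge3$ (in fact for every integer $\ell\ge3$).

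The main obstacle is that Lemma~\ref{lem:technical} is essentially sharp: both sides behave like $2+\frac{\ln2}{\ell}+O(\ell^{-2})$, so no lossy step is affordable. In particular the lower bound on $R_\ell$ must be obtained with leading coefficient $1$ — the cruder consequence $R_\ell>2^{\ell/((2\ell+1)(\ell-1))}$ of $R_\ell^{2\ell}>2R_\ell^{1+1/\ell}$, which only gives $R_\ell\gtrsim1+\frac{\ln2}{2\ell}$, is not enough; the normalization by $2^{1/(\ell-1)}$ (rather than by $2\cdot2^{1/(\ell-1)}$, which would use the wasteful $\rho+\rho^{1/\ell}<2\rho$) is what preserves the $\Theta(1/\ell)$ margin; and the estimates for $a',b',c'$ must be kept to second order. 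Once these choices are fixed, all that remains is routine verification of the two displayed inequalities, the only case-by-case point being a couple of explicit numerical checks at the smallest $\ell$.
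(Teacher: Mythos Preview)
Your proof is correct. Both your argument and the paper's follow the same overall strategy --- extract a sharp lower bound on $R_\ell$ from its defining equation and then reduce the lemma to an elementary scalar inequality in $\ell$ --- but the executions differ in ways worth noting. The paper works directly with $R^\ell$ and bootstraps: from $R^\ell > R + R^{1/\ell}\cos\alpha$ it first gets $R^\ell > 1+\cos\alpha$, feeds this back to obtain $R^\ell > R+1$, hence $R^\ell > 2$, hence $R^\ell > 1+2^{1/\ell}$; it then shows $(R\cos\beta)^\ell > 2$ for $\ell\ge 5$ and finishes via $2\cdot 2^{1/\ell} > 2^{1/(\ell-1)}+2^{1/(\ell^2-\ell)}$, treating $\ell=3$ by a separate numerical check. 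Your substitution $C=R^{(\ell-1)/\ell}$ is a genuine simplification: it turns the implicit equation into $C^{2(\ell+1)}=C^2+1+2C\cos\phi_\ell$, whose unique root exceeding $1$ is immediately seen to satisfy $C>2^{1/(\ell+1)}$ by a single monotonicity/sign check, with no iteration needed. This yields $R>2^{\ell/(\ell^2-1)}$ directly, and your subsequent normalization by $2^{1/(\ell-1)}$ together with the second-order expansions $b'>2a'+2c'$ dispatches all odd $\ell\ge 3$ at once, avoiding the paper's case split. The trade-off is that the paper's bootstrap is short and self-contained at each step, while your route requires keeping track of slightly more careful Taylor bounds; both correctly preserve the $\Theta(1/\ell)$ margin that the lemma demands.
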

\begin{proof}
Put $\alpha= \frac{\pi (\ell+1)}{2(\ell^3-1)}$ and $\beta=\frac{\theta}{\ell^2}$. Using the assumption $\ell\ge 3$, it is easy to check that  $\alpha \le \frac{9\pi}{13}\frac{1}{\ell^2},$ and $\beta\le \frac{2}{\ell^3}$.
Therefore
$$(\cos \alpha)^{\ell^2} \ge \left(1-\frac{\alpha^2}{2}\right)^{\ell^2}\ge 1-\frac{\ell^2 \alpha^2}{2}>0.7,$$
where we used $(1-x)^{\ell^2}>1-\ell^2 x$ for $x\in (0,1)$. Consequently, $\cos \alpha>0.9$ and
\begin{equation}\label{eqn:cosalpha}
(1+\cos \alpha)(\cos \alpha)^{\ell^2}>1.
\end{equation}
By (\ref{eqn:Rell}),
$$R^{2\ell} =(R+R^{1/\ell} \cos \alpha)^2+ (R^{1/\ell}\sin \alpha)^2> (R+R^{1/\ell} \cos \alpha)^2,
$$
hence
$$R^{\ell}> R+R^{1/\ell} \cos \alpha,$$
which implies $R^{\ell}> 1+\cos \alpha$
since $R>1$. By (\ref{eqn:cosalpha}), we obtain
$$R^{\ell}> R+1.$$
It follows that $R^\ell>2$ and consequently $R^\ell > 1+2^{1/\ell}$. Therefore
\begin{equation}\label{eqn:Rcosbeta}
(R\cos \beta)^\ell > (1+2^{1/\ell})(1-\ell\beta^2/2)=(1+2^{1/\ell})(1-2/\ell^5)
\end{equation}

{\em Case 1.} $\ell=3$. By direct computation, we deduce from (\ref{eqn:Rcosbeta}) that $(R\cos \beta)^3> 2.24$, hence
$2R\cos \beta> 2.61$. But $2^{1/2}+2^{1/6}<2.54 <2.61$.

{\em Case 2.} $\ell\ge 5$. Using $2^{1/\ell}> 1+\frac{1}{2\ell}$, we deduce from (\ref{eqn:Rcosbeta}) that $(R\cos \beta)^\ell>2$. Thus it suffices to prove
$$ 2 \cdot 2^{1/\ell} > 2^{1/(\ell-1)}+ 2^{1/(\ell^2-\ell)}.$$
For this purpose, put $\delta=2^{1/(\ell^2-\ell)}-1\in (0, 0.5)$. Then
$$\frac{2^{1/(\ell-1)}+ 2^{1/(\ell^2-\ell)}}{ 2^{1/\ell}}=1+\delta+ \frac{1}{(1+\delta)^{\ell-2}}
< 1+\delta +\frac{1}{(1+\delta)^3}<2.$$
The proof is completed.
\end{proof}

 We say that a holomorphic motion $h_{\lambda}(x)$ of $P$ over $\D_r$ is {\em admissible} if the following hold for each $\lambda\in \D_r$:
\begin{enumerate}
\item [(A1)] $h_{\lambda}(-z_1)=-h_{\lambda}(z_1)$, $h_\lambda(0)=0$;
\item [(A2)] For each $x\in P$ with $0<x< w$, we have $|h_\lambda(x)|\le |h_{\lambda}(z_1)|$;
\item [(A3)] For each $x\in P$ with $x>w$, we have $h_{\lambda}(x)\in S_{\theta}$,
where $$S_\theta=\{re^{it}: r>0, |t|<\theta\};$$
\item [(A4)] For each $x\in P$ with $x<-w$, we have $h_\lambda(x)\in -S_{\theta}$;
\item [(A5)] $h_\lambda(c_1)\in -S_{\theta/\ell^2}$, $h_\lambda(c_2)\in S_{\theta/\ell}$;
\item [(A6)] $|h_\lambda(c_1)|> R$, $|h_\lambda(c_2)|>R^{1/\ell}$;
\item [(A7)] $|h_\lambda(x)|\le 2^{1/(\ell-1)}$ for all $x\in P$;
\item [(A8)] $|h_\lambda(z_1)|\le 2^{1/(\ell^2-\ell)}$.
\end{enumerate}

\medskip
{\bf Main Lemma.} {\em  Assume that $\ell\ge 3$ is an odd integer and let $f=f_{c_1}$ be as above.
Then any admissible holomorphic motions $h_\lambda$ of $P$ over $\D_r$ has a lift $\widehat{h}_\lambda$ which is
again an admissible holomorphic motion of $P$ over $\D_r$.
}
\medskip

\begin{proof}
{\bf Step 1.} It is clear that for each $x\in P$, $\lambda\mapsto \widehat{h}_\lambda(x)$ can be defined over $\D_r$ as a holomorphic map, so that
\begin{itemize}
\item $\widehat{h}_\lambda(0)=0,$
\item for $x>0$,
$(\widehat{h}_\lambda(x))^\ell= h_\lambda(f(x))-h_\lambda(c_1),$
\item for $x<0$,
$-(\widehat{h}_\lambda(x))^\ell=h_\lambda(f(x))-h_\lambda(c_1).$
\end{itemize}
Thus $\widehat{h}_\lambda$ satisfies (A1).

{\bf Step 2.} Let us prove that $\widehat{h}_\lambda$ is indeed a holomorphic motion of $P$ over $\D_r$. Arguing by contradiction, assume that there exists $x,y\in P$ and $\lambda_0\in \D_r$ such that $\widehat{h}_{\lambda_0}(x)=\widehat{h}_{\lambda_0}(y)$. Then using the assumption that $\ell$ is an odd integer, we must have $xy<0$ and
$$h_{\lambda_0}(f(x))-h_{\lambda_0}(c_1)=-(h_{\lambda_0}(f(y))-h_{\lambda_0}(c_1)).$$
Thus
$$\text{Re} h_{\lambda_0}(f(x))+ \text{Re} h_{\lambda_0}(f(y))= 2 \text{Re} h_{\lambda_0}(c_1).$$
Assume without loss of generality $x<0$ and $y>0$. Then
$f(x)\ge 0$. Since $h_\lambda$ satisfies (A2) and (A3), we have
$$\text{Re} h_{\lambda_0}(f(x)) \ge  -|h_{\lambda_0}(z_1)|.$$
On the other hand, since $h_\lambda$ satisfies (A7),
$$|\text{Re} h_{\lambda_0}(f(y))|\le |h_{\lambda_0}(f(y))|\le 2^{1/(\ell-1)}.$$
Therefore, we have
$$-2\text{Re} h_{\lambda_0}(c_1)\le |h_{\lambda_0}(z_1)|+2^{1/(\ell-1)}\le 2^{1/(\ell^2-\ell)}+2^{1/(\ell-1)},$$
where the last inequality follows from the property (A8) for $h_\lambda$.
However, this contradicts with Lemma~\ref{lem:technical} by the properties (A5) and (A6) for $h_\lambda$.


{\bf Step 3.} Let us prove that $\widehat{h}_\lambda$ satisfies the property (A2).
It suffices to show that for each $y=f(x)\in [c_1, -w]\cap P$,
$$|h_\lambda(y)-h_\lambda(c_1)|\le |h_\lambda(c_1)|.$$
Indeed, writing
$$\zeta=\frac{h_\lambda(y)}{h_\lambda(c_1)}=re^{it},$$
we have $r\le 2^{1/(\ell-1)}$ and $|t|<\theta (1+1/\ell^{2})$. Provided that $\ell\ge 3$, we have
$r<2\cos t$, which implies $|\zeta-1|<1$ and hence the desired estimate.

{\bf Step 4.} The property (A5) for $\widehat{h}_\lambda$ follows from
$(-\widehat{h}_\lambda(c_1))^\ell=h_\lambda(c_2)-h_\lambda(c_1)\in S_{\theta/\ell}$,
$\widehat{h}_\lambda (c_2)^\ell= h_\lambda(c_3)-h_\lambda(c_1)\in S_\theta$.

Similarly, for any $x\in P$ with $|x|>w$, since $|h_\lambda(c_1)|>|h_\lambda(z_1)|$ and
$$h_\lambda(f(x))\in S_{\theta} \cup \D_{|h_\lambda(z_1)|},$$
it follows that $h_\lambda(f(x))-h_\lambda(c_1)\in S_{\frac{\theta}{\ell^2}+\frac{\pi}{2}}$, and hence
$$\widehat{h}_\lambda(x)\in \pm S_{\frac{\theta}{\ell^3}+\frac{\pi}{2\ell}}=\pm S_\theta.$$ This proves that (A3) and (A4) hold for $\widehat{h}_\lambda$.

{\bf Step 5.} Let us prove the property (A6) for $\widehat{h}_\lambda$.
Indeed,
$$|\widehat{h}_\lambda(c_1)|^{2\ell}=|h_\lambda(c_2)-h_\lambda(c_1)|\ge R^2+1+2R\cos \frac{\pi(\ell+1)}{2(\ell^3-1)}=R^{2\ell},$$
since $|h_\lambda(c_1)|>1$, $|h_\lambda(c_2)|>1$ and $\angle h_\lambda(c_1) 0 h_\lambda(c_2) > \pi-\frac{\theta}{\ell}-\frac{\theta}{\ell^2}>\frac{\pi}{2}.$
This proves that $|\widehat{h}_\lambda(c_1)|>R$.
Using $\angle h_\lambda(c_1)0h_\lambda(c_3)> \pi/2$, we obtain
$|\widehat{h}_\lambda(c_2)|^\ell> |h_\lambda(c_1)|>R.$


{\bf Step 6.} We prove the properties (A7) and (A8) for $\widehat{h}_\lambda$.
Indeed, for any $x\in P$,
$$|\widehat{h}_\lambda(x)|^\ell=|h_\lambda(f(x))-h_\lambda(c_1)|\le 2\cdot 2^{1/(\ell-1)}=2^{\ell/(\ell-1)},$$
which implies that $|\widehat{h}_\lambda(x)|\le 2^{1/(\ell-1)}$. This proves (A7).
For $x=z_1$, we have $h_\lambda(f(x))=0$, and thus
$|\widehat{h}_\lambda(z_1) |^\ell \le 2^{1/(\ell-1)}.$
This proves (A8).

\end{proof}

\part*{Part D: The lifting property for some well-known families}
\section{Polynomials and rational functions}\label{sec:rationalmaps}
In this section we demonstrate that the method of Section~\ref{sec:lifting} on holomorphic maps also applies in the setting of rational maps on the
Riemann sphere. In this setting we can use the Measurable Riemann Theorem to prove the lifting property.
The main results obtained in this section can also be covered by other methods, see~\cite{Le, BE, LSvS}.

\subsection{Holomorphic perturbations}
Let $\textbf{P}_d$ and $\textbf{Rat}_d$ be collections of all monic centered polynomials and rational functions of degree $d\ge 2$; these sets are naturally parametrized by $\C^{d-1}$ and an open set in $P\C^{2d+1}$ respectively. In this and the next subsections $f$ is an arbitrary function either from $\textbf{P}_d$ or from $\textbf{Rat}_d$. In the latter case we assume without loss of generality that the orbits of critical points avoid the point at $\infty$.
Let $c_1, c_2, \cdots, c_\nu$ be all distinct (finite) critical points of $f$ with multiplicities $m_1, m_2, \cdots, m_\nu$ and let $v_j=f(c_j)$.
We define a holomorphic deformation $(f,f_{\textbf{w}},\textbf{p})_W$ of $f$ as follows.

If $f$ is a polynomial, there is a neighbourhood $W$ of $(v_1, v_2,\cdots, v_\nu)$ in $\C^\nu$ and a neighbourhood $W_f\subset \textbf{P}_d$ of $f$ such that for each $\textbf{w}\in W$, there is a unique polynomial $f_{\textbf{w}}\in W_f$, depending on $\textbf{w}$ holomorphically, and a holomorhic function $\textbf{p}=(p_1,p_2,\cdots,p_\nu):W\to \C^\nu$, such that

(i) $p_j(\textbf{w})$ is a critical point of $f_{\textbf{w}}$ of multiplicity $m_j$ and
$$\textbf{w}=(f_{\textbf{w}}(p_1(\textbf{w})),f_{\textbf{w}}(p_2(\textbf{w})),\cdots,f_{\textbf{w}}(p_\nu(\textbf{w}))),$$

(ii) $f_{(v_1,v_2,\cdots, v_\nu)}=f$, $p_j(v_1, v_2, \cdots, v_\nu)=c_j$.

For a proof, see \cite{Le0}, Proposition 1.

\medspace

Now let $f$ be a rational function.
We say that a rational map $g$ of degree $d$ is in the class $\textbf{Rat}_d^{\bf m}$ where $\textbf{m}=(m_1,\cdots,m_\nu)$
if $g$ has $\nu$ distinct critical points $c_1, c_2, \dots , c_\nu$ with multiplicities $m_1,m_2,\dots,m_\nu$ respectively.
\begin{theorem}\label{prop:cvrational}
$\textbf{Rat}^{\bf m}_d$ is a manifold of dimension $\nu+3$
and the functions defined by the critical values form a partial holomorphic coordinate system.
In other words, $\Psi\colon \textbf{Rat}_d^{\bf m} \ni g \mapsto (g(c_1),\dots,g(c_\nu))$ has rank $\nu$.
\end{theorem}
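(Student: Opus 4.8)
The plan is to exhibit $\textbf{Rat}_d^{\bf m}$ as a smooth subvariety of $\textbf{Rat}_d$ cut out transversally by the conditions that fix the multiplicity profile of the critical divisor, and then to compute the differential of $\Psi$ using a deformation argument. First I would recall the standard picture: $\textbf{Rat}_d$ is a smooth complex manifold of dimension $2d+1$, a rational map $g=P/Q$ of degree $d$ has a critical divisor $\mathrm{Crit}(g)$ of degree $2d-2$ (the zero divisor of the Wronskian $W(P,Q)=P'Q-PQ'$, a form of degree $2d-2$ up to the $\mathrm{PGL}_2$-action on the target), and the prescribed profile ${\bf m}=(m_1,\dots,m_\nu)$ with $\sum (m_j-1)=2d-2$ means we are demanding exactly $\nu$ distinct critical points with these multiplicities. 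The locus where the critical divisor has a fixed partition type is locally defined by $2d-2-\nu$ independent equations (coalescing $2d-2$ points into $\nu$ groups imposes exactly that many conditions on the coefficients of the Wronskian), so $\textbf{Rat}_d^{\bf m}$, if nonempty, is locally a manifold of dimension $(2d+1)-(2d-2-\nu)=\nu+3$. I would make this rigorous either by the implicit function theorem applied to the symmetric-function/discriminant-type equations on the coefficients of $W(P,Q)$, or — cleanly — by noting that near a given $g$ one can use the $\nu$ critical points together with $\mathrm{PGL}_2$-coordinates on the source and target as parameters.

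The heart of the matter is the rank computation: I must show $D\Psi$ has rank $\nu$ at every $g\in\textbf{Rat}_d^{\bf m}$, i.e. the map $g\mapsto(g(c_1(g)),\dots,g(c_\nu(g)))$ is a submersion onto $\C^\nu$. Equivalently, for each $j$ I want a tangent vector (an infinitesimal deformation $\dot g$ of $g$ inside $\textbf{Rat}_d^{\bf m}$) that moves $g(c_j)$ and leaves $g(c_i)$ fixed for $i\ne j$, to first order. A tangent vector to $\textbf{Rat}_d$ at $g$ is a meromorphic vector field along $g$, i.e. $\dot g = v\circ g$ with $v$ a holomorphic section of $TS^2$ plus the contribution of moving the source; writing a deformation as $g_t = \phi_t\circ g\circ\psi_t^{-1}$ for germs of families of Möbius maps is too rigid (it stays in one $\mathrm{PGL}_2\times\mathrm{PGL}_2$-orbit), so instead I would perturb the coefficients of $P$ and $Q$ directly. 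The key point: the constraint of staying in $\textbf{Rat}_d^{\bf m}$ is automatically satisfied to first order as long as the deformation does not break critical points apart — and since the $c_j$ have multiplicities $m_j$, small coefficient perturbations keep a critical point of multiplicity $m_j$ near $c_j$ (this is where one uses that we have fixed the profile, not just the number of critical points). So the real content is: the $\nu$ numbers $g_t(c_j(g_t))$ can be varied independently by coefficient perturbations. This should follow from a dimension/duality count — the derivatives $\partial_\epsilon \big(g_\epsilon(c_j(g_\epsilon))\big)$ as the coefficient vector $\epsilon$ ranges over the tangent space to $\textbf{Rat}_d^{\bf m}$ must span $\C^\nu$, for otherwise some nonzero covector $(\alpha_1,\dots,\alpha_\nu)$ would annihilate all of them, giving a nontrivial relation among the critical values valid on an open set, which one rules out by producing explicit deformations (e.g. post-compose $g$ with a one-parameter family of Möbius maps that is the identity to high order at all but one critical value).

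I expect the main obstacle to be the transversality/rank step rather than the manifold-dimension count, because one has to argue that $\textbf{Rat}_d^{\bf m}$ is not "accidentally" contained in a fiber of $\Psi$ of smaller rank — i.e. that the critical-value map is genuinely nondegenerate on this stratum. The cleanest route is probably to reduce to the polynomial case already handled (cite \cite{Le0}, Proposition 1, as the excerpt does) by a local change of coordinates: move one critical value to $\infty$ by a Möbius transformation and realize a neighbourhood in $\textbf{Rat}_d^{\bf m}$ as a family closely analogous to $\textbf{P}_d$ with its critical values as coordinates, then quotient out the residual $\mathrm{PGL}_2$ (this accounts for the "$+3$" versus the polynomial "$-1$"). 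Alternatively, one invokes directly the known structure theory of the multiplier/critical-value map on $\textbf{Rat}_d$ (as in the references \cite{Le, BE}), but the self-contained argument via explicit Möbius deformations at the critical values is what I would write out, keeping the computation to the one-line verification that post-composing with $z\mapsto z + t\prod_{i\ne j}(z-v_i)^{N}/(\cdots)$ moves only $v_j$ to first order.
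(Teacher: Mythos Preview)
There is a genuine gap in your rank argument. The explicit deformation you propose at the end --- post-composing $g$ with $z\mapsto z+t\prod_{i\ne j}(z-v_i)^N/(\cdots)$ --- does not stay inside $\textbf{Rat}_d$: such a map is not M\"obius, so post-composition raises the degree and you leave the space entirely. Genuine M\"obius post-composition carries only three complex parameters, so for $\nu>3$ it cannot move the $\nu$ critical values independently; the missing directions must come from deformations that genuinely change the conformal type of the map, and you have not produced those. Your fallback suggestion of reducing to the polynomial case by ``moving one critical value to $\infty$'' is also off the mark: a rational map with a critical value at $\infty$ is not a polynomial (for that one needs $\infty$ to be a totally ramified \emph{fixed} point), so Proposition~1 of \cite{Le0} does not apply, and ``closely analogous to $\textbf{P}_d$'' is not an argument.

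The paper takes a related but correctly targeted reduction. It fixes a non-postcritical point $w$ and passes from $g$ to $\tilde g(z)=1/\bigl(g(1/z+w)-g(w)\bigr)$, which is a degree-$d$ rational map fixing $\infty$ with expansion $\sigma z+m+O(1/z)$ there. This puts $\tilde g$ in the normalized class $\Lambda_{d,\nu}\subset\textbf{Rat}_d^{\bf m}$, for which Proposition~3 (not Proposition~1) of \cite{Le0} already shows that $(\sigma,m,\tilde v_1,\ldots,\tilde v_\nu)$ are local holomorphic coordinates, of total dimension $\nu+2$. Since the fibre of $g\mapsto\tilde g$ is one-dimensional (parametrized by $g(w)$), unwinding the M\"obius change shows that $(g(w),Dg(w),D^2g(w),v_1,\ldots,v_\nu)$ are local coordinates on $\textbf{Rat}_d^{\bf m}$, giving simultaneously the dimension $\nu+3$ and the fact that $\Psi$ has rank $\nu$. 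Your Wronskian-stratification dimension count is plausible (modulo checking that the critical-divisor map is a submersion, which you do not address), but it does not by itself yield the rank statement, and your proposed proof of the rank statement does not work.
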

\begin{remark}
A direct elementary proof of
Theorem~\ref{prop:cvrational} is given in \cite{LSvS}. Here we derive it from \cite{Le0}.
\end{remark}
\begin{proof}
Let $\Lambda_{d, \nu}$ be a collection of functions $g\in \textbf{Rat}^{\bf m}_d$ with the following expansion $g(z)=\sigma_g z+m_g+O(1/z)$ as $z\to \infty$ for some $\sigma_g\not=0$ and $m_g\in \C$.
By Proposition 3 of \cite{Le0}, $\Lambda_{d,\nu}$ is a manifold of dimension $\nu+2$ and
$(\sigma_g,m_g,v_1(g),\cdots,v_\nu(g))$ is a holomorphic local coordinate of $g\in\Lambda_{d,\nu}$. Fix some $w\in \C\setminus P(f)$. Now, given $g\in \textbf{Rat}^{\bf m}_d$ close to $f$, the function $z\mapsto 1/\{g(1/z+w)-g(w)\}$ is in $\Lambda_{d,\nu}$. Therefore, the vector
$$\textbf{x}(g)=(g(w), D g(w), D^2g(w), v_1(g),\cdots,v_\nu(g))$$ defines a local holomorphic coordinate of $g\in \textbf{Rat}_d^{\textbf{m}}$.
\end{proof}
In the next corollary we define a holomorphic deformation $(f,f_{\textbf{w},Z}, \textbf{p}_Z)_W$ of the rational function $f$. It depends on a given set $Z=\{x_1, x_2, x_3\}\subset \CC$ of three distinct points so that $Df(x_i)\not=0$, $i=1,2,3$.
\begin{coro}\label{sect3p}
There is a neighbourhood $W$ of $(v_1, v_2,\cdots, v_\nu)$ in $\C^\nu$ and a neighbourhood $W_f\subset \textbf{Rat}_d$ of $f$ such that for each $\textbf{w}\in W$, there is a unique rational function $f_{\textbf{w}}=f_{\textbf{w},Z}\in W_f$, depending on $\textbf{w}$ holomorphically, and a holomorhic function $\textbf{p}=\textbf{p}_Z:W\to \C^\nu$, such that (i)-(ii) hold and also $f_{\textbf{w},Z}(x_i)=f(x_i)$ for $i=1,2,3$.
\end{coro}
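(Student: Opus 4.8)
The plan is to deduce Corollary~\ref{sect3p} from Theorem~\ref{prop:cvrational} in exactly the same spirit as the polynomial case recorded just above. The point is that Theorem~\ref{prop:cvrational} tells us $\textbf{Rat}_d^{\bf m}$ is a $(\nu+3)$-dimensional manifold on which the map $g\mapsto(g(c_1),\dots,g(c_\nu))$ has rank $\nu$, so the critical values can be completed to a local holomorphic coordinate system. What we must add is that the three extra coordinates can be chosen to be the values $g(x_1),g(x_2),g(x_3)$, so that fixing these three values pins down a slice on which the critical values alone parametrise.

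First I would observe that the three ``evaluation'' functionals $g\mapsto g(x_i)$, $i=1,2,3$, are holomorphic on $\textbf{Rat}_d^{\bf m}$ near $f$ (they are holomorphic as long as $x_i$ stays away from poles of $g$, which is an open condition since $Df(x_i)\neq 0$ forces $x_i$ to be a point where $f$ is a local homeomorphism, in particular not a pole). Then I would check that the differential of the combined map
$$\Xi\colon g\longmapsto \bigl(g(x_1),g(x_2),g(x_3),g(c_1),\dots,g(c_\nu)\bigr)\in\C^{\nu+3}$$
at $g=f$ has full rank $\nu+3$. Granting this, $\Xi$ is a local biholomorphism from a neighbourhood $W_f\subset\textbf{Rat}_d^{\bf m}$ of $f$ onto a neighbourhood of $\Xi(f)$ in $\C^{\nu+3}$; restricting its inverse to the affine slice $\{g(x_i)=f(x_i),\ i=1,2,3\}$ produces, for each $\textbf{w}=(w_1,\dots,w_\nu)$ in a neighbourhood $W$ of $(v_1,\dots,v_\nu)$, a unique $f_{\textbf{w},Z}\in W_f$ depending holomorphically on $\textbf{w}$ with $f_{\textbf{w},Z}(x_i)=f(x_i)$ and critical values $w_j$. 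The critical points $p_j(\textbf{w})$ of $f_{\textbf{w},Z}$ depend holomorphically on $\textbf{w}$ because a critical point of multiplicity $m_j$ persists holomorphically under small perturbation (it is a simple zero of $g'$ when $m_j=1$, and more generally one tracks it via the symmetric functions of the $m_j$ colliding preimage roots, or simply notes that $\textbf{Rat}_d^{\bf m}$ was defined by requiring exactly $\nu$ critical points of these multiplicities, so $c_j$ continues as a holomorphic function on the manifold). This gives (i)--(ii) together with the extra normalisation, which is the statement of the corollary.

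The key step, and the one I expect to be the main obstacle, is verifying that $D\Xi(f)$ has rank $\nu+3$, i.e.\ that the three evaluation functionals are linearly independent modulo the span of the differentials of the critical-value functions. Equivalently: on the $3$-dimensional kernel of $g\mapsto(g(c_1),\dots,g(c_\nu))$ inside $T_f\textbf{Rat}_d^{\bf m}$ — which, by the proof of Theorem~\ref{prop:cvrational}, is the tangent space to the fibre through $f$ of the map to critical values and is naturally identified with the $3$-dimensional family of infinitesimal M\"obius reparametrisations of the target (post-composition by $\mathrm{PSL}_2$) — the map $v\mapsto(v(x_1),v(x_2),v(x_3))$ is injective. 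This is where the hypothesis that $x_1,x_2,x_3$ are \emph{distinct} enters: an infinitesimal M\"obius vector field $\xi$ on $\CC$ that vanishes at the three distinct points $f(x_1),f(x_2),f(x_3)$ is identically zero, and post-composition of $f$ by the flow of $\xi$ is the corresponding deformation. Concretely, I would use the coordinate $\textbf{x}(g)=(g(w),Dg(w),D^2g(w),v_1(g),\dots,v_\nu(g))$ from the proof of Theorem~\ref{prop:cvrational} (choosing $w\notin\{x_1,x_2,x_3\}$ and $w\notin P(f)$): deformations with fixed critical values correspond precisely to varying $(g(w),Dg(w),D^2g(w))$, which is exactly the $3$-parameter family obtained by post-composing with M\"obius transformations near the identity, and a M\"obius transformation fixing the three distinct points $f(x_1),f(x_2),f(x_3)$ is the identity. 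Hence the three functionals $g\mapsto g(x_i)$ are independent on this slice, $D\Xi(f)$ is an isomorphism, and the corollary follows by the implicit function theorem. (One should also note $Df_{\textbf{w},Z}(x_i)\neq 0$ for $\textbf{w}$ near $(v_1,\dots,v_\nu)$, which is automatic by continuity from $Df(x_i)\neq 0$, so the construction stays inside the class where the evaluation functionals are well behaved.)
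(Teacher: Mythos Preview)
Your overall strategy---assembling $\Xi(g)=(g(x_1),g(x_2),g(x_3),\Psi(g))$ and checking it has rank $\nu+3$ at $f$---is sound and gives a clean alternative to the paper's argument. However, you have the M\"obius action backwards, and this matters. Pre-composition $g\mapsto g\circ M$ preserves critical values (the critical points move to $M^{-1}(c_j)$ but their images are still $g(c_j)=v_j$), whereas post-composition $g\mapsto M\circ g$ sends the critical values to $M(v_j)$. Hence the $3$-dimensional kernel of $D\Psi(f)$ is the tangent to the \emph{pre}-composition orbit, not the post-composition one. With the correct identification, the derivative of $g\mapsto g(x_i)$ along an infinitesimal source-M\"obius vector field $\xi$ is $Df(x_i)\,\xi(x_i)$; since $Df(x_i)\neq 0$ by hypothesis, this vanishes for all $i$ iff $\xi(x_1)=\xi(x_2)=\xi(x_3)=0$, forcing $\xi=0$ because $x_1,x_2,x_3$ are distinct. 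Your version instead requires $\xi$ to vanish at $f(x_1),f(x_2),f(x_3)$, but these three values are \emph{not} assumed distinct, so the argument as written can fail. Once you replace ``post'' by ``pre'' and $f(x_i)$ by $x_i$, the proof goes through and uses the hypothesis $Df(x_i)\neq 0$ in exactly the right place.

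For comparison, the paper's proof is more geometric and avoids any rank computation: it builds the slice $S=\{g:g(x_i)=f(x_i)\}$ directly by exhibiting a local product decomposition $g\mapsto(g\circ M_g,\,M_g)$ of a neighbourhood of $f$ in $\textbf{Rat}_d^{\bf m}$ as $S\times(\text{M\"obius near }\id)$, where $M_g$ is the unique M\"obius map near $\id$ sending each $x_i$ to the $g$-preimage of $f(x_i)$ near $x_i$ (well-defined precisely because $Df(x_i)\neq 0$). Since pre-composition leaves $\Psi$ invariant, $\Psi|_S$ is then a local diffeomorphism onto $\C^\nu$. Your rank argument and the paper's product decomposition are two faces of the same fact; the paper's version makes the role of pre-composition explicit from the start and sidesteps the pitfall you ran into.
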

\begin{proof} Let $S=\{g\in \textbf{Rat}_d^{\textbf{m}}: g(x_i)=f(x_i), i=1,2,3\}$. There exists a neighbourhood
$W$ of $f$ and a neighbourhood $U$ of the identity in the space of Moebius transformations,
so that for any $g\in W$ there exists a unique Moebius transformation $M_g\in U$ so that $M_g(x_i)=y_i$ where $y_i=g^{-1}\circ f(x_i)$ is the $g$-preimage of $f(x_i)$ close to $x_i$, $i=1,2,3$. Hence $g\circ M_g(x_i)=f(x_i)$ and therefore $g\circ M_g \in S$.
It follows that the map $\Phi\colon W\to S\times U$ defined by $g\mapsto (g\circ M_g, M_g)$
is a local diffeomorphism  with inverse $(g,M)\mapsto g\circ M^{-1}$. Since $U$ has dimension three,
$S\cap W$ is a codimension-three manifold of $\textbf{Rat}_d^{\textbf{m}}$.
Since $\Psi(g\circ M)=\Psi(g)$ for all $M\in U$, it follows from this and the previous theorem, that
$(\Psi|S)\colon S\to \R^\nu$ is a diffeomorphism.
%
\end{proof}
\subsection{Lifting holomorphic motions}
Let $(f,f_{\textbf{w}}, \textbf{p})_W$ be a holomorphic deformation which is defined in the previous Subsection. Here, if $f$ is a rational function,
then $f_{\textbf{w}}=f_{\textbf{w},Z}$ and $\textbf{p}=\textbf{p}_Z$ where $Z\subset \CC$ is a set of 3 distinct points such that $Df(x)\not=0$ for $x\in Z$.

In Section~\ref{sec:lifting}, under the assumption that $P(f)$ is a finite set, given a holomorphic motion $h_\lambda$ of $P(f)$, we defined a lift $\widehat{h}_\lambda$ with respect to the local holomorphic deformation $(f,f_{\textbf{w}},\textbf{p})_W$ which exists in a small disk around zero. We shall now show that the lift $\widehat{h}_\lambda$ exists globally (i.e. it exists as a holomorphic of $P(f)$ over $(\D, 0)$), even when $P(f)$ is an {\it infinite set}.
\begin{prop}\label{global} Let $h_\lambda$ be a holomorphic motion of $P(f)$ over $(\D, 0)$. If $f$ is a rational function, we assume additionally about the set $Z$ that $f(Z)=Z$, $Z\cap P(f)=\emptyset$ and $\infty\in Z$ and also $h_\lambda(z)\notin Z$ for all $z\in P(f)$.
Then there exists a holomorphic motion $\widehat{h}_\lambda(z)$ of $P(f)$ over $(\D, 0)$ such that for each $z\in P(f)$,
\begin{equation}\label{eqn:liftpoly}
f_{(h_\lambda(v_1), h_\lambda(v_2),\cdots, h_\lambda(v_\nu))}(\widehat{h}_\lambda(z))=h_\lambda(f(z))
\end{equation}
holds when $|\lambda|$ is small enough
and, moreover, if $f$ is rational, $\widehat{h}_\lambda(z)\notin Z$ for all $\lambda\in \mathbb  D$ and all $z\in P(f)$.
\end{prop}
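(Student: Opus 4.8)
The statement asserts that the local lift $\widehat{h}_\lambda$, which is defined near $\lambda=0$ by equation~(\ref{eqn:liftpoly}), extends to a holomorphic motion of the (possibly infinite) set $P(f)$ over the whole disk $\D$. The strategy is to use the Measurable Riemann Mapping Theorem to produce, for each $\lambda\in\D$, a quasiconformal homeomorphism that conjugates $f$ to the perturbed map $f_{\textbf w(\lambda)}$, where $\textbf w(\lambda)=(h_\lambda(v_1),\dots,h_\lambda(v_\nu))$; the lift $\widehat{h}_\lambda$ will then be read off from this conjugacy, and analytic dependence on $\lambda$ from the Ahlfors--Bers theorem will give that it is a holomorphic motion. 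The first step is to promote the holomorphic motion $h_\lambda$ of $P(f)$ (together with the finitely many points $Z$ in the rational case) to a holomorphic motion of the whole sphere $\CC$: this is exactly the (strengthened) $\lambda$-lemma of S{\l}odkowski / B{e}rs--Royden, which yields a holomorphic family $\lambda\mapsto H_\lambda$ of quasiconformal homeomorphisms of $\CC$ with $H_0=\mathrm{id}$, $H_\lambda|_{P(f)}=h_\lambda$ (and $H_\lambda|_Z = h_\lambda|_Z$ in the rational case, after first extending $h_\lambda$ over $P(f)\cup Z$, which is still a holomorphic motion since the extra points are disjoint from $P(f)$ and not hit by $h_\lambda$).

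\emph{Construction of the conjugacy.} Given $H_\lambda$, pull back the standard conformal structure by $H_\lambda$ to get a Beltrami coefficient $\mu_\lambda$ on $\CC$ with $\|\mu_\lambda\|_\infty<1$ depending holomorphically on $\lambda$. Now consider the preimage structure $f^*\mu_\lambda$: because $f$ is holomorphic, $\|f^*\mu_\lambda\|_\infty=\|\mu_\lambda\|_\infty<1$, and $f^*\mu_\lambda$ again depends holomorphically on $\lambda$. Let $\Phi_\lambda$ be the normalized solution of the Beltrami equation with coefficient $f^*\mu_\lambda$ (normalized by fixing, say, the three points of $Z$ in the rational case, or using the monic-centered normalization in the polynomial case). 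Then $H_\lambda\circ f\circ \Phi_\lambda^{-1}$ is a holomorphic map of the same degree $d$: it is quasiregular by construction, and it maps the $\Phi_\lambda$-pushforward of $f^*\mu_\lambda$ to the $H_\lambda$-pushforward of $\mu_\lambda$, both of which are the standard structure, so it is holomorphic by Weyl's lemma. One checks this map is the correct perturbation $f_{\textbf w(\lambda)}$: its critical points are $\Phi_\lambda(c_j)$ with the right multiplicities $m_j$, and its critical values are $H_\lambda(v_j)=h_\lambda(v_j)=w_j(\lambda)$; by the \emph{uniqueness} part of the deformation statements (the cited Proposition~1 of \cite{Le0}, resp. Corollary~\ref{sect3p} in the rational case, using that $\Phi_\lambda$ fixes $Z$ so the three marked-point conditions $f_{\textbf w,Z}(x_i)=f(x_i)$ hold) this map must equal $f_{\textbf w(\lambda),(Z)}$ for $\lambda$ near $0$.

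\emph{Reading off the lift and checking it is a holomorphic motion.} Define $\widehat{h}_\lambda(z)=\Phi_\lambda(z)$ for $z\in P(f)$. From $H_\lambda\circ f = f_{\textbf w(\lambda)}\circ \Phi_\lambda$ we get $f_{\textbf w(\lambda)}(\widehat{h}_\lambda(z)) = H_\lambda(f(z)) = h_\lambda(f(z))$ for $z\in P(f)$ (using $f(z)\in P(f)$), which is~(\ref{eqn:liftpoly}); and for the rational case $\widehat{h}_\lambda(z)=\Phi_\lambda(z)\notin Z$ since $\Phi_\lambda$ fixes $Z$ setwise and $z\notin Z$. That $\lambda\mapsto\Phi_\lambda(z)$ is holomorphic for each fixed $z$, and that $z\mapsto\Phi_\lambda(z)$ is injective, follows from the standard holomorphic-dependence of Beltrami solutions (Ahlfors--Bers) — so $\widehat{h}_\lambda$ is indeed a holomorphic motion of $P(f)$ over $(\D,0)$, with $\widehat h_0=\mathrm{id}$ since $\mu_0=0$. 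The main subtlety to be careful about is the \emph{globalization over all of $\D$}: equation~(\ref{eqn:liftpoly}) literally involves $f_{\textbf w(\lambda)}$, which a priori is only defined for $\textbf w(\lambda)\in W$, i.e. for small $\lambda$; but the qc-conjugacy map $H_\lambda\circ f\circ\Phi_\lambda^{-1}$ is defined and holomorphic of degree $d$ for \emph{all} $\lambda\in\D$, so the correct reading is that $\widehat h_\lambda=\Phi_\lambda|_{P(f)}$ provides the global extension of the lift, and~(\ref{eqn:liftpoly}) holds verbatim only where $f_{\textbf w(\lambda)}$ is defined (``when $|\lambda|$ is small enough'', as the statement says), while the conjugacy relation $H_\lambda\circ f=(H_\lambda f\Phi_\lambda^{-1})\circ\Phi_\lambda$ holds throughout. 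I expect the bookkeeping around normalizations (monic-centered vs. three fixed points, and ensuring $\Phi_\lambda$ respects them so that the uniqueness clauses apply) to be the only real point requiring care; everything else is a direct application of the measurable Riemann mapping theorem and the cited parametrization results.
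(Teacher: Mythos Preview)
Your proposal is correct and follows essentially the same approach as the paper: extend the motion to the whole sphere via the $\lambda$-lemma with a normalization (fixing $Z$ in the rational case, tangent to the identity at $\infty$ in the polynomial case), pull back the resulting Beltrami coefficient by $f$, solve the Beltrami equation with the same normalization to obtain $\Phi_\lambda$ (the paper's $\widetilde h_\lambda$), and check that $H_\lambda\circ f\circ\Phi_\lambda^{-1}$ is the map $f_{\textbf{w}(\lambda)}$ so that $\widehat h_\lambda=\Phi_\lambda|_{P(f)}$ is the desired lift. The only cosmetic difference is that the paper, in the polynomial case, arranges the extension to be conformal and tangent to the identity near $\infty$ directly (rather than invoking S{\l}odkowski and then renormalizing), which makes the ``monic centered'' identification immediate; your remark that the normalization bookkeeping is the only real point of care is exactly right.
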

\begin{proof}
By the $\lambda$-lemma we can extend $h_\lambda$ to a holomorphic motion over $(\D, 0)$ of the whole complex plane, subject to the following normalization: (a) if $f$ is a polynomial, for $|z|$ large enough, $h_\lambda(z)$ is holomorphic in $z$ and $h_\lambda(z)=z+o(1)$ near infinity and (b) if $f$ is a rational function, $h_\lambda(z)=z$ for all $z\in Z$.
(We can also require that the extended holomorphic motion $h_\lambda(z)$ is holomorphic near all superattracting periodic points for each $\lambda$.) Let $\mu_\lambda$ denote the complex dilatation of $h_\lambda:\C\to\C$ and let $\widetilde{\mu}_\lambda=f^*(\mu_\lambda)$. Since $\mu_\lambda$ depends on $\lambda$ holomorphically, so does $\widetilde{\mu}_\lambda$. Let $\widetilde{h}_\lambda$ denote the unique qc map with complex dilatation $\widetilde{\mu}_\lambda$ which satisfies $\widetilde{h}_\lambda(z)=z+o(1)$ near infinity for each $\lambda$ in case (a) and $\widetilde{h}_\lambda(z)=z$ for $z\in Z$ in case (b).
Note that (b) and the injectivity of $\widehat{h}_\lambda:\C\to \C$ implies that $\widehat{h}_\lambda(z)\notin Z$ for $z\in P(f)$.

{\bf Claim.} For $|\lambda|$ small enough, we have $$h_\lambda\circ f\circ \widetilde{h}_\lambda^{-1}=f_{(h_\lambda(v_1), h_{\lambda}(v_2),\cdots, h_\lambda(v_k))}.$$ Hence (\ref{eqn:liftpoly}) holds when $\lambda$ is sufficiently close to $\ast$ in $\Lambda$.

Indeed, for each $\lambda$,  the complex dilatation of $\widetilde{h}_\lambda$ is the  lift of that of $h_\lambda$,  and therefore the function
$g_\lambda:=h_\lambda\circ f\circ \widetilde{h}_\lambda^{-1}$ is holomorphic in $\CC$. It is a branched covering of degree $d$, so it is either a polynomial or a rational function of degree $d$. By the normalization of both $h_\lambda$ and $\widetilde{h}_\lambda$, $g_\lambda$ is either a monic centered polynomial or a rational function such that $g_\lambda(z)=f(z)$ for $z\in Z$. Clearly the critical values of $g_\lambda$ are $h_\lambda(v_i)$. The claim follows.
\end{proof}

%

\subsection{{\lq\lq}Positively oriented{\rq\rq} transversality}
Assume that $P=P(f)$ is finite so that $f$ is a marked map.

\begin{theorem}\label{thm:pora}  Let $\mathcal{A}$ be the transfer operator associated to the holomorphic deformation $(f, f_{\textbf{w}}, \textbf{p})_W$ of $f$. Here, if $f$ is rational, $f_{\textbf{w}}=f_{\textbf{w},Z}$, $\textbf{p}=\textbf{p}_Z$ where $Z\subset \CC$ is a set of $3$ distinct points such that $Df(x)\not=0$ for all $x\in Z$ and, additionally, $f(Z)=Z$, $Z\cap P=\emptyset$ (Such a set always exists.)
Then the spectral radius of $\mathcal{A}$ is at most $1$ and $1$ is not an eigenvalue of $\mathcal{A}$ unless $f$ is a flexible Latt\'es rational map.
Furthermore, if $f$ is a polynomial, there is $\xi\in (0,1)$ so that the spectral radius of $\mathcal{A}$ is at most $1-\xi$.

Assume additionally that $f$ has real coefficients and
all its critical points are real; moreover, if $f$ is rational, assume also $Z=\overline{Z}$. Then
the 'positively oriented' transversality property (\ref{eq:trans2}) holds:
\begin{equation}\label{transpora}
\frac{\det (D\mathcal{R}((v_1\cdots,v_\nu)))}{\prod_{j=1}^\nu Df^{q_j-1}(v_{j})}>0.
\end{equation}
\end{theorem}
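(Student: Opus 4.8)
\textbf{Proof strategy for Theorem~\ref{thm:pora}.}
The plan is to obtain the spectral statements from the lifting machinery of Part~A, and then to deduce the positively oriented transversality inequality~(\ref{transpora}) from Corollary~\ref{real}. First I would verify that $(f, f_{\textbf{w}}, \textbf{p})_W$ (resp.\ $(f, f_{\textbf{w},Z}, \textbf{p}_Z)_W$ in the rational case) has the lifting property. For a given holomorphic motion $h_\lambda^{(0)}$ of $P=P(f)$ over $(\D,0)$, Proposition~\ref{global} produces, for each $k\ge 0$, a holomorphic motion $h_\lambda^{(k+1)}$ of $P$ over all of $(\D,0)$ which is the lift of $h_\lambda^{(k)}$; in the rational case one arranges $Z=\overline Z$ with $f(Z)=Z$, $Z\cap P=\emptyset$, $\infty\in Z$ (three repelling fixed points, or preimages thereof, always work), so that the normalization hypotheses of Proposition~\ref{global} hold and the lifts all avoid $Z$. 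Thus properties (1) and (2) in the definition of the lifting property hold with $\eps$ as large as one likes; for property (3), the uniform bound, note that in the polynomial case the lifts are holomorphic motions of the \emph{finite} set $P$ avoiding a fixed finite set (the critical values of $f_{\textbf{w}}$ stay in a compact subset of $W$ and the filled Julia sets stay bounded), and in the rational case the $h_\lambda^{(k)}$ are holomorphic motions of $P$ over $(\D,0)$ omitting the three points of $Z$, hence form a normal family by Montel and are locally uniformly bounded on $\D_\eps$ for small $\eps$. Hence the lifting property holds, and by Lemma~\ref{lem:lift2spectrum} the spectral radius of $\mathcal{A}$ is at most $1$ with every unimodular eigenvalue semisimple.

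Next I would rule out $1$ as an eigenvalue except in the flexible Latt\`es case, using Theorem~\ref{thm:1eigen}. If $1$ is an eigenvalue, the theorem gives (alternative (2)) a positive-dimensional local analytic variety $\{\mathcal R(\textbf{w})=0\}$ through $\textbf{v}=(v_1,\dots,v_\nu)$, so there is a nonconstant analytic disc $w\mapsto f_w$ along which all the marked critical relations persist, i.e.\ all critical points remain (pre)periodic with the same combinatorics. A nonconstant analytic family of postcritically finite rational (or polynomial) maps forces, by Thurston rigidity / McMullen's theorem on invariant line fields (cf.\ the argument in Theorem~3.17 of \cite{McM}, which is already invoked in the proof of Theorem~\ref{arn}), that $f$ carries an invariant line field on its Julia set, hence is a flexible Latt\`es example; in particular this is impossible for polynomials, which gives the eigenvalue statement. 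For the polynomial case I would then upgrade to spectral radius $\le 1-\xi$ via Lemma~\ref{lem:perturb2spectrum}: choose a polynomial $Q$ vanishing to first order at each point of $g(P)$ and vanishing at each $c_{0,j}$, set $\varphi_\xi(z)=z-\xi Q(z)$, and check that $\varphi_\xi\circ f$ is again a monic centered polynomial marked map whose deformation still has the lifting property (the same Proposition~\ref{global} argument applies, since $\varphi_\xi\circ f$ is postcritically finite with the same $P$); Lemma~\ref{lem:perturb2spectrum} then yields spectral radius at most $1-\xi$.

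Finally, for the real statement, assume $f$ has real coefficients, all finite critical points real, and (rational case) $Z=\overline Z$. Then $f$ is a real marked map and $(f,f_{\textbf w},\textbf p)_W$ is a real local holomorphic deformation, so $\mathcal A(\R^\nu)\subset\R^\nu$. The eventually-periodic critical points give, for $r<j\le\nu$, genuine cycles of the rational map $f$; since $f$ is postcritically finite, each such cycle is either superattracting or repelling (a non-repelling, non-superattracting cycle would have to attract a critical point, contradicting finiteness of $P$), and a periodic point in the postcritical set with a critical point in its cycle is superattracting — but for $r<j\le\nu$ the point $c_{l_j,j}$ is periodic and \emph{not} itself marked as a critical relation of the first type, so in fact it lies on a repelling cycle, giving $|Df^{q_j-l_j}(c_{l_j,j})|>1$. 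Thus the hypotheses of Corollary~\ref{real} are met: all eigenvalues of $\mathcal A$ lie in $\{|\rho|\le 1,\ \rho\ne 1\}$, and $|Df^{q_j-l_j}(c_{l_j,j})|>1$ for all $r<j\le\nu$. Corollary~\ref{real} then gives exactly the positively oriented transversality inequality~(\ref{eq:trans2}), which in the present notation reads~(\ref{transpora}).

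\textbf{Main obstacle.} The routine part is the lifting property (it is essentially a restatement of Proposition~\ref{global} plus a Montel/boundedness remark). The delicate point is the dichotomy in Theorem~\ref{thm:1eigen}: excluding alternative (2) for rational maps requires the rigidity input that a nonconstant analytic family of postcritically finite rational maps exists only in the flexible Latt\`es case, which is where the measurable Riemann mapping theorem and the invariant-line-field argument do the real work; handling the subtle indexing (which periodic marked points sit on repelling cycles, so that the hypothesis $|Df^{q_j-l_j}(c_{l_j,j})|>1$ of Corollary~\ref{real} genuinely holds) is the other place where care is needed.
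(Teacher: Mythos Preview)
Your overall strategy matches the paper's: lifting property via Proposition~\ref{global}, then Theorem~\ref{thm:1eigen} plus rigidity of postcritically finite maps to exclude eigenvalue $1$, then Corollary~\ref{real} for the sign. The rational case and the final real deduction are essentially right (one small point: the hypothesis does not give $\infty\in Z$, so the paper first conjugates by a M\"obius map $M$ taking $Z$ to $\tilde Z\ni\infty$, applies Proposition~\ref{global} to $\tilde f=M\circ f\circ M^{-1}$, and transports back; the lifts for the original triple then omit the three values of $Z$, so Montel applies).

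The genuine gap is the polynomial case. Your boundedness argument for the lifts (``avoiding a fixed finite set\ldots filled Julia sets stay bounded'') does not work: the critical points of $f_{\textbf w}$ move with $\textbf w$, so there is no fixed finite set avoided, and the intermediate polynomials $f_{\textbf{c}_1^{(k)}(\lambda)}$ arising in successive lifts are not postcritically finite, so nothing a priori traps the lifts. Your perturbation step is also incorrect: $\varphi_\xi\circ f$ is \emph{not} a monic centered polynomial of degree $d$ (since $\deg Q\ge 2$, the composition has degree $d\cdot\deg Q$), so Proposition~\ref{global} does not apply to it; and in any case Lemma~\ref{lem:perturb2spectrum} requires the lifting property of the specific triple $(\varphi_\xi\circ f,\ \varphi_\xi\circ f_{\textbf w},\ \textbf p\circ\psi_\xi)$, whose lift equation reads $f_{\textbf{c}_1(\lambda)}(\widehat h_\lambda(x))=\varphi_\xi^{-1}\big(h_\lambda(f(x))\big)$, not the equation coming from a new polynomial family. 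The paper supplies the missing ingredient as Lemma~\ref{lem:robustpol}: there exists $R_\ast>0$, independent of the degree, such that every monic centered polynomial with all critical values in $\overline{B(0,R_\ast)}$ satisfies $g^{-1}(B(0,2R_\ast))\subset B(0,R_\ast)$ (proved via B\"ottcher coordinates and Koebe distortion). One then chooses $\xi$ small so that $\varphi_\xi^{-1}(B(0,R_\ast))\subset B(0,2R_\ast)$ and runs the induction $h_\lambda^{(k)}(P)\subset B(0,R_\ast)$: applying Proposition~\ref{global} to the \emph{original} family $f_{\textbf w}$ but with the motion $\varphi_\xi^{-1}\circ h_\lambda^{(k)}$ (which lies in $B(0,2R_\ast)$) gives a lift that Lemma~\ref{lem:robustpol} forces back into $B(0,R_\ast)$. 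This is exactly what verifies the lifting property for the perturbed triple and yields spectral radius $\le 1-\xi$.
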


\begin{remark}
The proof shows that $\xi=\xi(\#P,\delta)$ where $\delta$ is at most the minimal distance between pairs of different points of $P$, and
so in particular $\xi$  does not dependent on the degree of $f$.
\end{remark}
\begin{proof}
Assuming the first part of this theorem, the inequality~(\ref{transpora}) follows from Corollary~\ref{real}. Indeed, since $f$ is critically finite, each cycle of $f$ is either superattracting or repelling.
Since $\textbf{w}$ is a local coordinate for $f_{\textbf{w}}$ and $Z=\overline{Z}$  (if $f$ is rational) it follows that
the holomorphic deformation $(f,f_w,\textbf{p})_W$ is real and Corollary~\ref{real} applies.

Let us prove the first part of the theorem.
First, we consider the case of rational function and show that $(f,f_{\textbf{w}}, \textbf{p})$ has the lifting property . Let $h_\lambda^{(0)}$ be a holomorphic motion of $P$ over $(\D, 0)$.
Let us fix $\eps>0$ such that $h_\lambda(x)\notin Z$ for all $|\lambda|<\eps$ and all $x\in P$.
As $Z\cap P=\emptyset$, there is a Moebius $M$ so that $\tilde Z=M(Z)\ni\infty$ and $\tilde P=M(P)\subset \C$. Note that $\tilde Z\cap \tilde P=\emptyset$.
Consider the function $\tilde f=M\circ f\circ M^{-1}$ and its holomorphic deformation $(\tilde f, \tilde f_{\textbf{v}}, \tilde{\textbf{p}})$ which is defined by the set $\tilde Z$. Note that $\tilde f_{\textbf{v}}=M\circ f_{\textbf{w}}\circ M^{-1}$ and $\tilde{\textbf{p}}(\textbf{v})=M(\textbf{p}(\textbf{w}))$
where $\textbf{v}=M(\textbf{w})$ and $M(x_1,\cdots,x_\nu):=(M(x_1),\cdots,M(x_\nu))$.
Since $H_\lambda^{(0)}=M\circ h_\lambda^{(0)}$ is a holomorphic motion of $\tilde P$ over $(\Delta_\eps, 0)$ and $H_\lambda^{(0}(x)\notin \tilde Z$ for $x\in \tilde P$, Proposition~\ref{global} immediately implies that there is a sequence $(H_\lambda^{(k)})_{k=0}^\infty$ of holomorphic motions of $\tilde P$ over $(\Delta, 0)$ such that for $k=0,1,\cdots$, $H_\lambda^{(k+1)}$ is the lift of $H_\lambda^{(k)}$ for the triple $(\tilde f, \tilde f_{\textbf{v}}, \tilde{\textbf{p}})$ if $|\lambda|$ is small enough. It follows that if we define $h_\lambda^{(k)}=M^{-1}\circ H_\lambda^{(k)}$, then for all $k$,
$h_\lambda^{(k+1)}$ is the lift of $h_\lambda^{(k)}$ for the triple $(f,f_{\textbf{w}}, \textbf{p})$ if $|\lambda|$ is small enough, each function $h_\lambda^{(k)}(x)$ is meromorphic in $\lambda\in \Delta_\eps$ and omit
the values in $Z$.
As $\#Z=3$, the lifting property then follows from Montel's theorem.
Therefore, either (1) or (2) of the alternative of Theorem~\ref{thm:1eigen} holds.
Assume that (2) holds. We obtains a non-trivial local holomorphic family $f_{\text{w}(t)}$
of critically finite rational maps. It follows from the $\lambda$-lemma that $f_{\textbf{w}(t)}$ is conjugate to $f$ by a quasi-conformal homeomorphism $\phi_t$.
If $J_f$ has Lebesgue measure zero then $\phi_t$ is conformal almost everywhere, hence, is a Moebius transformations
which is close to the identity. At it must fix the points of $Z$, it is the identity map, a contradiction. If the measure of $J_f$ is positive, $h_t$ gives rise to an invariant line field on $J_f$
and since the postcritical set of $f$ is finite,  $f$ must be a flexible Latt\'es map
(see e.g. Corollary 3.18 of \cite{McM}). Thus the theorem is proved for rational maps.

Now, let $f$ be a (monic centered) polynomial. We need
\begin{lemma}\label{lem:robustpol}
There exists $R_\ast>0$ such that for any non-linear monic centered polynomial $g$, if all critical values of $g$ lie in $\overline{B}(0,R_\ast)$ then $g^{-1}(B(0,2R_\ast))\subset B(0,R_\ast)$.
\end{lemma}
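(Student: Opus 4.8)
The statement is a quantitative, degree-independent ``complex bounds'' estimate for monic centered polynomials. The plan is to argue by a normalization/compactness scheme combined with an explicit elementary estimate, exploiting that a monic centered polynomial $g$ of degree $d$ has the form $g(z) = z^d + a_{d-2} z^{d-2} + \cdots + a_0$, so that $g(z)/z^d \to 1$ as $z\to\infty$. The key quantitative input is that for $|z|$ large the leading term dominates: precisely, there is an absolute radius beyond which $|g(z)| > 2R_*$ once one controls the lower-order coefficients in terms of the critical values.

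First I would recall that the critical values of $g$ control the coefficients of $g$. Since $g$ is monic centered, by Vieta applied to $g'$ (whose roots are the critical points $c_1,\dots,c_{d-1}$, counted with multiplicity) one has $g'(z) = d\prod_{i}(z-c_i)$, and the elementary symmetric functions of the $c_i$ are the coefficients of $g'$, hence of $g$ up to scaling. The critical points themselves are controlled by the critical values via the following standard fact: if all critical values lie in $\overline{B}(0,R_*)$, then all critical points lie in $\overline{B}(0, C R_*^{1/d})$ for an absolute constant $C$ --- indeed if $|c_i|$ were enormous, then $|g(c_i)|$ would be comparable to $|c_i|^d$, which is larger than $R_*$. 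Consequently all coefficients $a_j$ of $g$ are bounded by a polynomial expression in $R_*$ with absolute (degree-independent after a suitable normalization, or at worst depending on $d$ in a controllable way) constants.

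The cleanest route, and the one I would actually carry out, is to avoid tracking the $d$-dependence of intermediate constants and instead argue directly: suppose $|w| \ge R_*$ with $R_*$ to be chosen; I want $|g(w)| > 2R_*$. Write $g(w) = w^d(1 + h(w))$ where $h(w) = \sum_{j=0}^{d-2} a_j w^{j-d}$. It suffices to show $|h(w)| \le 1/2$, say, for then $|g(w)| \ge |w|^d/2 \ge R_*^d/2 > 2R_*$ provided $R_* > 4^{1/(d-1)}$, which holds for all $d\ge 2$ once $R_* \ge 4$. To bound $|h(w)|$ one needs $|a_j| \le \tfrac{1}{2(d-1)} R_*^{d-j}$ or similar; this follows from the coefficient bounds of the previous paragraph once $R_*$ is taken large enough depending only on the absolute constant $C$. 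The contrapositive then reads: if $g^{-1}(B(0,2R_*)) \not\subset B(0,R_*)$, there is $w$ with $|w|\ge R_*$ and $|g(w)| < 2R_*$, contradicting the above. (Alternatively, one phrases it via the maximum principle: $g^{-1}(\overline{B}(0,2R_*))$ is a compact set, and one shows its boundary, on which $|g|=2R_*$, lies inside $B(0,R_*)$, then invokes connectedness of the relevant components.)

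The main obstacle is getting the estimate to be genuinely \emph{degree-independent} in the right sense, i.e.\ a single $R_*$ working for all $d\ge 2$ simultaneously; the naive coefficient bounds carry factors that grow with $d$ (binomial coefficients from Vieta), and the number of terms in $h(w)$ also grows with $d$. The resolution is that these are defeated by the gain $|w|^{j-d} \le R_*^{j-d}$, which is geometrically small in $R_*$: summing a geometric-type series $\sum_{j} (\text{poly in } d) R_*^{-(d-j)}$ one extracts, for $R_*$ larger than an absolute constant, a bound $\le 1/2$ uniformly in $d$ --- the point being that $(d-j)$ ranges over $2,3,\dots,d$ so the smallest power of $R_*^{-1}$ present is $R_*^{-2}$, and the combinatorial factors grow only polynomially (or like $\binom{d}{k}$, still beaten by $R_*^{-k}$ once $R_* > e\cdot d$... which is not absolute). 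If a fully $d$-independent constant cannot be extracted this cleanly, the fallback --- which is all the later application in Theorem~\ref{thm:pora} needs via Lemma~\ref{lem:perturb2spectrum} --- is to allow $R_*$ to depend on $d$ but note that the subsequent perturbation argument only uses the existence of \emph{some} $R_*$ for the fixed polynomial $f$ at hand; I would state and prove the lemma in whichever form (absolute $R_*$ or $R_* = R_*(d)$) the ensuing argument actually requires, and I expect the absolute version to go through with a short extra observation that the relevant coefficient bounds involve $\binom{d}{k}^{1/d}$-type normalized quantities which are bounded.
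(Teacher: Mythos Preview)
Your approach is genuinely different from the paper's, but it has a real gap at its core step. You assert that if all critical values lie in $\overline{B}(0,R_\ast)$ then all critical points lie in $\overline{B}(0,C R_\ast^{1/d})$, justifying this by ``if $|c_i|$ were enormous, then $|g(c_i)|$ would be comparable to $|c_i|^d$''. But that comparison requires controlling the lower-order coefficients $a_j$, which via Vieta for $g'$ are symmetric functions of the $c_i$ themselves --- so the argument is circular. Concretely, if $M=\max_i|c_i|$ then $|a_j|$ is only bounded by $\tfrac{d}{j}\binom{d-1}{d-j}M^{d-j}$, so $|a_jc^j|$ is of order $M^d$ with a constant \emph{exceeding} $1$, and the leading term does not dominate. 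You notice this yourself (``$R_\ast>e\cdot d$\ldots which is not absolute''), but do not resolve it; the subsequent estimate on $|h(w)|$ inherits the same problem. The fallback to a degree-dependent $R_\ast(d)$ would suffice for the main statement of Theorem~\ref{thm:pora} (the degree is fixed there), but your proposal does not actually prove even that version --- the circularity persists --- and it would lose the degree-independence noted in the remark following Theorem~\ref{thm:pora}.

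The paper sidesteps the coefficient bookkeeping entirely. It takes $\psi=B^{-1}$, the inverse B\"ottcher coordinate of $g$, which is univalent on $\{|w|>h\}$ with $\psi(w)=w+O(1/w)$; the hypothesis on critical values guarantees $\psi$ extends down to the right level. Koebe's distortion theorem, applied to $z\mapsto h/\psi(h/z)$, then gives uniform (degree-free) control on $\psi$ and on the complement of its image, from which the inclusion $g^{-1}(B(0,2R_\ast))\subset B(0,R_\ast)$ follows. In effect, the bound on critical points by critical values that your argument needs \emph{is} the content of the Koebe step; this is where the absolute constant comes from, and there is no obvious purely algebraic substitute.
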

\begin{proof} We begin with

{\bf Claim.}
There exist $R_\ast>1$ and $h_\ast>1$ as follows. Let $d\ge 2$ be an integer and $\psi: \{|w|>h\}\to \C$ a univalent map where $h\ge 1$ and $\psi(w)=w+O(1/w)$ as $w\to \infty$. If $h>1$, suppose there is $w_c$ such that $|w_c|=h^d$ and $|\psi(w_c)|\le R_\ast$.
Then $h<h_\ast$, moreover, $\C\setminus \psi(\{|w|>h\}\subset \{|z|<R_\ast\})$
and
for every $w$ if $|w|>h$ and $|\psi(w)|=R_\ast$ then $|\psi(w^d)|\ge 2R_\ast$.

The claim being rewritten for the function $h/\psi(h/z)$ follows then easily from the Koebe distortion theorem. Details are left to the interested reader.

Let $R_\ast$ be as in the Claim. Consider the map $\psi=B^{-1}$, where $B$ is the B\"ottcher function of $g$ which is defined in a neighbourhood of infinity and normalized such that $B(z)=z+O(1/z)$ as $z\to \infty$
(this is possible as the polynomial $g$ is monic and centered).
Then $\psi$ extends to a univalent map $\psi:\{|w|>h\}\to \C$ for some minimal $h\ge 1$ and $\psi(w)=w+O(1/w)$ as $w\to \infty$.
Note that if $h>1$ then there is $w_c$ such that $|w_c|=h^d$ and $\psi(w_c)$ is a critical value of $g$. Assume $|u|\le R_\ast$ for all critical values $u$ of $g$. Then the point $w_c$ is as in the Claim. Now
assume the lemma does not hold for $g$. Then there exists $z$ such that $|z|=R_\ast$ and $|f(z)|<2 R_\ast$. By the Claim, there is $|w|>h$ such that $|\psi(w)|=R_\ast$ and, hence, $|f(z)|=|\psi(w^d)|\ge 2 R_\ast$, a contradiction.
\end{proof}
Lemma~\ref{lem:robustpol} allows us to employ Lemma \ref{lem:perturb2spectrum} on perturbations to the triple $(f,f_{\textbf{w}},\textbf{p})$. Indeed, let $Q$ be a polynomial as in Lemma~\ref{lem:perturb2spectrum}. Let us choose $\xi\in (0,1)$ in such a way
that the map $\varphi_\xi(z)=z-\xi Q(z)$ is invertible on $B(0,R_\ast)$ and
$\varphi_\xi^{-1}(B(0,R_\ast))\subset B(0,2R_\ast)$. By Lemma~\ref{lem:perturb2spectrum}, the spectral radius of $\mathcal{A}$ is at most $1-\xi$ if we show that the triple
$(\varphi_\xi\circ g, \varphi_\xi\circ f_{\textbf{w}}, \textbf{p}\circ \psi_\xi)$ has the lifting property . To this end, let $h_\lambda^{(0)}$ be a holomorphic motion of $P$ over $(\D,0)$.
Since $P$ is finite, $P\subset B(0,R_\ast)$. Hence, there is $\eps>0$ so that $h_\lambda^{(0)}(x)\in B(0,R_\ast)$ for each $x\in P$ and $|\lambda|<\eps$.  We show by induction that
for each $k\ge 0$ the lift $h_\lambda^{(k+1)}$ of $h_\lambda^{(k)}$ for the triple is well-defined in $\Delta_\eps$ and $h_\lambda^{(k+1)}(x)\in B(0,R_\ast)$.
Assume $h_\lambda^{(k)}$ is well-defined in $\Delta_\eps$ and $h_\lambda^{(k)}(x)\in B(0,R_\ast)$. Consider a new holomorphic motion $\varphi_\xi^{-1}\circ h_\lambda^{(k)}$
of $P$ over $(\Delta_\eps, 0)$. Observe that it takes values in $B(0,2R_\ast)$. By Proposition~\ref{global}, there exists a holomorphic motion $h_\lambda^{(k+1)}$ of $P$ over $(\Delta_\eps, 0)$ such that for each $x\in P$,
$$
f_{(h_\lambda^{(k)}(v_1), h_\lambda^{(k)}(v_2),\cdots, h_\lambda^{(k)}(v_\nu))}(h_\lambda^{(k+1)}(x))=\varphi_\xi^{-1}\circ h_\lambda(f(x))
$$
holds when $|\lambda|$ is small enough. By the Uniqueness theorem, this equality holds for every $\lambda\in \Delta_\eps$ and by Lemma~\ref{lem:robustpol} and the indunction hypothesis
$h_\lambda^{(k+1)}(x)\in B(0,R_\ast)$ for $|\lambda|<\eps$. This completes the indunction and therefore the proof of Theorem~\ref{thm:pora}.
\end{proof}

\section{Piecewise linear multimodal maps}\label{subset:linear}
Given $\epsilon\in \{1,-1\}$, a positive integer $\nu$ and $\underline \kappa=(\kappa_1,\dots,\kappa_{\nu+1})\in \R^{\nu+1}$ with $\kappa_i>0$, let us
introduce the class $\mathcal{L}^{\epsilon}_{\nu,\underline \kappa}$ of $\nu$-modal piecewise linear continuous maps $g: [-1,1]\to \R$ as follows:
\begin{itemize}
\item there are $s=s(g)>0$ and $c_i=c_i(g)$, $0\le i\le \nu+1$ so that $-1=c_0<c_1<\cdots<c_{\nu}<c_{\nu+1}=1$ and for each $i\in \{1,\cdots,\nu+1\}$, $g_{[c_{i-1},c_i]}$ is a linear
(i.e. affine) map with slope $s_i=\epsilon_i \kappa_i s$ where here and later $\epsilon_i=(-1)^{i-1}\epsilon$.
\item $g(-1), g(1)\in \{-1,1\}$ (so $g(-1)=-\epsilon_1, g(1)=\epsilon_{\nu+1}$).
\end{itemize}
For example, $\mathcal{L}^1_{1,(1,1)}=\{f_t\}_{t>0}$ where $f_t(x)=-t|x|+(t-1)$ is the tent family.

Denote $\textbf{v}(g)=(v_1,\cdots,v_\nu)$ where $v_i=g(c_i)$, $1\le i\le \nu$, the vector of the extremal values of $g$. Here and below $v_0\equiv g(-1)=-\epsilon_1$, $v_{\nu+1}\equiv g(1)=\eps_{\nu+1}$.

Let us show that  given a vector
$$(v_1,\dots,v_\nu)\in \R^\nu\mbox{ with }\epsilon_i (v_i-v_{i-1})>0\,\, \forall i=1,\dots,\nu+1,$$
there exists a map $g\in \mathcal{L}_{\nu,\underline \kappa}^{\epsilon}$ for which $\textbf{v}(g)=(v_1,\dots,v_\nu)$.  To see this, take the piecewise linear map $g$ with values
$v_0,v_1,\dots,v_\nu,v_{\nu+1}$ are to be determined points $-1=c_0<c_1<\dots<c_\nu<c_{\nu+1}=1$ with slope $s_i=\epsilon_i \kappa_i s$ on
$(c_{i-1},c_i)$ where $s$ is also to be determined.
It follows that
\begin{equation}
c_i-c_{i-1}=\frac{(v_i-v_{i-1})}{s_i}=\frac{(v_i-v_{i-1})\epsilon_i }{\kappa_i s}>0\label{cj}\end{equation}
and that the piecewise linear map $g$ is given by
\begin{equation} g(x)=v_{i-1}+s_i (x-c_{i-1})\mbox{ for }x\in (c_{i-1},c_i).\label{g}\end{equation}
Note that  $g\in \mathcal{L}_{\nu,\underline \kappa}^{\epsilon}$ if and only if
$$2=\sum_{i=1}^{\nu+1} (c_i-c_{i-1}) = \sum_{i=1}^{\nu+1} \frac{(v_i-v_{i-1})\epsilon_i }{\kappa_i s}$$
that is if and only if
\begin{equation} s=\sum_{i=1}^{\nu+1} \frac{(v_i-v_{i-1})\epsilon_i }{2\kappa_i}.\label{slope}\end{equation}

%
%

Let us now assume $g\in \mathcal{L}^{\epsilon}_{\nu,\underline \kappa}$ such that every turning point $c_{0,i}=c_i(g)$, $1\le i\le \nu$ is either periodic or eventually periodic.
Let $U=\cup_{x\in P\setminus P_0} U_x$ where $U_x$ is a small complex neighbourhood of $x$ so that $U_x$, $U_y$ are disjoint whenever $x\not=y$.
Then $g: U\to \C$ is a marked map with $P_0=\{c_{0,i}\}_{i=1}^\nu$ and $P=\{g^k(x)|x\in P_0, k\ge 0\}$. Let $W$ be a small neighbourhood of $\textbf{c}_1=(g(c_{0,1}),\cdots,g(c_{0,\nu}))\in \C^\nu$. Given $x\in P\setminus P_0$ there is a single $i=i(x)\in \{1,\cdots,\nu+1\}$ such that
$x\in [c_{i-1}(g),c_{i}(g)]$ (here $x$ can be an end point iff $x=\pm 1$).
We define a local holomorphic deformation $(g,G,\textbf{p})_W$ of $g$ naturally as follows.
Given $w=(w_1,\cdots,w_\nu)\in W$, let $w_0\equiv -\epsilon_1p,w_{\nu+1}\equiv \eps_{\nu+1}$  and
$$S=\sum_{i=1}^{\nu+1} \frac{(w_i-w_{i-1})\epsilon_i }{2\kappa_i}$$
Then
\begin{enumerate}
\item for $1\le j\le \nu$,
$$p_j(w)=\sum_{i=1}^j \frac{(w_i-w_{i-1})\epsilon_i/\kappa_i }{S}$$
let also $p_0(w)\equiv -1$,
\item for any $x\in P\setminus P_0$, if $z\in U_x$ then
$$G_w(z)=w_{i-1}+\kappa_i \epsilon_i S (z-p_{i(x)-1}(w)). $$
\end{enumerate}
Observe that if all $w_i$ are real (and $W$ is small enough) then by~(\ref{cj})-(\ref{slope}), $G_w\in \mathcal{L}_{\nu,\underline \kappa}^{\epsilon}$, $\textbf{v}(G_w)=w$ and $c_j(G_w)=p_j(w)$.

\begin{theorem}\label{thm:linear}
Let  $g\in \mathcal{L}_{\nu,\underline \kappa}^{\epsilon}$ be so that each turning point of $g$ is periodic or eventually periodic. Assume that $g$ is ergodic with respect to the Lebesgue measure.
Then the `positively oriented' transversality property (\ref{eq:trans2}) holds
for the real holomorphic deformation $(g,G_w,\textbf{p})_W$ defined above.
\end{theorem}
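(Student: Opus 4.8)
The plan is to verify the hypotheses of Theorem~\ref{thm:holo} (and hence Corollary~\ref{real}) for the real local holomorphic deformation $(g,G_w,\textbf{p})_W$ constructed above, by exhibiting an appropriate collection $\mathbf{S}=\{S_x\}_{x\in g(P)}$ with respect to which $(g,G_w,\textbf{p})_W$ has the separation property. Since each turning point of $g$ is periodic or eventually periodic, $P$ is finite and $g$ is a real marked map; moreover, $g$ being piecewise linear and ergodic forces every cycle to be repelling (a piecewise linear map has no attracting periodic orbit since all slopes have modulus $\geq \min_i\kappa_i\cdot s>1$ on the relevant part of phase space once we note that ergodicity of a piecewise linear map with respect to Lebesgue measure precludes the existence of any wandering or attracting behaviour; more simply, $|Dg^n|$ is bounded below along any periodic orbit by a quantity that must exceed $1$, as otherwise an attracting cycle would carry an invariant interval of positive measure contradicting ergodicity). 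This gives hypothesis (i)-type information needed to invoke Corollary~\ref{real}. So the whole matter reduces to the lifting property.

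\textbf{Choice of $\mathbf{S}$ and the separation property.} I would take each $S_x$ to be a thin complex neighbourhood of the real interval $[-1,1]$ — more precisely a Poincaré-type lens-shaped domain $\{z : \angle{(-1)}\,z\,(1) > \pi - \eta\}$ of small aperture, or alternatively the slit-and-thickened strip around $(-1,1)$, chosen uniformly for all $x\in g(P)$. The point is that for a holomorphic motion $h_\lambda$ with $h_\lambda(x)\in S_x$, the images $h_\lambda(v_1),\dots,h_\lambda(v_\nu)$ stay near the real extremal values and in the correct cyclic order, so the perturbed map $G_{\rho(\lambda)}$ remains (a complex continuation of) a genuine piecewise-linear map in $\mathcal{L}_{\nu,\underline\kappa}^{\epsilon}$ with real structure deformed only slightly. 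Because each branch of $G_w$ is \emph{affine}, the inverse branches are globally defined affine maps; pulling back the lens domain $S_{g(x)}$ under an affine map with slope close to a real slope of modulus $>1$ produces a \emph{strictly smaller} lens-shaped region, which one arranges to sit inside $S_x$. This is exactly inclusion~(\ref{extincl}); the condition~(\ref{extincl'}) on the critical points $p_{\rho,j}(\lambda)=p_j(h_\lambda(v_\ast))$ is checked directly from the explicit formula $p_j(w)=\sum_{i\le j}\frac{(w_i-w_{i-1})\epsilon_i/\kappa_i}{S}$, which depends continuously and holomorphically on $w$ near the real point and lands in $(-1,1)\subset S_x$. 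The global deformation is determined by analytic continuation along arcs in $S_\ast^\nu$ via the Monodromy Theorem (Remark~\ref{monodromy}), which is unobstructed because the branches are affine and the slit in $S_x$ is never crossed by the relevant orbits (the turning points are not in the orbit of any $v_i$ by the structure of $P$, and condition~(6) is automatic for affine families). Since $\C\setminus S_x$ contains more than two points, the lifting property — not merely the weak one — follows from Theorem~\ref{thm:holo}, and then Theorem~\ref{thm:1eigen} gives the dichotomy: either all eigenvalues of $\mathcal{A}$ lie in $\overline{\D}\setminus\{1\}$, or $\mathcal{R}(\mathbf w)=0$ cuts out a positive-dimensional variety.

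\textbf{Excluding the second alternative.} To rule out the variety alternative I would use ergodicity: if $\mathcal{R}(\mathbf w)=0$ held along a one-parameter analytic family $G_{w(t)}$, all these piecewise linear maps would have the same combinatorics (the same critical relations persist), hence, by the $\lambda$-lemma applied to the natural holomorphic motion of the finite set $P$ and its iterated preimages, $g$ would be topologically — in fact quasisymmetrically, and then affinely — conjugate to each $G_{w(t)}$. But an ergodic piecewise linear map is rigid: two affinely conjugate maps in $\mathcal{L}_{\nu,\underline\kappa}^\epsilon$ with the same slopes $\underline\kappa$ and the same combinatorics must coincide, because the conjugating affine map of $[-1,1]$ must fix the endpoints (they are the extreme points of the dynamical interval) hence is the identity, forcing $w(t)\equiv w(0)$. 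This contradicts non-triviality of the family, so the first alternative of Theorem~\ref{thm:1eigen} holds. Finally, since $g$ is real with real deformation, Corollary~\ref{real} applies once we know $|Dg^{q_j-l_j}(c_{l_j,j})|>1$ for the preperiodic turning points — which holds because those multipliers are products of real slopes each of modulus $\geq \min_i \kappa_i s$ over a full period, and repellingness gives $>1$ — and yields the positively oriented transversality inequality~(\ref{eq:trans2}). \textbf{The main obstacle} I anticipate is the careful bookkeeping in the separation-property verification: one must choose the aperture $\eta$ of the lens domains and the size of $W$ so that the affine pullback strictly contracts every lens $S_{g(x)}\cap$(branch) into $S_x$ \emph{uniformly} over the disk $\D$ of the holomorphic motion and over all branches $i(x)$, while simultaneously ensuring the deformed critical points $p_{\rho,j}(\lambda)$ stay in the open interval — this is where the hypothesis $\epsilon_i(v_i-v_{i-1})>0$ and the positivity of the slope $S$ must be used quantitatively, and getting the robust version (for the strict spectral radius bound, if one wants it) requires a uniform $\epsilon$-collar, which for piecewise linear maps with range possibly not containing a fixed ball needs a small additional argument analogous to Lemma~\ref{lem:robustpol}.
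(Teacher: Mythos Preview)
Your approach via the separation property is genuinely different from the paper's, and it has a real gap. The paper does \emph{not} invoke Theorem~\ref{thm:holo} here at all; instead it establishes the \emph{real} lifting property by a direct algebraic argument specific to affine branches. The key observation (stated as a lemma inside the proof) is that every lift $h_\lambda^{(k)}(x)$ is a \emph{fractional--linear} expression in the original values $\{h_\lambda(y)\}_{y\in g(P)}$, with an $x$-independent denominator. This follows by induction from the explicit formulas for $p_j(w)$ and $G_w(z)$, both of which are ratios of linear forms in $w$ with the common denominator $S$. Combined with the elementary fact that a real holomorphic motion preserves the order of the points of $g(P)$ in $(-1,1)$ for all real $\lambda\in(-\eps_0,\eps_0)$, this yields uniform bounds on a complex disc and hence the real lifting property required for Theorem~\ref{thm:1eigen}.

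The gap in your proposal is the assertion that pulling back the lens $S_{g(x)}$ by the relevant affine branch lands inside $S_x$. This needs $|\kappa_{i(x)}\,s|>1$ for every branch visited by $g(P)\setminus P_0$, and that is \emph{not} part of the hypotheses: the class $\mathcal{L}_{\nu,\underline\kappa}^\epsilon$ allows arbitrary positive $\kappa_i$, and neither ergodicity nor critical finiteness forces each individual slope to exceed $1$ in modulus --- only the multipliers of the periodic cycles in $P$ are forced to. For a visited branch with $|\kappa_i s|\le 1$ the preimage of any neighbourhood of $[-1,1]$ is at least as large, so the two inclusions $U^{(i)}\subset S_x$ and $S_{g(x)}\subset G(U^{(i)})$ cannot both hold with a uniform $S_x$. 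There is also a structural mismatch with the $\HH$-framework: the components $U_g^{(i)}$ must be pairwise disjoint, so a single lens around $[-1,1]$ cannot serve as the common domain for all branches.

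Your sketch for excluding the second alternative of Theorem~\ref{thm:1eigen} is also incomplete: you jump to an \emph{affine} conjugacy between $g$ and $G_{w(t)}$, whereas a priori one only has a topological conjugacy $h_t$. The paper closes this in two steps: first a Markov-matrix relation $v=sAv$ (with $A$ determined by the common itinerary) shows that the slope parameter $s(t)$ is locally constant along the family; then, using equal slopes, the conjugacy $h_t$ is shown to be bi-Lipschitz, hence absolutely continuous with $Dh_t\circ g=Dh_t$ a.e., and ergodicity forces $Dh_t$ to be constant, so $h_t=\mathrm{id}$.
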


Here as usual, we say that $g$ is ergodic with respect to the Lebesgue measure if for any $g$-invariant Borel sets $A, B\subset [-1,1]$ both with positive Lebesgue measure, we have that $A\cap B$ has positive Lebesgue measure.  Let $(I_i)_{i=1}^N$ consist of the components of $[-1,1]\setminus P$. Then {\em $g$ is ergodic if and only if for any $1\le i, j\le N$, the forward orbit of $I_i$ and $I_j$ under $g$ intersect.}  Indeed, the only if part is clear. For the if part, assume $g$ is not ergodic. Then there exists forward invariant Borel sets $A, B\subset [-1,1]$ such that both $A$ and $B$ have positive Lebesgue measure, but $A\cap B$ has Lebesgue measure zero. Take a Lebesgue density point $a$ of $A$ and let $i$ be such that the orbit of $a$ visits $I_i$ infinitely often. Then
%
using the fact
that $A$ is forward invariant, $g$ is linear and that $P$ is finite, it follows that $A$ contains $I_i$ up to a set of Lebesgue measure zero.
Similarly, $B$ contains one of the intervals $I_j$ up to a set of Lebesgue measure zero. Since $A,B$ are forward invariant, it follows that the forward orbits of $I_i$ and of $I_j$ intersects at most at a set of Lebesgue measure zero. By the Markov property of $g$, this implies that the forward orbit of $I_i$ and $I_j$ are disjoint.

For $\nu=1$, this theorem provides a new proof of transversality for the family of tent family $f_t$, $t>1$, see ~\cite{Tsu0} for the first proof.

\begin{remark}
The assumption that $g$ is ergodic is needed in this result. Indeed take a piecewise affine map $g\colon [-1,1]\to [-1,1]$ with
$g\in \mathcal{L}^1_{\nu,\underline \kappa}$  so that $g$ maps $[-1,0]$ into itself and $[0,1]$ into itself. One can take $g$ so that
$g$ is expanding on each branch, and so that its turning points are eventually periodic.
By conjugating $g$ with a map $h_t:[-1,1]\to [-1,1]$ which is affine both $[-1,0]$ and on $[0,1]$ and so that $h_t(\pm 1)=\pm 1$, $h_t(0)=t$,
one obtains a family of maps $g_t\in \mathcal{L}_{\nu,\underline \kappa}$. It follows that the transversality property
does not hold for $(g,G_w,\textbf{p})_W$.
\end{remark}

\begin{proof}[Proof of Theorem~\ref{thm:linear}]
Let $h_\lambda=h_\lambda^{(0)}$ be a real holomorphic motion of $g(P)$ over $(\D,0)$. Then for each $k=1,2,\ldots$ there exists a holomorphic motion $h_\lambda^{(k)}$ of $g(P)$ over $(\D_{\eps_k},0)$ for some $\eps_k>0$ such that $h_\lambda^{(k)}$ is the lift of $h_\lambda^{(k-1)}$ over $\D_{\eps_k}$. All these holomorphic motions are real. Choose $\eps_0>0$ such that $-1<h_\lambda(x)<h_\lambda(y)<1$ for all $\lambda\in (-\eps_0,\eps_0)$ and $x,y\in g(P)$, $-1<x<y<1$. Then clearly, for each $k$, $h_{\lambda}^{(k)}$ allows analytic continuation to a neighbourhood of $(-\eps_0,\eps_0)$ and moreover, for all $k\ge 0$ and all $-1<x<y<1$,
\begin{equation}\label{eqn:lorhom}
-1<h_\lambda^{(k)}(x)<h_\lambda^{(k)}(y)<1 \,\, \mbox{ for all }  \lambda\in (-\eps_0,\eps_0).
\end{equation}
\begin{lemma}\label{lorfrac}
For every $k=1,2,\cdots$, $h_\lambda^{(k)}(x)$ is of the form
\begin{equation}
h^{(k)}_\lambda(x)=\frac{a_k+\sum_{y\in g(P)} a_{y,k}(x) h_\lambda(y)}{b_k+\sum_{y\in g(P)} b_{y,k} h_\lambda(y)}
\label{B5}\end{equation}
where all coefficients are real.
\end{lemma}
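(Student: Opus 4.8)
The plan is to prove Lemma~\ref{lorfrac} by induction on $k$, using the explicit formula for the lift together with the explicit form of the piecewise-linear deformation $(g,G_w,\textbf p)_W$ defined just above the statement of Theorem~\ref{thm:linear}. The base case $k=1$ should be immediate: by definition of the lift, for $x=c_{0,j}$ we have $h^{(1)}_\lambda(c_{0,j})=p_j(\mathbf c_1(\lambda))$, which by the formula $p_j(w)=\big(\sum_{i=1}^j (w_i-w_{i-1})\epsilon_i/\kappa_i\big)/S$ with $S=\sum_{i=1}^{\nu+1}(w_i-w_{i-1})\epsilon_i/(2\kappa_i)$ is manifestly a ratio of real-affine functions of the quantities $h_\lambda(c_{1,1}),\dots,h_\lambda(c_{1,\nu})$, hence of the form (\ref{B5}); and for $x\in g(P)\setminus P_0$ the lift is defined by solving $G_{\mathbf w}(\widehat h_\lambda(x))=h_\lambda(g(x))$ where $\mathbf w=\mathbf c_1(\lambda)$, and since $G_w$ is affine in $z$ with slope $\kappa_i\epsilon_i S$ on the relevant branch, one gets
$$
\widehat h_\lambda(x)=p_{i(x)-1}(\mathbf w)+\frac{h_\lambda(g(x))-w_{i(x)-1}}{\kappa_{i(x)}\epsilon_{i(x)}S},
$$
which, after clearing the common denominator $S$, is again a ratio of real-affine functions of the $h_\lambda(c_{1,k})$ — so of the required form with the $c_{1,k}$ playing the role of the entries of $g(P)$.

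For the inductive step, suppose $h^{(k)}_\lambda(y)$ has the stated form (\ref{B5}) for every $y\in g(P)$, with a \emph{common} denominator $D_k(\lambda)=b_k+\sum_{y\in g(P)}b_{y,k}h_\lambda(y)$ (this uniformity of denominator across all $y$ is worth recording explicitly and is what makes the induction go through cleanly). The lift $h^{(k+1)}_\lambda$ is obtained from $h^{(k)}_\lambda$ by exactly the same two formulas as above, but now with $\mathbf w=\mathbf c^{(k)}_1(\lambda)=(h^{(k)}_\lambda(c_{1,1}),\dots,h^{(k)}_\lambda(c_{1,\nu}))$ in place of $\mathbf c_1(\lambda)$. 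Substituting the inductive expressions for the $h^{(k)}_\lambda(c_{1,j})$ into $p_j(\mathbf w)$ and into the slope $S$ and the branch formula, each of these becomes a rational function in $h_\lambda(y)$, $y\in g(P)$, whose numerator and denominator are affine in the $h_\lambda(y)$ after multiplying through by $D_k(\lambda)$: indeed $w_i-w_{i-1}=(\text{affine})/D_k$, so $S=(\text{affine})/D_k$ and $p_j(\mathbf w)=(\text{affine})/(\text{affine})$, the affine functions being the numerators; and the branch formula for $x\notin P_0$ likewise reduces, after multiplying top and bottom by $D_k$ and then by the numerator of $S$, to a ratio of affine functions of the $h_\lambda(y)$. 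One then notes that all numerators and denominators appearing can be put over a single common denominator, giving the form (\ref{B5}) for $h^{(k+1)}_\lambda(x)$ for all $x\in g(P)$ simultaneously, and all coefficients remain real because the motion and all the structure constants $\kappa_i,\epsilon_i$ are real.

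The only genuine point requiring care — and the place I'd expect to spend the most effort — is bookkeeping the denominators: one must check that the denominator of $p_j(\mathbf w)$ (namely the numerator of $S$) and the denominator coming from $\kappa_{i(x)}\epsilon_{i(x)}S$ in the branch formula are the \emph{same} affine function of the $h_\lambda(y)$, so that after one clears $D_k$ the whole of $h^{(k+1)}_\lambda$ really does have a single common denominator $D_{k+1}(\lambda)=b_{k+1}+\sum_y b_{y,k+1}h_\lambda(y)$ rather than a product of several distinct affine factors (which would break the inductive hypothesis and inflate the degree). This is true because $S$ appears identically in every branch formula, so the claim follows, but it should be stated carefully. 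A further remark: the expressions are a priori valid only near $\lambda=0$, but since each $h^{(k)}_\lambda$ was already shown (via (\ref{eqn:lorhom})) to continue to a neighbourhood of $(-\epsilon_0,\epsilon_0)$, the rational identity (\ref{B5}), being an equality of meromorphic functions agreeing near $0$, persists there by the identity theorem. Everything else is routine algebra, and no step beyond the denominator-tracking should present any obstacle.
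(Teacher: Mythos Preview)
Your proposal is correct and follows essentially the same inductive approach as the paper's proof. The paper's proof is terser: it takes $k=0$ as the base case (trivial), writes out the computation for $h_\lambda^{(k+1)}(c_{0,j})$ explicitly, and then simply remarks that the induction goes through ``using the induction hypothesis and that the denominator of (\ref{B5}) does not depend on $x$'', leaving the case $x\in P\setminus P_0$ as an analogous calculation. You have spelled out more carefully the key point the paper only states in passing---namely that the common denominator $D_k$ cancels uniformly and that the new common denominator $D_{k+1}$ is (up to a constant) the numerator of $S$ evaluated at $\mathbf c_1^{(k)}(\lambda)$---and your explicit branch formula makes this transparent. Your identity-theorem remark at the end is not needed for the lemma as stated (the claim is purely algebraic), but it does no harm.
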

\begin{proof}
We prove by induction on $k$.
For $k=0$ this holds trivially.
So assume it holds for some $k\ge 0$.
Then for $x=c_{0,j}\in P_0$,
\begin{multline*}
h_\lambda^{(k+1)}(c_{0,j})=p_j(h_\lambda^{(k)}(c_{1,1}),\cdots,h_\lambda^{(k)}(c_{1,\nu}))=\\
2\dfrac{\sum_{i=1}^j (w_i-w_{i-1})\epsilon_i/\kappa_i }{\sum_{i=1}^{\nu+1} (w_i-w_{i-1})\epsilon_i /\kappa_i} =
\\
2\dfrac{\sum_{i=1}^j (h_\lambda^{(k)}(c_{1,i})-h_\lambda^{(k)}(c_{1,i-1}))\epsilon_i/\kappa_i }{\sum_{i=1}^{\nu+1} (h_\lambda^{(k)}(c_{1,i})-h_\lambda^{(k)}(c_{1,i-1}))\epsilon_i/\kappa_i}.
\end{multline*}
where we have $h_\lambda^{(k)}(c_{1,0})\equiv -1$ and  $h_\lambda^{(k)}(c_{1,\nu+1})\equiv \epsilon_{\nu+1}$.
Now using the induction hypothesis and that the denominator of (\ref{B5})  does not depend on $x$ we obtain
$$  h_\lambda^{(k+1)}(c_{0,j}) =
2\dfrac{\sum_{i=1}^j \sum_{y\in g(P)}[(a_{y,k}(c_{1,i})-a_{y,k}(c_{1,i-1}))h_\lambda(y)]\epsilon_i/\kappa_i }
{\sum_{i=1}^{\nu+1} \sum_{y\in g(P)}[(a_{y,k}(c_{1,i})-a_{y,k}(c_{1,i-1}))h_\lambda(y)]\epsilon_i/\kappa_i  }.
$$
and so the induction statement holds.
The analogous calculation for $x\in P\setminus P_0$ completes the proof of the lemma.
This proves the induction statement; note that this calculation relies on the assumption that the slope of each branch is a fixed
multiple $\kappa_i$ of $s$.
\end{proof}
Together with (\ref{eqn:lorhom}), the lemma implies that there exists $\eps>0$ such that each $h_\lambda^{(k)}$ extends to a holomorphic map on $\D_\eps$ and moreover, they are uniformly bounded in $\D_\eps$. This proves that $(g, G,\textbf{p})_W$ has the lifting property.
It follows that one of the alternatives in the conclusion of Theorem~\ref{thm:1eigen} holds.
Let us show by contradiction that the 2nd alternative of this
theorem does not take place in the present setting, completing the proof of the theorem.
Indeed, otherwise there is a continuous one-parameter family of maps $g_t\in \mathcal{L}_{\nu,\underline \kappa}$, $t\in (-\delta,\delta)$ for some $\delta>0$ so that $g_0=g$, all $g_t$ are different and critically finite and so that the iteneraries of the turning points of $g_t$ are the same as $g$.  Let $s(t)$ be the parameter $s$ corresponding to $t$. So $s(0)=s$.
In order to show that such a family $g_t$ cannot exist,  write $P\cup \{-1,1\}=\{a_0,\dots,a_n\}$ so that $0=a_0<a_1<\dots<a_{n-1}<a_{n}=1$
and write $v_i=a_i-a_{i-1}$, $i=1,\dots,n$.
Since $g$ is critically finite,  the interval $(a_{i-1},a_i)$ is mapped onto a union of such intervals, and so the latter union is equal to
$s\kappa_j$ times the former. Let $v$ be the column vector $v=(v_1,\dots,v_n)$ and note that $\sum v_i=2$.
It follows that there exists matrix $A$  (for which each row consists of $0$'s and a block of some $\kappa_j$'s), so that
$v=sAv$. So $\det(I-sA)=0$. Since the iteneraries of the turning points of $g_t$ are the same as $g$, we have $\det(I-s(t) A)=0$.
%
It follows that $s(t)=s$ for all $t$ close to zero (since $g_t$ is close to $g$).
Notice that $g_t$ and $g$ are conjugate. Indeed, neither of these maps
has wandering intervals, nor do they have periodic attractors (because otherwise their critical points cannot
be critically finite). By assumption there cannot be an integer $k$ so that $g^k(x)=x$ holds on an interval, and so the same holds for $g_t$.
Since the itinerary of the critical orbits of $g$ and $g_t$ are the same  it therefore follows that
$g_t$ and $g$ are topologically conjugate for each $t$. Let $h_t$ be the conjugacy
and take $x<y$ in an interval of monotonicity of $g$. Since $g$ is critically finite, there exists $\delta>0$
independent of $x,y$, and integers $n(x), n(y)$ so that

(a) there exists $z\in (x,y)$ so that  both $g^{n(x)}|(x,z)$ and  $g^{n(y)}|(z,y)$ are monotone
and so that $|g^{n(x)}(x)-g^{n(x)}(z)|\ge \delta$ and $|g^{n(y)}(z)-g^{n(y)}(y)|)\ge \delta$;
here $z$ is so that either $g^{n(x)}(z)$ or  $g^{n(y)}(z)$ is equal to a turning point of $g$ or $n(x)=n(y)$ and then we take $z=(x+y)/2)$.

\noindent
By linearity of $g^{n(x)}$, $g_t^{n(x)}$ and since $g$, $g_t$ have the same corresponding slopes,
$$ \dfrac{|h_t(x)-h_t(z)|}{|x-z|}= \dfrac{|h_t(g^{n(x)}(x))-h_t(g^{n(x)}(z))|}{|g^{n(x)}(x)-g^{n(x)}(z)|}$$
and by property (a) we obtain lower and upper bounds for the right hand side of this expression.
So we obtain lower and upper bounds for $ \dfrac{|h_t(x)-h_t(z)|}{|x-z|}$ and similarly for $ \dfrac{|h_t(z)-h_t(y)|}{|z-y|}$,
and therefore for  $ \dfrac{|h_t(x)-h_t(y)|}{|x-y|}$.
It follows that $h_t$ is bi-Lipschitz, and in particular absolutely continuous. Hence $h_t$ is almost everywhere differentiable.
Since $h_t\circ g = g_t \circ h_t$, it follows that $Dh_t(g(x)\cdot Dg(x)=Dh_t(x) \cdot Dg_t(h_t(x))$  for Lebesgue a.e. $x$.
Since $g$ and $g_t$ have the same slope on corresponding branches,
it follows that $Dh_t(g(x)=Dh_t(x)$ and so $Dh_t$ is Lebesgue a.e. $g$-invariant.
Since $g$ is assumed to be ergodic with respect to the Lebesgue measure,  it follows that pthere exists $c\in \R$ so that
$Dh_t=c$ Lebesgue almost everywhere.  Hence by Newton-Leibnitz for absolutely continuous functions,
$h_t(x)=-1+\int_{-1}^x Dh_t(u) du=-1+c(x+1)$.  Since $h_t(1)=1$ it follows that $c=1$.
So $g_t=g$, a contradiction. Therefore, Corollary~\ref{real} applies as $g$ has no attracting or neutral periodic orbits.
\end{proof}

\section{Families of real maps with discontinuities}\label{sec:familiesdiscont}

In this Section we will give some examples which show that the methods developed
in this paper also apply to maps with discontinuities. So assume that
a real map $g$ is discontinuous at one point $c$, but that $c_1^+=\lim_{x\downarrow c}g(x)$
and $c_1^-=\lim_{x\uparrow c}g(x)$ are well-defined. If both these points are eventually periodic,
or eventually mapped onto $c$, then let $P$ be the corresponding forward orbits of $c^-$ and $c^+$.
To put this in the frame work of marked maps,  consider $c_{0,1}=c^-,c_{0,2}=c^+$ as two separate points
(which are identified) and take $P_0=\{c_{0,1},c_{0,2}\}$. Note that $g$ is not assumed to be holomorphic near $c$.
Now take a holomorphic motion of $P$ with $h_\lambda(c_{0,i})=c_{0,i}=c$ for all $\lambda$ and $i=1,2$.
We can still take the lift as in  (\ref{eq:lift2}) and also in
(\ref{eq:lift1}) provided we define  ${\bf p}(c_1^+(\lambda))\equiv {\bf p}(c_1^-(\lambda))$ correctly.

\subsection{Piecewise affine Lorenz maps}\label{subsec:affinelorenz}

Consider the family of affine Lorenz maps $f_{t,c}(x)=tx+(t-1)$ for $x\in [-1,c)$ and $f_{t,c}(x)=tx-(t-1)$ for $x\in (c, 1]$,
parametrised by $t\in (1,2]$ and $c\in (-1,1)$.
We say that $c^-$ and $c^+$ are both eventually periodic, if some iterate of $c_{1,1}=\lim_{x\uparrow c} f_{t,c}(x)$  is either
equal to $c$ or to a periodic orbit, and the same holds for $c_{1,2}=\lim_{x\downarrow c} f_{t,c}(x)$.

\begin{theorem}\label{thm:lorenz-twoparameter}
Assume that $c^-$ and $c^+$ are (both) periodic or eventually periodic. Then the spectral radius of the
corresponding transfer matrix $\mathcal{A}$ is  at most $1$, and $1$ is not an eigenvalue of $\mathcal{A}$;
in particular  the transversality condition (\ref{eq:trans2})
holds for the two parameter family $f_{t,c}$.
\end{theorem}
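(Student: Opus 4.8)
The plan is to recognize the Lorenz family $f_{t,c}$ as a two-parameter family covered by the general framework of Part A once we set up a marked map with $P_0 = \{c_{0,1}, c_{0,2}\}$ (the two ``sides'' of the discontinuity point $c$, which we identify). This is precisely the $\nu = 2$ situation described in the preamble to Section~\ref{sec:familiesdiscont}: the holomorphic motion fixes $c_{0,1} = c_{0,2} = c$, the lift is defined by pulling back along the two affine branches, and the map $\textbf{p}$ sending the critical value vector to the ``critical point vector'' is constant equal to $(c,c)$ (or rather, the two parameters $(t,c)$ are recovered biholomorphically from the pair of boundary values $(c_{1,1}, c_{1,2})$, in complete analogy with the piecewise-linear case in Section~\ref{subset:linear} and with the Arnol'd family in Section~\ref{sec:sinearnold}). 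So the first step is to write down explicitly the local holomorphic deformation $(g, G, \textbf{p})_W$: given $\textbf{w} = (w_1, w_2)$ near $(c_{1,1}, c_{1,2})$, solve the two linear equations $w_1 = tc + (t-1)$ (limit from the left is $tc - (t-1)$? — here one must be careful with the sign conventions in $f_{t,c}$) for $(t,c)$, obtaining $t = t(\textbf{w})$, $c = c(\textbf{w})$ depending holomorphically on $\textbf{w}$, and then set $G_{\textbf{w}}$ to be the corresponding affine-on-each-branch map, with $\textbf{p}(\textbf{w}) \equiv (c(\textbf{w}), c(\textbf{w}))$.

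\textbf{The lifting property.} The second and main step is to establish that $(g, G, \textbf{p})_W$ has the lifting property, so that Theorem~\ref{thm:1eigen} applies. Here I would follow the template of the proof of Theorem~\ref{thm:linear} almost verbatim. Start with a real holomorphic motion $h_\lambda^{(0)}$ of $g(P)$ over $(\D, 0)$, restrict to $\lambda \in (-\eps_0, \eps_0)$ so that the order of the points of $g(P)$ in $(-1,1)$ is preserved, and produce successive lifts $h_\lambda^{(k)}$. The key computational lemma is the analogue of Lemma~\ref{lorfrac}: each $h_\lambda^{(k)}(x)$ is a ratio of two real-affine expressions in the $\{h_\lambda(y)\}_{y \in g(P)}$ whose denominator does not depend on $x$ — this follows by induction because (i) recovering $t(\textbf{w})$ from two values $w_1, w_2$ via the affine formulas gives $t$ as a ratio of affine functions of $w_1, w_2$, (ii) recovering $c(\textbf{w})$ is likewise a ratio with the same denominator, and (iii) one step of the lift, $z \mapsto G_{\textbf{w}}^{-1}(\cdot)$ on an affine branch, divides by $t(\textbf{w})$, which preserves the ``ratio of affine forms with $x$-independent denominator'' structure. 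Combined with the order-preservation $-1 < h_\lambda^{(k)}(x) < 1$ on the real slit, this uniformly bounds all the $h_\lambda^{(k)}$ on a fixed disk $\D_\eps$, giving the lifting property. I expect the bookkeeping of signs across the discontinuity (the $+(t-1)$ versus $-(t-1)$ on the two branches) to be the fussiest part, but it is routine.

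\textbf{Ruling out the second alternative and concluding.} With the lifting property in hand, Theorem~\ref{thm:1eigen} gives the dichotomy: either all eigenvalues of $\mathcal{A}$ lie in $\overline{\D} \setminus \{1\}$, or $\mathcal{R}(\textbf{w}) = 0$ cuts out a positive-dimensional analytic variety through $\textbf{c}_1$. The third step is to exclude the latter. As in the piecewise-linear case, a positive-dimensional solution variety would force a one-parameter family $f_{t(s), c(s)}$, $s$ near $0$, of pairwise-distinct critically-finite Lorenz maps all sharing the itineraries of $c^\pm$. But a critically finite affine Lorenz map has a Markov partition by $P \cup \{-1, 1\}$, and the slope $t$ satisfies $\det(I - tA) = 0$ for the associated nonnegative integer transition matrix $A$ (the interval-length vector is an eigenvector); since this polynomial equation in $t$ has isolated roots and $t(s)$ varies continuously with $t(0) = t$, we get $t(s) \equiv t$. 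Then $f_{t(s), c(s)}$ and $f_{t,c}$ have the same slope and the same itineraries, hence are topologically conjugate with a conjugacy $h_s$ that is bi-Lipschitz (same argument as in Theorem~\ref{thm:linear}, using critical finiteness to get uniformly-sized images of branches under iteration), hence absolutely continuous, hence $Dh_s$ is a.e. $g$-invariant and — by ergodicity, which for affine Lorenz maps with $t > 1$ holds automatically since the map is expanding, transitive, and has no wandering intervals or periodic attractors — a.e. constant, forcing $h_s = \mathrm{id}$ and $c(s) \equiv c$, a contradiction. Finally, every cycle of $f_{t,c}$ is repelling (slope $|t| > 1$ on each branch, no critical points in the usual sense), so the hypotheses of Corollary~\ref{real} are met and the ``positively oriented'' transversality inequality~(\ref{eq:trans2}) follows. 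The main obstacle I anticipate is the sign/convention juggling in the deformation setup; once that is pinned down, every analytic step is imported from Sections~\ref{subset:linear} and~\ref{sec:lifting}.
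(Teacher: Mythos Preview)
Your proposal is correct and follows essentially the same route as the paper: the paper sets up the deformation with the explicit formulas $t = 1 + \tfrac{w_1 - w_2}{2}$ and $\textbf{p}(w) = \tfrac{w_1 + w_2}{2t}$, then simply says ``arguing exactly as in the previous section'' (i.e.\ Section~\ref{subset:linear}) to obtain the lifting property via the fractional-linear structure of Lemma~\ref{lorfrac} and to rule out the second alternative of Theorem~\ref{thm:1eigen}. Your sketch is in fact more detailed than the paper's own treatment --- in particular you flag the ergodicity of the expanding affine Lorenz map (needed to force $Dh_s$ constant a.e.), which the paper leaves implicit; this does hold since both branch images cross the discontinuity (so the Markov partition is transitive), but it is a point the paper glosses over.
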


To clarify what we mean by (\ref{eq:trans2}) in the above theorem, write $c_{0,1}=c^-, c_{0,2}=c^+$,  and consider $f_{t,c}$ as a marked map $g$ for which $P_0=\{c^-,c^+\}$
and $P$ is the union of $P_0$ and the forward orbits of $c_{1,1}$ and $c_{1,2}$. So $P$ is a finite forward invariant set by
assumption. Take $W\subset \C^2$ be a small neighbourhood of $(c_{1,1},c_{1,2})$ and define the holomorphic maps $G_-(w,z)=tz+(t-1)$  and $G_+(w,z):=tz-(t-1)$,
where $w\in W$, $t=1+\frac{w_1-w_2}{2}$ and $\textbf{p}(w)=\frac{w_1+w_2}{2t}$.
For these choices of $t$ and ${\bf p}$ it follows that $G_-(w,{\bf p}(w))=w_1$ and $G_+(w,{\bf p}(w))=w_2$.
In particular, for $w=(c_{1,1},c_{1,2})$, $[-1,1]\ni z\mapsto G_\pm(w,z)$ agrees with $g$.
So we consider $G_\pm(w,x)$ as a local holomorphic deformation of $g$, and the map $\mathcal R$ in (\ref{eq:trans2}) is defined as in Subsection~\ref{subsec:transA}.
So for  example, if $c_1$ and $c_2$ have period $q_1$ and $q_2$, the above theorem gives that
$$\mathcal R(w_1,w_2)=(f_{t,c}^{q_1}(c_-)-c,f_{t,c}^{q_2}(c_+)-c),$$ where
$t=1+\frac{w_1-w_2}{2}$ and $c=\frac{w_1+w_2}{2t}$, is locally invertible for $w=(w_1,w_2)$ near $(c_1,c_2)$.

To prove this theorem, consider {\em real holomorphic motions}, i.e. so that  $h_\lambda(x)\in (-1,1)$  for $\lambda$ real.
Next let $h_\lambda^{(k+1)}$ be the lift of $h_\lambda^{(k)}$ and write $c_i^{(k)}(\lambda)=h^{(k)}_\lambda(c_i)$.
Here the lift $h^{(k)}_\lambda$ is defined as in equations (\ref{eq:lift1}) and (\ref{eq:lift2})
but in (\ref{eq:lift2}) we take  for $w_1>0>w_2$ (i.e. $t>1$),
$G_-(w,x)$ for $x<c$ or $G_+(w,x)$ for $x>c$ the linear functions from above.

 So arguing exactly
as in the previous section we obtain:

\begin{theorem}\label{lorlin}
The transfer operator $\mathcal{A}$ associated to the deformation $(g,G,\textbf{p})_W$ of the marked Lorenz map $g$ has spectral radius at most $1$ and $1$ is not an eigenvalue of $\mathcal{A}$.
\end{theorem}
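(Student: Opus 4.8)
\textbf{Proof plan for Theorem~\ref{lorlin}.}
The plan is to mimic, almost verbatim, the argument already used in Section~\ref{subset:linear} for piecewise linear multimodal maps, taking account of the fact that the marked map $g$ has a single discontinuity at $c$ but is otherwise piecewise affine with the \emph{same} slope $t$ on both branches. The key point is that the lifting property is established by an explicit algebraic form for the iterated lifts, exactly as in Lemma~\ref{lorfrac}, and then an appeal to Theorem~\ref{thm:1eigen} together with the observation that the second alternative cannot occur. First I would set up the lifts: start with a real holomorphic motion $h_\lambda=h_\lambda^{(0)}$ of $g(P)$ over $(\D,0)$, and for each $k\ge 0$ let $h_\lambda^{(k+1)}$ be the lift of $h_\lambda^{(k)}$, defined over $(\D_{\eps_{k+1}},0)$ by equations (\ref{eq:lift1})--(\ref{eq:lift2}) with the convention $h_\lambda^{(k)}(c_{0,1})=h_\lambda^{(k)}(c_{0,2})=c$ for all $\lambda$ and with $G_\pm(w,z)=tz\pm(t-1)$, $t=1+\tfrac{w_1-w_2}{2}$, ${\bf p}(w)=\tfrac{w_1+w_2}{2t}$. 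Choosing $\eps_0$ small enough that $-1<h_\lambda(x)<h_\lambda(y)<1$ for all $x<y$ in $g(P)$ and $\lambda\in(-\eps_0,\eps_0)$, one checks inductively that each $h_\lambda^{(k)}$ continues analytically to a neighbourhood of $(-\eps_0,\eps_0)$ and preserves this order relation.

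Next I would prove the analogue of Lemma~\ref{lorfrac}: for every $k$, and each $x\in g(P)$,
\[
h_\lambda^{(k)}(x)=\frac{a_k+\sum_{y\in g(P)}a_{y,k}(x)\,h_\lambda(y)}{b_k+\sum_{y\in g(P)}b_{y,k}\,h_\lambda(y)}
\]
with all coefficients real and the denominator independent of $x$. This is an induction on $k$. The inductive step uses that for $x=c_{0,j}\in P_0$,
$h_\lambda^{(k+1)}(c_{0,j})={\bf p}(h_\lambda^{(k)}(c_{1,1}),h_\lambda^{(k)}(c_{1,2}))=\tfrac{h_\lambda^{(k)}(c_{1,1})+h_\lambda^{(k)}(c_{1,2})}{2+h_\lambda^{(k)}(c_{1,1})-h_\lambda^{(k)}(c_{1,2})}$, a Möbius function of the two previous values, which by the induction hypothesis is again of the asserted form (the common denominator cancels appropriately); and that for $x\in P\setminus P_0$, $h_\lambda^{(k+1)}(x)=\big(h_\lambda^{(k)}(g(x))\mp(t-1)\big)/t$ with $t=1+\tfrac{h_\lambda^{(k)}(c_{1,1})-h_\lambda^{(k)}(c_{1,2})}{2}$, which is again of the required rational shape — here it is essential that both branches have the \emph{same} slope $t$, exactly as in Section~\ref{subset:linear} it was essential that the slopes were fixed multiples $\kappa_i s$. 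Combined with the order/boundedness estimate from the first paragraph, this rational form gives a uniform bound $|h_\lambda^{(k)}(x)|\le M$ on a fixed disk $\D_\eps$, hence the triple $(g,G,{\bf p})_W$ has the lifting property.

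By Theorem~\ref{thm:1eigen} we then have the alternative: either all eigenvalues of $\mathcal A$ lie in $\overline\D\setminus\{1\}$, or $\mathcal R({\bf w})=0$ defines an analytic variety of positive dimension through $(c_{1,1},c_{1,2})$. I would rule out the second case: a positive-dimensional such variety would give a non-trivial one-parameter family of Lorenz maps $f_{t(s),c(s)}$, $s$ in a disk, all with the same itinerary data for $c^-,c^+$ and all critically finite. Because an affine Lorenz map with $1<t\le 2$ is expanding and has no wandering intervals and no periodic attractors, such maps with the same critical itineraries are topologically conjugate; the conjugacy is bi-Lipschitz and hence differentiable a.e., and since corresponding branches have equal slope $t(s)$ versus $t$, the derivative of the conjugacy is $g$-invariant a.e. Here I would need the ergodicity input: the natural Markov partition of $[-1,1]$ by $P$ makes the conjugacy's derivative constant a.e., forcing $t(s)\equiv t$ and $c(s)\equiv c$, a contradiction. (In the purely affine case this is exactly the argument at the end of the proof of Theorem~\ref{thm:linear}; the discontinuity changes nothing in this step.) Finally, since $g$ is critically finite it has no attracting or neutral cycles, so Corollary~\ref{real} applies and yields both that $1$ is not an eigenvalue and the positively oriented transversality inequality (\ref{eq:trans2}). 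I expect the main obstacle to be the second step — verifying that the Möbius form survives the lift at the discontinuity, i.e.\ getting the common denominator to behave correctly when one of the two branch formulas is applied — and, more subtly, confirming that the topological-conjugacy-rigidity argument of Section~\ref{subset:linear} goes through unchanged for a map with one discontinuity (in particular that an affine Lorenz map with $t>1$ is automatically ergodic, or else that we should add that hypothesis), so that the second alternative of Theorem~\ref{thm:1eigen} is genuinely excluded.
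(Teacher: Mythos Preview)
Your proposal is correct and follows essentially the same approach as the paper, whose proof consists of the single sentence ``So arguing exactly as in the previous section we obtain'' before the theorem statement. Your concern about ergodicity in the rigidity step is legitimate and worth flagging, but the paper simply transports the Section~\ref{subset:linear} argument wholesale without further comment.
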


Theorem~\ref{thm:lorenz-twoparameter} follows from this theorem.

\medskip
Note that monotonicity of entropy for the symmetric family of Lorenz maps  $f_{t,0}$
does not follow directly from this theorem. However, this follows immediately from the monotonicity for the tent family $T_t(x)=-t|x|+(t-1)$.
Indeed for any $x$, one has $f^n_t(x)=\pm T_t^n(x)$ and therefore either $f^n_t(x)=T_t^n(x)$ or $f^n_t(x)=T_t^n(-x)$.
Hence $h_{top}(f_t)=h_{top}(T_t)$,

\subsection{Lorenz maps with a flat critical point}

Fix real numbers $\ell\ge 1$ and $b> 2(e\ell)^{1/\ell}$ and consider the two-parameter family
$$f_{c_1,c_2}(x)=\left\{\begin{array}{ll}
-b e^{-1/|x|^\ell} +c_1 & \mbox{ if } x<0\\
b e^{-1/|x|^\ell} + c_2 & \mbox{ if } x>0.
\end{array}
\right.
$$
parameterised by $c_2<0<c_1$. Note that $f$ is increasing on each component of $\R\setminus \{0\}$.
Let $\beta\in (0, \ell^{1/\ell})$ be the number defined in Subsection~\ref{subsec:flatcritical}. Then $f_{\beta,-\beta}(\pm\beta)=\pm\beta$.

Fix $-\beta\le c_2<0<c_1\le \beta$ so that both $0^-$ and $0^+$ are periodic
or eventually periodic for $f_{c_1,c_2}$. Consider $g=f_{c_1,c_2}$ as
a marked map with $P_0=\{0^-,0^+\}$ and with $P$ the forward iterates of $P_0$.
For $w=(w_1,w_2)$ near $(c_1,c_2)$ define $G_- (w,x)$ to be equal to $-b e^{-1/|x|^\ell} +w_1$
and $G_+(w,x)=b e^{-1/|x|^\ell} +w_2$, define ${\bf p}\equiv 0$.

\begin{theorem}\label{lorflat}
The spectral radius of the operator $\mathcal{A}$ associated to the triple $(g,G,\textbf{p})_W$ is less than $1$.
The transversality condition (\ref{eq:trans2}) holds for the two-parameter family $f_{c_1,c_2}$.
\end{theorem}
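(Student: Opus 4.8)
The strategy is to verify that the triple $(g,G,\textbf{p})_W$ attached to this flat-critical Lorenz family falls under Theorem~\ref{thm:holo}, with a robust separation property, exactly as in the proof of Theorem~\ref{thm:flat} but now carried out simultaneously on both branches. The point is that $g$ need not be holomorphic near the critical point $c=0$ (indeed $x\mapsto be^{-1/|x|^\ell}$ is only $C^\infty$-flat there); the marked-map formalism of Section~\ref{sec:familiesdiscont} circumvents this by treating $0^-$ and $0^+$ as two separate marked points with $h_\lambda(0^\pm)\equiv 0$, so that only the holomorphic extensions of the two \emph{restricted} branches on $\R\setminus\{0\}$ are used. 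Concretely, I would first invoke Lemma~\ref{lem:flatext} (applied to each of the two monotone branches) to produce, for a suitable $x_0>\beta$ close to $\beta$ and $R:=f_0(x_0)=x_1+\beta<b$, an unbranched holomorphic covering $F_0\colon U\to B^*(0,R)$ with $U\subset D_*((-x_0,0))\cup D_*((0,x_0))$ and $\diam(U)=2x_0<R$; then set, for $w=(w_1,w_2)$ near $(c_1,c_2)$, $G_-(w,z)=-F_0^-(z)+w_1$ on the left piece and $G_+(w,z)=F_0^+(z)+w_2$ on the right piece, with $\textbf{p}\equiv 0$ and singular-value vector the pair of branch endpoints. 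One checks directly that $(G_w,C(G_w),w)\in\HH$ (each branch is an unbranched covering of a disk of radius $R$ onto a punctured neighbourhood, property (6) holds because the only asymptotic value is the puncture and $\diam U<R$ keeps preimages interior), so this is a genuine local holomorphic deformation in the sense of Subsection~\ref{subsec:localdefn}.

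Next I would set up the separation data. Since $-\beta\le c_2<0<c_1\le\beta$, the Chebyshev/Misiurewicz-type estimate $Df_{\pm\beta}(\pm\beta)=2\ell/\beta^\ell>2$ (from Subsection~\ref{subsec:flatcritical}) guarantees that the whole forward orbit $P\setminus P_0$ stays inside the union of the two disks $D_*((-x_0,0))\cup D_*((0,x_0))\supset U$, provided $x_0$ is chosen close enough to $\beta$; in particular $P\setminus P_0\subset U$. Take $S_x=U$ for every $x\in g(P)$ (together with, say, $S_{0^\pm}$ a small disk about $0$, on which $h_\lambda$ and $\textbf{p}$ are constant). Then for any holomorphic $\rho\colon\D\to S_*^\nu$ the deformed branches $G_{\rho(\lambda)}^\pm$ are again of the form $\mp F_0^\mp + (\text{shift})$, whose range contains $B(\rho_j(\lambda);\diam U)\supset B(\rho_j(\lambda);2x_0)$. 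Since $\rho_j(\lambda)\in U$ and $2x_0<R$, the inclusions $U_{g_{\rho,\lambda}}^{(i)}\subset S_x$ and $S_{g(x)}\subset V_{g_{\rho,\lambda}}^{(i)}$ of (\ref{extincl}) hold, and in fact with uniform room: the range always contains an $\epsilon$-neighbourhood of $\overline U$ for a fixed $\epsilon>0$ depending only on $R-2x_0$, and sits inside a fixed ball $B(0,\beta+R)$. This is precisely the \emph{robust} separation property. Because $\C\setminus S_x$ certainly contains at least two points, Theorem~\ref{thm:holo} yields the lifting property, and the robust version gives spectral radius of $\mathcal A$ strictly less than $1$ via Lemma~\ref{lem:perturb2spectrum}.

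Finally, transversality. With $\mathrm{spec}(\mathcal A)\subset\D$, the hypothesis $1\notin\mathrm{spec}(\mathcal A)$ of Lemma~\ref{lem:21} and Corollary~\ref{real} is automatic; one still must check the real-structure hypotheses (the deformation is manifestly real: $G_\pm(\overline w,\overline z)=\overline{G_\pm(w,z)}$ and $\textbf{p}\equiv 0$ is real) and that, for each $r<j\le\nu$, $|Dg^{q_j-l_j}(c_{l_j,j})|>1$, i.e. that each periodic point occurring in a critical relation is repelling. The latter follows because $g$ is expanding on $\R\setminus\{0\}$ in the relevant region: away from $0$ one has $Dg(x)=be^{-1/|x|^\ell}\,\ell/|x|^{\ell+1}$, and on the invariant region $[-\beta,\beta]\setminus\{0\}$ (more precisely on the part of it actually visited by $P$, which stays a definite distance from $0$ since $0$ is periodic of some finite period) this derivative is bounded below by a constant $>1$; hence every periodic cycle in $P$ is repelling, so in particular superattracting cycles do not occur and the "exceptional value" caveat is vacuous. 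Then Corollary~\ref{real} applies verbatim and gives (\ref{eq:trans2}), i.e.\ $\det(D\mathcal R(\textbf{v}))/\prod_j Dg^{q_j-1}(v_j)>0$. I expect the only genuinely delicate point to be the geometric bookkeeping in the separation step: choosing $x_0$ (equivalently, controlling how close $c_1,c_2$ are allowed to be to $\pm\beta$) so that the entire finite orbit $P$ lands inside $U$ while still keeping $\diam U=2x_0<R$ — this is the Lorenz analogue of Lemma~\ref{lem:flatext} and of the closing paragraph of the proof of Theorem~\ref{thm:flat}, and it is where the bound $c_1\le\beta$, $c_2\ge-\beta$ is used. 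Everything else is a routine transcription of the Part~A and Part~B machinery to the two-branch discontinuous setting already prepared in Section~\ref{sec:familiesdiscont}.
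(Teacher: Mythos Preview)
Your approach is essentially the same as the paper's. The paper simply invokes Lemma~\ref{lem:flatext} to get the two unbranched coverings $F_\pm\colon U^\pm\to B(c_\pm,R)\setminus\{c_\pm\}$, observes that $\diam(U)=2x_0<R$ gives the robust inclusion $U\subset\overline{B(c_\pm,\diam U)}\subset B(c_\pm,R)$, and then says ``the proof is now a direct generalisation of the proof of Theorem~\ref{single}.'' Your write-up routes the same geometric input through the general separation machinery of Theorem~\ref{thm:holo} rather than directly mimicking the short proof of Theorem~\ref{single}; this is more verbose but amounts to the same argument, and the choice $S_x=U$ is exactly what makes the inclusions (\ref{extincl}) work.

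One point to flag: your justification that every periodic cycle in $P$ is repelling is not correct as written. The derivative $|Dg(x)|=be^{-1/|x|^\ell}\,\ell/|x|^{\ell+1}$ is \emph{not} bounded below by $1$ on $[-\beta,\beta]\setminus\{0\}$; it tends to $0$ as $x\to 0$, and nothing prevents the finite set $P$ from containing points where $|Dg|<1$. What actually rescues the application of Corollary~\ref{real} is that its hyperbolicity hypothesis is only used to rule out exceptional values $|\rho|\le 1$, and with $\nu=2$ an exceptional value can only exist when \emph{both} $0^-$ and $0^+$ are strictly preperiodic and land on the \emph{same} periodic point. In all other cases (at least one of $0^\pm$ periodic, or the two orbits land on different cycles) no $\rho$ is exceptional and spectral radius $<1$ already forces $\det D(\rho)\ne 0$ on $[0,1]$ via Proposition~\ref{prop:non-exceptional}. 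In the remaining coincident-landing case one does need $|Dg^{q_1-l_1}(c_{l_1,1})|>1$; this follows from the covering structure (the inverse branches of $g$ contract the hyperbolic metric because $\overline U\subset B(c_\pm,R)$), not from a pointwise bound on $|Dg|$. The paper's own proof does not spell this out either.
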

\begin{proof}
Let $R>0$, $U=U^+\cup U^-$ and $V$  be as in Lemma~\ref{lem:flatext}
and define   $F_{-}(z):=-b e^{-1/z^\ell} +c_1$, $F_{+}(z):=b e^{-1/z^\ell} +c_2$
and $V_-=B(c_1,R)$ and $V_+=B(c_2,R)$.
Exactly as in Lemma~\ref{lem:flatext}, one has $\diam(U)<R$  and that
$F_{-,c_1}\colon  U_-\to V_-\setminus \{c_1\}$
and $F_{+,c_2}\colon U_+\to V_+\setminus \{c_2\}$
are unbranched covering maps. Since $\diam(U)<R$, for each
$(c_1,c_2)\in W:=U_+\times U_-$, one has that
$$U \subset \overline{S}_- \subset V_{-}\mbox{ and } U \subset \overline{S}_+ \subset V_+, $$
where $S_-=B(c_1,R)$ and $S_+=B(c_2,R)$.  The proof of the theorem is now a direct generalisation of the proof of Theorem~\ref{single}.
\end{proof}

Monotonicity of the topological entropy of the family of symmetric maps $f_{c,-c}$ follows as in the previous
subsection from the corresponding theorem for unimodal maps in Subsection~\ref{subsec:flatcritical}.


\bibliographystyle{plain}             

\end{document}